\def\COMMENT#1{}
\def\TASK#1{}
\def\noproof{{\unskip\nobreak\hfill\penalty50\hskip2em\hbox{}\nobreak\hfill%
        $\square$\parfillskip=0pt\finalhyphendemerits=0\par}\goodbreak}
\def\endproof{\noproof\bigskip}
\newdimen\margin   
\def\textno#1&#2\par{%
    \margin=\hsize
    \advance\margin by -4\parindent
           \setbox1=\hbox{\sl#1}%
    \ifdim\wd1 < \margin
       $$\box1\eqno#2$$%
    \else
       \bigbreak
       \hbox to \hsize{\indent$\vcenter{\advance\hsize by -3\parindent
       \sl\noindent#1}\hfil#2$}%
       \bigbreak
    \fi}
\def\proof{\removelastskip\penalty55\medskip\noindent{\bf Proof. }}
\def\eps{\varepsilon}
\def\T{\mathcal{T}}
\newcommand{\msc}[1]{\begin{center}MSC2000: #1.\end{center}}
\newtheorem{firstthm}{Proposition}[section]
\newtheorem{thm}[firstthm]{Theorem}
\newtheorem{prop}[firstthm]{Proposition}
\newtheorem{lemma}[firstthm]{Lemma}
\newtheorem{conj}[firstthm]{Conjecture}
\newtheorem{claim}[firstthm]{Claim}
\newtheorem{fact}[firstthm]{Fact}
\newtheorem{ques}[firstthm]{Question}
\begin{document}
\title{On directed versions of the Hajnal--Szemer\'edi theorem}
\author{Andrew Treglown}
\thanks{This research was in part undertaken whilst the author was a researcher at Charles University, Prague. The work leading to this invention has received funding from the European Research Council under the European Union's Seventh Framework Programme (FP7/2007-2013)/ERC grant agreement no.~259385.}

\label{firstpage}
\date{\today} 
\begin{abstract} 
We say that a (di)graph $G$ has a perfect $H$-packing if there exists a set of vertex-disjoint 
copies of $H$ which cover all the vertices in $G$. The seminal Hajnal--Szemer\'edi theorem characterises the minimum degree
that ensures a graph $G$ contains a perfect $K_r$-packing. In this paper we prove the following analogue for
directed graphs: 
Suppose that $T$ is a tournament on $r$ vertices  and $G$ is a digraph of sufficiently large order $n$ where $r$ divides $n$.
If $G$ has minimum in- and outdegree at least $ (1-1/r)n$ then $G$ contains a perfect $T$-packing. 

In the case when $T$ is a cyclic triangle, this result verifies a recent conjecture of Czygrinow, Kierstead and Molla~\cite{ckm}
(for large digraphs).
Furthermore, in the case when $T$ is transitive we conjecture that it suffices for every vertex in $G$ to have
sufficiently large indegree or outdegree. We prove this conjecture for transitive triangles and asymptotically for all $r \geq 3$.
Our approach  makes use of a result of Keevash and Mycroft~\cite{my1} concerning almost perfect matchings in hypergraphs as well as the Directed Graph Removal lemma \cite{alon, fox}.
\end{abstract}
\maketitle
\msc{5C35, 5C20, 5C70}
\section{Introduction} 
\subsection{Perfect packings in undirected graphs}
Given two (di)graphs $H$  and $G$, an \emph{$H$-packing} in $G$ 
is a collection of vertex-disjoint copies of $H$ in $G$. An
$H$-packing is called \emph{perfect} if it covers all the vertices of $G$.
Perfect $H$-packings are also referred to as \emph{$H$-factors} or \emph{perfect $H$-tilings}. 
Note that perfect $H$-packings are generalisations of perfect matchings (which correspond to the case when $H$ is a single edge).
Tutte's theorem characterises all those graphs that contain a perfect matching. On the other hand,
Hell and Kirkpatrick~\cite{hell} showed that the decision problem of
whether a graph $G$ has a perfect $H$-packing is NP-complete precisely when $H$ has a
component consisting of at least $3$ vertices. Thus, for such graphs $H$, it is unlikely that there is a complete characterisation
of those graphs containing a perfect $H$-packing. It is natural therefore to ask for simple sufficient conditions which force a graph
to contain a perfect $H$-packing. 

A seminal result in the area is the following theorem of Hajnal and Szemer\'edi~\cite{hs}.
\begin{thm}[Hajnal and Szemer\'edi~\cite{hs}]\label{hs}
Every graph $G$ whose order $n$
is divisible by $r$ and whose minimum degree satisfies $\delta (G) \geq (1-1/r)n$ contains a perfect $K_r$-packing. 
\end{thm}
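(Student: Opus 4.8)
The plan is to prove the theorem by an \emph{augmentation} argument, in the spirit of the classical proof that $\delta(G)\ge n/2$ forces a perfect matching. First I would pass to the complement: writing $H=\overline{G}$, a perfect $K_r$-packing in $G$ is exactly a proper colouring of $H$ in which every colour class has size $r$, and the hypothesis $\delta(G)\ge(1-1/r)n$ says precisely that $\Delta(H)\le n/r-1$, in particular $\Delta(H)<n/r$. So it suffices to show that every graph $H$ on $n$ vertices with $r\mid n$ and $\Delta(H)<n/r$ has an \emph{equitable} colouring into $n/r$ classes of size exactly $r$. The case $r=2$ is classical, so I would take $r\ge 3$ and induct on $n$ (delete one vertex, colour the rest by the inductive hypothesis---after fixing the number of colour classes throughout the induction so that divisibility causes no trouble---and restore the vertex): the equitable colouring of $H-v$ has classes of sizes $r$ and $r-1$, and putting $v$ back yields a colouring of $H$ that is \emph{near-equitable}, i.e.\ one class has size $r+1$, one has size $r-1$, and all others have size $r$. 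Everything then comes down to a single repair step.

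So the heart of the proof is the following repair lemma: every near-equitable colouring of $H$ (with $n/r$ classes and $\Delta(H)<n/r$) can be turned into an equitable one. Let $V^+$ be the class of size $r+1$ and $V^-$ the class of size $r-1$. If some vertex of $V^+$ has no neighbour in $V^-$, recolour it into $V^-$ and stop; so assume not, and call a vertex \emph{movable to a class $W$} if it has no neighbour in $W$. I would look for a \emph{cascade}: a sequence of classes $V^+=W_0,W_1,\dots,W_t=V^-$ and vertices $v_i$ with $v_i$ in (the current) $W_i$ and movable to $W_{i+1}$, so that recolouring $v_0$ into $W_1$, then $v_1$ into $W_2$, and so on, is a valid sequence of proper recolourings after which $V^+$ has shrunk to size $r$, $V^-$ has grown to size $r$, and no other class has changed size---an equitable colouring. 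To find a cascade one grows, from the root $V^+$, a rooted tree of classes reachable by partial cascades, attaching $W$ below $U$ whenever a suitable vertex of $U$ is movable to $W$, and wins the moment $V^-$ enters the tree. The point of the degree hypothesis is that $\Delta(H)<n/r$ should keep movable vertices plentiful enough for the tree to keep growing until it swallows $V^-$; if instead the tree closed up without reaching $V^-$, a double count over the vertices responsible for the closure would force some vertex to have degree at least $n/r$, contradicting $\Delta(H)<n/r$. Performing the cascade proves the repair lemma, and the induction then delivers the theorem.

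The main obstacle is exactly the step I have just glossed: proving that a near-equitable colouring admitting no repairing cascade cannot exist, i.e.\ making the tree-growing argument and its terminal count go through when $\Delta(H)$ is as large as $n/r-1$ and there is essentially no slack. The delicate ingredients will be a careful choice of which near-equitable colouring to start from (among all of them, one that optimises some potential---say the number of edges leaving $V^+$, perhaps with a secondary tie-break---so that a stuck tree is forced to be dense enough to violate the degree bound), together with tight control of how the recolourings along a partial cascade interact, i.e.\ which movabilities each step preserves and which it destroys, so that distinct steps do not clash. This bookkeeping is where the genuine difficulty of the Hajnal--Szemer\'edi theorem lies; the much shorter Kierstead--Kostochka proof of the equivalent equitable-colouring statement organises the same ideas more efficiently but does not dispense with them. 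A different route I would keep in reserve is to invoke Szemer\'edi's regularity lemma---reduce to a nearly tiled graph via blow-ups and absorb the leftover vertices---but extracting the \emph{exact} bound $(1-1/r)n$ rather than $(1-1/r+\eps)n$ reintroduces an extremal case analysis of much the same flavour.
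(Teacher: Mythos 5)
The paper does not prove Theorem~\ref{hs}: it is the Hajnal--Szemer\'edi theorem itself, quoted from its original source and used throughout as a black box, so there is no internal proof to compare your attempt against.

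Taken on its own terms, your outline correctly identifies the cleanest modern route---the Kierstead--Kostochka reformulation as an equitable $(n/r)$-colouring of the complement $H$ with $\Delta(H)\le n/r-1$, induction with the number of colour classes held fixed, and a repair lemma that turns a near-equitable colouring into an equitable one via a cascade of recolourings---but what you have written is a description of the shape of a proof, not a proof, because the repair lemma carries essentially all of the content of the theorem and you explicitly leave it as a gloss. Three concrete gaps. (a) You never specify the potential that selects \emph{which} near-equitable colouring to repair; the argument genuinely fails without the right choice, and Kierstead and Kostochka's proof hinges on minimising a carefully chosen hierarchy of quantities (involving edges leaving the large class and a refinement via so-called terminal classes among the accessible ones). (b) Movability is not preserved under partial cascades---once $v_0$ leaves $W_0$ for $W_1$, the set of vertices of $W_1$ movable to $W_2$ changes---so ``a rooted tree of classes reachable by partial cascades'' is not a well-defined static object; making the recolourings along a path compatible is exactly where the delicacy lives. (c) ``A double count over the vertices responsible for the closure would force some vertex to have degree at least $n/r$'' is the \emph{conclusion} you need, not an argument, and since $\Delta(H)$ can be exactly $n/r-1$ there is zero slack, so the count has to be set up precisely (via the potential from (a) and a terminal accessible class). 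You flag all of this yourself, which is honest, but as written the proposal does not establish the theorem.
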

It is easy to see that the minimum degree condition here cannot be lowered. 
In recent years there have been several generalisations of the Hajnal--Szemer\'edi theorem. K\"uhn and Osthus~\cite{kuhn, kuhn2} characterised, up to an additive constant, the minimum degree which ensures that a graph $G$ 
contains a perfect $H$-packing for an \emph{arbitrary} graph $H$.
Kierstead and Kostochka~\cite{kier} proved an \emph{Ore-type} analogue of
the Hajnal--Szemer\'edi theorem: If $G$ is a graph whose order $n$ is divisible by $r$, then $G$ contains a perfect $K_r$-packing
provided that $d(x)+d(y) \geq 2(1-1/r)n-1$ for all non-adjacent $x \not = y \in V(G)$.
K\"uhn, Osthus and Treglown~\cite{kotore}  characterised, asymptotically, the Ore-type degree condition which ensures that a graph $G$ 
contains a perfect $H$-packing for an arbitrary graph $H$.
Recently, Keevash and Mycroft~\cite{my} proved the following $r$-partite version of the Hajnal--Szemer\'edi theorem, thereby 
tackling a conjecture of Fischer~\cite{fish} for sufficiently large graphs.

\begin{thm}[Keevash and Mycroft~\cite{my}]\label{rpart}
Given $r \in \mathbb N$ there exists an $n_0 \in \mathbb N$ such that the following holds. Suppose $G$ is an $r$-partite graph
with vertex classes $V_1, \dots, V_r$ where $|V_i|=n\geq n_0$ for all $1\le i \leq r$.
If $$\overline{\delta}(G) \geq (1-1/r)n+1$$ then $G$ contains a perfect $K_r$-packing.
\end{thm}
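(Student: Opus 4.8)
The plan is to use the absorbing method, which broadly follows \cite{my}. It is convenient to pass to the auxiliary $r$-uniform hypergraph $\mathcal{H}=\mathcal{H}(G)$ with vertex set $V_1\cup\dots\cup V_r$ whose edges are the \emph{transversal} copies of $K_r$ in $G$, i.e.\ those using exactly one vertex of each $V_i$. Since each $V_i$ is independent in $G$ and all the $|V_i|$ equal $n$, every copy of $K_r$ appearing in a perfect $K_r$-packing is transversal, so a perfect $K_r$-packing of $G$ is exactly a perfect matching of $\mathcal{H}$. The degree hypothesis makes $\mathcal{H}$ dense in the relevant sense: if $u_1,\dots,u_{r-1}$ lie in distinct classes and span a clique of $G$, then each $u_k$ has at most $n/r-1$ non-neighbours in the remaining class $V_j$, so their common neighbourhood in $V_j$ has size at least $n-(r-1)(n/r-1)=n/r+(r-1)$; iterating, every transversal clique of order less than $r$ extends to an edge of $\mathcal{H}$, and, more usefully, each transversal $(r-1)$-clique lies in at least $n/r+(r-1)$ edges of $\mathcal{H}$. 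I would then split the proof into (i) constructing a small, flexible absorbing structure, (ii) covering almost all of $V(G)$ by a $K_r$-packing, and (iii) mopping up the remainder with the absorber.

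For (i) I want an \emph{absorbing lemma}: there is a $K_r$-packing $\mathcal{M}$ in $G$ with $|V(\mathcal{M})\cap V_i|\le\eta n$ for each $i$ such that for every \emph{balanced} vertex set $U$ disjoint from $V(\mathcal{M})$ with $|U\cap V_i|=m\le\eta^2 n$ for all $i$, the graph $G[V(\mathcal{M})\cup U]$ has a perfect $K_r$-packing. Fix a notion of \emph{absorber} for a transversal tuple $(v_1,\dots,v_r)$, $v_i\in V_i$: a balanced set $B$ of $O_r(1)$ vertices, disjoint from the $v_i$, such that both $G[B]$ and $G[B\cup\{v_1,\dots,v_r\}]$ have spanning transversal-$K_r$-packings. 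Using the common-neighbourhood estimate above one checks that every transversal tuple has at least $c\,n^{|B|}$ absorbers; a now-standard random selection (include each candidate absorber independently with probability of order $n^{1-|B|}$, then delete those that clash) then produces a vertex-disjoint family whose union $V(\mathcal{M})$ is balanced, of size $O(\eta n)$ per class, and still contains, for \emph{every} transversal tuple, at least $\eta^2 n$ absorbers. Any $U$ as above is partitioned arbitrarily into $m$ transversal tuples and these are absorbed one at a time, each using a fresh surviving absorber.

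Step (ii) is where \cite{my1} enters: the asymptotic form $\overline{\delta}(G)\ge(1-1/r+o(1))n$ puts $\mathcal{H}(G)$ into the scope of the Keevash--Mycroft result on almost perfect matchings in hypergraphs, yielding a $K_r$-packing of $G-V(\mathcal{M})$ that leaves a balanced remainder $U$ with $|U\cap V_i|\le\eta^2 n$; the absorber from (i) then swallows $U$, completing a perfect $K_r$-packing. The genuine difficulty --- and the reason the theorem needs the exact bound $(1-1/r)n+1$ rather than $(1-1/r)n+o(n)$ --- is that this route only delivers an \emph{almost} perfect packing, and bridging the final $o(n)$ gap can really fail near the extremal configurations. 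The argument must therefore branch on whether $G$ is $\varepsilon$-far from or $\varepsilon$-close to such a configuration. If $G$ is $\varepsilon$-far from all of them, the absorbing-plus-nibble machinery has room to spare and goes through with the asymptotic bound. If $G$ is $\varepsilon$-close to one, that configuration has to be named and handled by hand; there are two types, a \emph{space barrier} (a small set meeting every transversal $K_r$), which in this setting is excluded because all classes have size exactly $n$ and $\overline{\delta}(G)>(1-1/r)n$, and a \emph{divisibility barrier} (a colouring of the classes under which every transversal $K_r$ is confined to a proper sublattice of the ambient integer lattice), which becomes incompatible with the class-size vector $(n,\dots,n)$ precisely once the extra $+1$ is imposed and which is also where the construction witnessing tightness at $(1-1/r)n$ lives. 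Carrying out this stability analysis --- listing the near-extremal structures and showing that each one either admits a perfect $K_r$-packing or violates the hypothesis --- is the technical heart of the proof and the step I expect to be the main obstacle; by comparison the absorbing lemma and the nibble are routine applications of standard techniques.
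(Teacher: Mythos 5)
This theorem is not proved in the paper you are reading: it is quoted verbatim from Keevash and Mycroft~\cite{my} and invoked as a black box (most directly via its weak corollary, Theorem~\ref{rpart2}, in Section~\ref{secex}). So there is no internal proof to compare against, and what you have written is a reconstruction of the approach in~\cite{my}. Your high-level outline does match the broad strategy used there (and in the present paper for its own theorems): pass to the auxiliary hypergraph of transversal $K_r$'s, build an absorbing structure, obtain an almost perfect matching via the Keevash--Mycroft machinery~\cite{my1}, and branch into extremal and non-extremal cases.

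That said, there is a substantive gap in the sketch beyond its deliberate incompleteness on the divisibility-barrier analysis. You assert, before the $\eps$-far/$\eps$-close split, that ``one checks that every transversal tuple has at least $c\,n^{|B|}$ absorbers''; this is not a consequence of the degree hypothesis alone. Near a divisibility barrier the absorber count for certain transversal tuples can collapse, which is precisely why the absorbing lemmas in this paper (Theorems~\ref{nonex1} and~\ref{nonex2}) carry a hypothesis that $G$ is not close to extremal, and why the extremal cases need an entirely separate argument (Lemmas~\ref{ex1} and~\ref{exC3}) that does not go through absorption at all. In your outline the non-extremality hypothesis is deferred to step~(ii) (the almost perfect matching), but it is needed already in step~(i). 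The correct structure is: first decide whether $G$ is near-extremal; if not, prove both the absorbing lemma and the near-perfect matching under that assumption; if it is, abandon the absorption framework and handle the near-extremal configuration directly. As written, your absorbing lemma is claimed unconditionally and would be false without the extra hypothesis.
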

(Here $\overline{\delta} (G)$ denotes the minimum degree of a vertex from one vertex class $V_i$ to another vertex class $V_j$.)
Keevash and Mycroft~\cite{my} actually proved a stronger result than Theorem~\ref{rpart}. Indeed, they showed that the minimum
degree condition here can be relaxed to $\overline{\delta}(G) \geq (1-1/r)n$ provided that $G$ is not isomorphic to one special construction. Further, their result extends to perfect $K_k$-packings where 
$1\leq k \leq r$ (see~\cite{my} for more details).

\subsection{Packing tournaments in directed graphs}\label{sec1}
It is natural to seek analogues of the Hajnal--Szemer\'edi theorem in the digraph and oriented graph settings.
We consider digraphs with no loops and at most one edge in each direction between every pair of vertices.
An oriented graph is a digraph without 
$2$-cycles. In this paper we restrict our attention to the problem for digraphs. See~\cite{problem, blm} for an overview of the known results
concerning perfect packings in oriented graphs. 

The \emph{minimum semidegree} $\delta ^0 (G)$ of a digraph $G$ is the minimum of its minimum outdegree $\delta ^+ (G)$ and its
minimum indegree $\delta ^- (G)$. 
Let $\delta (G)$ denote the minimum degree of $G$, that is, the minimum number of edges incident to a vertex in $G$. (Note that if both $xy$ and $yx$ are directed edges in $G$, they are counted as two separate edges.)
Denote by $\mathcal T_r$ the set of all tournaments on $r$ vertices. 
Our main result is
 an analogue of the Hajnal--Szemer\'edi theorem for perfect tournament packings in digraphs.
\begin{thm}\label{mainthm} Given an integer $r \geq 3$, there exists an $n_0 \in \mathbb N$ such that the following holds.
Suppose $T \in \mathcal T_r$ and $G$ is a digraph on $n \geq n_0$ vertices where $r$ divides $n$.
If $$\delta ^0 (G)\geq (1-1/r)n$$ then $G$ contains a perfect $T$-packing.
\end{thm}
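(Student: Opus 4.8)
The plan is to recast a perfect $T$-packing as a perfect matching in an auxiliary hypergraph, use the Keevash--Mycroft theorem on almost perfect matchings to cover all but a few vertices, and finish by an absorbing argument; the directed minimum semidegree hypothesis will enter mainly through a counting/stability lemma and the Directed Graph Removal Lemma. First I would introduce the $r$-uniform hypergraph $\mathcal{H}=\mathcal{H}(G,T)$ on vertex set $V(G)$ whose edges are exactly the $r$-sets of vertices that span a (not necessarily induced) copy of $T$ in $G$, so that perfect $T$-packings in $G$ are precisely perfect matchings in $\mathcal{H}$. The key preliminary estimate is that the hypothesis makes copies of $T$ abundant and well distributed: since every tournament has a Hamiltonian path and $|N^+(x)\cap N^-(y)|\geq \delta^+(x)+\delta^-(y)-n$, one embeds a copy of $T$ greedily along such a path to see that every vertex of $G$ lies in $\Omega(n^{r-1})$ copies of $T$, and that every induced subdigraph of $G$ on at least $(1-1/r+\eps)n$ vertices contains $\Omega_\eps(n^r)$ copies of $T$. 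In particular $\mathcal{H}$ has minimum degree $\Omega(n^{r-1})$ while each pair of vertices lies in only $O(n^{r-2})$ edges, so $\mathcal{H}$ is automatically ``well spread''; and I would record the stability facts behind the tight example, namely that every independent set of $G$ has at most $n/r$ vertices and, more robustly, that any $U\subseteq V(G)$ with $|U|\geq n/r+\eps n$ induces a subdigraph of minimum semidegree at least $\eps n$.

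To obtain an almost perfect $T$-packing I would then apply the Keevash--Mycroft almost-perfect-matching theorem to $\mathcal{H}$. Besides well-spreadness this needs a fractional perfect matching of $\mathcal{H}$, equivalently a fractional perfect $T$-packing of $G$, and verifying this is where most of the work lies. By LP duality it fails only if $\mathcal{H}$ has a cheap fractional vertex cover, and the plan is to rule this out by a stability argument: one shows that a cheap cover would force a large portion of $G$ --- of order more than $(1-1/r)n$ vertices --- to span essentially no copy of $T$; the Directed Graph Removal Lemma then places that portion within $o(n^2)$ edges of a $T$-free digraph, but every $T$-free digraph has minimum semidegree bounded away from its order (the extremal $T$-free digraphs being essentially the complete $(r-1)$-partite digraph with all $2$-cycles), and combining this bound with $\delta^0(G)\geq(1-1/r)n$ caps the size of any near-$T$-free portion at roughly $(1-1/r)n$ --- too small for the cover to be cheap. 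This is exactly the regime of the extremal construction, so the bound is genuinely tight here. With the fractional matching in hand, the Keevash--Mycroft theorem yields a $T$-packing of $G$ covering all but $o(n)$ (in fact, with a little more care, a bounded number of) vertices.

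Finally I would upgrade this to a perfect $T$-packing by the absorbing method, setting aside the absorbing structure before the previous step. As divisibility rules out single-vertex absorbers, I would use $r$-set absorbers: for an $r$-set $Q\subseteq V(G)$, a set $A_Q$ disjoint from $Q$ with $r\mid |A_Q|$ such that both $G[A_Q]$ and $G[A_Q\cup Q]$ have perfect $T$-packings. The counting lemma shows that every $r$-set lies in $\Omega(n^{|A_Q|})$ absorbers of some bounded size --- this needs a short case analysis over the possible sub-tournaments induced on the vertices of $Q$ --- so a standard random-greedy selection produces a $T$-packing $\mathcal{A}$ on $o(n)$ vertices able to absorb any bounded family of disjoint $r$-sets. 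Running the previous step on $G-V(\mathcal{A})$, which still has enormous minimum semidegree, leaves a set $L$ of boundedly many uncovered vertices with $r\mid|L|$; partitioning $L$ arbitrarily into $r$-sets, absorbing each into $\mathcal{A}$, and packing the remainder of $V(\mathcal{A})$ then completes the proof. (An alternative organisation of the last step is an extremal/non-extremal dichotomy, in which the extremal case --- $G$ containing a near-independent set of size close to $n/r$ --- is handled by hand and reduces essentially to the theorem for $r-1$.)

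The main obstacle is the middle step: turning the digraph minimum semidegree hypothesis into exactly the fractional-matching input demanded by the Keevash--Mycroft theorem, which rests on combining the Directed Graph Removal Lemma with a stability argument to show that the near-independent-set configuration witnessing tightness of the bound is the only genuine obstruction to a fractional perfect $T$-packing. By comparison the counting lemma of the first step and the absorbing argument of the last are fairly routine once the semidegree condition has been exploited.
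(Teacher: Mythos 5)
Your high-level architecture — recast perfect $T$-packings as perfect matchings in an auxiliary hypergraph, apply Keevash--Mycroft to get an almost-perfect packing, finish by absorption, and use the Directed Graph Removal Lemma plus a Tur\'an-type stability statement to rule out failure of the Keevash--Mycroft hypothesis — matches the paper's plan. But there are two genuine gaps, and they are exactly the places where the real work happens.

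First, the absorbing step as you describe it does not go through. You claim that because every vertex lies in $\Omega(n^{r-1})$ copies of $T$ and every large induced subdigraph has $\Omega(n^r)$ copies, ``the counting lemma shows that every $r$-set lies in $\Omega(n^{|A_Q|})$ absorbers of some bounded size.'' This is false at the boundary of the hypothesis. Take $G_1$ to be the complete $r$-partite digraph with all parts of size $n/r$; then $\delta^0(G_1)=(1-1/r)n$ exactly, and $G_1$ has $\Theta(n^r)$ copies of $T_r$ (any transversal works), yet for $Q\subseteq V_1$ with $|Q|=r$ there is \emph{no} set $M$ such that both $G_1[M]$ and $G_1[M\cup Q]$ have perfect $T_r$-packings, because every copy of $T_r$ in $G_1$ is a transversal and adding $Q$ unbalances $M\cap V_1$. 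So abundance of copies of $T$ does not give absorbers; what one actually needs are ``connection lemmas'': for all $x,y\in V(G)$, $\Omega(n^{r-1})$ sets $X$ of size $r-1$ with both $X\cup\{x\}$ and $X\cup\{y\}$ spanning $T$. Proving these requires the non-extremal assumption (no $\gamma$-independent set of size $n/r$), a nontrivial case analysis depending on the structure and size of $T$ ($r\geq 5$, $r=4$, $r=3$ transitive, and $T=C_3$ are all handled differently in the paper), and for $T=C_3$ one cannot even connect by $(r-1)$-sets at all — the paper instead finds many $5$-sets $X$ with $X\cup\{x\}$ and $X\cup\{y\}$ spanning $2C_3$. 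Your parenthetical remark that one could run an extremal/non-extremal dichotomy is closer to the truth, but you present it as optional; in fact, the dichotomy is forced because absorbers may not exist in near-extremal digraphs, so the absorbing method \emph{must} be run under a non-extremality hypothesis and the extremal cases \emph{must} be handled separately (the paper's proof of this is Lemma~\ref{ex1}, a substantial stability argument, not ``essentially the theorem for $r-1$'').

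Second, your sketch misses an entire extremal configuration. For $T=C_3$ there are two barriers, not one: besides a near-independent set of size $n/r$, there is the digraph $Ex(n)$ (three nearly equal classes $A_1,A_2,A_3$, each a complete digraph, with all edges from $A_i$ to $A_{i+1}$ cyclically). $Ex_1(n)$ has $\delta^0=2n/3-2$ and no perfect $C_3$-packing, so being close to $Ex(n)$ is a genuine obstruction right at the threshold; yet $Ex(n)$ contains no large near-independent set, so the independent-set case does not subsume it. Both the connection lemma for $C_3$ and the almost-perfect-packing step have to exclude $\alpha$-containment of $Ex(n)$, and a separate extremal argument (the paper's Lemma~\ref{exC3}) handles the case when $G$ is close to $Ex(n)$. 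Your proposal, which treats $T\in\mathcal T_r$ uniformly and identifies the near-independent set as ``the only genuine obstruction,'' does not account for this.

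A smaller point: the version of Keevash--Mycroft that the paper actually uses is the $k$-complex degree-sequence theorem whose alternative conclusion is a ``space barrier'' ($J_k$ is $\beta$-contained in $J(S,j)_k$), which is then converted into a $\gamma$-independent set via the Removal Lemma and a Tur\'an-type proposition. Your framing via fractional perfect matchings and LP duality would need the fractional-matching variant of Keevash--Mycroft and a correct verification of well-spreadness; this can likely be made to work but it is not the same input as the paper checks, and the details are not routine.
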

Notice that the minimum semidegree condition in Theorem~\ref{mainthm} is tight. Indeed, let $n,r \in \mathbb N$ such that
$r$ divides $n$. Let $G'$ be the digraph obtained from the complete digraph on $n$ vertices by removing all those edges
lying in a given vertex set of size $n/r+1$. Then $\delta ^0 (G') =(1-1/r)n-1$ and $G'$ does not contain a perfect $T$-packing for any
$T \in \mathcal T_r$. In general, any digraph $G''$ on $n$ vertices with an independent set of size $n/r+1$ (and $\delta ^0 (G'') =(1-1/r)n-1$)
does not contain a perfect $T$-packing.

In the case when $T$ is the cyclic triangle $C_3$, Theorem~\ref{mainthm} verifies a recent conjecture of
Czygrinow, Kierstead and Molla~\cite{ckm} for large digraphs. 
Further, notice that Theorem~\ref{mainthm} is a `true generalisation' of the Hajnal--Szemer\'edi theorem in the sense that the former implies the latter for large graphs.

We remark that, by applying the same probabilistic trick used by Keevash and Sudakov in Section 7 of~\cite{keevs}, one can obtain an asymptotic version of Theorem~\ref{mainthm} from Theorem~\ref{rpart}. In~\cite{ckm} it was also shown that such an asymptotic version of
Theorem~\ref{mainthm} for $T=C_3$ follows from a result concerning perfect packings in multigraphs.

Similarly to many proofs in the area, our argument splits into `extremal' and `non-extremal' cases. When
$T\not = C_3$, the extremal case considers digraphs $G$ containing a set of vertices of size $n/r$ that spans an `almost' independent set (i.e. $G$ is `close'
to an extremal graph $G''$ as above). Interestingly,  when $T=C_3$ we have an extra extremal configuration (see Section~\ref{subex}),
and thus have two separate extremal cases.
In the non-extremal case our proof splits into two main tasks: finding an `almost' perfect $T$-packing in $G$ and finding a so-called
`absorbing set' that can be used to cover the remaining vertices with disjoint copies of $T$ (see Section~\ref{subabs1} for the
precise definition of such a set). To obtain the former we apply a result of Keevash and Mycroft~\cite{my1} concerning almost perfect matchings in hypergraphs. We also make use of the Directed Graph Removal lemma (see e.g. \cite{alon, fox}). A substantial proportion of the paper is devoted to obtaining our desired absorbing set. A more detailed overview of the proof is given in Section~\ref{overview}.

\subsection{Degree conditions forcing perfect transitive tournament packings}
Although the minimum semidegree condition in Theorem~\ref{mainthm} is `best-possible' one could replace the condition by a weaker one.
 Indeed,  for \emph{transitive} tournaments we conjecture that  the following stronger statement is true. Let $T_r$ denote the transitive tournament on $r$ vertices.

\begin{conj}\label{conj2} Let $n,r \in \mathbb N$ such that $r$ divides $n$.
 Suppose that $G$ is a digraph on $n $ vertices so that for any $x \in V(G)$,
\begin{align}\label{conmin1}
 \ d ^+ (x) \geq  ( 1- {1}/{r} ) n
\ \text{or} \ d ^- (x) \geq ( 1- {1}/{r}  ) n.
\end{align}
Then $G$ contains a perfect $T_r$-packing.
\end{conj}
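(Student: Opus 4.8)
The plan is to mimic, with one structural twist, the absorbing-method proof of Theorem~\ref{mainthm}. The twist exploits that in a copy of $T_r$ with vertices $v_1\to v_2\to\cdots\to v_r$ the source $v_1$ only uses out-edges and the sink $v_r$ only uses in-edges, so a vertex of large outdegree sits comfortably near the start of a copy and a vertex of large indegree near its end. Using \eqref{conmin1}, fix a partition $V(G)=A\sqcup B$ with $d^+(x)\ge(1-1/r)n$ for every $x\in A$ and $d^-(x)\ge(1-1/r)n$ for every $x\in B$. Since reversing every edge of $G$ turns a perfect $T_r$-packing into a perfect $T_r$-packing (as $T_r$ is isomorphic to its reverse) and interchanges $A$ and $B$, I may assume $|A|\ge|B|$. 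The admissible ``building blocks'' are then the copies of $T_r$ whose first $j$ vertices $v_1,\dots,v_j$ lie in $A$ and whose remaining $r-j$ vertices lie in $B$, for some $0\le j\le r$; the elementary facts driving everything are that each $u\in A$ fails to send an edge to fewer than $n/r$ vertices of $G$ (in particular to fewer than $n/r$ vertices of $B$), that each $w\in B$ fails to receive an edge from fewer than $n/r$ vertices, and that $G[A]$ and $G[B]$ inherit corresponding lower bounds on their minimum out- and indegree. A perfect $T_r$-packing is exactly a choice of $n/r$ admissible blocks whose $A$-parts partition $A$ and whose $B$-parts partition $B$, and the multiset of block types is constrained only by $\sum_c j_c=|A|$, which is always realisable.

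Assuming $G$ is not close to an extremal configuration, I would run the standard two steps. First, find a $T_r$-packing covering all but $o(n)$ vertices: encode the admissible blocks as the edges of an auxiliary $r$-uniform hypergraph on $V(G)$ and apply the Keevash--Mycroft almost-perfect-matching theorem~\cite{my1}; in practice this reduces to producing a perfect fractional $T_r$-packing of $G$ respecting the $A/B$ split (total weight $1$ at each vertex), which a standard averaging / LP-duality argument extracts from the degree condition --- the only potential obstruction being a near-independent set of size exceeding $n/r$, which does not arise once $G$ is non-extremal. Second, build an absorbing set $M\subseteq V(G)$ with $|M|=o(n)$: show each $v\in V(G)$ lies in $\Omega(n^{m})$ ``$v$-absorbers'', that is $m$-element sets $S$ (for a fixed integer $m$) such that both $G[S]$ and $G[S\cup\{v\}]$ have perfect $T_r$-packings, and then choose $M$ at random so that with positive probability $G[M\cup W]$ has a perfect $T_r$-packing for every sufficiently small $W$ of the appropriate size. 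The Directed Graph Removal Lemma~\cite{alon,fox} certifies, away from near-extremal $G$, that every vertex lies in many copies of $T_r$, which is precisely what the absorbers need. Running the almost-perfect packing inside $G-M$ and absorbing the remaining $o(n)$ vertices into $M$ then closes the non-extremal case.

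It remains to treat $G$ close to an extremal configuration --- essentially, $G$ has a set of about $n/r$ vertices spanning almost no edges (the construction witnessing tightness of \eqref{conmin1}), so that by \eqref{conmin1} almost every vertex of that set either sends edges to nearly all the rest of $G$ or receives edges from nearly all of it. Here one pins down the structure precisely and assembles the packing by a direct argument tailored to it, mirroring the extremal analysis behind Theorem~\ref{mainthm} and the cyclic-triangle case treated in~\cite{ckm}. I expect this extremal analysis to be the main obstacle, compounded by the loss of balance between $A$ and $B$: unlike in Theorem~\ref{mainthm} one cannot assume the blocks are balanced, so one must separately control the sub-packings inside $G[A]$ and inside $G[B]$, each governed by only a one-sided degree bound --- for instance $G[A]$ has minimum outdegree only about $|A|-n/r$, which can fall well short of $(1-1/r)|A|$ when $|B|$ is large --- and the extremal / non-extremal dividing line must be drawn so that degenerate regimes, such as $|B|$ much smaller than $n/r$ (which forces almost every block to lie essentially inside $A$), fall on the extremal side. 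For general $r$ this is why only the asymptotic form of Conjecture~\ref{conj2} seems reachable by this route, the exact statement requiring a complete case analysis of the (more numerous) extremal configurations.
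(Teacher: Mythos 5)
Your proposal correctly identifies that the absorbing method together with the Keevash--Mycroft matching theorem reduces the problem to an extremal case, and you are honest that you cannot complete that case --- but this means the proposal does not actually prove the statement. This is consistent with the fact that the statement is a \emph{conjecture} in the paper: the paper itself proves it in full only for $r=3$, and for general $r$ proves only the asymptotic version (Theorem~\ref{2nonex}) and a reduction to the extremal case (Theorem~\ref{2nonex1}), both of which track essentially what you outline. Two remarks on the overlap and the gaps. First, your $A/B$ partition with ``admissible blocks'' is in spirit the paper's notion of a \emph{consistent} copy of $T_{r'}$ (Section~6.2), but you are missing the cleanup step the paper performs at the outset of Theorem~\ref{2nonex}: one first deletes every edge $xy$ with $d^+(x)<(1-1/r)n$ and $d^-(y)<(1-1/r)n$, so that afterwards \emph{every} transitive tournament in $G$ is automatically consistent. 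Without this deletion your claim that one may work only with admissible blocks is unjustified, since a vertex of small outdegree could feed into one of small indegree and force non-admissible copies into the packing. Second, and more importantly for the statement as written, you do not recover the case the paper actually settles: for $r=3$ the paper (Section~4) gives a short self-contained proof by applying Hajnal--Szemer\'edi to the underlying undirected graph to get a perfect $\{T_3,C_3\}$-packing, then a minimality/swap argument converting cyclic triangles into transitive ones; this is an entirely different and non-asymptotic route that avoids absorbing, removal lemma, and extremal case analysis altogether. So, relative to the paper, your proposal mirrors the reduction machinery for general $r$ but neither closes the extremal case (which the paper also leaves open) nor exhibits the elementary argument that settles $r=3$ exactly.
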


 Conjecture~\ref{conj2} would imply the following very recent result of
 Czygrinow, DeBiasio,  Kierstead and  Molla~\cite{cdkm}.
\begin{thm}[Czygrinow, DeBiasio,  Kierstead and  Molla~\cite{cdkm}]\label{cdthm}
Let $n,r \in \mathbb N$ such that $r$ divides $n$. Then every digraph $G$ on $n$
vertices with $$\delta ^+ (G) \geq (1-1/r)n$$  contains a perfect $T_r$-packing.
\end{thm}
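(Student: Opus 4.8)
The plan is to mimic the strategy used for Theorem~\ref{mainthm}, taking care to use only the out-degree hypothesis; I would prove the statement for all sufficiently large $n$ (the original argument of~\cite{cdkm} handles all $n$). Write $T_r$ for the transitive tournament on an ordered vertex set $v_1<\dots<v_r$ with all forward pairs $v_iv_j$ ($i<j$) oriented $v_i\to v_j$, and let $\mathcal H$ be the $r$-uniform hypergraph on $V(G)$ whose edges are precisely the $r$-sets spanning a copy of $T_r$ in $G$; a perfect $T_r$-packing of $G$ is exactly a perfect matching of $\mathcal H$. Fix a small $\eta>0$ and call $G$ \emph{extremal} if it contains a set $S$ with $|S|=n/r$ spanning at most $\eta n^2$ edges, treating the extremal and non-extremal cases separately. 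The crucial observation, and the reason that controlling out-degrees alone is enough, is a greedy supersaturation count: for any $v\in V(G)$ one grows copies of $T_r$ with $v$ as source by successively choosing $v_{k+1}\in N^+(v_1)\cap\dots\cap N^+(v_k)$, a set of size at least $n-kn/r$ for $1\le k\le r-1$; since the source is always fixed first, in-degrees never enter, and we obtain that every vertex lies in at least $c_rn^{r-1}$ edges of $\mathcal H$ for some $c_r=c_r(r)>0$.

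\textbf{Non-extremal case.} Using this supersaturation bound together with the Directed Graph Removal lemma~\cite{alon,fox} — the latter to convert the local abundance of copies of $T_r$ into the structural fact that, since $G$ is non-extremal, there is no obstruction to a balanced fractional perfect matching of $\mathcal H$ — one checks that $\mathcal H$ admits a fractional perfect matching supported in a suitably well-spread way. The hypergraph matching theorem of Keevash and Mycroft~\cite{my1} then produces a $T_r$-packing of $G$ covering all but $o(n)$ vertices. To upgrade this to a perfect packing I would use the absorbing method: from the supersaturation count one builds, by a standard random-selection argument, an \emph{absorbing set} $A\subseteq V(G)$ with $|A|\le\eta n$ such that $G[A]$ has a perfect $T_r$-packing and $G[A\cup W]$ has a perfect $T_r$-packing for every $W\subseteq V(G)\setminus A$ with $r\mid|W|$ and $|W|$ at most some $\eta' n$; applying the almost-perfect packing step to $G-A$ (whose out-degree condition is only slightly weakened) and absorbing the $o(n)$ leftover vertices with $A$ finishes this case.

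\textbf{Extremal case.} If $G$ has a set $S$ with $|S|=n/r$ spanning at most $\eta n^2$ edges then, after a short cleaning step, $S$ is essentially independent and $|V(G)\setminus S|=(1-1/r)n$ (and $\delta^+(G)\ge(1-1/r)n$ already rules out an independent set of size $n/r+1$), so in any perfect packing each copy of $T_r$ meets $S$ in exactly one vertex. Since $\delta^+(G)\ge(1-1/r)n$, every $v\in S$ sends edges to all but at most $n/r-1$ of the $(1-1/r)n$ vertices of $V(G)\setminus S$, while $G[V(G)\setminus S]$ has out-degree at least $(1-1/(r-1))|V(G)\setminus S|$, so by induction on $r$ it has a perfect $T_{r-1}$-packing. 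One then completes the proof by making each $v\in S$ the source of a distinct copy of $T_r$ obtained by attaching it to one of these copies of $T_{r-1}$ with all edges pointing from $v$ into it — a bipartite matching problem solved by choosing the $T_{r-1}$-packing of $V(G)\setminus S$ appropriately (rather than arbitrarily) and performing a bounded number of local exchanges to handle the few edges lying inside $S$.

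\textbf{Main obstacle.} I expect the hardest part to be the construction of the absorbing set, and in particular of the local absorbing gadgets. The naive gadget — a grid of vertices all of whose rows and columns span $T_r$ — fails once $r$ is not tiny, since a vertex occupying a late position in both its row and its column would need more than $2r\cdot(n/r)>n$ out-neighbours. A sparser gadget, in which every vertex plays a late role in at most one copy of $T_r$ (for instance one assembled from vertex-disjoint ``chains'' of transitive tournaments), is required; proving that such gadgets occur in the abundance the random selection needs, again using only $\delta^+(G)\ge(1-1/r)n$, and verifying that the extremal threshold matches $(1-1/r)n$ exactly, are where the real work lies.
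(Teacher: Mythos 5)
The paper does not prove Theorem~\ref{cdthm}; it imports it from~\cite{cdkm}, observing only that Conjecture~\ref{conj2} would subsume it. The closest the paper comes on its own is Theorem~\ref{2nonex1}: a digraph with $\delta^+(G)\ge(1-1/r)n$ satisfies both hypotheses of that theorem (the second one vacuously, since no vertex has $d^+(x)<(1-1/r)n$), so for large $n$ the paper's machinery already reduces Theorem~\ref{cdthm} to the single extremal case in which $G$ contains a $\gamma$-independent set of size $n/r$ --- and the paper deliberately leaves that case open, settling it only for $r=3$ (Section~\ref{easy}). So you are not competing against a proof in the paper; you are attempting to fill a gap the paper explicitly leaves, and only in the regime $n\ge n_0$, whereas~\cite{cdkm} proves the statement for all $n$ divisible by $r$.

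In the non-extremal part your plan is essentially sound, and the worry you single out as the ``main obstacle'' is in fact a non-issue: in Lemma~\ref{con+-} the connecting $(r-1)$-set $X$ is chosen inside $N^+(x)\cap N^+(y)$ and made to span a $T_{r-1}$, so $x$ and $y$ always occupy the \emph{source} position in the two copies of $T_r$; no vertex is ever forced into a ``late'' role, the outdegree condition is all that is used, and the absorbing construction of Section~\ref{secnon} then transfers verbatim under $\delta^+(G)\ge(1-1/r-\eps)n$. The genuine gap is in your extremal case. You reduce it to a bipartite matching between the vertices of $S$ and the cliques of a $T_{r-1}$-packing of $V(G)\setminus S$ and assert this is ``solved by choosing the packing appropriately plus a bounded number of local exchanges''; that assertion carries nearly all of the difficulty. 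With $\delta^+(G)\ge(1-1/r)n$, a vertex $v\in S$ can fail to send an edge to as many as $n/r-1$ of the $(1-1/r)n$ vertices outside $S$, spread one per clique, so $v$ may dominate only one of the $n/r$ cliques; for a badly chosen packing a set $S'\subseteq S$ can have collective neighbourhood of size $|S'|-1$ and Hall's condition fails. Proving that \emph{some} packing always makes this matching succeed is a structural claim of roughly the same depth as the entire extremal case of Theorem~\ref{mainthm} (compare Lemma~\ref{ex1}, which occupies all of Section~\ref{secex1}), and cannot be dispatched by gesture. You also silently assume the cleaning step turns an $\eta$-independent $S$ into a genuinely independent set of size exactly $n/r$; deleting endpoints of the internal edges changes $|S|$, after which the arithmetic $|V(G)\setminus S|=(1-1/r)n$ and $\delta^+(G[V(G)\setminus S])\ge(1-1/(r-1))|V(G)\setminus S|$, on which your induction relies, holds only approximately. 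These are exactly the points where a careful argument (along the lines of Section~\ref{secex1}, or the original argument of~\cite{cdkm}) would have to take over.
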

In Section~\ref{easy} we give a short proof of Conjecture~\ref{conj2} in the case when $r=3$. We also prove 
the following asymptotic version of Conjecture~\ref{conj2}.
\begin{thm}\label{2nonex}
Let $\eta >0$ and $r \geq 3$. Then there exists an $n_0 \in \mathbb N$ such that the following holds. Suppose that $G$ is a digraph on $n\geq n_0$ vertices 
where $r$ divides $n$ and that for any $x \in V(G)$,
\begin{align*}
d^+ (x) \geq \left(1-{1}/{r} +\eta \right) n \ \text{ or } \ d^- (x) \geq \left(1-{1}/{r} +\eta \right) n.
\end{align*}
Then $G$ contains a perfect $T_r$-packing.
\end{thm}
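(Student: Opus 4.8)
The plan is to deduce Theorem~\ref{2nonex} from the $r$-partite Hajnal--Szemer\'edi theorem (Theorem~\ref{rpart}) via a random partition argument, in the spirit of the Keevash--Sudakov probabilistic trick mentioned in the introduction. First I would observe that a transitive tournament $T_r$ on vertices ordered $v_1 < v_2 < \dots < v_r$ has the property that each edge points from a lower-indexed vertex to a higher-indexed one; so to embed a copy of $T_r$ it suffices to find vertices $x_1, \dots, x_r$ with $x_i x_j \in E(G)$ whenever $i < j$. The key structural input is that every vertex of $G$ is either \emph{out-heavy} (has $d^+(x) \geq (1 - 1/r + \eta)n$) or \emph{in-heavy} (has $d^-(x) \geq (1 - 1/r + \eta)n$); fix such a labelling, breaking ties arbitrarily, and let $A$ be the set of out-heavy vertices and $B = V(G) \setminus A$ the in-heavy ones.

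Next I would randomly and uniformly partition $V(G)$ into $r$ classes $V_1, \dots, V_r$ of equal size $n/r$ (this is where $r \mid n$ is used; a standard concentration argument via, e.g., McDiarmid's inequality shows that with positive probability all the degree conditions below hold simultaneously). I then want to build an auxiliary $r$-partite \emph{graph} $\Gamma$ on classes $V_1, \dots, V_r$ in which a perfect $K_r$-packing corresponds to a perfect $T_r$-packing of $G$. The natural definition: for $x \in V_i$ and $y \in V_j$ with $i < j$, put an edge of $\Gamma$ between $x$ and $y$ precisely when $x y \in E(G)$ (i.e. the edge of $G$ points "forward"). A copy of $K_r$ in $\Gamma$ with one vertex $x_i$ in each $V_i$ then satisfies $x_i x_j \in E(G)$ for all $i < j$, hence spans a transitive tournament $T_r$ in $G$ with the prescribed order; a perfect $K_r$-packing of $\Gamma$ thus yields the desired perfect $T_r$-packing of $G$.

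It remains to verify $\overline{\delta}(\Gamma) \geq (1 - 1/r)(n/r) + 1$, so that Theorem~\ref{rpart} applies with $n$ replaced by $n/r$. Consider $x \in V_i$ and a target class $V_j$. If $j > i$ we need many $y \in V_j$ with $x y \in E(G)$, i.e. we need $x$ to have large \emph{outdegree} into $V_j$; if $j < i$ we need large \emph{indegree} into $V_j$. The problem is that a single vertex $x$ is only guaranteed to be heavy in \emph{one} of the two directions, so this naive partition will not work for both the "forward" and "backward" target classes at once. The fix is to make the roles of the classes depend on the heaviness type: I would instead set up the bipartition so that out-heavy vertices are placed only in "early" classes and in-heavy vertices only in "late" classes — more precisely, partition $A$ randomly into classes $V_1, \dots, V_k$ and $B$ randomly into $V_{k+1}, \dots, V_r$ for a suitable split, OR, more robustly, for each vertex $x$ define its "forward neighbourhood" to be $N^+(x)$ if $x \in A$ and $N^-(x)$ reversed appropriately if $x \in B$, and build $\Gamma$ as a packing problem for a mixed pattern. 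Working this out carefully is the main obstacle: one must check that whichever direction $x$ is heavy in, the edge $x y$ of $G$ has the correct orientation to contribute an edge of $\Gamma$ to \emph{every} other class, and that after a random equipartition each vertex sees at least $(1 - 1/r + \eta/2)(n/r) > (1-1/r)(n/r)+1$ of the relevant neighbours in each class with high probability. The extra slack $\eta$ in the hypothesis is exactly what absorbs the $o(n)$ fluctuations in the random partition, so once the orientation bookkeeping is set up correctly the concentration step is routine and Theorem~\ref{rpart} finishes the argument.
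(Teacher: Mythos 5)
Your proposal takes a genuinely different route from the paper, but it contains a gap that the random-partition reduction cannot close. The paper proves Theorem~\ref{2nonex} by the absorbing method: Theorem~\ref{nonex2} produces a small absorbing set (via the connection lemma, Lemma~\ref{con+-}, whose proof goes through Proposition~\ref{turanstab+-}), and Theorem~\ref{almostthm+-} covers almost all of the remaining vertices by a reduction to Keevash--Mycroft's hypergraph matching theorem; the $\eta$ slack is used only to rule out the extremal $\gamma$-independent-set case, so the non-extremal machinery of Theorem~\ref{mainthm} applies directly. The paper pointedly remarks that the Keevash--Sudakov random-partition trick gives an asymptotic version of Theorem~\ref{mainthm} (where \emph{both} $d^+$ and $d^-$ are large for every vertex), and makes no such claim for the one-sided hypothesis of Theorem~\ref{2nonex} -- precisely because the trick does not transfer.

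Here is the concrete obstruction. In your auxiliary $r$-partite graph $\Gamma$, a vertex $x$ placed in class $V_i$ must satisfy $d^+_G(x,V_j)\geq (1-1/r)(n/r)+1$ for all $j>i$ \emph{and} $d^-_G(x,V_j)\geq (1-1/r)(n/r)+1$ for all $j<i$. After a random equipartition, the latter forces $d^-(x)\gtrsim (1-1/r)n$ whenever $i\geq 2$, and symmetrically $d^+(x)\gtrsim (1-1/r)n$ whenever $i\leq r-1$. But the hypothesis only promises \emph{one} of the two for each vertex; an out-heavy vertex may have indegree far below $(1-1/r)n$, and such a vertex can only safely be placed in $V_1$. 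Your proposed fix -- out-heavy vertices into early classes, in-heavy into late -- does not repair this: an out-heavy vertex in $V_2$ still needs $d^-_G(x,V_1)$ large, and this is a statement about the indegree of the particular vertex $x$, not about the outdegrees of $V_1$, so nothing about $V_1\subseteq A$ being out-heavy helps. A quick double count shows the set $Z$ of vertices with $d^-(x)<(1-1/r)n$ can have size up to roughly $n/(1+\eta)$, vastly exceeding $|V_1|=n/r$ for $r\geq 3$, so the "confine them to $V_1$'' strategy is structurally impossible. Any reduction to Theorem~\ref{rpart} would need additional input controlling how the one-sided vertices interact, which the hypothesis does not supply; this is why the paper routes through the absorbing method instead.
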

We give a unified approach to proving Theorems~\ref{mainthm} and~\ref{2nonex}, though the proof of the former
is substantially more involved. 

For `most' tournaments $T$, there does not  exist a `non-trivial' minimum outdegree condition which forces a digraph
$G$ to contain a perfect $T$-packing. Indeed, let $T \in \mathcal T_r$ such that every vertex in $T$ has an inneighbour. Let $n \in \mathbb N$ such that $r$ divides $n$. Obtain the digraph $G$ from the complete digraph on $n-1$ vertices by adding a vertex $x$ that
sends out all possible edges to the other vertices (but receives none). Then $\delta ^+ (G)=n-2$ but $G$ does not contain a 
perfect $T$-packing since $x$ does not lie in a copy of $T$.

So certainly Conjecture~\ref{conj2} and Theorem~\ref{cdthm} cannot be generalised to arbitrary tournaments $T$.
It would be interesting to establish whether the degree conditions in Conjecture~\ref{conj2} and Theorem~\ref{cdthm}  force a perfect $T$-packing
for some \emph{non-transitive} tournament $T$ on $r$ vertices.

\COMMENT{e.g. two disjoint complete digraphs joined together by balanced bipartite tournament has no perfect $k_3 ^-$-packing if
choose class sizes to be different sizes}

In the next section we give an outline of the proofs as well as  details about the organisation of the paper.

\section{Overview of the proofs and  organisation of the paper}\label{overview}
\subsection{Outline of the proof of Theorem~\ref{mainthm}}
Suppose that $G$ and $T \in \mathcal T_r$ are as in Theorem~\ref{mainthm}. Further, suppose that there is a `small' set $M \subseteq V(G)$ with the property that both $G[M]$ and $G[M\cup Q]$ contain
perfect $T$-packings for \emph{any} `very small' set $Q \subseteq V(G)$ where $|Q|\in r \mathbb N$. Then notice that, to find a perfect $T$-packing in $G$, it suffices to find an `almost' perfect $T$-packing
in $G':=G\setminus M$. Indeed, suppose that $G'$ contains a $T$-packing $\mathcal M_1$ covering all but a very small set of vertices $Q$. Then by definition of $M$, $G[M\cup Q]$  contains a perfect $T$-packing $\mathcal M_2$. Thus, $\mathcal M_1 \cup \mathcal M_2$ is a perfect $T$-packing in $G$, as desired. 

Roughly speaking, we refer to such a set $M$ as an `absorbing set' (see Section~\ref{subabs1} for the precise definition of such a set). The `absorbing method' was first used in~\cite{rrs2} and has subsequently
been  applied to numerous embedding problems in extremal graph theory.

In general, a digraph $G$ as in Theorem~\ref{mainthm} may not contain an absorbing set. For example, consider the complete $3$-partite digraph $G_1$ with vertex classes $V_1,V_2,V_3$ of size $n/3$. (So $G_1$ contains all possible edges with endpoints in different vertex classes.) Then $G_1$ satisfies the hypothesis of Theorem~\ref{mainthm} in the case when $T=T_3$ and $r=3$. However, if $Q \subseteq V_1$ such that $|Q|=3$
then it is easy to see that there does not exist a set $M \subseteq V(G_1)$ such that both $G_1[M]$ and $G_1[M \cup Q]$ contain perfect $T_3$-packings.
Notice though that $G_1$ is `close to extremal' (i.e. $G_1$ contains an independent set of size $n/3$). 

It turns out that being `close' to an extremal example is the only barrier preventing our digraph $G$ from containing an absorbing set $M$. Indeed, in the case when $T \not =C_3$ we show that if $G$ does not
contain an `almost' independent set of size $n/r$ then $G$ contains our desired set $M$. As mentioned in Section~\ref{sec1}, when $T=C_3$ we have an extra extremal configuration $Ex(n)$ (see Section~\ref{subex}). In this case we show that if $G$ is far from $Ex(n)$ and does not contain an `almost' independent set of size $n/3$ then $G$ contains our desired set $M$ (see Theorem~\ref{nonex1}).

\noindent{\bf{Constructing the absorbing set in the non-extremal case.}}
The crucial idea in proving that a non-extremal digraph $G$ contains an absorbing set $M$ is to first show that $G$ has many `connecting structures' of a certain type. For example, to find our desired  absorbing set it suffices to show that, for any $x,y \in V(G)$, there are `many' $(r-1)$-sets 
$X \subseteq V(G)$ so that both $X \cup \{x\}$ and $X\cup \{y\}$ span copies of $T$ in $G$.
In Section~\ref{consec} we prove a number of so-called connection lemmas that guarantee such connecting structures. This turns out to be quite a subtle process as we prove different connection lemmas
depending on the structure and size of $T$. In particular, we need to deal with the case when $T=C_3$ separately. (This stems from the fact
that we now have two extremal cases. See Section~\ref{consec} for more details.) In Section~\ref{secnon} we use the connection lemmas to construct the
absorbing set $M$.

\noindent{\bf{Covering the remaining vertices of $G$ in the non-extremal case.}}
As mentioned earlier, once we have constructed an absorbing set $M$ in a non-extremal digraph $G$, it suffices to find an `almost' perfect $T$-packing in $G'=G \setminus M$. For this, we translate the problem into  one about almost perfect matchings in hypergraphs. Indeed, from $G'$ we construct a hypergraph $J$ on $V(G')$ where, for any $1 \leq i \leq r$, an $i$-tuple $Y \subseteq V(G')$
forms an edge in $J$ precisely when $Y$ spans a subtournament of $T$ of size $i$ in $G'$. So one may think of $J$ as consisting of `layers' $J_1, \dots , J_r$ where $J_i$ contains the edges of size $i$.
For example, if $T=T_3$, then the edge set of $J_1$ is $V(G')$, the edge set of $J_2$ consists of all pairs $\{x,y\}$ where $xy \in E(G')$ or $yx \in E(G')$
and the edge set of $J_3$ consists of all triples $\{x,y,z\}$ that span a copy of $T_3$ in $G'$. 
$J$ is an example of a so-called $r$-complex (see Section~\ref{sec:almost} for the precise definition).

Vitally, $J$ has the property that a matching in $J_r$ corresponds to a $T$-packing in $G'$. We thus apply a result of Keevash and Mycroft~\cite{my1} on almost perfect matchings in $r$-complexes. (In order to apply this result we again use that $G$ is non-extremal.)
This ensures an almost perfect matching in $J_r$ and thus an almost perfect $T$-packing in $G '$, as desired.

\noindent{\bf{The extremal cases.}}
Finally, we deal with the case when $G$ is close to an extremal example. If $T=C_3$ and $G$ is close to $Ex(n)$ then a relatively short argument shows that $G$ must contain a perfect $C_3$-packing (see Lemma~\ref{exC3}). On the other hand, the general extremal case when $G$ contains an almost independent set of size $n/r$ is more involved (see Lemma~\ref{ex1}). (Note that the class of digraphs $G$ on $n$ vertices
with an almost independent set of size $n/r$ and $\delta ^0 (G) \geq (1-1/r)n$ is wide.)
We draw on ideas from~\cite{kss} to tackle this case.

The extremal cases are the only parts of the proof where we use the full force of the minimum semidegree condition on $G$. Indeed, the argument in the non-extremal case holds even if we relax the condition
to $\delta ^0 (G) \geq (1-1/r-o(1))n$.
\subsection{The proof of Theorem~\ref{2nonex}}
The proof of Theorem~\ref{2nonex} follows the same general approach as that of Theorem~\ref{mainthm} in the \emph{non-extremal} case: Again our two main tasks are to (i) find an absorbing set and (ii) cover  almost all of the remaining vertices with a $T_r$-packing. Thus, where possible, we present the tools for both proofs in a unified way. Indeed, many of our auxiliary results are applied in both proofs.
\subsection{Organisation of the paper}
In the next section we formally introduce the notion of an absorbing set and define the extremal digraph $Ex(n)$. We also introduce other notation and definitions. We prove Conjecture~\ref{conj2} in the case of
transitive triangles in Section~\ref{easy}. In Section~\ref{pfsec} we state the main auxiliary results that we prove in the paper and derive Theorems~\ref{mainthm} and~\ref{2nonex} from them.
In Section~\ref{tusec} we prove Tur\'an-type results for digraphs. These results will be applied both when constructing our absorbing sets and when finding an almost perfect $T$-packing in the non-extremal case.
Section~\ref{sec:almost} deals with this latter task. We state and prove the connection lemmas for  Theorems~\ref{mainthm} and~\ref{2nonex} in Section~\ref{consec}. These are then used to construct our absorbing sets in Section~\ref{secnon}. After giving a number of useful results in Section~\ref{secex} we tackle the extremal cases of Theorem~\ref{mainthm} in Sections~\ref{secex1} and~\ref{secexC3}.

\section{Notation and preliminaries}\label{notation}
\subsection{Definitions and notation}
Given two vertices~$x$ and~$y$ of a digraph~$G$, we write~$xy$ for the edge directed from~$x$ to~$y$.
We write $V(G)$ for the vertex set of $G$, $E(G)$ for the edge set of $G$ and define
$|G|:=|V(G)|$ and $e(G):= |E(G)|$. 
We denote by~$N^+ _G (x)$ and~$N^- _G (x)$ the out- and the inneighbourhood of~$x $
and by $d^+_G(x)$ and $d^-_G(x)$ its out- and indegree. 
We will write $N^+ (x)$ for example, if this is unambiguous. 
For a vertex $x \in V(G)$ and a set $Y \subseteq V(G)$ we write $d^+ _G (x,Y)$ to denote the number of edges in $G$ with startpoint $x$ and
endpoint in $Y$. We define $d^- _G (x,Y)$ analogously.
The \emph{minimum semidegree} $\delta ^0 (G)$ of $G$ is the minimum of its minimum outdegree $\delta ^+ (G)$ and its
minimum indegree $\delta ^- (G)$. 
Let $\delta (G)$ denote the minimum degree of $G$, that is, the minimum number of edges incident to a vertex in $G$. (Note that if both $xy$ and $yx$ are directed edges in $G$, they are counted as two separate edges.)

Given a subset $X \subseteq V(G)$, we write $G[X]$ for the subdigraph of $G$ induced by $X$. 
We write $G\setminus X$ for the subdigraph of $G$ induced by $V(G)\setminus X$.
For $x_1, \dots , x_m \in V(G)$
we define $G[x_1, \dots , x_m]:=G[\{x_1, \dots , x_m\}]$. 

Given a set $X \subseteq V(G)$ and a digraph $H$ on $|X|$ vertices we say that $X$ \emph{spans a copy of $H$ 
in $G$} if $G[X]$ contains a copy of $H$. In particular, this does not necessarily mean that
$X$ induces a copy of $H$ in $G$. For disjoint $X,Y \subseteq V(G)$ we let $G[X,Y]$ denote the digraph with vertex set $X\cup Y$ whose edge set consists of all those edges
$xy \in E(G)$ with $x \in X$ and $y \in Y$.
If $G$ and $H$ are digraphs, we write $G \cup H$ for the digraph whose vertex set is
$V(G) \cup V(H)$ and whose edge set is $E(G) \cup E(H)$. If $G$ and $H$ have the same vertex set $V$ then let $G-H$ denote the digraph with vertex set $V$ and edge set $E(G) \setminus E(H)$.

Given digraphs $G$ and $H$, we say that $G$ is \emph{$H$-free} if $G$ does not contain $H$ as a subdigraph.
Let $G$ be a (di)graph on $n$ vertices and let $\gamma >0$. We say that a set $S \subseteq V(G)$ is \emph{$\gamma$-independent} if 
$G[S]$ contains at most $\gamma n^2$ edges. 
Given two digraphs $G$ and $H$ on $n$ vertices we say that $G$ \emph{$\gamma$-contains $H$} if, after adding at most $\gamma n^2$ edges
to $G$, the resulting digraph contains a copy of $H$. More precisely, $G$ {$\gamma$-contains $H$} if there is an isomorphic copy
$G'$ of $G$ such that $V(G')=V(H)$ and $|E(H)\setminus E(G')|\leq \gamma n^2.$

For a (di)graph $G$ and disjoint $A,B \subseteq V(G)$, we write $e_G (A,B)$ for the number of edges in $G$ with one endpoint in $A$ and the other in $B$. (So $e_G (A,B)=e_G (B,A)$.)
Given a (di)graph $T$, 
let $2T$ denote the disjoint union of two copies of $T$.

Recall that $T_r$ denotes the transitive tournament of $r$ vertices.
Given $1 \leq i \leq r$, we say a vertex $x \in V(T_r)$ is the \emph{$i$th vertex of $T_r$}
if $x$ has indegree $i-1$ and outdegree $r-i$ in $T_r$.
Given a set $X$  and $r \in \mathbb N$ we denote by $\binom{X}{r}$ the set of all $r$-subsets of $X$.

Throughout the paper, we write $0<\alpha \ll \beta \ll \gamma$ to mean that we can choose the constants
$\alpha, \beta, \gamma$ from right to left. More
precisely, there are increasing functions $f$ and $g$ such that, given
$\gamma$, whenever we choose some $\beta \leq f(\gamma)$ and $\alpha \leq g(\beta)$, all
calculations needed in our proof are valid. 
Hierarchies of other lengths are defined in the obvious way.

\subsection{The extremal digraph $Ex(n)$}\label{subex}
Suppose that $n \geq 3 $ and  $c$  are non-negative integers. Define $a_1,a_2, a_3 \in \mathbb N$ such that 
$\lfloor n/3 \rfloor \leq a_1\leq a_2 \leq a_3 \leq \lceil n/3 \rceil$ where $a_1+a_2+a_3=n$.
Let $A_1,A_2$ and $A_3$ be disjoint vertex sets of size $a_1-c, a_2 +c$ and $a_3$ respectively.
Let $Ex_c (n)$ denote the digraph with vertex set $A_1 \cup A_2 \cup A_3$ and whose edge set is defined as follows:
\begin{itemize}
\item $A_i$ induces a complete digraph in $Ex_c(n)$ (for all $1\leq i \leq 3$);
\item If $x \in A_i$ and $y \in A_{i+1}$ then $xy \in E(Ex_c(n))$ (for all $1\leq i \leq 3$, where indices are taken mod $3$).
\end{itemize}
Define $Ex(n):=Ex_0 (n)$. (See Figure~1.) We call $A_1,A_2$ and $A_3$ the \emph{vertex classes of $Ex(n)$}.
\begin{figure}[htb!]
\begin{center}\footnotesize
\psfrag{1}[][]{ $A_1$}
\psfrag{2}[][]{$A_2$}
\psfrag{3}[][]{ $A_3$}
\includegraphics[width=0.35\columnwidth]{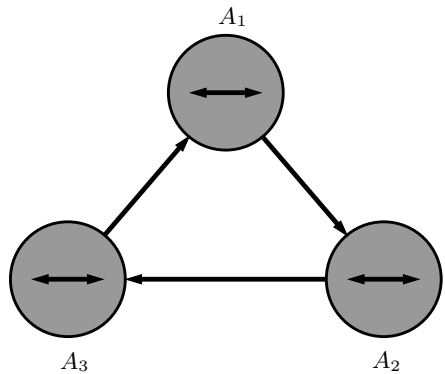}  
\caption{The extremal digraph $Ex(n)$}
\end{center}
\end{figure}

Suppose that $n$ is divisible by $3$. Note that $\delta ^0 (Ex_1 (n))=2n/3-2$ but $Ex_1(n)$ does not contain a perfect $C_3$-packing.
Thus, in the proof of Theorem~\ref{mainthm} for $T=C_3$ we have two extremal cases to consider: when $G$ contains an `almost' independent
set of size $n/3$ and when $G$ `almost' contains $Ex(n)$.

\subsection{Absorbing sets}\label{subabs1}
Let $T \in \mathcal T_r$. Given a digraph $G$, a set $S \subseteq V(G)$ is called a \emph{$T$-absorbing set for $Q \subseteq V(G)$}, if both
$G[S]$ and $G[S\cup Q]$ contain perfect $T$-packings. In this case we say that \emph{$Q$ is $T$-absorbed by $S$}. Sometimes we will simply refer
to a set $S \subseteq V(G)$ as a $T$-absorbing set if \emph{there exists} a set $Q \subseteq V(G)$ that is $T$-absorbed by $S$.

When constructing our absorbing sets in Section~\ref{secnon}
we will use the following Chernoff bound for binomial
distributions (see e.g.~\cite[Corollary 2.3]{Janson&Luczak&Rucinski00}).
Recall that the binomial random variable with parameters $(n,p)$ is the sum
of $n$ independent Bernoulli variables, each taking value $1$ with probability $p$
or $0$ with probability $1-p$.

\begin{prop}\label{chernoff}
Suppose that $X$ has binomial distribution and $0<a<3/2$. Then
$\mathbb{P}(|X - \mathbb{E}X| \ge a\mathbb{E}X) \le 2 e^{-\frac{a^2}{3}\mathbb{E}X}$.
\end{prop}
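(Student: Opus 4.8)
The statement to prove is Proposition~\ref{chernoff}, the standard Chernoff bound for binomial distributions, which the paper quotes with a citation. Since the paper explicitly cites~\cite[Corollary 2.3]{Janson&Luczak&Rucinski00}, the expected ``proof'' here is really a reference, but I will sketch the self-contained argument one would give.

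The plan is to prove the two-sided tail bound $\mathbb{P}(|X-\mathbb{E}X|\ge a\mathbb{E}X)\le 2e^{-a^2\mathbb{E}X/3}$ for $0<a<3/2$ by establishing the upper-tail and lower-tail estimates separately and then applying a union bound, so that the factor $2$ appears naturally. Write $X=\sum_{i=1}^n X_i$ as a sum of independent Bernoulli variables with $\mathbb{P}(X_i=1)=p_i$, and set $\mu:=\mathbb{E}X=\sum_i p_i$. The workhorse is the exponential moment (Bernstein/Chernoff) method: for any $t>0$ one has $\mathbb{E}e^{tX_i}=1+p_i(e^t-1)\le \exp(p_i(e^t-1))$, hence by independence $\mathbb{E}e^{tX}\le \exp(\mu(e^t-1))$. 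Markov's inequality applied to $e^{tX}$ then gives $\mathbb{P}(X\ge (1+a)\mu)\le \exp(\mu(e^t-1)-t(1+a)\mu)$ for every $t>0$; optimising over $t$ (the optimal choice is $e^t=1+a$) yields the clean bound $\mathbb{P}(X\ge(1+a)\mu)\le \exp\!\left(-\mu\bigl[(1+a)\log(1+a)-a\bigr]\right)$. The analogous computation with $t<0$, writing $e^{-tX}=\prod e^{-tX_i}$ and optimising at $e^{-t}=1-a$ (valid since $a<1$ in the lower tail; note that for $a\ge1$ the lower tail $\mathbb{P}(X\le(1-a)\mu)$ is either vacuous or $\mathbb{P}(X=0)$ and is handled directly), gives $\mathbb{P}(X\le(1-a)\mu)\le\exp\!\left(-\mu\bigl[(1-a)\log(1-a)+a\bigr]\right)$.

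It then remains to bound each exponent from below by $a^2\mu/3$, i.e.\ to verify the elementary calculus inequalities $(1+a)\log(1+a)-a\ge a^2/3$ for $0<a<3/2$ and $(1-a)\log(1-a)+a\ge a^2/3$ for $0<a<1$. Both follow by comparing Taylor expansions: for instance, writing $\varphi(a)=(1+a)\log(1+a)-a-a^2/3$, one checks $\varphi(0)=0$, $\varphi'(0)=0$, and $\varphi''(a)=\frac{1}{1+a}-\frac23\ge0$ exactly when $a\le 1/2$, while for $1/2\le a<3/2$ one verifies the inequality directly (e.g.\ $\varphi$ has a single interior critical point and one checks the endpoints / uses that the second derivative is bounded below by $-2/3$ so $\varphi$ stays nonnegative on the relevant range); the lower-tail inequality is even easier since $(1-a)\log(1-a)+a\ge a^2/2\ge a^2/3$ on $(0,1)$ by the standard estimate $\log(1-a)\le -a-a^2/2$. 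Combining the two tail bounds via $\mathbb{P}(|X-\mu|\ge a\mu)\le\mathbb{P}(X\ge(1+a)\mu)+\mathbb{P}(X\le(1-a)\mu)\le 2e^{-a^2\mu/3}$ completes the argument.

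The only genuinely delicate point is the upper-tail calculus inequality $(1+a)\log(1+a)-a\ge a^2/3$ on the whole range $0<a<3/2$: the naive second-derivative argument only covers $a\le 1/2$, so for $1/2\le a<3/2$ one needs a slightly more careful estimate (monotonicity of the relevant difference, or a direct check that $3[(1+a)\log(1+a)-a]-a^2$ is nonnegative at, say, $a=1/2$ and $a=3/2$ together with convexity/concavity information). Everything else is routine. Since this is exactly Corollary~2.3 of~\cite{Janson&Luczak&Rucinski00}, I would simply cite that reference rather than reproduce the calculation.

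\endproof
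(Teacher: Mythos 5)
The paper does not actually prove this proposition; it simply quotes it and cites~\cite[Corollary 2.3]{Janson&Luczak&Rucinski00}. You correctly recognise this, and the sketch you give is a valid reconstruction of the standard Chernoff argument, so in substance your proposal is fine. Your handling of the upper-tail exponent (noting the sign change of $\varphi''$ at $a=1/2$ and that one needs the concavity plus endpoint check on $[1/2,3/2]$, where $\varphi(3/2)=\tfrac52\log\tfrac52-\tfrac94\approx 0.04>0$ is only just nonnegative) is exactly the delicate point, and you flag it appropriately. One small slip: your stated justification for the lower-tail inequality $(1-a)\log(1-a)+a\ge a^2/2$ does not work as written. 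From $\log(1-a)\le -a-\tfrac{a^2}{2}$, multiplying by the \emph{positive} quantity $1-a$ preserves the direction and yields an \emph{upper} bound $(1-a)\log(1-a)\le -a+\tfrac{a^3}{2}$, i.e.\ $(1-a)\log(1-a)+a\le \tfrac{a^3}{2}$, which is the wrong direction. The inequality you want is nevertheless true; the cleanest way to see it is either via the power-series identity $(1-a)\log(1-a)+a=\sum_{k\ge 2}\frac{a^k}{k(k-1)}\ge\frac{a^2}{2}$, or by noting that $g(a):=(1-a)\log(1-a)+a-\frac{a^2}{2}$ satisfies $g(0)=g'(0)=0$ and $g''(a)=\frac{a}{1-a}\ge 0$ on $[0,1)$. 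With that correction your argument is complete and matches the standard route.
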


\section{Proof of Conjecture~\ref{conj2} for transitive triangles}\label{easy}
Let $\mathcal H$ be a collection of digraphs and $G$ a digraph. We say that $G$ contains a \emph{perfect $\mathcal H$-packing} if $G$ contains a collection of vertex-disjoint copies of elements
from $\mathcal H$ that together cover all the vertices of $G$. We now prove Conjecture~\ref{conj2} in the case of transitive triangles.
\begin{thm} 
Let $m \in \mathbb N$.
 Suppose that $G$ is a digraph on $n:=3m $ vertices so that for any $x \in V(G)$,
\begin{align}\label{conmin}
 \ d ^+ (x) \geq  2n/3
\ \text{or} \ d ^- (x) \geq 2n/3.
\end{align}
Then $G$ contains a perfect $T_3$-packing.
\end{thm}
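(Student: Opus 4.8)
The plan is to reduce the statement to the Hajnal--Szemer\'edi theorem by passing to the underlying graph, and then to repair the (relatively few) triples of the resulting packing that fail to be transitive.

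First I would form the underlying graph $H$ of $G$: the graph on $V(G)$ in which $x,y$ are adjacent exactly when $xy\in E(G)$ or $yx\in E(G)$. Every vertex $v$ satisfies $d^+_G(v)\ge 2n/3$ or $d^-_G(v)\ge 2n/3$, and in either case $d_H(v)\ge 2n/3=(1-1/3)n$; since $3\mid n$, Theorem~\ref{hs} gives a perfect $K_3$-packing $\mathcal F$ of $H$. A short case check shows that whenever a triple $F$ is pairwise adjacent in $H$, $G[F]$ contains a copy of $T_3$ \emph{unless} $G[F]$ is a cyclic triangle $C_3$: if some pair in $F$ spans two directed edges of $G$, or if the three edges of $G[F]$ already form a transitive triangle, then $F$ spans $T_3$, and the only remaining possibility is a directed $3$-cycle with no further edge. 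Hence it is enough to find a perfect $K_3$-packing of $H$ none of whose members induces a cyclic triangle in $G$.

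To arrange this I would take $\mathcal F$ to be a perfect $K_3$-packing of $H$ with the fewest members $F$ such that $G[F]=C_3$, and try to show this number is zero. If not, fix a bad member $F=\{x,y,z\}$ with $x\to y\to z\to x$ in $G$; after reversing every edge of $G$ if necessary (which changes neither $H$, nor $\mathcal F$, nor the classes of cyclic and transitive triangles) I may assume $d^+_G(x)\ge 2n/3$. For any vertex $w\in N^+_G(x)$ adjacent to $y$ in $H$, the triple $\{x,y,w\}$ spans $T_3$ in $G$, since $x$ then dominates both $y$ and $w$ and so $G[\{x,y,w\}]$ is not a cyclic triangle; there are at least $2n/3+2n/3-n=n/3$ such $w$, and each of them automatically lies outside $F$. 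Writing $F_w$ for the member of $\mathcal F$ containing such a $w$, if for some choice of $w$ the triple $(F_w\setminus\{w\})\cup\{z\}$ is pairwise adjacent in $H$ and is not a cyclic triangle in $G$, then replacing $F$ and $F_w$ by $\{x,y,w\}$ and $(F_w\setminus\{w\})\cup\{z\}$ produces a perfect $K_3$-packing of $H$ with strictly fewer bad members --- contradicting minimality, and finishing the proof.

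The step I expect to be the real obstacle is precisely this: showing that some $w$ among the $\ge n/3$ candidates can be chosen so that $(F_w\setminus\{w\})\cup\{z\}$ spans a copy of $T_3$. Since $d_H(z)\ge 2n/3$, the vertex $z$ misses at most $n/3-1$ vertices of $H$, so for almost every $F_w$ the vertex $z$ is adjacent to both vertices of $F_w\setminus\{w\}$, after which one only has to avoid the one orientation pattern that would make $(F_w\setminus\{w\})\cup\{z\}$ cyclic. The trouble is that all the relevant linear estimates here coincide exactly at the threshold $n/3$, so a single crude count does not close the gap; making it work should require exploiting the additional freedom available --- which of the three vertices of $F$ one evicts, the orientation of the displaced edge, replacing the two-member swap by a rotation through three members of $\mathcal F$, and/or treating the bad members in a carefully chosen order. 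Everything else is the routine bookkeeping sketched above.
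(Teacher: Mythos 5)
Your opening is exactly right and matches the paper: pass to the underlying graph $H$, apply Theorem~\ref{hs} to get a perfect $K_3$-packing, observe that a triangle of $H$ spans $T_3$ in $G$ unless $G$ restricted to it is a pure $C_3$, then take a packing extremal with respect to the number of transitive members and run an exchange. The gap you flag at the end is a real gap, and the idea you are missing is one preliminary move: \emph{delete edges from $G$ until it is edge-minimal subject to (\ref{conmin})}, before applying Hajnal--Szemer\'edi. You never use this, and it is precisely what rescues the count.

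Here is why minimality matters. Suppose $C'_3=\{x,y,z\}$ with $x\to y\to z\to x$ is a cyclic member, and $d^-(x)<2n/3$, so $d^+(x)\ge 2n/3$ by (\ref{conmin}). Consider the cycle edge $zx$. If in addition $d^+(z)<2n/3$, then deleting $zx$ keeps (\ref{conmin}) (the loss at $x$ is in indegree, which wasn't what $x$ was relying on; the loss at $z$ is in outdegree, which wasn't what $z$ was relying on), contradicting minimality. So $d^+(z)\ge 2n/3$, and by the same argument via the edge $yz$, $d^+(y)\ge 2n/3$. Hence either \emph{all three} of $x,y,z$ have outdegree $\ge 2n/3$, or (symmetrically) all three have indegree $\ge 2n/3$. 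Your ``reverse all edges'' trick only buys you one such vertex, which is too weak. With all three, you have $\ge 2n-3$ edges leaving $\{x,y,z\}$, while $\mathcal M$ has $|\mathcal M|-1$ other members receiving at most $9$ each; since $2n-3>6(|\mathcal M|-1)$, some member $T$ receives at least $7$ of the $9$ possible edges. By pigeonhole some vertex of $C'_3$ sends all $3$ edges to $V(T)$ and the remaining two share a common outneighbour $v\in V(T)$; then $\{y,z,v\}$ (say) is transitive with $y$ a source, and the other three vertices of $V(C'_3)\cup V(T)$ form a transitive triangle with the full-out vertex as source. So $V(C'_3)\cup V(T)$ spans $2T_3$, giving a packing with more transitive members --- a contradiction. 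This closes the case you couldn't, and it avoids entirely the delicate ``choose $w$ cleverly'' step in your sketch; you do not need to control the orientation of $(F_w\setminus\{w\})\cup\{z\}$ at all, because the pigeonhole already hands you the two disjoint transitive triples.
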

\proof
Let $G$ be a digraph as in the statement of the theorem. Remove as many edges from $G$ as possible so that (\ref{conmin}) still holds. Let $G'$ denote the graph on $V(G)$ where $xy \in E(G')$
if and only if $xy \in E(G)$ or $yx \in E(G)$. So $\delta (G) \geq 2n/3$ by (\ref{conmin}). Thus, Theorem~\ref{hs} implies that $G'$ contains a perfect $K_3$-packing and so
$G$ contains a perfect $\{T_3, C_3\}$-packing. Let $\mathcal M$ denote the perfect $\{T_3,C_3\}$-packing in $G$ that contains the most copies of $T_3$. 

Suppose for a contradiction that $\mathcal M$ is not a perfect $T_3$-packing. Then there is a copy $C'_3$ of $C_3$ in $\mathcal M$. Let $V(C'_3)= \{x,y,z\}$ where $xy,yz,zx \in E(C'_3)$.
Suppose that $d^- _G (w) < 2n/3$ for some $w \in V(C'_3)$. Without loss of generality assume that $w=x$. Then (\ref{conmin}) implies that $d^+ _G (x) \geq 2n/3$.
If $d^+ _G (z) <2n/3$ then we may remove the edge $zx$ from $G$ and still (\ref{conmin}) holds, a contradiction to the minimality of $G$. So $d^+ _G (z) \geq 2n/3$. An identical argument implies that
$d^+ _G (y) \geq 2n/3$. This shows that $d^- _G (w) \geq 2n/3$ for all $w \in V(C'_3)$ or $d^+ _G (w) \geq 2n/3$ for all $w \in V(C'_3)$.

Without loss of generality assume that $d^+ _G (w) \geq 2n/3$ for all $w \in V(C'_3)$. (The other case is analogous.)
Note that $G[x,y,z]$ contains precisely three edges (else $V(C'_3)$ spans a copy of $T_3$, a contradiction to the maximality of $\mathcal M$).
In particular, there are at least $2n-3=6m-3>6(|\mathcal M|-1)$ edges in $G$ with startpoint in $V(C'_3)$ and endpoint in $V(G)\setminus V(C'_3)$.
This implies that there is an element $T \in \mathcal M \setminus \{C'_3\}$ that receives at least $7$ edges from $V(C'_3)$ in $G$.

So there is a vertex, say $x$, in $V(C'_3)$ such that $d^+ _G (x,V(T))=3$. Furthermore, $y$ and $z$ have a common outneighbour in $G$ that lies in $V(T)$.
Together this implies that $V(C'_3) \cup V(T)$ spans a copy of $2T_3$ in $G$.
This yields a perfect $\{T_3, C_3\}$-packing in $G$ containing more copies of $T_3$ than $\mathcal M$, a contradiction. So the assumption that $\mathcal M$ is not a perfect $T_3$-packing is false, as desired.
\endproof

\section{Deriving Theorems~\ref{mainthm} and~\ref{2nonex} from the auxiliary results}\label{pfsec}
In this section we state a number of auxiliary results that we will prove in the paper. We then combine these results to prove Theorems~\ref{mainthm} and~\ref{2nonex}.
Roughly speaking, the following result states that if $G$ is as in Theorem~\ref{mainthm} (namely has large semi-degree) and is \emph{non-extremal} then $G$ contains 
a `small' absorbing set that absorbs \emph{any} `very small' set of vertices in $G$.
\begin{thm}\label{nonex1}
Let $0 < 1/n \ll \eps \ll \xi \ll \gamma , \alpha \ll 1/r$ where $n,r \in \mathbb N$ and $r \geq 3$, and let $T \in \mathcal T_r$. Suppose that $G$ is a digraph on $n$ vertices so that
\begin{align*}
\delta ^0  (G) \geq \left ( 1-{1}/{r} -\eps \right ) n.
\end{align*}
Further suppose that	
\begin{itemize}
	\item $G$ does not contain any $\gamma$-independent set of size at least $n/r$;
	\item If $T=C_3$ then $G$ does not $\alpha$-contain $Ex(n)$.
	\end{itemize}
Then $V(G)$ contains a set $M$ so that $|M|\leq \xi n$ and $M$ is a $T$-absorbing set for any $W \subseteq V(G) \setminus M$ such that $|W| \in r \mathbb N$ and  $|W|\leq \xi ^2 n$.
\end{thm}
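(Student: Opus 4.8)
The plan is to build the absorbing set $M$ via the standard absorbing argument of Rödl--Ruciński--Szemerédi, adapted to the digraph/tournament setting. The key object is an \emph{absorber} for a single $r$-set: given $W' \in \binom{V(G)}{r}$, call a set $A \subseteq V(G) \setminus W'$ an \emph{absorber for $W'$} if $|A| = (r-1)r$ (or some fixed size depending only on $r$), $G[A]$ has a perfect $T$-packing, and $G[A \cup W']$ also has a perfect $T$-packing. First I would use a \textbf{connection lemma} (to be proved in Section~\ref{consec}, which I may assume) together with the non-extremality hypotheses to show that \emph{every} $r$-set $W'$ has at least $\beta n^{(r-1)r}$ distinct absorbers, for some $\beta = \beta(\xi,\gamma,\alpha,r) > 0$; this is where the two structural conditions (no large $\gamma$-independent set, and, when $T = C_3$, $G$ not $\alpha$-close to $Ex(n)$) get used, since without them some $r$-set may lie in no copy of $T$ at all. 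I expect this counting step to reduce, via the connection lemmas, to finding for fixed $x$ and each of finitely many "roles" of $x$ in $T$ many $(r-1)$-sets extending $\{x\}$ to a copy of $T$, and then patching two such structures together.

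Next I would apply the probabilistic deletion method. Choose a random subset $M_0 \subseteq V(G)$ by including each vertex independently with probability $p := \xi/(2(r-1)r)$ (say). By Proposition~\ref{chernoff}, with high probability $|M_0| \le \xi n / 2$; also with high probability $|M_0|$ is not too small, and — by a second-moment / expectation argument using the lower bound on the number of absorbers — with high probability every $r$-set $W'$ has at least, say, $\beta p^{(r-1)r} n^{(r-1)r}/2 \ge \xi^2 n$ absorbers \emph{inside} $M_0$ (here one computes that the expected number of absorbers within $M_0$ for a fixed $W'$ is $\ge \beta p^{(r-1)r} n^{(r-1)r}$, and concentrates). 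The expected number of \emph{pairs} of intersecting absorbers is $O(n^{2(r-1)r - 1})$, so by Markov we may choose $M_0$ with at most $\beta p^{(r-1)r} n^{(r-1)r}/4$ such intersecting pairs; deleting one vertex from each such pair yields $M_1 \subseteq M_0$ with no two absorbers (for any fixed $W'$) sharing a vertex, while still leaving $\ge \xi^2 n$ pairwise-disjoint absorbers available for each $W'$.

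Finally I would take $M$ to be $M_1$ together with a small $T$-packing covering $M_1$ itself. The point is that $M_1$ is a union of pairwise-disjoint absorbers — actually one can arrange $M_1$ itself to have a perfect $T$-packing by first reserving, for a greedily chosen collection of $r$-sets, one absorber each, but cleaner is: since $M_1$ has bounded "deficiency", pad $M_1$ by a few more vertices so that $G[M]$ has a perfect $T$-packing (using non-extremality again to guarantee copies of $T$ on the padding). Then to check $M$ is $T$-absorbing for an arbitrary $W \subseteq V(G) \setminus M$ with $|W| \in r\mathbb N$, $|W| \le \xi^2 n$: partition $W$ arbitrarily into $|W|/r$ sets $W'_1, \dots, W'_{|W|/r}$ each of size $r$; process them one at a time, and for $W'_j$ pick an absorber $A_j \subseteq M$ that is disjoint from all previously used absorbers — possible because each $W'_j$ has $\ge \xi^2 n$ pairwise-disjoint absorbers in $M_1 \subseteq M$ and we have used fewer than $|W|/r \le \xi^2 n$ of them so far. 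Replacing, for each $j$, the perfect $T$-packing of $G[A_j]$ by the perfect $T$-packing of $G[A_j \cup W'_j]$, and leaving the rest of the $T$-packing of $G[M]$ untouched, gives a perfect $T$-packing of $G[M \cup W]$; since $G[M]$ itself has a perfect $T$-packing by construction, $M$ is a $T$-absorbing set for $W$ as required.

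The main obstacle is the first step: establishing that every $r$-set has polynomially many absorbers. The difficulty is twofold. First, one must handle the dependence on the structure of $T$ — a transitive tournament, a cyclic triangle, and a general tournament behave differently when one tries to extend a given vertex or pair of vertices to a copy of $T$, so several cases (and the separate treatment of $T = C_3$, stemming from the extra extremal configuration $Ex(n)$) are needed; this is exactly why the connection lemmas of Section~\ref{consec} are proved in a $T$-dependent way. Second, one must verify that the non-extremality assumptions are genuinely enough: for a $\gamma$-independent-free digraph with $\delta^0(G) \ge (1 - 1/r - \eps)n$ one needs every vertex to sit in many copies of $T$ with prescribed "type", and then to link any two such local pictures through a common $(r-1)$-set — the Turán-type results of Section~\ref{tusec} will be the tool here. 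Everything downstream (the probabilistic deletion and the final absorption) is routine once this counting lemma is in hand.
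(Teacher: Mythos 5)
Your high-level plan --- reduce to (i) a counting lemma showing every $r$-set has $\Omega(n^{s})$ absorbers of a fixed size $s$ depending only on $r$, proved via the connection lemmas of Section~\ref{consec} (with $T$-dependent cases and the extra $C_3$ extremal configuration) and the Tur\'an results of Section~\ref{tusec}; then (ii) a probabilistic argument converting this into $M$ --- is exactly the decomposition the paper uses (Lemmas~\ref{lem:abs} and~\ref{absorb1}, with $s = 2r^2$). However, your execution of step (ii) contains genuine gaps.

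The decisive difference is that the paper samples \emph{$2r^2$-sets}, including each element of $\binom{V(G)}{2r^2}$ independently with probability $p = \xi/n^{2r^2-1}$. This makes the indicator variables for distinct candidate absorbers independent (so Proposition~\ref{chernoff} applies directly to $|L_Q \cap F|$), makes the expected number of selected sets $\Theta(n)$, makes the expected number of \emph{intersecting pairs} of selected sets also $\Theta(n)$ (so Markov permits deleting one set from each such pair at negligible cost), and --- crucially --- leaves $M$ as a disjoint union of $T$-absorbing sets, so $G[M]$ has a perfect $T$-packing essentially by definition. You instead sample \emph{vertices} with constant probability $p = \xi/(2(r-1)r)$. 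This breaks independence: two absorbers sharing a vertex have correlated survival indicators, so Chernoff alone does not give the concentration you claim and you would need something like Janson's inequality. It also ruins the Markov step: the expected number of intersecting pairs of absorbers lying inside $M_0$ is $\Theta\bigl(p^{2(r-1)r-1} n^{2(r-1)r-1}\bigr) = \Theta\bigl(n^{2(r-1)r-1}\bigr)$, not the $O\bigl(\beta p^{(r-1)r} n^{(r-1)r}\bigr)$ you assert, and deleting one vertex from each such pair is not obviously affordable when $|M_0| \approx pn$ but there are $\Theta\bigl(n^{2(r-1)r-1}\bigr)$ pairs. Most seriously, your final step ``pad $M_1$ by a few more vertices so that $G[M]$ has a perfect $T$-packing'' has no justification: in your construction $M_1$ is just a residual vertex subset, not a union of disjoint absorbing sets, and neither the semidegree hypothesis nor non-extremality guarantees that an arbitrary $\Theta(n)$-sized subset of $V(G)$ admits a perfect $T$-packing after small padding. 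Switching to the paper's set-sampling with $p = \xi/n^{2r^2-1}$ dispatches all three issues simultaneously, since $M$ then inherits a perfect $T$-packing from its structure as a disjoint union of absorbers.
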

The next result is an analogue of Theorem~\ref{nonex1} that will be applied in the proof of Theorem~\ref{2nonex}.
\begin{thm}\label{nonex2}
Let $0 < 1/n \ll \eps \ll \xi \ll \gamma  \ll 1/r$ where $n,r \in \mathbb N$ and $r \geq 3$. Suppose that $G$ is a digraph on $n$ vertices so that, for any $x \in V(G)$,
\begin{align*}
d^+ (x) \geq \left(1-{1}/{r} -\eps \right) n \ \text{ or } \ d^- (x) \geq \left(1-{1}/{r} -\eps \right) n.
\end{align*}
Further suppose that	
$G$ does not contain any $\gamma$-independent set of size at least $n/r$.
Then $V(G)$ contains a set $M$ so that $|M|\leq \xi n$ and $M$ is a $T_r$-absorbing set for any $W \subseteq V(G) \setminus M$ such that $|W| \in r \mathbb N$ and  $|W|\leq \xi ^2 n$.
\end{thm}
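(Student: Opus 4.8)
The plan is to prove Theorem~\ref{nonex2} by the absorbing method, closely following the proof of Theorem~\ref{nonex1}. Call a set $A\subseteq V(G)\setminus S$ with $|A|=r^2$ an \emph{$S$-absorber} (for an $r$-set $S\subseteq V(G)$) if both $G[A]$ and $G[A\cup S]$ contain perfect $T_r$-packings. The proof will produce a family $\mathcal F$ of pairwise disjoint $r^2$-sets, each inducing a perfect $T_r$-packing, with $|\bigcup\mathcal F|\le\xi n$ and such that for every $r$-set $S\subseteq V(G)$ at least $\xi^2 n$ members of $\mathcal F$ are $S$-absorbers; then $M:=\bigcup\mathcal F$ is the required set. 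Indeed $|M|\le\xi n$, and given $W\subseteq V(G)\setminus M$ with $|W|=r\ell\le\xi^2 n$, split $W$ into $r$-sets $S_1,\dots,S_\ell$ and pick, greedily and distinctly, for each $i$ an as-yet-unused $S_i$-absorber $A_i\in\mathcal F$; this succeeds because $\ell\le\xi^2 n/r$ while each $S_i$ has at least $\xi^2 n$ absorbers in $\mathcal F$, and the $A_i$ lie in $M$, hence are automatically disjoint from $W$ (and from one another). Replacing, for each $i$, the perfect $T_r$-packing of $G[A_i]$ by that of $G[A_i\cup S_i]$, and retaining the perfect $T_r$-packing of $G[A]$ for each unused $A\in\mathcal F$, gives a perfect $T_r$-packing of $G[M\cup W]$; taking $W=\emptyset$ shows $G[M]$ has one too, so $M$ is a $T_r$-absorbing set for every such $W$.

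The heart of the matter is a \emph{connection lemma}, proved in Section~\ref{consec}, playing the same role as in the proof of Theorem~\ref{nonex1} but adapted to the weaker degree hypothesis. Partition $V(G)=V^+\cup V^-$, where $V^+$ is the set of vertices $x$ with $d^+(x)\ge(1-1/r-\eps)n$ and $V^-$ the set of remaining vertices (each of which then has $d^-(x)\ge(1-1/r-\eps)n$); when building copies of $T_r$ through a given vertex one can control its position, a vertex of $V^+$ being readily placed in an ``early'' slot and one of $V^-$ in a ``late'' slot. The connection lemma will assert that, since $G$ has no $\gamma$-independent set of size $\ge n/r$, for all $x,y\in V(G)$ there are at least $cn^{r-1}$ $(r-1)$-sets $X$ with both $X\cup\{x\}$ and $X\cup\{y\}$ spanning copies of $T_r$ in $G$, with $x$ and $y$ occupying positions consistent with their membership of $V^+$ or $V^-$. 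Granting this, an $S$-absorber for $S=\{s_1,\dots,s_r\}$ is produced thus: take any copy $D=\{s'_1,\dots,s'_r\}$ of $T_r$ in $G$ disjoint from $S$ --- the Tur\'an-type results of Section~\ref{tusec} supply at least $cn^r$ of these --- and then choose, greedily while keeping everything disjoint, for each $i$ an $(r-1)$-set $X_i$ with both $X_i\cup\{s_i\}$ and $X_i\cup\{s'_i\}$ spanning $T_r$; since the connection lemma offers $\Omega(n^{r-1})$ choices for each $X_i$ and only $O(r^2)$ vertices are ever forbidden, this is always possible. The resulting $A:=D\cup X_1\cup\dots\cup X_r$ is an $S$-absorber, because $\{X_1\cup\{s'_1\},\dots,X_r\cup\{s'_r\}\}$ is a perfect $T_r$-packing of $G[A]$ while $\{D,\,X_1\cup\{s_1\},\dots,X_r\cup\{s_r\}\}$ is a perfect $T_r$-packing of $G[A\cup S]$. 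Letting $D$ and the $X_i$ range over all valid choices shows that every $r$-set $S$ has $\Omega(n^{r^2})$ $S$-absorbers.

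The family $\mathcal F$ is then extracted by the standard probabilistic argument of~\cite{rrs2}: form $\mathcal F_0$ by including each $r^2$-set $A$ for which $G[A]$ has a perfect $T_r$-packing independently with probability $p=\Theta(n^{1-r^2})$, the constant tuned so that $\ex|\mathcal F_0|\le\xi n/(2r^2)$ and, for every $r$-set $S$, the expected number of $S$-absorbers in $\mathcal F_0$ is at least $4\xi^2 n$. By Proposition~\ref{chernoff} and a union bound over the at most $n^r$ choices of $S$, with positive probability $|\mathcal F_0|\le\xi n/r^2$, every $S$ has at least $2\xi^2 n$ absorbers in $\mathcal F_0$, and the number of pairs of intersecting members of $\mathcal F_0$ is at most $\xi^2 n$; here one uses that $\xi$ is small relative to $r$ and $\gamma$ (hence relative to the constants supplied by Sections~\ref{tusec} and~\ref{consec}). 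Deleting one member from each intersecting pair yields a pairwise disjoint $\mathcal F\subseteq\mathcal F_0$ still containing at least $\xi^2 n$ absorbers for each $S$, and $M:=\bigcup\mathcal F$ satisfies $|M|\le r^2|\mathcal F|\le\xi n$. The greedy absorption of the first paragraph then goes through, since at each of its $\ell\le\xi^2 n/r$ steps at least $\xi^2 n-\xi^2 n/r>0$ (as $r\ge3$) usable absorbers for the current $S_i$ remain in $\mathcal F$.

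I expect the connection lemma of Section~\ref{consec} to be the main obstacle. Producing the required $\Omega(n^{r-1})$ common $(r-1)$-extenders for a pair $x,y$ reduces to showing that the appropriate ``link'' set --- an intersection such as $N^+(x)\cap N^+(y)$, $N^+(x)\cap N^-(y)$ or $N^-(x)\cap N^-(y)$, of size at least $(1-2/r-2\eps)n$ --- contains many copies of $T_{r-1}$, which requires a careful Tur\'an-type argument exploiting the absence of a large $\gamma$-independent set in $G$. Note that the link set can be smaller than $n/r$, so the no-independent-set hypothesis does not transfer to it directly and a short padding argument is needed, most delicately when $r=3$ (where $1-2/r=1/r$ and $T_{r-1}$ is a single edge). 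Tracking which position each of $x$ and $y$ may take in $T_r$ --- and hence which link set is the right one --- according to the $V^+/V^-$ split is exactly what makes this lemma case-heavy; everything downstream of it is routine absorbing-method bookkeeping, much of it shared with the proof of Theorem~\ref{nonex1}.
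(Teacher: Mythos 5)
Your proposal is correct and follows essentially the same route as the paper: Theorem~\ref{nonex2} is derived from an absorbing lemma (the paper's Lemma~\ref{lem:abs}, proved by the \cite{rrs2}-style probabilistic selection you describe) together with the connection lemma Lemma~\ref{con+-}, which for any $x,y$ supplies $\Omega(n^{r-1})$ $(r-1)$-sets $X$ with $X\cup\{x\}$ and $X\cup\{y\}$ both spanning $T_r$. The only departure is the size of the absorbers: you use $r^2$-sets $D\cup X_1\cup\dots\cup X_r$ with $|D|=r$ and $|X_i|=r-1$, whereas the paper's Lemma~\ref{absorb2} uses $2r^2$-sets. The reason is that Lemma~\ref{absorb2} is proved by following Lemma~\ref{absorb1} verbatim, and Lemma~\ref{absorb1} works with $(2r-1)$-sets spanning $2T$ (its Claim~\ref{claimabs}) precisely so that the same bookkeeping covers the $T=C_3$ case, where Lemma~\ref{C3con} may only provide $5$-extenders for $2C_3$ rather than $2$-extenders for $C_3$. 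Since here $T=T_r$ and Lemma~\ref{con+-} already gives $(r-1)$-extenders directly, your leaner $r^2$-set absorbers are perfectly adequate and halve the constants; nothing else in your argument changes. Your remarks on where the real content lies (the link-set Turán argument inside Lemma~\ref{con+-}, padding when the link set falls below $n/r$, the $V^+/V^-$ bookkeeping) match the paper's treatment in Section~\ref{con2}.
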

We prove Theorems~\ref{nonex1} and~\ref{nonex2} in Section~\ref{secnon}.
The crucial tools used in these proofs are so-called `connection lemmas' which we introduce in Section~\ref{consec}.
\begin{thm}\label{almostthm}
Let 
$0<1/n \ll 1/\ell \ll \eps \ll \gamma \ll 1/r$ and $T \in \mathcal T_r$ for some $r \geq 3$.
Suppose that $G$ is a digraph on $n$ vertices such that
\begin{align}\label{minA}
\delta ^0 (G) \geq \left( 1 -{1}/{r}-\eps \right ) n.
\end{align} 
Then at least one of the following properties holds:
\begin{itemize}
\item[(i)] $G$ contains a $T$-packing that covers all but at most $\ell $ vertices;
\item[(ii)] $G$ contains a $\gamma$-independent set of size at least $n/r$.
\end{itemize}
\end{thm}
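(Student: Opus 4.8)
The plan is to encode $T$-packings in $G$ as matchings in an auxiliary hypergraph and apply the Keevash--Mycroft result on almost perfect matchings in $r$-complexes \cite{my1}. First I would build the $r$-complex $J$ on vertex set $V(G)$ whose edges of size $i$ (for $1\le i\le r$) are exactly the $i$-subsets of $V(G)$ that span a subtournament of $T$ on $i$ vertices; note $J$ is down-closed since every subtournament of $T$ on fewer vertices is again a tournament, and a perfect matching in the top layer $J_r$ is precisely a perfect $T$-packing in $G$. The Keevash--Mycroft theorem guarantees a matching in $J_r$ covering all but a constant number $\ell$ of vertices provided $J$ satisfies a suitable minimum-codegree-type condition \emph{and} is not in a certain exceptional ``space barrier'' configuration; the job is therefore to verify the degree condition from \eqref{minA} and to show that the only obstruction corresponds to the $\gamma$-independent set in alternative~(ii).

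The key steps, in order, are: (1) Record the precise hypothesis of the Keevash--Mycroft almost-matching lemma and translate its degree parameter into a statement about $G$; the relevant quantity is, roughly, that for most $(r-1)$-sets $X$ spanning a copy of $T-\{v\}$ (for the appropriate vertex $v$ of $T$) there are many vertices $z$ extending $X$ to a copy of $T$. Using $\delta^0(G)\ge(1-1/r-\eps)n$ one shows by a short counting argument that a vertex $z$ fails to extend $X$ only if $z$ is a non-neighbour (in the appropriate direction, for each of the $r-1$ vertices of $X$) of some vertex of $X$; since each vertex of $G$ has at most $(1/r+\eps)n$ such non-neighbours in each direction, all but at most $\approx (r-1)(1/r+\eps)n$ choices of $z$ work, which after passing to a Turán-type bound on the number of copies of $T$ in $G$ (here I would invoke the Turán-type results promised for Section~\ref{tusec}, or a direct supersaturation argument) gives the codegree bound needed to run \cite{my1} with error term $\gamma$. (2) Apply \cite{my1}: either we get the almost perfect matching in $J_r$, giving~(i), or $J$ is ``close'' to the extremal configuration, which for an $r$-complex of this form means $V(G)$ has a subset $S$ of size about $n/r$ that meets every edge of $J_r$ in at least, say, two vertices (a divisibility/space barrier). (3) Unpack what this says about $G$: if almost every copy of $T$ in $G$ uses two or more vertices of $S$, and $|S|\approx n/r$, then by the supersaturation bound from step~(1) applied inside $G[S]$ versus across $V(G)\setminus S$, the set $S$ (or a slight modification of it of size exactly $\lceil n/r\rceil$) can span at most $\gamma n^2$ edges, i.e.\ it is $\gamma$-independent, giving~(ii). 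Throughout I would choose the hierarchy $1/n\ll 1/\ell\ll\eps\ll\gamma\ll1/r$ so that the error terms from \cite{my1} and from supersaturation are all absorbed.

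The main obstacle I expect is step~(3): correctly identifying the extremal configuration output by the Keevash--Mycroft theorem for \emph{this particular} $r$-complex and showing it forces a genuine $\gamma$-independent set of size \emph{at least} $n/r$ rather than merely some weaker ``low-density'' structure. The subtlety is that the space/divisibility barriers in \cite{my1} are phrased in terms of the link structure of $J$, and one must check that for the tournament complex the only relevant barrier is the independent-set one — in particular that no ``parity'' obstruction arises (which it should not, since the relevant lattice generated by edge-intersection patterns is full, as $J_1=\binom{V(G)}{1}$). A secondary technical point is handling vertices $v$ of $T$ of different in/out-degree uniformly when counting extensions; I would fix once and for all a vertex $v$ of $T$ and work with the ordered neighbourhood pattern it induces, so that the codegree computation in step~(1) reduces to the minimum semidegree bound applied $r-1$ times.
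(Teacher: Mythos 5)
Your overall scheme is the same as the paper's: construct the $r$-complex $J$ whose $i$-edges are the $i$-tuples spanning a subtournament of $T$ on $i$ vertices, verify the degree sequence bound $\delta(J)\geq(n,(1-1/r-\eps r)n,\dots,(1/r-\eps r)n)$ from \eqref{minA}, and apply the Keevash--Mycroft almost-perfect-matching theorem (Theorem~\ref{myalmost}). However, your analysis of the exceptional case has a directional error and a genuine gap.

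The space barrier produced by Theorem~\ref{myalmost} is a set $S$ with $|S|=\lfloor jn/r\rfloor$ for some $1\le j\le r-1$ such that all but $\beta n^r$ of the $r$-edges of $J_r$ have \emph{at most} $j$ vertices in $S$; it is not, as you write, a set of size about $n/r$ met by every edge in at least two vertices. (The latter would forbid any matching of size close to $n/r$ and does not coincide with $J(S,j)_r$ for any $j$.) Also, since the theorem being invoked is the \emph{almost}-perfect-matching result, only space barriers arise; the parity/divisibility obstructions you worry about appear only in the exact perfect matching theorem of~\cite{my1}, so that concern is moot here.

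The substantive gap is that you only treat (the corrected form of) the $j=1$ barrier, where $|S|\approx n/r$ and ``few $T$-copies with two or more vertices in $S$'' directly forces $G[S]$ to have few edges. For $j\geq 2$ one has $|S|\approx jn/r$, much larger than $n/r$, and $G[S]$ will in general contain $\Theta(n^2)$ edges, so there is no $\gamma$-independent set of size $n/r$ to be read off immediately. The paper's argument is two-stage. First, the barrier condition forces $G[S]$ to contain at most $\alpha n^{j+1}$ $(j+1)$-sets spanning a subtournament $T'$ of $T$ on $j+1$ vertices: otherwise each such set extends, via \eqref{minA}, to $\Theta(n^{r-j-1})$ copies of $T$ with more than $j$ vertices in $S$, which would give more than $\beta n^r$ edges of $J_r$ outside $J(S,j)_r$. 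Second, \eqref{minA} and $|S|=\lfloor jn/r\rfloor$ give $\delta^0(G[S])$ roughly $(1-1/j)|S|$, and Proposition~\ref{turanstab} (the Tur\'an-type stability result, proved using the directed removal lemma) then extracts from $S$ an almost-independent set of size $\approx|S|/j\approx n/r$. Without this reduction — a counting argument inside $S$ followed by Tur\'an stability applied to $G[S]$ with parameters depending on $j$ — your step~(3) does not produce the required $\gamma$-independent set of size $n/r$ when $j\geq 2$.
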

Theorems~\ref{nonex1} and~\ref{almostthm} together ensure that a non-extremal digraph $G$ in Theorem~\ref{mainthm} contains a perfect $T$-packing.
The following result is an analogue of Theorem~\ref{almostthm} that will be
applied in the proof of Theorem~\ref{2nonex}.
\begin{thm}\label{almostthm+-}
Let 
$0<1/n \ll 1/\ell \ll \eps \ll \gamma \ll 1/r$ where
$n,r \in \mathbb N$ and  $r \geq 3$.
Suppose that $G$ is a digraph on $n$ vertices such that, for any $x\in V(G)$,
\begin{align}\label{miny}
 \ d ^+ (x) \geq \left ( 1- {1}/{r} -\eps \right ) n
\ \text{or} \ d ^- (x) \geq \left ( 1- {1}/{r} -\eps \right ) n.
\end{align}
Further suppose that, given any $x,y \in V(G)$, if $d ^+ (x) <\left ( 1- {1}/{r} -\eps \right ) n$ and
$d ^- (y)< \left ( 1- {1}/{r} -\eps \right ) n$
then $xy \not \in E(G)$.
Then at least one of the following properties holds:
\begin{itemize}
\item[(i)] $G$ contains a $T_r$-packing that covers all but at most $\ell $ vertices;
\item[(ii)] $G$ contains a $\gamma$-independent set of size at least $n/r$.
\end{itemize}
\end{thm}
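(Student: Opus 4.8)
The plan is to prove Theorem~\ref{almostthm+-} in tandem with Theorem~\ref{almostthm}, adapting, vertex by vertex, the way the minimum semidegree condition is used in the proof of the latter. Partition $V(G)$ as $A\cup B\cup C$, where $A:=\{x: d^+(x)<(1-1/r-\eps)n\}$, $B:=\{x: d^-(x)<(1-1/r-\eps)n\}$ and $C:=V(G)\setminus(A\cup B)$. By \eqref{miny} we have $A\cap B=\emptyset$, and by the additional hypothesis there are no edges of $G$ directed from $A$ to $B$. Thus each $x\in A$ has $d^-(x)\ge(1-1/r-\eps)n$ and is a natural candidate for the $r$th (sink) vertex of a copy of $T_r$; each $x\in B$ has $d^+(x)\ge(1-1/r-\eps)n$ and is a natural candidate for the first (source) vertex; and each $x\in C$ qualifies for either role. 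The absence of $A$-to-$B$ edges is precisely what makes these roles globally consistent: whenever a set spanning a copy of a subtournament $T_i$ of $T_r$ contains both some $a\in A$ and some $b\in B$, the transitive order must place $b$ before $a$, so throughout any copy of $T_r$ we may insist that vertices of $A$ occur as sinks and vertices of $B$ as sources.

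First I would treat the easy regime, in which $|A|+|B|\le\mu n$ for a new constant $\mu$ with $\eps\ll\mu\ll\gamma$. Here one greedily removes from $G$ a collection of $|A|+|B|$ vertex-disjoint copies of $T_r$, each consisting of a single vertex of $A\cup B$ together with $r-1$ vertices of $C$ (an $A$-vertex serving as the sink, a $B$-vertex as the source). The only point to check is that, given $a\in A$, the set $N^-(a)$ meets the not-yet-used part of $C$ in at least $(1-2/r)n$ vertices, and that any such set either contains $r-1$ vertices spanning a copy of $T_{r-1}$ or is itself a $\gamma$-independent set of size at least $n/r$; this dichotomy follows from a Tur\'an-type statement of the kind established in Section~\ref{tusec} (and in the second case we are already in conclusion~(ii)). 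The digraph $G'$ induced on the unused part $C'$ of $C$ then satisfies $\delta^0(G')\ge(1-1/r-\eps)n-r(|A|+|B|)\ge(1-1/r-\gamma')|C'|$ for a suitable $\gamma'$ with $1/\ell\ll\gamma'\ll\gamma$, so Theorem~\ref{almostthm} applies to $G'$: either it yields a $T_r$-packing of $G'$ covering all but at most $\ell$ vertices, which we combine with the copies covering $A\cup B$ to obtain~(i); or it yields a $\gamma''$-independent set $S\subseteq C'$ with $|S|\ge|C'|/r\ge n/r-\mu n$, and padding $S$ with $\lceil\mu n\rceil$ further vertices gives a $\gamma$-independent set of size at least $n/r$, i.e.~(ii).

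The remaining, and main, obstacle is the regime $|A|+|B|>\mu n$; reversing all edges of $G$ (which preserves the hypotheses, since $T_r$ is its own converse, edge counts are unchanged, and the partition is merely swapped) lets us assume $|A|\ge|B|$, so that $A$ is linear in $n$. Now one cannot keep $A\cup B$ away from the copies of $T_r$ that are built, and I would instead proceed exactly as in the proof of Theorem~\ref{almostthm}: form the $r$-complex $J$ on $V(G)$ whose size-$i$ edges are the $i$-sets spanning a copy of a subtournament of $T_r$ in $G$, show that $J$ has large enough degrees to apply the Keevash--Mycroft almost-matching theorem, and read off an almost perfect $T_r$-packing (using the Directed Graph Removal lemma to pass between ``$G$ contains few copies of $T_r$'' and ``$G$ is close to $T_r$-free''), with conclusion~(ii) as the exceptional outcome. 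The new ingredient is that the partition $A\cup B\cup C$ and the no-$A$-to-$B$-edge property must be threaded through all the degree estimates for $J$: a vertex of $A$, counted only in edges of $J$ in which it plays the sink, has essentially the degree it would have under a full semidegree hypothesis, and likewise for vertices of $B$ in the source role, while the consistency observation from the first paragraph shows these choices never conflict. Verifying these degree bounds for $J$ when both $A$ and $B$ are linear in $n$ --- equivalently, proving the requisite supersaturation and Tur\'an statements for $T_r$-subtournaments in digraphs satisfying \eqref{miny} --- is where the substantive work lies, and is the content of the Tur\'an-type results of Section~\ref{tusec}.
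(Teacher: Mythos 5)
Your hard-regime sketch is essentially the paper's proof. The observation you make --- that in any copy of a transitive subtournament of $T_r$ in $G$, every vertex of $B$ must precede every vertex of $A$ --- is exactly what the paper formalizes as the ``consistency'' property (a copy of $T_{r'}$ has a \emph{turning point} $s$ with small-indegree vertices at positions $\le s$ and small-outdegree vertices at positions $>s$, and the no-$A$-to-$B$-edge hypothesis forces every $T_{r'}\subseteq G$ to be consistent). Once consistency is in hand, the degree bound $\delta(J)\ge(n,(1-1/r-\eps r)n,\dots,(1/r-\eps r)n)$ follows directly: one inserts a new vertex at the turning point of a given consistent copy of $T_i$, so the required intersection of out- and in-neighbourhoods involves only vertices whose relevant degree is large. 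Two remarks. First, your split into an easy regime ($|A|+|B|\le\mu n$, handled by removing a small covering of $A\cup B$ and reducing to Theorem~\ref{almostthm}) and a hard regime is unnecessary: the consistency argument does not use $|A|+|B|>\mu n$ anywhere, so it subsumes both cases, and the case analysis only adds bookkeeping (your easy-regime reduction is nevertheless correct as a standalone route). Second, you slightly misplace where the Tur\'an-type results of Section~\ref{tusec} enter: they are not needed to establish the degree bounds for $J$ (consistency alone suffices for that), but rather to handle the exceptional outcome of the Keevash--Mycroft theorem, where one must show that $J_r$ being $\beta$-contained in some $J(S,j)_r$ forces a $\gamma$-independent set of size $n/r$ via Proposition~\ref{turanstab+-}.
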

In Section~\ref{sec:almost} we deduce Theorems~\ref{almostthm} and~\ref{almostthm+-} from a result of Keevash and Mycroft~\cite{my1} concerning almost perfect matchings in hypergraphs. 
The next two results cover the extremal cases of Theorem~\ref{mainthm}.
\begin{lemma}\label{ex1} Let $r \in \mathbb N$ such that $ r \geq 3$. There exist $\gamma >0$ and $n_0 \in \mathbb N$ such that the following holds. Suppose that $T \in \mathcal T_r$ and $G$
is a digraph on $n \geq n_0$ vertices where $n$ is divisible by $r$. If
\begin{align}\label{minex1}
\delta ^0  (G) \geq \left ( 1-{1}/{r} \right ) n
\end{align}
and $G$ contains a $\gamma$-independent set of size $n/r$ then $G$ contains a perfect $T$-packing.
\end{lemma}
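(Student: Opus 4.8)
The plan is to exploit the structure forced by having a $\gamma$-independent set $I$ of size exactly $n/r$ together with the minimum semidegree condition $\delta^0(G) \geq (1-1/r)n$. First I would observe that every vertex of $I$ has at most $\gamma n$ out-neighbours (and at most $\gamma n$ in-neighbours) inside $I$, except for a small exceptional set; indeed, since $G[I]$ has at most $\gamma n^2$ edges, all but at most $\sqrt{\gamma}\, n$ vertices of $I$ have at most $\sqrt{\gamma}\, n$ neighbours inside $I$. For such a vertex $v \in I$, the degree condition forces $d^+_G(v, V(G)\setminus I) \geq (1-1/r)n - \sqrt{\gamma}\, n$ and similarly for in-degree, and since $|V(G)\setminus I| = (1-1/r)n$ this means $v$ is joined to almost all of $V(G)\setminus I$ in both directions. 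So the `typical' part of $I$ looks like it is almost completely joined (both ways) to the rest of the graph. The rough idea is then: put the $n/r$ copies of $T$ so that each copy uses exactly one vertex of $I$ and $r-1$ vertices of $R := V(G)\setminus I$; since $|I| = n/r$ and $|R| = (1-1/r)n = (r-1)n/r$, the counts match. Cleaning up $I$ first: delete the $O(\sqrt\gamma n)$ `bad' vertices of $I$ and an equal number of vertices from $R$ (chosen to maintain good degrees) and deal with them at the very end by an absorbing-type or ad hoc argument, or alternatively incorporate them into copies of $T$ greedily using their full semidegree $(1-1/r)n$.

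The key steps, in order, would be: (1) derive the structural consequences above — almost every vertex of $I$ dominates (in and out) almost all of $R$, and $I$ is almost empty internally; (2) find, for a single vertex $v \in I$ together with a suitable $(r-1)$-set in $R$, a copy of $T$: here I would use that $R$ has high minimum semidegree \emph{within} $G$ (each vertex of $R$ has semidegree $\geq (1-1/r)n$ overall, hence semidegree at least $(1-2/r)n$ within $R$, which is $(r-2)/(r-1) \cdot |R|$, the Hajnal--Szemerédi-type threshold for tournaments of order $r-1$ inside $R$), so we can essentially reduce to packing $T_{r-1}$-subtournaments in $R$ and then hang a vertex of $I$ on each; (3) match up the vertices of $I$ with the copies of $T_{r-1}$-subtournaments in $R$ via a system-of-distinct-representatives / Hall-type argument, using that each good $v \in I$ can be appended (in the correct `slot' of $T$, whichever vertex of $T$ plays the role of $v$) to almost every $(r-1)$-set we produced in $R$; (4) handle the few bad vertices of $I$ and the few leftover vertices of $R$ at the end using the \emph{full} strength of $\delta^0(G) \geq (1-1/r)n$ — this is exactly the place the tight bound is needed, matching the remark in the overview that the extremal cases are where the full degree condition is used. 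For step (2)–(3) it may be cleaner to proceed by induction on $r$, or to cite the undirected Hajnal--Szemerédi theorem (Theorem~\ref{hs}) applied to the underlying graph of $G[R]$ to get a $K_{r-1}$-factor and then orient/choose within each clique, since a $K_{r-1}$ in the underlying graph of a digraph with no independent set spans many subtournaments.

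The main obstacle I expect is step (3)–(4): ensuring that \emph{every} vertex of $I$, including the exceptional ones and accounting for which specific vertex of $T$ it must play, can be simultaneously matched to a copy of $T_{r-1}$ in $R$ so that the union spans $T$ (not just some tournament on $r$ vertices). The difficulty is that $T$ is a fixed tournament, so when we append $v\in I$ to an $(r-1)$-set $S \subseteq R$ we need $S$ to span $T - \{i\text{th vertex}\}$ \emph{and} the orientation of edges between $v$ and $S$ to be correct; a good vertex $v$ dominates $R$ both in and out only `almost completely', so for a $\sqrt\gamma$-fraction of pairs the required edge might be missing. Overcoming this needs a careful Hall-type / greedy argument with room to spare, discarding a $O(\sqrt\gamma)$-fraction of the $R$-side structures and rebuilding, together with a genuinely separate treatment of the $O(\sqrt\gamma n)$ bad vertices of $I$ using their full semidegree — this is where I anticipate drawing on the ideas from~\cite{kss} mentioned in the overview, and where the bulk of the technical work lies.
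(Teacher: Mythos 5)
Your high-level plan — peel off an almost-independent set of size $n/r$ and reduce to packing $(r-1)$-subtournaments in the remainder $R$ — is the right first move, but it misses the central structural difficulty the paper's proof is built around: $R$ itself may again contain a $\gamma$-independent set of size $n/r$, and then again, and so on. Your step (2) assumes that $G[R]$ (which has minimum semidegree about $(1-1/(r-1))|R|$) can be packed by $T_{r-1}$-copies via the undirected Hajnal--Szemer\'edi theorem or by induction, but $G[R]$ can be exactly as extremal as $G$ was (for instance, $G$ could be the complete $r$-partite digraph, in which case $R$ is the complete $(r-1)$-partite digraph), and the induction would be circular. The paper handles this by \emph{iteratively} extracting almost-independent sets $A_1, \dots, A_s$ of size $n/r$ until what remains, $B$, has no almost-independent set of size $n/r$ — so $B$ is genuinely non-extremal — and then applying the non-extremal result (Theorem~\ref{1nonex}, itself a consequence of Theorems~\ref{nonex1} and~\ref{almostthm}) as a subroutine inside $G[B]$. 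Nothing in your sketch accounts for this recursion, and without it the argument stalls the moment $R$ is extremal.

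Two further issues. First, there is an extra sub-case that your plan does not anticipate: when $|B| = 2n/r$ and $G[B]$ is close to $2K_{n/r}$, the non-extremal packing result in $B$ does not apply, and the paper instead finds a perfect matching inside each half of $B$ and builds copies of $T$ out of one matching edge plus one vertex from each $A_i$, needing separate parity adjustments ($|B_1|, |B_2|$ both even). Second, your step (2)'s use of the undirected Hajnal--Szemer\'edi theorem is insufficient: a $K_{r-1}$ in the underlying graph of $G[R]$ gives only \emph{some} tournament on $r-1$ vertices, not the specific subtournament $T\setminus\{v\}$ that can be extended by a chosen vertex $v \in I$ in the prescribed position. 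The paper sidesteps this by building an auxiliary $(s+1)$-partite digraph with classes $A_1, \dots, A_s, B^*$ (where $B^*$-vertices correspond to vertex-disjoint $T^*$-packings or matching edges already found in $B$) and applying the directed multipartite Hajnal--Szemer\'edi theorem (Theorem~\ref{rpart2}), which in one stroke gives the balance across classes and the correct orientations. In short, your plan captures the right intuition for one layer of the argument but lacks the iterative peeling, the multipartite directed reduction, and the close-to-$2K_{n/r}$ sub-case — each of which is essential and not routine.
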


\begin{lemma}\label{exC3} There exist   $\alpha >0$ and $n_0 \in \mathbb N$ such that the following holds.
Suppose that $G$ is a digraph on $n \geq n_0$ vertices where $n$ is divisible by $3$. If
\begin{itemize}
\item $\delta ^0 (G) \geq 2n/3-1$ and
\item $G$ $\alpha$-contains $Ex(n)$,
\end{itemize}
then $G$ contains  a perfect $C_3$-packing.
\end{lemma}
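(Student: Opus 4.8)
The plan is to use the near-equality with $Ex(n)$ to recover, for $G$, essentially the same way of decomposing into cyclic triangles that $Ex(n)$ admits: a bounded number of ``cyclic transversal'' triangles $a_1\to a_2\to a_3\to a_1$ with $a_j\in A_j$, together with $C_3$-factors inside the three parts. Apart from setting up this skeleton, the only genuine issue is a divisibility one, and resolving it is exactly the place where the precise value $\delta^0(G)\ge 2n/3-1$ — rather than $2n/3-2$, which is attained by $Ex_1(n)$, a digraph that $\alpha$-contains $Ex(n)$ but has no perfect $C_3$-packing — is essential.

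Set $\beta:=\alpha^{1/2}$ and take $1/n_0\ll\alpha$. Since $3\mid n$, the hypothesis that $G$ $\alpha$-contains $Ex(n)$ gives a partition $V(G)=A_1\cup A_2\cup A_3$ with $|A_i|=n/3$ and with at most $\alpha n^2$ edges of $Ex(n)$ (for this labelling) missing from $G$. Call $x\in A_i$ \emph{typical} if $d^+(x,A_i),d^-(x,A_i)\ge n/3-1-\beta n$ and $d^+(x,A_{i+1}),d^-(x,A_{i-1})\ge n/3-\beta n$, indices modulo $3$, and \emph{atypical} otherwise; double counting the missing edges shows that the set $B$ of atypical vertices satisfies $|B|\le 4\beta n$. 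I would first absorb $B$ into disjoint cyclic triangles. Processing $v\in B$ one at a time and deleting used vertices as we go: since $\delta^0(G)\ge 2n/3-1$ and only $O(\beta n)$ vertices have been deleted, $v$ still has many unused typical out- and in-neighbours, and one then finds unused typical $x\in N^+(v)$, $y\in N^-(v)$ with $xy\in E(G)$, so that $v\to x\to y\to v$ is a copy of $C_3$, with some control over which parts $x,y$ lie in. The extra point is that an atypical $v\in A_i$ violates one of its four defining inequalities, and in each case $\delta^0(G)\ge 2n/3-1$ forces $v$ to have more than $\beta n$ \emph{backward} neighbours (more than $\beta n$ out-neighbours in $A_{i-1}$ if an out-inequality fails, or more than $\beta n$ in-neighbours in $A_{i+1}$ if an in-inequality fails); using such an edge one gets a second copy of $C_3$ through $v$ whose other two vertices both lie in one prescribed part. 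So each $v\in B$ can be covered so as to delete either one typical vertex from each of two suitable parts, or two typical vertices from one suitable part.

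Let $r_i$ be the number of leftover typical vertices in $A_i$ after covering $B$; then $r_1+r_2+r_3\equiv 0\pmod 3$, and each leftover part induces an almost-complete digraph on roughly $n/3$ vertices. The crux is to choose the coverings of $B$ so that $r_1\equiv r_2\equiv r_3\pmod 3$: this is automatic if $B=\emptyset$ (then $r_i=n/3$), and otherwise one exploits the flexibility above — in particular the ``uneven'' coverings made possible by the backward edges — to eliminate the only bad residue pattern, namely $\{r_1,r_2,r_3\}\equiv\{0,1,2\}\pmod 3$. Once $r_1\equiv r_2\equiv r_3\equiv\rho\pmod 3$ with $\rho\in\{0,1,2\}$, take $\rho$ vertex-disjoint cyclic transversal triangles on leftover typical vertices (these exist in abundance, all their edges being forward edges present at typical vertices), leaving in each part an almost-complete digraph of order divisible by $3$; such a digraph has a perfect $C_3$-packing by a trivial greedy argument (from any $u$ take $w\in N^+(u)$, then $z\in N^+(w)\cap N^-(u)\ne\emptyset$, remove $\{u,w,z\}$, and iterate). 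Combining these triangles with those covering $B$ yields a perfect $C_3$-packing of $G$.

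I expect the main obstacle to be the residue-balancing step: one must verify, by a short case analysis on the type of atypicality and on the residue pattern, that a single well-chosen uneven covering (equivalently, one extra triangle built from a backward edge, using two vertices of one part and one of another) converts the pattern $\{0,1,2\}$ into an all-equal one, and that the requisite backward edge is always available. This is exactly where $\delta^0(G)\ge 2n/3-1$ is used in an essential way, since relaxing it to $2n/3-2$ would admit $Ex_1(n)$. The remaining ingredients — the bound on $|B|$, the greedy covering of $B$, the existence of the transversal triangles, and the near-complete $C_3$-factors — are routine.
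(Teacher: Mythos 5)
Your overall plan is in the spirit of the paper's proof but diverges at two structural points, and one of these divergences leaves a genuine gap.

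\textbf{What you do differently.} The paper does \emph{not} keep the $Ex(n)$-partition fixed: it first relocates every ``internally bad'' vertex to the class in which it has large in- and out-degree (possible because $\delta^0(G)\ge 2n/3-1$ forces such a class to exist for every vertex). This repartitioning is what makes the balancing step clean: after relocation the class sizes are $n/3\pm O(\beta n)$, and whenever the residues are a permutation of $\{0,1,2\}$ some class has size $\ge n/3+1$, at which point the bound $\delta^0(G)\ge 2n/3-1$ \emph{directly} produces the required backward edge into that class (this is exactly where $2n/3-1$ versus $2n/3-2$ is used). The paper then covers the externally bad vertices and finishes by applying the tripartite Hajnal--Szemer\'edi theorem (Theorem~\ref{rpart}) to the three-coloured digraph $G[A_1,A_2]\cup G[A_2,A_3]\cup G[A_3,A_1]$. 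You instead keep every vertex in its original $Ex(n)$-class, cover the atypical set $B$ by ad hoc triangles, and finish with transversal triangles plus $C_3$-factors \emph{inside} each class.

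\textbf{The gap.} You isolate the residue-balancing step as the crux and then assert that an atypical $v\in A_i$ can always be covered by a $C_3$ whose other two vertices ``both lie in one prescribed part.'' This is not justified: to put both other vertices of a $C_3$ through $v$ into $A_{i-1}$, say, you need a vertex in $N^+(v)\cap A_{i-1}$ \emph{and} a vertex in $N^-(v)\cap A_{i-1}$ that is an out-neighbour of the first; the failure of a single out-inequality gives $d^+(v,A_{i-1})>\beta n$ but says nothing about $d^-(v,A_{i-1})$, which can be $0$. In that case the second vertex of your cycle is forced into a different class, and the ``$(2,0,0)$''-type removal pattern you rely on is unavailable. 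One can try to rescue the argument by tracking which global backward edges the degree condition forces (e.g.\ if $d^+(v,A_i)=0$ then every $u\in A_i\setminus\{v\}$ must have a backward in-edge from $A_{i+1}$), but this is a substantial case analysis that your sketch neither states nor carries out, and it is not clear it can always be closed without effectively re-deriving the repartitioning idea. The paper's repartitioning is not a cosmetic simplification; it is precisely what reduces the balancing to a one-line degree count.

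\textbf{A secondary issue.} The ``trivial greedy argument'' for a perfect $C_3$-packing inside an almost-complete digraph of order divisible by $3$ is not correct as stated: with $\delta^0(H)\ge m-t$, the greedy requires $m>2t+2$ to find the third vertex of the next triangle, so it can stall once $O(\beta n)$ vertices remain. This is easy to repair — apply the Hajnal--Szemer\'edi theorem (Theorem~\ref{hs}) to the double-edge graph of $H$, whose minimum degree is $\ge 2\delta^0(H)-(m-1)\ge 2m/3$ once $m\gtrsim\beta n$ — but it should be said, since the unrepaired version is what you lean on to finish. Note also that the paper deliberately avoids within-class $C_3$-factoring as the main tool and instead uses the tripartite structure via Theorem~\ref{rpart}; your approach of factoring each class separately is workable but is a genuine departure, and you would still need the residue balancing to come out exactly right for it to apply.
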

Lemmas~\ref{ex1} and \ref{exC3} are proved in Sections~\ref{secex1} and~\ref{secexC3} respectively.
We now deduce Theorem~\ref{mainthm} from Theorems~\ref{nonex1} and~\ref{almostthm} and Lemmas~\ref{ex1} and~\ref{exC3}.

\noindent
{\bf Proof of Theorem~\ref{mainthm}.}
Define constants $\eps, \xi , \gamma , \alpha$ and integers $n_0 , \ell$ such that
$$0<1/n_0 \ll 1/\ell \ll \eps \ll \xi \ll \gamma , \alpha \ll 1/r.$$
Let $T \in \mathcal T_r$ and suppose that $G$ is a digraph on $n \geq n_0$ vertices such that $r$ divides $n$ and $\delta ^0 (G) \geq (1-1/r)n$. By Lemmas~\ref{ex1} and~\ref{exC3}
we may assume that
\begin{itemize}
	\item[(i)] $G$ does not contain any $\gamma$-independent set of size  $n/r$;
	\item[(ii)] If $T=C_3$ then $G$ does not $\alpha$-contain $Ex(n)$.
	\end{itemize}
(Otherwise $G$ contains a perfect $T$-packing, as desired.) 
Thus, we can apply Theorem~\ref{nonex1} to obtain a 
set $M\subseteq V(G)$ so that $|M|\leq \xi n$ and $M$ is a $T$-absorbing set for any $W \subseteq V(G) \setminus M$ such that $|W| \in r \mathbb N$ and  $|W|\leq \xi ^2 n$.
Set $G':=G\setminus M$ and let $n':=|G'|\geq (1-\xi)n$. Since $n$ is divisible by $r$ and $M$ is a $T$-absorbing set, $n'$ is also divisible by $r$. Further,
$$\delta ^0 (G') \geq (1-1/r)n-\xi n\geq (1-1/r-\xi )n'.$$
Notice that $G'$ does not contain any $\gamma/2$-independent set of size at least  $n'/r$. (Otherwise $G$ contains a $\gamma$-independent set of size  $n/r$, a contradiction to (i).)
Therefore, by applying Theorem~\ref{almostthm} with $G', n',\xi, \gamma/2$ playing the roles of $G,n, \eps, \gamma$, we obtain a $T$-packing $\mathcal M_1$ in $G'$ that covers all but at most $\ell $ vertices. Let $W$ denote the set of vertices in $G'$ that are not covered by $\mathcal M_1$. 
So $|W|\leq \ell \leq \xi ^2 n$ and,
since $n'$ is divisible by $r$, $|W|\in r \mathbb N$. Thus, by definition of $M$, $G[M\cup W]$ 
contains a perfect $T$-packing $\mathcal M_2$. Therefore, $\mathcal M_1 \cup \mathcal M_2$ is a perfect $T$-packing in $G$, as desired.
\endproof

Similarly we deduce Theorem~\ref{2nonex} from Theorems~\ref{nonex2} and~\ref{almostthm+-}.

\noindent
{\bf Proof of Theorem~\ref{2nonex}.}
Define  additional constants $ \eps, \xi , \gamma  $ and  integers $n_0,  \ell$ such that
$$0<1/n_0 \ll 1/\ell \ll \eps \ll \xi \ll \gamma  \ll 1/r, \eta .$$
Suppose that $G$ is a digraph on $n \geq n _0$ vertices where $r$ divides $n$ and:
\begin{itemize}
\item[(i)] For any $x\in V(G)$,  $d ^+ (x) \geq \left ( 1- 1/r +\eta \right ) n$
{or}  $d ^- (x) \geq \left ( 1- 1/r  +\eta \right ) n$.
\end{itemize}
Suppose that for some $x,y \in V(G)$,  $d ^+ (x) <( 1-1/r+\eta) n$, 
$d ^- (y)< ( 1-1/r +\eta ) n$ 
and  $xy  \in E(G)$. Then if we remove the edge $xy$ from $G$, (i) still holds. In particular, this implies that  we may assume:
\begin{itemize}
\item[(ii)] Given any  $x,y \in V(G)$,  if  $d ^+ (x) <\left ( 1- 1/r +\eta \right  ) n$  and 
$d ^- (y)< \left ( 1- 1/r  +\eta \right ) n $
then  $xy \not \in E(G)$.
\end{itemize}
Note that (i) implies that:
\begin{itemize}
\item[(iii)] $G$ does not contain any $\gamma$-independent set of size $n/r$.
\end{itemize}

Apply Theorem~\ref{nonex2} to obtain a 
set $M\subseteq V(G)$ so that $|M|\leq \xi n$ and $M$ is a $T_r$-absorbing set for any $W \subseteq V(G) \setminus M$ such that $|W| \in r \mathbb N$ and  $|W|\leq \xi ^2 n$.
Set $G':=G\setminus M$ and let $n':=|G'|\geq (1-\xi)n$. Since $n$ is divisible by $r$ and $M$ is a $T_r$-absorbing set, $n'$ is also divisible by $r$. Further, (i) implies that
for any $x\in V(G')$,  $$d ^+ _{G'} (x) \geq \left ( 1- \frac{1}{r} -\eps \right ) n' \ \text{ or } \
d ^- _{G'} (x) \geq \left ( 1- \frac{1}{r} -\eps \right ) n'.$$

Suppose that for some $x,y \in V(G')$,  $d ^+ _{G'} (x) <( 1-1/r - \eps) n'$ and 
$d ^- _{G'} (y)< ( 1-1/r -\eps) n'$. Then $d ^+ _{G} (x) <( 1-1/r - \eps) n'+\xi n  \leq(1-1/r+\eta)n$ and $d ^- _{G} (y)< ( 1-1/r +\eta)n$. Thus, by (ii), $xy \not \in E(G')$. 
Notice that $G'$ does not contain any $\gamma/2$-independent set of size at least  $n'/r$. (Otherwise $G$ contains a $\gamma$-independent set of size  $n/r$, a contradiction to (iii).)
Therefore, by applying Theorem~\ref{almostthm+-} with $G', n', \gamma/2$ playing the roles of $G,n, \gamma$, we obtain a $T_r$-packing $\mathcal M_1$ in $G'$ that covers all but at most $\ell $ vertices. Let $W$ denote the set of vertices in $G'$ that are not covered by $\mathcal M_1$. 
So $|W|\leq \ell \leq \xi ^2 n$ and,
since $n'$ is divisible by $r$, $|W|\in r \mathbb N$. Thus, by definition of $M$, $G[M\cup W]$ 
contains a perfect $T_r$-packing $\mathcal M_2$. Hence, $\mathcal M_1 \cup \mathcal M_2$ is a perfect $T_r$-packing in $G$, as desired.
\endproof

Suppose that $G$ is a digraph on $n$ vertices that satisfies (\ref{conmin1}). 
Suppose that for some $x,y \in V(G)$,  $d ^+ (x) <( 1-1/r) n$, 
$d ^- (y)< ( 1-1/r  ) n$ 
and  $xy  \in E(G)$. Then if we remove the edge $xy$ from $G$, (\ref{conmin1}) still holds. Thus, to prove Conjecture~\ref{conj2} it suffices to  consider digraphs $G$ with the following additional assumption:
Given any  $x,y \in V(G)$,  if  $d ^+ (x) <\left ( 1- 1/r \right  ) n$  and 
$d ^- (y)< \left ( 1- 1/r   \right ) n $
then  $xy \not \in E(G)$. The next result states that such a digraph $G$  contains a perfect $T_r$-packing or contains an `almost' independent set of size $n/r$.
\begin{thm}\label{2nonex1} Given any $\gamma >0$ and an integer $r\geq 3$ there exists an $n_0 \in \mathbb N$ such that the following holds.
 Suppose that $G$ is a digraph on $n\geq n_0$ vertices where $r$ divides $n$ and that, for any $x \in V(G)$,
\begin{align*}
d^+ (x) \geq \left(1-{1}/{r}  \right) n \ \text{ or } \ d^- (x) \geq \left(1-{1}/{r} \right) n.
\end{align*}
Further suppose that, given any $x,y \in V(G)$, if $d ^+ (x) <\left ( 1- {1}/{r}  \right ) n$ and
$d ^- (y)< \left ( 1- {1}/{r}  \right ) n$
then $xy \not \in E(G)$.
Then at least one of the following properties holds:
\begin{itemize}
\item[(i)] $G$ contains a perfect $T_r$-packing;
\item[(ii)] $G$ contains a $\gamma$-independent set of size at least $n/r$.
\end{itemize}
\end{thm}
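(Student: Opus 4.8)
The plan is to follow the route used for Theorem~\ref{2nonex} --- combine the absorbing set supplied by Theorem~\ref{nonex2} with the almost perfect $T_r$-packing supplied by Theorem~\ref{almostthm+-} --- but now one has to cope with the fact that the degree hypothesis carries no `$\eta$-margin'. Since any $\gamma'$-independent set with $\gamma'\le\gamma$ is also $\gamma$-independent, we may assume that $\gamma$ is as small as we please; in particular $\gamma\ll 1/r$. Choose constants and an integer $\ell$ with $0<1/n_0\ll 1/\ell\ll \eps\ll \xi\ll \gamma\ll 1/r$. Let $G$ be as in the statement and suppose that (ii) fails, i.e.\ $G$ has no $\gamma$-independent set of size at least $n/r$; we shall exhibit a perfect $T_r$-packing.

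First I would apply Theorem~\ref{nonex2} to $G$: its degree alternative holds (with $\eps$ in place of $0$), and by assumption $G$ has no $\gamma$-independent set of size at least $n/r$, so we obtain a set $M\subseteq V(G)$ with $|M|\le \xi n$ that $T_r$-absorbs every $W\subseteq V(G)\setminus M$ with $|W|\in r\mathbb N$ and $|W|\le \xi^2 n$. Set $G':=G\setminus M$ and $n':=|G'|\ge (1-\xi)n$; since $G[M]$ has a perfect $T_r$-packing, $r$ divides $|M|$ and hence $r$ divides $n'$.

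The key step is to check that $G'$ satisfies the hypotheses of Theorem~\ref{almostthm+-} with $G',n',\xi,\gamma/2$ playing the roles of $G,n,\eps,\gamma$ (note $1/\ell\ll\xi\ll\gamma/2\ll 1/r$, as required). Deleting $M$ costs up to $\xi n$ in each degree --- much more than the $\eps n$ available in the `default' version of the theorem --- but $\xi\ll\gamma$, so this is still within tolerance. For the degree alternative: if $x\in V(G')$ has $d^+_G(x)\ge(1-1/r)n$ then $d^+_{G'}(x)\ge(1-1/r)n-|M|\ge(1-1/r-\xi)n\ge(1-1/r-\xi)n'$, and symmetrically for indegrees. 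For the non-edge condition: if $x,y\in V(G')$ satisfy $d^+_{G'}(x)<(1-1/r-\xi)n'$ and $d^-_{G'}(y)<(1-1/r-\xi)n'$, then $d^+_G(x)\le d^+_{G'}(x)+|M|<(1-1/r-\xi)n'+|M|=(1-1/r-\xi)n+(1/r+\xi)|M|<(1-1/r)n$, using $1/r+\xi<1$; likewise $d^-_G(y)<(1-1/r)n$, so $xy\notin E(G)$ by hypothesis, and hence $xy\notin E(G')$.

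Theorem~\ref{almostthm+-} now yields one of two outcomes for $G'$. If $G'$ has a $\gamma/2$-independent set $S$ with $|S|\ge n'/r$, extend it to a set $S'\subseteq V(G)$ with $|S'|=n/r$ by adjoining at most $|M|/r\le \xi n/r$ arbitrary vertices; since $G[S]=G'[S]$ and each new vertex lies on fewer than $2n$ edges of $G$, the digraph $G[S']$ has fewer than $(\gamma/2)(n')^2+(\xi n/r)(2n)\le \gamma n^2$ edges, so $S'$ is a $\gamma$-independent set of size $n/r$ in $G$ --- contradicting the failure of (ii). Hence $G'$ has a $T_r$-packing $\mathcal M_1$ missing only a set $W$ with $|W|\le\ell$; as $r$ divides $n'$ and each copy of $T_r$ has $r$ vertices, $r$ divides $|W|$, and $|W|\le\ell\le\xi^2 n$ once $n\ge n_0$. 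By the absorbing property of $M$, the digraph $G[M\cup W]$ has a perfect $T_r$-packing $\mathcal M_2$, so $\mathcal M_1\cup\mathcal M_2$ is a perfect $T_r$-packing of $G$ and (i) holds. The one genuinely delicate point is the constant bookkeeping in transferring the degree and non-edge hypotheses from $G$ to $G'$: in contrast with Theorem~\ref{2nonex} there is no slack to spare, so one must invoke Theorem~\ref{almostthm+-} with an error parameter of order $\xi$ rather than $\eps$.
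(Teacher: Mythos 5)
Your proof is correct and follows the route the paper intends: it says the proof is ``almost identical to that of Theorem~\ref{2nonex}'' and you faithfully reproduce that argument, while correctly noting that without the $\eta$-margin the loss of $|M|\le\xi n$ must be absorbed into the error parameter of Theorem~\ref{almostthm+-} (using $\xi$ in place of $\eps$, which is permissible since $\xi\ll\gamma$). The bookkeeping for the degree alternative, the non-edge condition, and the divisibility of $|W|$ all checks out.
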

\proof
The proof  is almost identical to that of Theorem~\ref{2nonex} so we omit it.
\endproof
So Theorem~\ref{2nonex1} implies that to prove Conjecture~\ref{conj2} for large digraphs it suffices to prove  the extremal case.

\section{Tur\'an-type stability results for embedding tournaments}\label{tusec}
\subsection{The Tur\'an result for Theorem~\ref{mainthm}}
The aim of this subsection is to prove Proposition~\ref{turanstab} which, roughly speaking,
states that a digraph $G$ on $n$ vertices of sufficiently large semidegree (i) contains many copies of a fixed
$T \in \mathcal T_r$ or (ii) contains an `almost' independent set of size $n/r$. Proposition~\ref{turanstab}
will be applied in the proof of both Theorem~\ref{nonex1} and Theorem~\ref{almostthm}. 

The next result is an immediate consequence of Tur\'an's theorem.

\begin{prop} \label{turan}
Let $n,r  \in \mathbb N$ where $r \geq 2$. Suppose that $G$ is a digraph on $n$ vertices such that
\begin{align}\label{edge1}
e(G) >\left(1-\frac{1}{r-1} \right) \frac{n^2}{2} +\binom{n}{2}.
\end{align}
Then $G$ contains a copy of $K_r$.
\end{prop}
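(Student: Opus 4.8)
The plan is to reduce the statement to the classical undirected Tur\'an theorem by passing to the graph of ``double edges'' of $G$. Define $G^*$ to be the graph on vertex set $V(G)$ in which $xy$ is an edge precisely when both $xy \in E(G)$ and $yx \in E(G)$. Since a complete digraph $K_r$ consists of all $r(r-1)$ possible edges among $r$ vertices, a set of $r$ vertices spans a copy of $K_r$ in $G$ if and only if it spans a copy of the complete graph $K_r$ in $G^*$. So it suffices to show that $G^*$ contains a $K_r$.

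Next I would bound $e(G^*)$ from below. Write $d:=e(G^*)$ for the number of pairs of vertices joined in both directions in $G$. Every pair of vertices that is not counted by $d$ contributes at most one edge to $G$, and there are at most $\binom{n}{2}-d$ such pairs, so
\begin{align*}
e(G) \le 2d + \left(\binom{n}{2}-d\right) = \binom{n}{2}+d.
\end{align*}
Combining this with the hypothesis~(\ref{edge1}) gives
\begin{align*}
d \;\ge\; e(G) - \binom{n}{2} \;>\; \left(1-\frac{1}{r-1}\right)\frac{n^2}{2}.
\end{align*}

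Finally, I would invoke Tur\'an's theorem: the maximum number of edges in a $K_r$-free graph on $n$ vertices is the number of edges of the complete $(r-1)$-partite Tur\'an graph, which is at most $\left(1-\frac{1}{r-1}\right)\frac{n^2}{2}$. Since $e(G^*)=d$ strictly exceeds this bound, $G^*$ cannot be $K_r$-free, so $G^*$ contains a copy of $K_r$, and hence so does $G$.

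There is essentially no serious obstacle here: the only points that need a little care are interpreting $K_r$ as the complete \emph{digraph} (so that a copy of $K_r$ in $G$ genuinely corresponds to a clique in the double-edge graph $G^*$), and quoting the Tur\'an bound in the precise form $\mathrm{ex}(n,K_r) \le \left(1-\tfrac{1}{r-1}\right)\tfrac{n^2}{2}$, so that the strict inequality in~(\ref{edge1}) translates directly into the existence of a $K_r$.
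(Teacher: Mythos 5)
Your proof is correct and is essentially identical to the paper's: both pass to the graph $G'$ (your $G^*$) of double edges, bound its edge count below by $e(G)-\binom{n}{2}$, and apply Tur\'an's theorem. You have merely spelled out the counting step that the paper compresses into one line.
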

\proof
Let $G'$ be the graph on $V(G)$ whose edge set consists of all pairs $xy$ where $xy,yx \in E(G)$.
Then (\ref{edge1}) implies that $e(G') >(1-{1}/(r-1) ) n^2/2$ and thus $G'$ contains a copy
of $K_r$ by Tur\'an's theorem. Hence, $K_r \subseteq G$ as required.
\endproof

\begin{prop}\label{ind}
Let $1/n \ll \alpha \ll 1/r$ with  $n,r \in \mathbb N$ and $r \geq 3$, and let $T \in \mathcal T_r$. Suppose that $G$ is a digraph on $n$ vertices such that
\begin{align}\label{min1}
\delta ^0 (G) \geq \left ( 1- \frac{1}{r-1} -\alpha \right ) n.
\end{align}
If $G$ is $T$-free then $G$ contains an independent set of size at least 
$\left(\frac{1}{r-1} -2r^2\alpha\right) n$.
\end{prop}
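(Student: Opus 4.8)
The plan is to run an induction on $r$, mirroring the standard way Turán-type statements for tournaments are reduced to the undirected case. The base case $r=3$ is where the key idea lives. Suppose $G$ is a digraph on $n$ vertices with $\delta^0(G) \geq (1/2 - \alpha)n$ that is $C_3$-free or $T_3$-free, depending on which $T \in \mathcal T_3$ we are forbidding (note $\mathcal T_3 = \{C_3, T_3\}$). In either case, first pass to the graph $G'$ on $V(G)$ whose edges are the pairs $xy$ with $xy, yx \in E(G)$ — if $G'$ contained a triangle $xyz$, then $G[x,y,z]$ is the complete digraph on three vertices and hence contains a copy of \emph{every} tournament on $3$ vertices, contradicting $T$-freeness. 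So $G'$ is triangle-free, and by Turán's theorem (or just Mantel) $G'$ has an independent set $S$ of size at least $n/2$; equivalently there is a set $S$ with $|S| \geq n/2$ such that for every pair $x, y \in S$, at most one of $xy, yx$ is present. This is \emph{not} yet an independent set in $G$, so the real work is to clean it up using the minimum semidegree condition together with $T$-freeness: inside $G[S]$ every vertex has semidegree at least $(1/2 - \alpha)n - (n - |S|) \geq (1/2-\alpha)n - n/2 \cdot(\text{slack})$, which for $|S|$ close to $n/2$ forces $G[S]$ to be a near-tournament (almost all pairs oriented). But a tournament on $\geq 4$ vertices contains a copy of every $3$-vertex tournament — indeed any tournament on $4$ vertices contains $T_3$, and any tournament on $\geq 4$ vertices with more than... — more carefully, the classical fact is that a tournament contains a transitive triangle unless it is a disjoint-ish union of small pieces, and contains a cyclic triangle unless it is itself transitive. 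In both sub-cases $T$-freeness caps how much of $G[S]$ can be oriented, which forces most pairs in $S$ to have \emph{no} edge, giving the desired genuinely independent subset of size at least $(1/2 - O(\alpha))n$ after deleting the few high-degree vertices.

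For the inductive step, assume the statement for $r-1$ and take $G$ on $n$ vertices with $\delta^0(G) \geq (1 - 1/(r-1) - \alpha)n$ that is $T$-free for some $T \in \mathcal T_r$. Fix any vertex $v$; let $T'$ be the sub-tournament of $T$ of order $r-1$ obtained by deleting an appropriate vertex of $T$ (we get to choose which vertex of $T$ we "use" $v$ as). Since $\delta^0(G) \geq (1 - 1/(r-1) - \alpha)n$, the common in/out-neighbourhood structure around $v$ is large: in particular there is a set $W \subseteq V(G)$ of size at least $(1 - 2/(r-1) - 2\alpha)n \geq \ldots$ — hmm, this bound is too weak to iterate directly. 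Instead I would argue on the subgraph $G[N]$ where $N$ is a neighbourhood of $v$ chosen to match the edges at one vertex of $T$: if $v$ is to play the role of the $i$-th vertex of $T$ (in the $T_r$ case) or an arbitrary vertex, we need $N$ consisting of vertices $w$ with the correct orientation of the edge $vw$. The point is: if $G[N]$ contains a copy of the corresponding $T' \in \mathcal T_{r-1}$ then together with $v$ we get a copy of $T$ in $G$, contradiction; so $G[N]$ is $T'$-free. Then $G[N]$ has order $|N| \geq$ roughly $\delta^{\pm}$-worth, and its induced minimum semidegree is at least $\delta^0(G) - (n - |N|)$. Choosing $v$ to maximise the relevant neighbourhood and bookkeeping the arithmetic, $|N|$ is large enough and the induced semidegree clears the threshold $(1 - 1/(r-2) - \alpha')|N|$ needed to apply the inductive hypothesis, yielding an independent set in $G[N] \subseteq G$ of size at least $(1/(r-2) - O(\alpha))|N|$, which after plugging in $|N| \gtrsim (1 - 1/(r-1))n$ and simplifying exceeds $(1/(r-1) - 2r^2\alpha)n$. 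The error tracking — why exactly $2r^2\alpha$ and not something larger — is a routine but slightly delicate accumulation across the $r$ levels of the induction, and that is the step I expect to absorb most of the bookkeeping; conceptually, though, the induction loses a factor bounded by a small polynomial in $r$ at each step and there are $O(r)$ steps.

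The main obstacle, then, is not the reduction structure (which is standard: forbid-a-tournament implies forbid-a-clique after passing to the "double-edge" graph only at the bottom level; at higher levels one localises to a correctly-oriented neighbourhood) but rather making the base case $r=3$ genuinely produce an \emph{independent} set rather than merely an "at-most-one-edge-per-pair" set of size $n/2$. That gap is exactly the content of the $-2r^2\alpha$ slack, and closing it requires the observation that a $T$-free digraph on a vertex set where all pairs already satisfy the double-edge-free condition must in fact be very sparse (a tournament on $4$ vertices already realises both $3$-vertex tournaments, so $T$-freeness on any $4$-subset forbids that subset from being fully oriented), pushing the count of actual edges inside $S$ down to $O(\alpha n^2)$ and letting us discard the $O(\alpha n)$ offending vertices. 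I would write the base case carefully and then let the inductive step largely cite the base case together with Proposition~\ref{turan} and the inductive hypothesis.
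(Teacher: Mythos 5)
Your proposal has genuine gaps and does not take the paper's route.

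The paper's proof is a single non-inductive argument: write $V(T)=\{v_1,\dots,v_{r-2},a,b\}$ with $ab\in E(T)$, greedily embed $T'=T[v_1,\dots,v_{r-2}]$ using the semidegree bound, and then form the ``candidate'' sets $A$ (vertices that could play the role of $a$) and $B$ (vertices that could play the role of $b$). Each has size at least $(\tfrac{1}{r-1}-(r-2)\alpha)n$, and $T$-freeness forces $e_G(A\to B)=0$, so $A\cap B$ is independent. If $A\setminus B$ were large it would be nearly complete (its vertices waste no out-degree on $B$), hence contain $K_r$ by Proposition~\ref{turan} and so a copy of $T$ — contradiction. The whole proof handles all $T\in\mathcal T_r$ uniformly.

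Your base case $r=3$ is where the real problem lies, and it is not merely unfinished bookkeeping. After passing to the double-edge graph $G'$ (which is indeed triangle-free) and extracting $S$ of size $\geq n/2$ independent in $G'$, you assert that $T$-freeness forces $G[S]$ to have $O(\alpha n^2)$ edges, citing the claim that ``a tournament on $4$ vertices already realises both $3$-vertex tournaments.'' That claim is false: the transitive tournament $T_4$ contains $T_3$ but has no $C_3$. Consequently, when $T=C_3$, a large transitive tournament sitting inside $S$ is perfectly $C_3$-free and dense, and your cleanup has nothing to grip. Even when $T=T_3$, avoiding oriented $K_4$'s only bounds the underlying edge density by $2/3$ via Tur\'an, not by $O(\alpha)$. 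And the fallback you gesture at — using the semidegree of $G[S]$ — fails quantitatively: with $|S|\approx n/2$ and $\delta^0(G)\approx n/2$, you get $\delta^0(G[S])\geq \delta^0(G)-(n-|S|)\approx 0$, so the restricted semidegree condition is vacuous.

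Your inductive step has a separate structural gap. You want to pick a vertex $v$, look at a suitable neighbourhood $N$, and argue $G[N]$ is $T'$-free for $T'=T-w$. For this reduction to a single in- or out-neighbourhood to make sense, $w$ must be a source or a sink of $T$, so that ``the correct orientation of $vw$'' is the same for every $w'\in V(T')$. A general tournament need not have a source or sink (regular tournaments on odd $r\geq5$ have none), and once $w$ has both in- and out-neighbours in $T$, the relevant subgraph is not $G[N^+(v)]$ or $G[N^-(v)]$ but a mixed structure that your framing does not capture. This is exactly what the paper's candidate-set bookkeeping handles and your sketch elides. Even for $T=T_r$ where the source exists, the semidegree arithmetic shows the required threshold is met only with equality up to $O(\alpha)$ at each level, so the accumulated error after $O(r)$ levels is not automatically bounded by $2r^2\alpha$; that would require a more careful choice of constants than ``routine bookkeeping.''
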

\proof 
Let $V(T) =\{ v_1,\dots , v_{r-2}, a,b \}$ and set $T':=T[v_1, \dots , v_{r-2}]$. Using (\ref{min1}), greedily construct
a copy $T''$ of $T'$ in $G$. To simplify notation,
for each $1\leq i \leq r-2$, we will refer to the vertex in $T''$ (and thus $G$) corresponding to the vertex 
$v_i$ in $T'$ as $v_i$.

We say that a vertex $v \in V(G)$ is a \emph{candidate for $a$ in $G$} if the following conditions hold:
\begin{itemize}
\item If $av_i \in E(T)$  then $vv_i \in E(G)$ (for each $1\leq i \leq r-2$);
\item If $v_ia \in E(T)$  then $v_iv \in E(G)$  (for each $1\leq i \leq r-2$).
\end{itemize}
We give an analogous definition of a \emph{candidate for $b$ in $G$}.
Let $A$ denote the set of candidates for $a$ in $G$ and let $B$ denote the set of candidates for
$b$ in $G$. Thus, (\ref{min1}) implies that 
\begin{align}\label{ABboundy}
|A|,|B|\geq \left(\frac{1}{r-1} -(r-2)\alpha\right) n.
\end{align}
Without loss of generality, suppose that $ab \in E(T)$. Since $G$ is $T$-free, there  is no edge in $G$
whose startpoint lies in $A$ and whose endpoint lies in $B$. In particular, $A\cap B$ is an independent
set. 

Set $A':=A\setminus B$.
Suppose for a contradiction that $|A'|\geq 2(r-1)^2 \alpha n$. 
Given any vertex $x \in A'$, since $x$ sends no edges to $B$, (\ref{min1}) and (\ref{ABboundy}) imply
that there are at most
$$\left( \frac{1}{r-1}+\alpha \right) n-\left( \frac{1}{r-1}-(r-2)\alpha \right) n=(r-1)\alpha n$$
vertices in $A'$ that $x$ does not send an edge to (including itself). Thus,
$$\delta ^+ (G[A'])\geq|A'|-(r-1)\alpha n \geq \left (1- \frac{1}{2(r-1)}\right)|A'|$$
and so
$$e(G[A'])\geq \left (1-\frac{1}{2(r-1)}\right)|A'|^2 >\left( 1 -\frac{1}{r-1} \right )\frac{|A'|^2}{2}
+\binom{|A'|}{2}.$$
Hence, Proposition~\ref{turan} implies that $K_r \subseteq G[A']$ and so $G$ contains a copy of
$T$, a contradiction. Therefore, $|A'|<2(r-1)^2 \alpha n$. Together with (\ref{ABboundy})
this implies that the independent set $A \cap B$ is of size at least
$\left(\frac{1}{r-1} -(r-2)\alpha\right) n -2(r-1)^2 \alpha n \geq 
\left(\frac{1}{r-1} -2r^2\alpha\right) n,$ as required.
\endproof
To prove Proposition~\ref{turanstab} we will apply Proposition~\ref{ind} together with the following
directed version of the Removal lemma (see e.g. \cite{alon, fox}).

\begin{lemma}[Directed Graph Removal lemma]\label{dirl}
Let $\gamma >0$ and $t \in \mathbb N$. Given any digraph $H$ on $t$ vertices, there exists 
$\alpha = \alpha (H, \gamma )>0$ and $n_0 =n_0 (H, \gamma ) \in \mathbb N$ such that the following
holds. Suppose that $G$ is a digraph on $n \geq n_0$ vertices such that $G$ contains at most $\alpha n^t$
copies of $H$. Then $G$ can be made $H$-free by deleting at most $\gamma n^2$ edges.
\end{lemma}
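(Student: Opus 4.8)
The plan is to prove Lemma~\ref{dirl} by the regularity method, adapted to the directed setting; the two key ingredients are a directed Szemer\'edi regularity lemma and a directed counting (embedding) lemma. First I would record the directed regularity lemma: for every $\eps'>0$ and $m_0\in\mathbb N$ there is $M=M(\eps',m_0)$ so that every digraph $G$ on $n$ vertices admits a partition $V(G)=V_0\cup V_1\cup\dots\cup V_k$ with $m_0\le k\le M$, $|V_0|\le\eps' n$, $|V_1|=\dots=|V_k|$, and such that for all but at most $\eps' k^2$ ordered pairs $(i,j)$ with $1\le i\ne j\le k$ the bipartite digraph $G[V_i,V_j]$ of edges directed from $V_i$ to $V_j$ is $\eps'$-regular. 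This follows from the usual energy-increment argument applied to ordered pairs of parts (or, alternatively, by applying the undirected regularity lemma to a suitable auxiliary edge-coloured structure recording orientations).

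Given $\gamma$ and $H$ on $t$ vertices, I would choose constants $\delta>0$ small, then $\eps'\ll\delta,1/t$, then $m_0$ large, and apply the directed regularity lemma to $G$. I then form a ``cleaned'' digraph $G'\subseteq G$ by deleting every edge that (a) is incident to $V_0$, (b) lies inside some part $V_i$, (c) lies in an ordered pair $(V_i,V_j)$ that is not $\eps'$-regular, or (d) lies in an ordered pair $(V_i,V_j)$ of density less than $\delta$. The number of edges deleted is at most $\eps' n^2 + n^2/m_0 + \eps' n^2 + \delta n^2 \le \gamma n^2$ for an appropriate choice of the constants.

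The crux is then to show that $G'$ is $H$-free, which proves the lemma in contrapositive form. Suppose $G'$ contains a copy of $H$ on vertices $x_1,\dots,x_t$ with $x_m\in V_{i_m}$. By the cleaning step (b) the indices $i_1,\dots,i_t$ are distinct, and by (c) and (d) every ordered pair $(V_{i_a},V_{i_b})$ used by an edge of this copy is $\eps'$-regular with density at least $\delta$. The directed counting lemma, proved exactly as in the undirected case by embedding the vertices of $H$ one at a time and using $\eps'$-regularity to control the sizes of the successive common in- and out-neighbourhood sets, then yields at least $c(\delta,\eps',t)\,(n/k)^t \ge c(\delta,\eps',t)\,(n/M)^t$ copies of $H$ in $G$ that respect this assignment of vertices to parts. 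Setting $\alpha := c(\delta,\eps',t)/(2M^t)$, which depends only on $H$ and $\gamma$, this contradicts the hypothesis that $G$ has at most $\alpha n^t$ copies of $H$. Hence $G'$ is $H$-free, and since $G'$ was obtained from $G$ by deleting at most $\gamma n^2$ edges, the lemma follows.

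I expect the main obstacle to be setting up the directed regularity and counting lemmas carefully: one must work with \emph{ordered} pairs of parts, with $\eps'$-regularity of $(V_i,V_j)$ referring only to edges directed from $V_i$ to $V_j$ (so a pair of parts is effectively counted twice, once in each direction), and the embedding step must track both out-neighbourhoods and in-neighbourhoods of the partial embedding. Once these foundations are in place, the cleaning-and-counting argument is entirely routine. (Alternatively, one can bypass developing a directed regularity lemma by reducing directly to the undirected graph removal lemma via an orientation-recording auxiliary graph or edge-colouring, which is essentially the route of \cite{alon,fox}.)
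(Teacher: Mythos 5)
The paper does not prove this lemma---it is invoked as a black box and attributed to Alon--Shapira~\cite{alon} and Fox~\cite{fox}. Alon--Shapira establish the directed removal lemma via a directed regularity lemma, which is exactly the route you sketch in the main body; Fox's contribution is a regularity-free argument with much better quantitative dependence, so your closing parenthetical somewhat mischaracterizes those references (neither proceeds by an orientation-coloured reduction to the undirected removal lemma), but the core of your sketch is the right approach.

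The one genuine gap is the assertion that cleaning step (b) forces the indices $i_1,\dots,i_t$ to be distinct. Deleting edges inside parts only guarantees $i_a\ne i_b$ when $x_a$ and $x_b$ are \emph{adjacent} in $H$; two non-adjacent vertices of $H$ may perfectly well be embedded into the same cluster $V_i$ of $G'$, and for a general digraph $H$ (which the lemma allows) this really can occur. The standard fix is to refine the regularity partition by splitting each $V_i$ into $t$ equal sub-parts, which preserves $O(t\eps')$-regularity by the slicing lemma; then the at most $t$ vertices of a copy of $H$ can always be reassigned to pairwise distinct sub-parts before the counting lemma is invoked. Alternatively, one can formulate the directed counting lemma so as to permit several non-adjacent vertices of $H$ to share a cluster, which merely requires a little extra care when shrinking candidate sets to avoid vertex collisions. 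In the paper's own use of the lemma (inside the proof of Proposition~\ref{turanstab}, where $H$ is a tournament) any two vertices of $H$ are adjacent, so distinctness is automatic and the issue is moot---but as a proof of the lemma for arbitrary $H$ your write-up needs the patch. Apart from that, the cleaning bound (up to a harmless factor $2$ on the edges meeting $V_0$) and the choice of $\alpha$ depending only on $H$ and $\gamma$ through $\delta,\eps',m_0,M$ are sound.
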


\begin{prop}\label{turanstab}
Let $0<1/n \ll \alpha \ll \eps \ll 1/r$ where $r,n \in \mathbb N$ and $r \geq 2$, and let $T \in \mathcal T_r$. 
Suppose that $G$ is a digraph on $n$ vertices such that
$$\delta ^0 (G) \geq \left ( 1- \frac{1}{r-1} -\eps\right ) n$$
and so that $G$ contains at most $\alpha n^r$ copies of $T$.
Then $G$ contains a $\sqrt{\eps}$-independent set of size at least $n/(r-1)$.
\end{prop}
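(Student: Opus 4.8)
The plan is to delete few edges of $G$ via the Directed Graph Removal lemma (Lemma~\ref{dirl}) to reach a $T$-free digraph, then produce a genuine independent set with Proposition~\ref{ind}, and finally absorb the two resulting sources of error — a size shortfall of order $r^2\eps n$ coming from Proposition~\ref{ind}, together with the edges deleted in the first step — into the slack afforded by the (comparatively generous) notion of a $\sqrt{\eps}$-independent set. It is exactly this last point that forces the weaker conclusion `$\sqrt{\eps}$-independent' rather than something linear in $\eps$.

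The case $r=2$ is immediate: then $T$ is a single directed edge, the semidegree hypothesis is vacuous, and having at most $\alpha n^2$ copies of $T$ means $e(G)\le\alpha n^2\le\sqrt{\eps}n^2$, so $V(G)$ itself is a $\sqrt{\eps}$-independent set of size $n=n/(r-1)$. So assume $r\ge 3$ (this is also needed for Proposition~\ref{ind}), and introduce an auxiliary constant $\mu$ with $\alpha\ll\mu\ll\eps$ and $\mu\le\eps^2$. Lemma~\ref{dirl} applied with $H:=T$ and $\gamma:=\mu$ provides a threshold $\alpha_0=\alpha_0(T,\mu)>0$; since $\mu$ depends only on $\eps$ and $r$ while $\alpha\ll\mu$, we may assume $\alpha\le\alpha_0$, and hence delete at most $\mu n^2$ edges of $G$ to obtain a $T$-free digraph $G'$ on $V(G)$.

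The delicate point is that deleting $\mu n^2$ edges could, a priori, wipe out almost the whole out- or in-neighbourhood of a single vertex, so Proposition~\ref{ind} cannot be applied to $G'$ directly. To get around this, let $X\subseteq V(G)$ consist of the vertices that lost more than $\sqrt{\mu}n$ out-edges or more than $\sqrt{\mu}n$ in-edges in passing from $G$ to $G'$; counting deleted edges gives $|X|\le 2\sqrt{\mu}n$. Set $G'':=G'[V(G)\setminus X]$ and $n'':=|G''|\ge(1-2\sqrt{\mu})n$. Then every vertex of $G''$ has in- and out-degree within $G''$ at least $\delta^0(G)-\sqrt{\mu}n-|X|\ge(1-1/(r-1)-\eps-3\sqrt{\mu})n\ge(1-1/(r-1)-\alpha'')n''$, where $\alpha'':=\eps+3\sqrt{\mu}\le 4\eps$, so in particular $\alpha''\ll 1/r$. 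As $G''$ is an induced subdigraph of the $T$-free digraph $G'$ it is $T$-free, so Proposition~\ref{ind} (whose hypotheses hold since $\alpha''\ll 1/r$ and $n''$ is large) yields an independent set $S$ of $G''$ with $|S|\ge(1/(r-1)-2r^2\alpha'')n''\ge(1/(r-1)-\beta)n$ for some $\beta=O(r^2\eps)$. Moreover $S$ is independent in $G'$, so the number of edges of $G$ with both endpoints in $S$ is at most the number of deleted edges, hence at most $\mu n^2$.

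Finally, adjust $S$ to a set $S^*$ with $|S^*|=\lceil n/(r-1)\rceil$: if $|S|$ already exceeds this we pass to a subset (which only decreases the number of induced edges), and otherwise we adjoin $k\le\beta n+1$ arbitrary further vertices of $G$. In the second case $e(G[S^*])\le e(G[S])+2|S|k+k^2\le \mu n^2+2nk+k^2\le \mu n^2+O(r^4\eps n^2)\le\sqrt{\eps}n^2$, using $\mu\le\eps^2$, that $\eps$ is chosen small enough in terms of $1/r$, and that $n$ is large. Hence $S^*$ is a $\sqrt{\eps}$-independent set of size at least $n/(r-1)$, as required. The only step needing genuine care is the degree bookkeeping — one must excise the small set $X$ of badly damaged vertices before invoking Proposition~\ref{ind}; everything else amounts to checking that the accumulated error terms are dominated by $\sqrt{\eps}n^2$.
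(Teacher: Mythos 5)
Your proof is correct and takes essentially the same route as the paper: apply the Directed Graph Removal lemma to pass to a $T$-free digraph, excise the few vertices whose degrees were badly damaged by the edge deletion, invoke Proposition~\ref{ind} on the resulting induced subdigraph, and then pad the independent set back up to size $n/(r-1)$ while absorbing the error into the $\sqrt{\eps}$-independence slack. The only cosmetic differences are that you spell out the $r=2$ case and the degree bookkeeping explicitly, whereas the paper compresses these into one-line remarks.
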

\proof The case when $r=2$ is trivial thus we may assume that $r \geq 3$.
Define an additional constant $ \gamma$  so that
$ \alpha \ll \gamma \ll \eps .$
Suppose that $G$ is as in the statement of the proposition. Since $G$ contains at most 
$\alpha n^r$ copies of $T$, Lemma~\ref{dirl} implies that one can remove at most $\gamma n^2$ edges
from $G$ to obtain a spanning subdigraph $G'$ that is $T$-free. 
So at most $\sqrt{\gamma}n$ vertices in $G$ are incident to more than $2 \sqrt{\gamma} n$ of the edges in $G-G'$.
Therefore, since $\gamma \ll \eps$, there exists an induced
 subdigraph $G''$ of $G'$ such that $n'':=|G''|\geq (1-\eps)n$ and 
$\delta ^0 (G'')\geq (1-1/(r-1)-2\eps)n''$.

Since $G''$ is $T$-free, Proposition~\ref{ind} implies that $G''$ contains an independent set $S$ of 
size at least $$\left(\frac{1}{r-1}-4r^2 \eps \right) n''\geq \left(\frac{1}{r-1}-5r^2 \eps \right) n.$$
By construction of $G''$, $S$ is a $\gamma$-independent set in $G$. By adding at most
$5r^2 \eps n$ arbitrary vertices to $S$ we obtain a $\sqrt{\eps}$-independent set in $G$ of size at least $n/(r-1)$, as desired.
\endproof

\subsection{The Tur\'an result for Theorem~\ref{2nonex}}
In this section we give an analogue of Proposition~\ref{turanstab} which will be applied in the proof of both Theorem~\ref{nonex2} and
Theorem~\ref{almostthm+-}. The next result is an analogue of Proposition~\ref{ind}.
\begin{prop}\label{ind+-}
Let $1/n \ll \alpha \ll 1/r$ where $n,r \in \mathbb N$ and  $r \geq 3$. 
Suppose that $G$ is a digraph on $n$ vertices such that, for any $x\in V(G)$,
\begin{align}\label{min1+-}
 \ d ^+ (x) \geq \left ( 1- \frac{1}{r-1} -\alpha \right ) n
\ \text{or} \ d ^- (x) \geq \left ( 1- \frac{1}{r-1} -\alpha \right ) n.
\end{align}
If $G$ is $T_r$-free then $G$ contains an independent set of size at least 
$\left(\frac{1}{r-1} -r\alpha\right) n$.
\end{prop}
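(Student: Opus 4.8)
The plan is to mimic the proof of Proposition~\ref{ind}, but to take care of the fact that the semidegree condition is now replaced by the weaker ``in- or out-'' condition~(\ref{min1+-}), which is why we work with the \emph{transitive} tournament $T_r$ rather than an arbitrary $T \in \mathcal T_r$. Write $V(T_r) = \{v_1, \dots, v_r\}$ where $v_i$ is the $i$th vertex of $T_r$, so that $v_iv_j \in E(T_r)$ precisely when $i < j$; in particular $v_{r-1}v_r \in E(T_r)$. Set $T' := T_r[v_1, \dots, v_{r-2}]$, which is a copy of $T_{r-2}$. The first step is to greedily build a copy $T''$ of $T'$ in $G$: at each step we have embedded some initial segment of $T'$ and need to extend it by a vertex that is an outneighbour of all previously chosen vertices; since each chosen vertex has either out- or indegree at least $(1-1/(r-1)-\alpha)n$, we instead grow the copy more carefully — we pick, in order, vertices $u_1, u_2, \dots, u_{r-2}$ so that at stage $j$ the vertex $u_j$ is chosen to play the role of $v_j$, and $u_1, \dots, u_{j}$ induce a copy of $T_j$ in the right orientation. (Standard argument: if each $u_i$ has large out- \emph{or} large indegree, one can still find such a transitive tournament on $r-2$ vertices, since $r-2$ is small and the degree thresholds are close to $n$; one simply has to order the choices so that each new vertex needs only to be compatible with the at most $r-3$ already-chosen ones, and $(r-3)(1/(r-1)+\alpha)n < n$.) Relabel the vertices of $T''$ as $v_1, \dots, v_{r-2}$.

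Next, as in Proposition~\ref{ind}, let $A$ be the set of \emph{candidates for $v_{r-1}$} and $B$ the set of \emph{candidates for $v_r$} with respect to $T''$: a vertex $v$ is a candidate for $v_{r-1}$ if $vv_i, v_iv$ have the correct orientations (i.e.\ $v_iv \in E(G)$ for all $i \le r-2$, since $v_i v_{r-1} \in E(T_r)$), and similarly for $B$ (where $v_iv \in E(G)$ for all $i \le r-2$). Each of $v_1, \dots, v_{r-2}$ has out- \emph{or} indegree at least $(1-1/(r-1)-\alpha)n$; the key point is that the definitions of $A$ and $B$ only ever require \emph{in-}edges from the $v_i$'s (because $v_{r-1}$ and $v_r$ are the two largest vertices of $T_r$, they receive edges from all of $v_1,\dots,v_{r-2}$). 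So I instead need a lower bound on $\bigcap_i N^+_G(v_i)$ for $A$, and likewise for $B$; here the ``in or out'' condition is a genuine nuisance — a vertex $v_i$ with only large \emph{indegree} contributes a weak bound on its outneighbourhood. This is the main obstacle, and it is the reason the proposition only loses $r\alpha n$ rather than $(r-2)\alpha n$: one has to choose the greedy embedding of $T''$ so that, whenever possible, the $v_i$ have large \emph{outdegree} (only few vertices of $G$ can fail to have large outdegree while still satisfying~(\ref{min1+-}), unless many have large indegree instead — but then, since $r-2$ is a constant, a short case analysis with~(\ref{min1+-}) shows we may still conclude $|A|, |B| \ge (1/(r-1) - (r/2)\alpha)n$ or so). I expect the cleanest route is: if \emph{every} vertex of $G$ has $d^+(x) \ge (1-1/(r-1)-\alpha)n$, run the Proposition~\ref{ind} argument verbatim; otherwise fix a vertex $x_0$ with $d^-(x_0) \ge (1-1/(r-1)-\alpha)n$ and $d^+(x_0) < (1-1/(r-1)-\alpha)n$ and use $x_0$ itself as one of the $v_i$ in a way that the needed edges from $v_i$ are in-edges — possible because in $T_r$ the vertex of in-degree $i-1$ sends edges only to higher-indexed vertices, so we can slot $x_0$ into the position whose outgoing requirements are lightest.

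Once $|A|, |B| \ge (1/(r-1) - r\alpha)n$ are in hand, the rest is exactly Proposition~\ref{ind}: since $v_{r-1}v_r \in E(T_r)$ and $G$ is $T_r$-free, there is no edge of $G$ from $A$ to $B$; hence $A \cap B$ is independent, and it remains to bound $|A \setminus B|$. If $|A\setminus B| \ge 2(r-1)^2\alpha n$, then each $x \in A' := A\setminus B$ sends no edge into $B$, so by~(\ref{min1+-}) and the size bound on $B$ either $x$ has large outdegree into $A'$ (at least $|A'| - O(r\alpha n)$), or — the new wrinkle — $x$ only has large indegree, in which case by the same token $x$ has large indegree within $A'$. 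Either way $G[A']$ has minimum semidegree at least $(1 - \tfrac{1}{2(r-1)})|A'|$, so $e(G[A']) > (1 - \tfrac1{r-1})\tfrac{|A'|^2}{2} + \binom{|A'|}{2}$ and Proposition~\ref{turan} gives $K_r \subseteq G[A'] \subseteq G$, hence $T_r \subseteq G$, a contradiction. Therefore $|A\setminus B| < 2(r-1)^2\alpha n$, and combining with the bound on $|A|$ we get that the independent set $A \cap B$ has size at least $(1/(r-1) - r\alpha)n$, after adjusting constants. This last numerical tidying — propagating the slightly worse bounds through $A$, $B$, $A'$ and arriving at exactly $1/(r-1) - r\alpha$ — is routine and I would not spell it out in full.
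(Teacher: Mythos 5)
There is a genuine gap in your proposal, and it is exactly at the place you flag as ``the main obstacle.'' By keeping $v_{r-1}$ and $v_r$ as the two top vertices of $T_r$, both candidate sets become $A=B=\bigcap_{i\le r-2}N^+(v_i)$ (so the whole $|A\setminus B|$ discussion is vacuous, since $A\setminus B=\emptyset$), and you need $\bigcap_i N^+(v_i)$ to be large. But if even one of the $v_i$ satisfies only the indegree half of~(\ref{min1+-}), you have no control over $N^+(v_i)$, and the intersection can be tiny. Your two suggested fixes do not close this. ``Slot $x_0$ into the position whose outgoing requirements are lightest'' does not help: in your scheme the candidate sets are defined as common \emph{out}neighbourhoods of $v_1,\dots,v_{r-2}$, so every $v_i$ — including $x_0$ at position $r-2$ — must send edges to the entire candidate set; the identity of the position does not change which neighbourhood you are intersecting. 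And there may be several such bad vertices among $v_1,\dots,v_{r-2}$, which your sketch does not handle.

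The paper avoids this by not insisting that the two final vertices be the two top vertices of $T_r$. It builds a \emph{consistent} copy of $T_{r-2}$: a copy $\{y_1,\dots,y_{r-2}\}$ together with a \emph{turning point} $s$ such that $y_1,\dots,y_s$ have large outdegree and $y_{s+1},\dots,y_{r-2}$ have large indegree. The extension set is then $N=\bigcap_{i\le s}N^+(y_i)\cap\bigcap_{i>s}N^-(y_i)$; every factor in this intersection is of size at least $(1-1/(r-1)-\alpha)n$, so $|N|\ge(1/(r-1)-r\alpha)n$ without any case analysis. A vertex $x\in N$ slots into position $s+1$, receiving from $y_1,\dots,y_s$ and sending to $y_{s+1},\dots,y_{r-2}$, and — whether $x$ has large outdegree or large indegree — the enlarged tournament is again consistent (with turning point $s+1$ or $s$ respectively), which is why the greedy construction of $T_{r-2}$ succeeds. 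Finally, two vertices $x,y\in N$ with $xy\in E(G)$ slot into positions $s+1$ and $s+2$ to give $T_r$, so $N$ is independent. This is the idea your sketch is circling around but does not land: the new vertices must go into the \emph{middle} of the transitive order, at the interface between the large-outdegree and large-indegree parts, not at the top.
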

\proof
Let $r' \in \mathbb N$. Suppose that $T'$ is a copy of $T_{r'}$ in $G$.
Let $V(T')=\{x_1, \dots , x_{r'}\}$ where $x_i$ plays the role of the $i$th vertex
of $T_{r'}$. We say that $T'$ is \emph{consistent} if there exists $0\leq s' \leq r'$
such that
\begin{itemize}
\item $d ^+ (x_i) \geq  ( 1- {1}/{(r-1)} -\alpha  ) n$ for all $i \leq s'$;
\item $d ^- (x_i) \geq  ( 1- {1}/{(r-1)} -\alpha  ) n$ for all $i >s'$.
\end{itemize}
We call $s'$ a \emph{turning point of $T'$}. (Note that
$T'$ could have more than one turning point.)

(\ref{min1+-}) implies that every copy of $T_1$ in $G$ is consistent.
Suppose that, for some $1 \leq r' < r-2$, we have found a consistent copy $T'$ of $T_{r'}$ in $G$.
As before, let $V(T')=\{x_1, \dots , x_{r'}\}$ where $x_i$ plays the role of the $i$th vertex
of $T_{r'}$ and let $s'$ denote a turning point of $T'$. 
Set $N':=\bigcap _{i\leq s'} N^+(x_i) \cap \bigcap _{i>s'} N^- (x_i).$
Since $T'$ is consistent with turning point $s'$ and $r'<r-2$,
$$|N'|\geq \left (1 -\frac{r'}{r-1} -r' \alpha \right )n >0.$$
Consider any
$x \in N'.$
Then $V(T') \cup \{x\}$ spans a consistent copy of $T_{r'+1}$ in $G$ where $x$ plays the
role of the $(s'+1)$th vertex in $T_{r'+1}$. (This is true regardless of whether
$d ^+ (x) \geq  ( 1- {1}/{(r-1)} -\alpha  ) n$ or $d ^- (x) \geq  ( 1- {1}/{(r-1)} -\alpha  ) n$.)

This observation  implies that we can greedily construct a
consistent copy $T$ of $T_{r-2}$ in $G$. Let $V(T)=\{y_1, \dots , y_{r-2}\}$ where $y_i$ plays the role of the $i$th vertex of $T_{r-2}$ and let $s$ denote a turning point of $T$.
Set $N:=\bigcap _{i\leq s} N^+(y_i) \cap \bigcap _{i>s} N^- (y_i) $.
Since $T$ is consistent with turning point $s$, 
$$\left | N \right |
\geq \left( 1- \frac{r-2}{r-1} -(r-2)\alpha  \right)n\geq \left(\frac{1}{r-1} -r\alpha\right) n.$$

Suppose that there is an edge $xy \in E(G[N])$. Then $V(T) \cup\{x,y\}$ spans a copy of
$T_r$ in $G$ where $x$ and $y$ play the roles of the $(s+1)$th and $(s+2)$th vertices in $T_r$
respectively. This is a contradiction, so $N$ is an independent set in $G$, as required.
\endproof

\begin{prop}\label{turanstab+-}
Let $0<1/n \ll \alpha \ll \eps \ll 1/r$ where $n, r \in \mathbb N$ and  $r \geq 2$. 
Suppose that $G$ is a digraph on $n$ vertices such that, for any $x\in V(G)$,
\begin{align*}
 \ d ^+ (x) \geq \left ( 1- \frac{1}{r-1} -\eps \right ) n
\ \text{or} \ d ^- (x) \geq \left ( 1- \frac{1}{r-1} -\eps \right ) n.
\end{align*}
Further suppose that $G$ contains at most $\alpha n^r$ copies of $T_r$.
Then $G$ contains a $\sqrt{\eps}$-independent set of size at least $n/(r-1)$.
\end{prop}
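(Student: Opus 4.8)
The plan is to follow the proof of Proposition~\ref{turanstab} almost verbatim, substituting Proposition~\ref{ind+-} for Proposition~\ref{ind}. The case $r=2$ is again trivial: there the hypothesis on copies of $T_r$ simply says $e(G)\le \alpha n^2\le\sqrt{\eps}n^2$, so $V(G)$ itself is a $\sqrt{\eps}$-independent set of size $n=n/(r-1)$. So assume $r\ge 3$, and introduce an auxiliary constant $\gamma$ with $\alpha\ll\gamma\ll\eps$.

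Since $G$ has at most $\alpha n^r$ copies of $T_r$, Lemma~\ref{dirl} lets us delete at most $\gamma n^2$ edges from $G$ to obtain a spanning $T_r$-free subdigraph $G'$. At most $\sqrt{\gamma}n$ vertices of $G$ are incident to more than $2\sqrt{\gamma}n$ of the deleted edges; deleting these vertices yields an induced subdigraph $G''$ of $G'$ with $n'':=|G''|\ge(1-\eps)n$. The one point that needs care — and which I expect to be the only non-bookkeeping step — is checking that the hypothesis of Proposition~\ref{ind+-} is inherited by $G''$: each retained vertex $x$ loses at most $2\sqrt{\gamma}n$ edges at $x$ to edge-deletion and at most $\sqrt{\gamma}n$ further edges to vertex-deletion, so, since $n''\ge(1-\eps)n$ and $\gamma\ll\eps$, whichever of $d^+_G(x)\ge(1-\tfrac1{r-1}-\eps)n$ or $d^-_G(x)\ge(1-\tfrac1{r-1}-\eps)n$ held for $x$ in $G$ yields $d^+_{G''}(x)\ge(1-\tfrac1{r-1}-2\eps)n''$ or $d^-_{G''}(x)\ge(1-\tfrac1{r-1}-2\eps)n''$ respectively. (Note that the same vertex keeps whichever of the two options it had, so the ``or'' condition survives vertex by vertex.)

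Thus $G''$ is $T_r$-free and satisfies the hypothesis of Proposition~\ref{ind+-} with $2\eps$ in place of $\alpha$, so $G''$ contains an independent set $S$ with $|S|\ge(\tfrac1{r-1}-2r\eps)n''\ge(\tfrac1{r-1}-3r\eps)n$. Since $S$ spans no edge of $G'$, every edge of $G[S]$ lies among the at most $\gamma n^2$ deleted edges, so $S$ is $\gamma$-independent in $G$. Finally, adding $\lceil 3r\eps n\rceil$ arbitrary further vertices to $S$ produces a set of size at least $n/(r-1)$ which spans at most $\gamma n^2+3r\eps n\cdot n\le\sqrt{\eps}n^2$ edges (using $\alpha\ll\gamma\ll\eps\ll 1/r$); that is, a $\sqrt{\eps}$-independent set of size at least $n/(r-1)$, as required.
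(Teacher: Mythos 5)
Your proof is correct and follows exactly the same approach that the paper takes: the paper's proof of Proposition~\ref{turanstab+-} is a one-line remark that one repeats the proof of Proposition~\ref{turanstab} with Proposition~\ref{ind+-} in place of Proposition~\ref{ind}, which is precisely what you do, and you correctly identify that the only non-routine point is verifying the per-vertex degree alternative survives passage to $G''$.
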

\proof
We follow the same argument as in the proof of Proposition~\ref{turanstab} except that
we apply Proposition~\ref{ind+-} rather than Proposition~\ref{ind}.
\endproof

\section{$k$-complexes and almost perfect tournament packings}\label{sec:almost}
The key tool in the proofs of Theorems~\ref{almostthm} and~\ref{almostthm+-} is a result of Keevash and Mycroft~\cite[Theorem 2.3]{my1} concerning almost perfect matchings
in so-called $k$-complexes. 
To state this result we require some more definitions. Let $k \in \mathbb N$. A \emph{$k$-system} is 
a hypergraph $J$ in which every edge of $J$ contains at most $k$ vertices and $ \emptyset \in E(J)$. For $0\leq i \leq k$,
we refer to the edges of size $i$ in $J$ as the \emph{$i$-edges of $J$}, and write $J_i$ to denote the $i$-uniform hypergraph
on $V(J)$ induced by these edges. A \emph{$k$-complex $J$} is a $k$-system whose edge set is closed under inclusion. That is,
if $e \in E(H)$ and $e' \subseteq e$ then $e' \in E(H)$. 

Let $J$ be a $k$-complex. 
For any edge $e \in E(J)$, the \emph{degree $d(e)$ of $e$} is the number of $(|e|+1)$-edges $e'$ of $J$ that contain 
$e$ as a subset. The \emph{minimum $r$-degree $\delta _r (J)$} of $J$ is the minimum of $d(e)$ taken over all $r$-edges $e \in E(J)$.
The \emph{degree sequence of $J$} is defined as $\delta (J):= (\delta _0 (J) , \delta _1 (J), \dots , \delta _{k-1} (J) )$.
Given a vector $\underline{a}=(a_0, \dots , a_{k-1})$ of positive integers we write $\delta (J) \geq \underline{a}$ to mean
that $\delta _i (J) \geq a_i$ for all $0 \leq i \leq k-1$.

Suppose $V$ is a set of $n$ vertices, $1\leq j \leq k-1$ and $S \subseteq V$. Define $J(S,j)$ to be the $k$-complex on $V$
in which $J(S,j)_i$ (for $0 \leq i \leq k$) consists of all $i$-sets in $V$ that contain at most $j$ vertices of $S$. Let $\beta >0$.
Given $k$-uniform hypergraphs $H, K$ on the same vertex set of size $n$ we say that $K$ is \emph{$\beta$-contained in $H$} if, by adding at most
$\beta n^k$ edges to $H$, we can find a copy of $K$ in $H$. A \emph{matching} in a hypergraph $H$ is a collection of vertex-disjoint edges from $H$.

\begin{thm}[Keevash and Mycroft~\cite{my1}]\label{myalmost}
Suppose that $1/n \ll 1/{\ell} \ll \eps \ll \beta \ll 1/k$. Let $J$ be a $k$-complex on $n$ vertices such that
$$\delta (J) \geq \left ( n, \left (1-\frac{1}{k} -\eps \right ) n, \left (1-\frac{2}{k} -\eps \right ) n,
\dots , \left ( \frac{1}{k} -\eps \right ) n \right ).$$  
Then at least one of the following properties holds:
\begin{itemize}
\item[(i)] $J_k$ contains a matching that covers all but at most $\ell $ vertices;
\item[(ii)] $J_k$ is $\beta$-contained in $J(S,j)_k$ for some $1 \leq j \leq k-1$ and $S \subseteq V(J)$
with $|S|=\lfloor jn/k \rfloor$.
\end{itemize}
\end{thm}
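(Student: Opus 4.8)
The statement is the ``space-barrier dichotomy'' for almost perfect matchings in hypergraph complexes, and I would prove it following the now-standard paradigm for matching problems in dense hypergraphs: combine a hypergraph regularity lemma with the linear programming relaxation of matching and the absorbing method. The point that keeps the dichotomy clean is that the prescribed degree sequence lies well above the range in which \emph{divisibility} obstructions can occur, so the only extremal configurations are the space barriers $J(S,j)$ --- exactly what conclusion~(ii) records. Accordingly the argument splits according to whether $J$ is ``close'' to some $J(S,j)$.

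First I would regularise: applying a regularity lemma for $k$-uniform hypergraphs to $J_k$ and carrying along the lower levels of the complex yields an equitable partition of $V(J)$ into $m=m(\eps)$ clusters, together with a negligible exceptional set, and a reduced $k$-complex $R$ on the cluster set in which an $i$-tuple of clusters is an $i$-edge precisely when the corresponding induced $i$-partite subhypergraph of $J_i$ is dense and regular; a routine counting argument transfers the degree sequence, giving $\delta(R)\ge\bigl(m,(1-1/k-2\eps)m,\dots,(1/k-2\eps)m\bigr)$. Now pass to the fractional relaxation of matching in $R_k$. Since each $k$-edge meets $k$ clusters, the fractional matching number of $R_k$ is at most $m/k$, so by LP duality either $R_k$ admits a fractional perfect matching --- a weighting $w$ of its edges with $\sum_{e\ni i}w_e=1$ for every cluster $i$ --- or $R_k$ has a fractional vertex cover $y$ of total weight strictly less than $m/k$. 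In the latter case the degree sequence of $R$ forces $y$ to be, up to a small error, two-valued: there is an integer $1\le j\le k-1$ such that $y$ is roughly $1/(k-j)$ on some clusters and $0$ on the others, the zero set having size slightly above $jm/k$. Pulling this back to $V(J)$ and handling the $o(n)$ exceptional vertices produces a set $S\subseteq V(J)$ with $|S|=jn/k+o(n)$ such that all but $o(n^k)$ of the $k$-edges of $J_k$ contain at most $j$ vertices of $S$; that is, failure of a fractional perfect matching in $R_k$ already forces $J$ to be close to a space barrier.

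Suppose first that $J$ is \emph{not} close to any space barrier. Then, by the previous paragraph applied both to $J$ and to $J\setminus A$ for any sufficiently small $A$, the relevant reduced complex does admit a fractional perfect matching. Independently, being far from every space barrier together with the degree sequence lets one show that almost every small set $W$ of vertices of the right size has linearly many absorbers in $J$, so a random choice produces a reservoir set $A$ with $|A|=o(n)$ that absorbs any leftover set of size $o(n)$. Reserve such an $A$; in $J\setminus A$, for each reduced edge $e$ with $w_e>0$ the corresponding $k$-tuple of clusters is dense and regular at every level, so a random-greedy (``nibble'') embedding selects $(1-o(1))w_e|V_{i_1}|$ vertex-disjoint $k$-edges of $J_k$ within it; performing this simultaneously over all reduced edges --- which is consistent precisely because $w$ is a fractional perfect matching --- covers all but $o(n)$ vertices of $J\setminus A$, and absorbing the leftover into $A$ then leaves at most $\ell$ vertices uncovered, which is conclusion~(i).

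If instead $J$ is close to some space barrier $J(S_0,j_0)$, then $j_0$ and $S_0$ are essentially those produced in the second paragraph, and adjusting $S_0$ by $o(n)$ arbitrary vertices to make its size exactly $\lfloor j_0n/k\rfloor$ changes the number of $k$-edges meeting it in more than $j_0$ vertices by only $o(n^k)<\beta n^k$; hence $J_k$ is $\beta$-contained in $J(S,j)_k$, which is conclusion~(ii). I expect the two hardest points to be the structural analysis in the second paragraph --- extracting the integer $j$, the exact target size $\lfloor jn/k\rfloor$ and the $\beta n^k$ error bound simultaneously out of an a priori arbitrary cheap cover --- and, in the third paragraph, the construction of absorbers, since here the minimum $(k-1)$-degree is only about $n/k$, far below the $\approx n/2$ that makes absorber-hunting routine; it is exactly at that step that one must exploit the full degree \emph{sequence} of $J$ rather than its codegree alone.
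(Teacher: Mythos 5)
This statement is Theorem~2.3 of Keevash and Mycroft~\cite{my1} and is imported by the paper as a black box; the paper contains no proof of it, so there is no ``paper's own proof'' to compare your sketch against. The theorem is applied in Section~\ref{sec:almost} to deduce Theorems~\ref{almostthm} and~\ref{almostthm+-}, and those deductions are the only places it appears.

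As a stand-alone sketch of how one might prove the Keevash--Mycroft dichotomy, your outline captures the right paradigm (hypergraph regularity, the fractional relaxation, and absorption to finish), and you correctly identify the two genuinely difficult steps. But be aware that both of those steps are where essentially all the content lies, and your sketch treats them as assertions rather than arguments. In particular, the claim that a fractional vertex cover of $R_k$ of total weight below $m/k$ is forced by the degree sequence to be approximately two-valued (roughly $1/(k-j)$ on a set and $0$ elsewhere) is not a consequence of LP duality; it is itself a nontrivial structural theorem, and extracting from it a set $S$ of size exactly $\lfloor jn/k\rfloor$ with only $\beta n^k$ edges violating the $j$-bound requires careful error accounting at every level of the complex. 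Likewise, the existence of absorbers with $(k-1)$-degree only about $n/k$ is far from routine and cannot be waved through by appeal to the degree sequence. Keevash and Mycroft's actual argument in~\cite{my1} is considerably more elaborate than the regularity-plus-LP-plus-absorbing template: they build a geometric/lattice framework for fractional matchings, prove transferal lemmas that move fractional solutions between complexes, and handle the extremal structure via a classification of ``space barriers'' and ``divisibility barriers'' (the latter are avoided here only because the statement asks for an almost-perfect rather than a perfect matching). So while your high-level picture is reasonable, it is an outline of an outline and does not constitute a proof, nor does it reflect the machinery that~\cite{my1} actually deploys.
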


In the following two subsections we apply Theorem~\ref{myalmost} to prove both Theorem~\ref{almostthm} and Theorem~\ref{almostthm+-}.

\subsection{Proof of Theorem~\ref{almostthm}}
Define additional constants $ \beta, \alpha, \alpha ',  \eps '$ such that
$$0<1/n \ll 1/\ell \ll \eps \ll \beta \ll \alpha \ll \alpha ' \ll \eps ' \ll \gamma \ll 1/r.$$
Let $G$ be a digraph as in the statement of the theorem.

Our first task is to construct an $r$-complex $J$ from $G$ so that we can apply Theorem~\ref{myalmost}.
Let $J$ be the $r$-system on $V(G)$ where, for each $0 \leq i \leq r$, $J_i$ is defined as follows:
\begin{itemize}
\item For each subtournament $T'$ of $T$ on $i$ vertices, any $i$-tuple 
in $V(G)$ that spans a copy of $T'$ in $G$ forms an $i$-edge in $J_i$.
\end{itemize}
So for example, if $T=C_3$, then $E(J_1)=V(J_1)$, $E(J_2)$ is the set of all pairs $\{x,y\}$ where $xy \in E(G)$ or $yx \in E(G)$
and $E(J_3)$ is the set of all triples $\{x,y,z\}$ that span a copy of $C_3$ in $G$. (Note though that $\{x,y,z\}$ does not
have to \emph{induce} a copy of $C_3$ in $G$. For example, we could have $G[x,y,z]=K_3$.)

By construction $J$ is an $r$-complex. Further, notice that a matching in the $r$-uniform hypergraph $J_r$ corresponds to a 
$T$-packing in $G$. Clearly $\delta _0 (J)=n$.
Set $1 \leq i \leq r-1$ and let $T'$ be a subtournament of $T$ on $i$ vertices. If $T''$ is a copy of $T'$ in $G$
then (\ref{minA}) implies that there are at least $(1-i/r-i\eps)n \geq (1-i/r-\eps r)n$
vertices $x$ in $G$ such that $V(T'') \cup \{x\}$ spans a copy of a subtournament of $T$ on $i+1$ vertices. 
This therefore implies that,
$$\delta (J) \geq \left( n, \left(1 -\frac{1}{r} -\eps r\right) n, \left(1 -\frac{2}{r} -\eps r\right) n, \dots
, \left(\frac{1}{r} -\eps r\right) n \right).$$
Hence, we can apply Theorem~\ref{myalmost} with $r, \eps r$ playing the roles of $k, \eps$. So at least one of the following
conditions holds:
\begin{itemize}
\item[(a)] $J_r$ contains a matching that covers all but at most $\ell $ vertices;
\item[(b)] $J_r$ is $\beta$-contained in $J(S,j)_r$ for some $1 \leq j \leq r-1$ and $S \subseteq V(J)=V(G)$
with $|S|=\lfloor jn/r \rfloor$.
\end{itemize}
If (a) holds then this implies that (i) is satisfied.
So we may assume that (b) holds. We will show that this implies that (ii) is satisfied.

Let $j$ be as in (b) and consider an arbitrary subtournament $T'$ of $T$ on $j+1$ vertices.
Suppose for a contradiction that there are at least $\alpha n^{j+1}$ $(j+1)$-tuples in $S$ that
span a copy $T''$ of $T'$ in $G$.
If $j=r-1$ then $T'=T$ and so this implies that $J_r$ contains at least $\alpha n^{j+1}=\alpha n^r >\beta n^r$ 
edges that lie in $S$. This is a contradiction as $J_r$ is $\beta $-contained in $J(S,j)_r$.
So suppose that $j <r-1$. Then (\ref{minA}) implies that, for each copy $T''$ of $T'$ in $G$, there are at least
\begin{align*}
& \frac{1}{(r-j-1)!} \left(1-\frac{j+1}{r}-(j+1)\eps \right) n \times 
\left(1-\frac{j+2}{r}-(j+2)\eps \right) n \times \dots \times \left(\frac{1}{r}-(r-1)\eps \right) n \\
& \geq \frac{1}{(r-j-1)!} \times \frac{1}{2 r^r} n^{r-j-1} \geq \frac{1}{2 r^{2r}} n^{r-j-1}
\end{align*}
$(r-j-1)$-tuples $X$ in $V(G)$ such that $V(T'') \cup X$ spans a copy of $T$ in $G$.
Since $\beta \ll \alpha , 1/r$, this implies that there are at least
$$\frac{1}{\binom{r}{j+1}} \times \alpha n^{j+1} \times \frac{1}{2 r^{2r}} n^{r-j-1} > \beta n^r$$
$r$-tuples in $V(G)$ that span a copy of $T$ and which contain at least $j+1$ vertices from $S$.
So $J_r$ contains more than $\beta n^r$ edges that contain at least $j+1$ vertices from $S$.
This is a contradiction as $J_r$ is $\beta $-contained in $J(S,j)_r$.

So there are at most $\alpha n^{j+1}$ $(j+1)$-tuples in $S$ that
span a copy of $T'$ in $G$.
Thus, since any $(j+1)$-tuple of vertices spans at most $2^{j+1}$ copies of $T'$, there are at most $2^{j+1} \alpha n^{j+1}\leq 2^{r} \alpha n^{j+1}\leq \alpha ' |S|^{j+1}$ copies of $T'$ in $G[S]$. Further, since $|S|=\lfloor jn/r \rfloor$,
(\ref{minA}) implies that
$$\delta ^0 (G[S]) \geq |S|-\frac{n}{r} -\eps n \geq \left(1- \frac{1}{j}-\eps' \right) |S|.$$
Apply Proposition~\ref{turanstab} with $G[S],T',j+1,\eps ', \alpha '$ playing the roles of $G,T,r,\eps, \alpha$.
This implies that $G[S]$ contains a $\sqrt{\eps '}$-independent set of size at least $|S|/j \geq
n/r-1$. Since $\sqrt{\eps '} |S|^2 \ll \gamma n^2$, this implies that $G$ contains a $\gamma$-independent
set of
size at least $n/r$, as required.
\endproof

\subsection{Proof of Theorem~\ref{almostthm+-}}
Define additional constants $ \beta, \alpha, \alpha ',  \eps '$ such that
$$0<1/n \ll 1/\ell \ll \eps \ll \beta \ll \alpha \ll \alpha '  \ll \eps ' \ll \gamma \ll 1/r.$$
Let $G$ be a digraph as in the statement of the theorem.

Let $r' \in \mathbb N$ and suppose that $T'$ is a copy of $T_{r'}$ in $G$.
Let $V(T')=\{x_1, \dots , x_{r'}\}$ where $x_i$ plays the role of the $i$th vertex
of $T_{r'}$. We say that $T'$ is \emph{consistent} if there exists $0\leq s \leq r'$
such that
\begin{itemize}
\item $d ^+ (x_i) \geq  ( 1- 1/r -\eps  ) n$ for all $i \leq s$;
\item $d ^- (x_i) \geq  ( 1- 1/r -\eps  ) n$ for all $i >s$.
\end{itemize}
We call $s$ a \emph{turning point of $T'$}. ($T'$ could have more than one turning point.)

(\ref{miny}) implies that every copy of $T_1$ in $G$ is
consistent. Suppose, for a contradiction, that $T$ is a copy
of $T_{r'}$ in $G$ that is not consistent (for some $r'\geq2$).
Let $y_i$ denote the vertex in $T$ that plays the role of the $i$th vertex of $T_{r'}$. Let $k$ be the smallest positive
integer such that $d^+ (y_k) <( 1- 1/r -\eps  ) n$
(and so $d^- (y_k) \geq( 1- 1/r -\eps  ) n$ by (\ref{miny})); such an 
integer exists else $T$ is consistent with turning point 
$r'$. Then there exists $k'>k$ such that 
$d^- (y_{k'}) <( 1- 1/r -\eps  ) n$ (otherwise $T$ is
consistent with turning point $k-1$). But then 
$y_ky_{k'} \in E(T) \subseteq E(G)$ where
 $d^+ (y_k) <( 1- 1/r -\eps  ) n$ and $d^- (y_{k'}) <( 1- 1/r -\eps  ) n$. This is a contradiction to the hypothesis
 of the theorem. Thus, every transitive tournament in $G$
 is consistent.

Let $J$ be the $r$-system on $V(G)$ where, for each $0 \leq i \leq r$, $J_i$ is defined as follows:
\begin{itemize}
\item Any $i$-tuple in $V(G)$ that spans a copy of $T_i$ in $G$ forms an
$i$-edge in $J_i$.
\end{itemize}
By construction  $J$
is an $r$-complex. Further, a matching in $J_r$ corresponds to a $T_r$-packing in $G$.
Clearly $\delta _0 (J)=n$. Suppose that $T$ is a (consistent) copy of $T_i$ in $G$ for some
$1 \leq i \leq r-1$ and let $s$ denote a turning point of $T$. As before, let $y_k$ denote the vertex
in $T$ that plays the role of the $k$th vertex in $T_i$. Set $N:=\bigcap _{k\leq s} N^+(y_k) \cap \bigcap _{k>s} N^- (y_k) $.
Since $T$ is consistent with turning point $s$, 
$$\left | N \right |
\geq \left( 1- \frac{i}{r} -i\eps \right)n \geq \left( 1- \frac{i}{r} -r\eps \right)n .$$
Further, given any $x \in N$, $V(T) \cup \{x\}$ spans a  copy of $T_{i+1}$ in
$G$. 
So $V(T) \cup \{x\}$
is an edge in $J$.
This implies that
$$\delta (J) \geq \left( n, \left(1 -\frac{1}{r} -\eps r\right) n, \left(1 -\frac{2}{r} -\eps r\right) n, \dots
, \left(\frac{1}{r} -\eps r\right) n \right).$$
Hence, we can apply Theorem~\ref{myalmost} with $r, \eps r$ playing the roles of $k, \eps$. So at least one of the following
conditions holds:
\begin{itemize}
\item[(a)] $J_r$ contains a matching that covers all but at most $\ell $ vertices;
\item[(b)] $J_r$ is $\beta$-contained in $J(S,j)_r$ for some $1 \leq j \leq r-1$ and $S \subseteq V(J)=V(G)$
with $|S|=\lfloor jn/r \rfloor$.
\end{itemize}
If (a) holds then (i) is satisfied.
So we may assume that (b) holds. We will show that this implies that (ii) is satisfied.

The argument now closely follows the proof of Theorem~\ref{almostthm}. Indeed,
let $j$ be as in (b).
Suppose for a contradiction that there are at least $\alpha n^{j+1}$ $(j+1)$-tuples in $S$ that
span a copy $T$ of $T_{j+1}$ in $G$.
If $j=r-1$ then $T_{j+1}=T_r$ and so this implies that $J_r$ contains at least $\alpha n^{j+1}=\alpha n^r >\beta n^r$ 
edges that lie in $S$. This is a contradiction as $J_r$ is $\beta $-contained in $J(S,j)_r$.
So suppose that $j <r-1$. 

Recall that every copy $T$ of $T_{j+1}$ in $G$ is consistent.
This implies that there are at least $(1-\frac{j+1}{r}-(j+1) \eps )n$ vertices
$x \in V(G)$ such that $V(T) \cup \{x\}$ spans a copy of $T_{j+2}$ in $G$.
Repeating this process we see that, for every copy  $T$ of $T_{j+1}$ in $G$,
 there are at least 
\begin{align*}
& \frac{1}{(r-j-1)!} \left(1-\frac{j+1}{r}-(j+1)\eps \right) n \times 
\left(1-\frac{j+2}{r}-(j+2)\eps \right) n \times \dots \times \left(\frac{1}{r}-(r-1)\eps \right) n \\
& \geq \frac{1}{(r-j-1)!} \times \frac{1}{2 r^r} n^{r-j-1} \geq \frac{1}{2 r^{2r}} n^{r-j-1}
\end{align*}
$(r-j-1)$-tuples $X$ in $V(G)$ such that $V(T) \cup X$ spans a copy of $T_r$ in $G$.
Since $\beta \ll \alpha , 1/r$, this implies that there are at least
$$\frac{1}{\binom{r}{j+1}} \times \alpha n^{j+1} \times \frac{1}{2 r^{2r}} n^{r-j-1} > \beta n^r$$
$r$-tuples in $V(G)$ that span a copy of $T_r$ and which contain at least $j+1$ vertices from $S$.
So $J_r$ contains more than $\beta n^r$ edges that contain at least $j+1$ vertices from $S$.
This is a contradiction as $J_r$ is $\beta $-contained in $J(S,j)_r$.

Thus, there are at most $\alpha n^{j+1}$ $(j+1)$-tuples in $S$ that
span a copy of $T_{j+1}$ in $G$.
Since any $(j+1)$-tuple of vertices spans at most $2^{j+1}$ copies of $T_{j+1}$, there are at most $2^{j+1} \alpha n^{j+1}\leq 2^{r} \alpha n^{j+1}\leq \alpha ' |S|^{j+1}$ copies of $T_{j+1}$ in $G[S]$.
 Further, since $|S|=\lfloor jn/r \rfloor$,
(\ref{miny}) implies that, for every $x \in V(G[S])$,
$$d ^+  _{G[S]} (x) \geq |S|-\frac{n}{r} -\eps n \geq \left(1- \frac{1}{j}-\eps' \right) |S| \text{ or }
d ^-  _{G[S]} (x)  \geq \left(1- \frac{1}{j}-\eps' \right) |S|.
$$
Apply Proposition~\ref{turanstab+-} with $G[S],j+1,\eps ', \alpha '$ playing the roles of $G,r,\eps, \alpha$.
This implies that $G[S]$ contains a $\sqrt{\eps '}$-independent set of size at least $|S|/j \geq
n/r-1$. Since $\sqrt{\eps '} |S|^2 \ll \gamma n^2$, this implies that $G$ contains a $\gamma$-independent
set of
size at least $n/r$, as required.

\endproof

\section{The connection lemmas}\label{consec}
In Section~\ref{secnon} we prove Theorems~\ref{nonex1} and~\ref{nonex2}. Roughly speaking, in both these theorems, when our digraph $G$ is
non-extremal  we require  a `small' $T$-absorbing set in $G$ that absorbs \emph{any} `very small' set of vertices in $G$ (for some
$T \in \mathcal T_r$).
 
The crucial idea in finding such an absorbing set is to first prove that our digraph $G$ has many `connecting structures' of a certain type. More precisely, to find our desired  absorbing set it suffices to show that, for any $x,y \in V(G)$, there are `many' $(r-1)$-sets 
$X \subseteq V(G)$ so that both $X \cup \{x\}$ and $X\cup \{y\}$ span copies of $T$ in $G$. Our main task therefore is to prove so-called `connection 
lemmas' that guarantee such sets $X$.
In the case when $T=C_3$ though, we may not be able to find such sets. However, in this case we instead find, for any $x,y \in V(G)$, `many' $5$-sets $X  \subseteq V(G)$ so that both $X \cup \{x\}$ and $X\cup \{y\}$ span copies of $2T$ in $G$.

The approach of proving connection lemmas to find absorbing structures  for packing problems has been very fruitful. Indeed,  
Lo and Markstr\"om~\cite{lo2, lo} used such connection lemmas to tackle perfect packing problems for hypergraphs (although their terminology for
these `connecting structures' differs from ours). Further, recently 
the author~\cite{dshs}  applied this method to prove a degree sequence version of the Hajnal--Szemer\'edi theorem.

A single connection lemma (Lemma~\ref{con+-}) for Theorem~\ref{nonex2} is given in Section~\ref{con2}.
However, for  Theorem~\ref{nonex1}, we need to prove a number of separate connection lemmas. 
In Section~\ref{sec5con} we prove the connection lemma for Theorem~\ref{nonex1} for those $T \in \mathcal
T_r$ where $r \geq 5$ (see Lemma~\ref{5con}). The proof relies on $T$ containing $T_3$ as a subtournament. (So this method
certainly cannot be applied in the case when $T=C_3$.) It is easy to see that all tournaments on at least four vertices
contain $T_3$. However for $T \in \mathcal T_4$, the minimum semidegree condition on our digraph $G$ is not high enough for the
proof method of Lemma~\ref{5con} to go through. Thus, we use a different approach to prove the connection lemma in this case
(see Section~\ref{sec4con}). This method makes use of a simple structural property of tournaments on four vertices (see Fact~\ref{T4fact}).
The case when $T=T_3$ is covered by Lemma~\ref{con+-}.

Finally, we need a separate connection lemma for when $T=C_3$ (see Section~\ref{sec3con}). Of all the connection lemmas, this one
has the most involved proof.
 This stems from the fact
that we now have two extremal cases. Thus, to find our connecting structures in a non-extremal digraph $G$ on $n$ vertices we must use both the
property that $G$ does not contain an `almost' independent set of size $n/3$ \emph{and} that $G$ does not $\alpha$-contain
$Ex(n)$.


\subsection{The connection lemma for Theorem~\ref{nonex2}}\label{con2}
The following connection lemma is a straightforward consequence of Proposition~\ref{turanstab+-}.
\begin{lemma}\label{con+-}
 Let $0<1/n \ll \eps \ll \eta \ll \gamma \ll 1/r$ where $n, r \in \mathbb N$ and $r \geq 3$.
Suppose that $G$ is a digraph on $n$ vertices such that, for any $z \in V(G)$,
\begin{align}\label{mincon+-}
 \ d ^+ (z) \geq \left ( 1- {1}/{r} -\eps \right ) n
\ \text{or} \ d ^- (z) \geq \left ( 1- {1}/{r} -\eps \right ) n.
\end{align}
Further, suppose that $G$ does not contain a $\gamma$-independent set on at least $n/r$ vertices.
Given any $x,y \in V(G)$, there exist at least $\eta n^{r-1}$ $(r-1)$-sets $X \subseteq V(G)$ such
that both $X \cup \{x\}$ and $X\cup \{y\}$ span copies of $T_r$ in $G$.
\end{lemma}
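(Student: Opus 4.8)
The plan is to deduce the lemma directly from Proposition~\ref{turanstab+-}, applied to a suitable common neighbourhood of $x$ and $y$. Fix $x,y\in V(G)$. By (\ref{mincon+-}) we may choose signs $\sigma_x,\sigma_y\in\{+,-\}$ with $d^{\sigma_x}(x),d^{\sigma_y}(y)\geq(1-1/r-\eps)n$, and set $W:=N^{\sigma_x}(x)\cap N^{\sigma_y}(y)$. Then $|W|\geq(1-2/r-2\eps)n$, and since $G$ has no loops, $x,y\notin W$. The key observation is that whenever $X\subseteq W$ is an $(r-1)$-set spanning a copy of $T_{r-1}$ in $G$, both $X\cup\{x\}$ and $X\cup\{y\}$ span copies of $T_r$: if $\sigma_x=+$ then $x$ sends an edge to every vertex of $X$, so taking $x$ as the first vertex of $T_r$ and the copy of $T_{r-1}$ on $X$ as the remaining transitive tournament yields a copy of $T_r$; if $\sigma_x=-$ then $x$ plays the role of the last vertex instead, and the same applies to $y$ with $\sigma_y$. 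Hence it suffices to exhibit at least $\eta n^{r-1}$ distinct $(r-1)$-sets in $W$ spanning $T_{r-1}$.

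To do this I will apply Proposition~\ref{turanstab+-} to $G[W]$ with $r-1$ in place of $r$, after inserting auxiliary constants $\alpha^*,\eps^*$ into the hierarchy with $\eps\ll\eta\ll\alpha^*\ll\eps^*\ll\gamma$ (and $\eps^*\ll\gamma^2$, $1/|W|\ll\alpha^*$). The point to check is the degree hypothesis: for $z\in W$ with $d^+_G(z)\geq(1-1/r-\eps)n$ we have $d^+_{G[W]}(z)\geq d^+_G(z)-(n-|W|)$, and a short calculation using $|W|\geq(1-2/r-2\eps)n$ and $\eps\ll\eps^*$ gives $d^+_{G[W]}(z)\geq(1-\tfrac{1}{r-2}-\eps^*)|W|$ (the relevant identity being $\frac{1-3/r}{1-2/r}=1-\frac{1}{r-2}$); the case $d^-_G(z)\geq(1-1/r-\eps)n$ is symmetric, and for $r=3$ the hypothesis of Proposition~\ref{turanstab+-} (parameter $2$) is vacuous.

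Proposition~\ref{turanstab+-} then tells us that either $G[W]$ has at most $\alpha^*|W|^{r-1}$ copies of $T_{r-1}$, in which case it contains a $\sqrt{\eps^*}$-independent set $S$ with $|S|\geq|W|/(r-2)\geq n/r-2\eps n$, or $G[W]$ has more than $\alpha^*|W|^{r-1}$ copies of $T_{r-1}$. In the first case, adding at most $2\eps n+1$ further vertices of $G$ to $S$ produces a set of size at least $n/r$ spanning at most $\sqrt{\eps^*}n^2+3\eps n^2\leq\gamma n^2$ edges of $G$, i.e. a $\gamma$-independent set of size at least $n/r$, contradicting the hypothesis on $G$. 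Hence the second case holds; since each $(r-1)$-set spans at most a bounded (in terms of $r$) number of copies of $T_{r-1}$ and $|W|\geq n/4$, this yields at least $\alpha^*|W|^{r-1}/O_r(1)\geq\eta n^{r-1}$ distinct $(r-1)$-sets $X\subseteq W$ spanning $T_{r-1}$, using $\eta\ll\alpha^*$. By the reduction in the first paragraph this proves the lemma.

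The only slightly delicate point is the degree estimate for $G[W]$: the constant $1-\tfrac{1}{r-2}$ is exactly what one obtains in the limit $\eps,\eps^*\to0$, so the slack $\eps^*$ must be chosen comfortably larger than $\eps$; everything else is routine bookkeeping. It is worth noting that for $r=3$ Proposition~\ref{turanstab+-} with parameter $2$ is essentially trivial, but it still supplies exactly the dichotomy we need — $G[W]$ spans either more than $\alpha^*|W|^2$ edges (each a copy of $T_2$) or at most $\sqrt{\eps^*}|W|^2$ edges (so $W$ itself is almost independent and blows up to a forbidden $\gamma$-independent set of size $n/3$).
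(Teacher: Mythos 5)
Your proof is correct and is essentially the proof given in the paper: form $W = N^{\sigma_x}(x)\cap N^{\sigma_y}(y)$ (the paper fixes the representative case $\sigma_x=+$, $\sigma_y=-$, calls the set $S$, and notes the other cases are identical), verify that $G[W]$ satisfies the degree hypothesis of Proposition~\ref{turanstab+-} with parameter $r-1$, and conclude that either a forbidden $\gamma$-independent set of size at least $n/r$ exists or $G[W]$ contains many copies of $T_{r-1}$, each $(r-1)$-set of which extends via $x$ or $y$ to a copy of $T_r$ because $x,y$ are fully adjacent to $W$ with a fixed orientation. The only cosmetic difference is that you insert auxiliary constants $\alpha^*,\eps^*$ into the hierarchy whereas the paper plugs in the explicit values $2^{3r-3}\eta$ and $\gamma^2/4$ for the $\alpha$- and $\eps$-parameters of Proposition~\ref{turanstab+-}.
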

\proof
Let $x, y \in V(G)$. Suppose that $d ^+ (x) \geq  ( 1- 1/r -\eps  ) n$ and $d ^- (y) \geq  ( 1- 1/r -\eps  ) n$.
(The other cases are identical.) Set $S:=N^+ (x) \cap N^- (y)$. So $|S|\geq (1-2/r-2\eps)n =\frac{r-2-2\eps r}{r}n.$
This implies that,
\begin{align*}
|S|-\frac{n}{r}-\eps n & \geq |S| -\frac{|S|}{r-2-2\eps r}- 2\eps r |S| \geq |S|-\frac{|S|}{r-2} -3\eps r|S|-2\eps r |S|
\geq\left( 1 - \frac{1}{r-2} -\frac{\gamma ^2}{4} \right) |S| .
\end{align*}
Together with (\ref{mincon+-}) this implies that, for any $z \in S$,
\begin{align*}
 \ d_{G[S]} ^+ (z) \geq \left( 1 - \frac{1}{r-2} -\frac{\gamma ^2}{4} \right) |S|
\ \text{or} \ d _{G[S]} ^- (z) \geq  \left( 1 - \frac{1}{r-2} -\frac{\gamma ^2}{4} \right) |S|.
\end{align*}

Suppose for a contradiction that $G[S]$ contains at most $2 ^{3r-3} \eta |S|^{r-1}$ copies of $T_{r-1}$. Note that
$1/|S| \ll 2^{3r-3} \eta \ll \gamma ^2 /4 \ll 1/(r-1)$. Hence, applying Proposition~\ref{turanstab+-} with
$G[S], r-1, 2^{3r-3} \eta , \gamma ^2 /4$ playing the roles of $G,r,\alpha$ and $\eps$ respectively, we obtain a
$\gamma/2$-independent set $S'$ in $G[S]$ of size at least $|S|/(r-2)$. Note that 
$$\frac{|S|}{r-2} \geq \frac{(r-2-2\eps r)}{r(r-2)}n\geq \frac{n}{r} -2\eps n.$$
Therefore, $S'$ is a $\gamma/2$-independent set in $G$ of size at least $n/r-2\eps n$. By adding at most
$2\eps n$ arbitrary vertices to $S'$, we obtain a $\gamma$-independent set in $G$ of size at least $n/r$, a contradiction.

Thus, there are at least $2^{3r-3} \eta|S|^{r-1} \geq  2 ^{r-1} \eta n^{r-1}$ copies of $T_{r-1}$ in $G[S]$.
(The inequality here follows since $|S|\geq n/4$.) Any $(r-1)$-set in $S$ spans at most
$2^{r-1}$ different copies of $T_{r-1}$ in $G[S]$. So there are at least $\eta n^{r-1}$ $(r-1)$-sets $X \subseteq S$ that
span a copy of $T_{r-1}$ in $G[S]$. By definition of $S$, for each such set $X$, both $X\cup \{x\}$ and $X\cup \{y\}$ span
copies of $T_r$ in $G$, as required.
\endproof

\subsection{The connection lemma for tournaments on four vertices}\label{sec4con}
The following simple fact will be used in the proof of the connection lemma for tournaments on four vertices (Lemma~\ref{T4con}).
\begin{fact}\label{T4fact}
If $T \in \T_4$ then there is a subset $S \subseteq V(T)$ with $|S| \in \{1,2\}$ and such that given any $s \in S$, either
			\begin{itemize}
				\item $ss^\prime \in E(T)$ for all  $s' \in V(T) \setminus S$ or
				\item $s^\prime s \in E(T)$ for all  $s' \in V(T) \setminus S$.
			\end{itemize}
\end{fact}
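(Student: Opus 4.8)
The plan is a short, elementary case analysis driven by the out-degree sequence of $T$. Every vertex of $T$ has out-degree in $\{0,1,2,3\}$, and these out-degrees sum to $\binom{4}{2}=6$. If some vertex $v$ satisfies $d^+_T(v)=3$, then $v$ sends an edge to every other vertex and $S:=\{v\}$ works, with the first alternative holding for $s=v$; symmetrically, if some $v$ satisfies $d^+_T(v)=0$ (equivalently $d^-_T(v)=3$) then $S:=\{v\}$ works with the second alternative. In both of these cases $|S|=1$ suffices, so we may henceforth assume that every vertex of $T$ has out-degree $1$ or $2$.

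Since the four out-degrees lie in $\{1,2\}$ and sum to $6$, exactly two vertices, say $c_1$ and $c_2$, have out-degree $2$, and the remaining two, say $b_1$ and $b_2$, have out-degree $1$. Orient the edge between $c_1$ and $c_2$ so that $c_1c_2\in E(T)$ (swapping the labels of $c_1,c_2$ if necessary). Then $c_1$ has in-degree $1$, and since $c_1c_2\in E(T)$ its unique in-neighbour is one of $b_1,b_2$; call it $b$, and write $W:=V(T)\setminus\{c_1,b\}$. I claim $S:=\{c_1,b\}$ has the required property. Indeed, $b$ is the only in-neighbour of $c_1$ and $d^+_T(c_1)=2$, so $W=N^+_T(c_1)$ and hence $c_1s'\in E(T)$ for every $s'\in W$. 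On the other hand $d^+_T(b)=1$ and $bc_1\in E(T)$, so $N^+_T(b)=\{c_1\}$; since $c_1\notin W$, this forces $s'b\in E(T)$ for every $s'\in W$. Thus $S$ works with $|S|=2$.

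The one point requiring a little care is the last step: one must orient the $c_1c_2$-edge before choosing $b$, so as to guarantee that the in-neighbour $b$ of $c_1$ has out-degree $1$ (if instead $b$ had out-degree $2$ it need not relate uniformly to $W$). Everything else is immediate. As a by-product, the forcing in the last paragraph shows that there is a unique $T\in\T_4$ — the strongly connected one, with out-degree sequence $(1,1,2,2)$ — for which no singleton $S$ works, so the case $|S|=2$ is genuinely needed.
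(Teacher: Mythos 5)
Your proof is correct. The paper states Fact~\ref{T4fact} without any accompanying proof (it is treated as an elementary observation to be checked directly on the four tournaments on four vertices), so there is no argument of the paper's to compare against; your out-degree-sequence case analysis is a clean and complete way to establish it, and your closing remark that the strongly connected tournament with score sequence $(1,1,2,2)$ is the unique $T\in\T_4$ for which $|S|=2$ is genuinely needed is accurate.
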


\begin{lemma}\label{T4con}
Let $0 < 1/n \ll \eps \ll \eta \ll \gamma \ll 1$ and $T \in \T_4$. Suppose that $G$ is a digraph on $n$ vertices such that
			\begin{align}\label{minT4}
				 \delta^0(G) \ge \displaystyle{\left({3}/{4} - \eps\right)} n
			\end{align}
and so that $G$ does not contain any $\gamma$-independent set of size at least $n/4$.
Then, for any  $x,y \in V(G)$, there exist at least $\eta n^3$ 3-sets $X \subseteq V(G)$ such that $X \cup \{x\}$ and $X \cup \{y\}$ span copies of $T$ in $G$.
\end{lemma}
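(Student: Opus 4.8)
The plan is as follows. Fix $x,y\in V(G)$ and put $W^{++}:=N^+(x)\cap N^+(y)$ and $W^{--}:=N^-(x)\cap N^-(y)$; by (\ref{minT4}) each has size at least $(1/2-2\eps)n$. Since $\delta^0(G)\ge(3/4-\eps)n$ is too weak to control an intersection of three neighbourhoods at once, the idea is to choose the roles that $x$ and $y$ will play in our two copies of $T$ so that the three remaining vertices of the desired $3$-set $X$ are constrained only through $W^{++}$ and $W^{--}$, and so that — after one vertex of $X$ has been pinned down — the other two range over a single common large set. We case on the structure of $T$ using Fact~\ref{T4fact}. Throughout we use the following observation: if $Z\subseteq V(G)$ has $|Z|\ge(1/4-O(\eps))n$ then $Z$ is not $(\gamma/2)$-independent, since otherwise adjoining $O(\eps)n$ arbitrary vertices to $Z$ would yield a $\gamma$-independent set of size at least $n/4$, contrary to hypothesis; hence $G[Z]$ spans at least $(\gamma/2)n^2$ edges.

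First suppose $T$ has a source $s$ (this covers $T_4$ and the tournament $K$ consisting of a dominating vertex over a cyclic triangle); the case where $T$ has a sink but no source follows by passing to the reverses of $G$ and $T$. Let both $x$ and $y$ play the role of $s$. Then any $3$-set $X\subseteq W^{++}$ spanning $T':=T-s$ has the property that $X\cup\{x\}$ and $X\cup\{y\}$ both span $T$, so it suffices to find at least $\eta n^3$ copies of the $3$-vertex tournament $T'$ in $H:=G[W^{++}]$; here $\delta^0(H)\ge(1/2-O(\eps))|H|$, because $|N^+(z)\cap W^{++}|\ge|W^{++}|-(1/4+\eps)n\ge(1/4-3\eps)n$ for every $z\in W^{++}$, and similarly for in-neighbourhoods. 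If $T'=T_3$ (i.e.\ $T=T_4$), then for each $z\in W^{++}$ the set $N^+(z)\cap W^{++}$ has size at least $(1/4-3\eps)n$, hence spans at least $(\gamma/2)n^2$ edges, each of which completes a copy of $T_3$ with source $z$; summing over $z$ and recalling that each $3$-set arises $O(1)$ times gives more than $\gamma n^3/48\ge\eta n^3$ such sets.

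The only remaining non-$C_3$ situation is $T=(1,1,2,2)$, the unique $4$-vertex tournament with neither source nor sink: Fact~\ref{T4fact} then gives a pair $S=\{s^+,s^-\}$ with $s^+$ dominating and $s^-$ dominated by $V(T)\setminus S$, and since $T$ has no source we must have $s^-\to s^+$, so $s^+$ has out-degree $2$ and in-degree $1$, $s^-$ is the sink of the transitive triangle $T-s^+$. Letting both $x$ and $y$ play $s^+$, any $3$-set $\{u,v,w\}$ with $u,v\in W^{++}$, $w\in W^{--}$, $u\to v$, $u\to w$, $v\to w$ has both $\{u,v,w\}\cup\{x\}$ and $\{u,v,w\}\cup\{y\}$ spanning $T$; for each $w\in W^{--}$ the set $W^{++}\cap N^-(w)$ has size at least $(1/4-3\eps)n$, hence spans $\ge(\gamma/2)n^2$ edges $u\to v$, and summing over $w\in W^{--}$ again yields $\gg\gamma n^3$ of the required $3$-sets.

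The main obstacle is the remaining case $T'=C_3$, that is $T=K$ (or, dually, its reverse): here every admissible choice of roles for $x$ and $y$ forces us to count copies of $C_3$ inside $H:=G[W^{++}]$ — the ``mixed'' role-choices all place the single $W^{++}$-vertex of $X$ in the middle of a transitive triangle whose two other vertices lie in $W^{--}$, a configuration that $\delta^0(G)\ge(3/4-\eps)n$ cannot handle. Suppose $H$ has fewer than $\eta n^3$ copies of $C_3$. By the Directed Graph Removal lemma (Lemma~\ref{dirl}) one may delete at most $\gamma_0|H|^2$ edges from $H$ to destroy every copy of $C_3$, where $\gamma_0=\gamma_0(\gamma)$ is small (e.g.\ $\gamma_0=\gamma^2/1000$); discarding the at most $\sqrt{\gamma_0}|H|$ vertices incident to many of these edges leaves a $C_3$-free digraph $H''$ with $\delta^0(H'')\ge(1/2-3\sqrt{\gamma_0}-5\eps)|H''|$. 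Proposition~\ref{ind}, applied with $r=3$ and $T=C_3$, now produces a genuinely independent set in $H''$ of size at least $(1/4-O(\sqrt{\gamma_0}))n$, which after padding is a $\gamma$-independent set of size at least $n/4$ — a contradiction. Hence $H$ has at least $\eta n^3$ copies of $C_3$, each giving a $3$-set $X\subseteq W^{++}$ with $X\cup\{x\}$ and $X\cup\{y\}$ spanning $T$; combined with the previous cases this produces the required $\ge\eta n^3$ sets. The delicate point of this last step (and the reason for the hierarchy $\eps\ll\eta\ll\gamma$, with $\eta$ essentially the Removal-lemma constant $\alpha(C_3,\gamma_0)$) is to take $\gamma_0$, and then $\eps$, small enough in terms of $\gamma$ that the set produced at the end is genuinely $\gamma$-independent and not merely $\gamma_0^{\Omega(1)}$-independent.
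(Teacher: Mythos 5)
Your proof is correct and follows essentially the same route as the paper. The paper appeals to Fact~\ref{T4fact} and splits into $|S|=1$ (source/sink) and $|S|=2$ (the strong tournament $(1,1,2,2)$); you split by the isomorphism type of $T$, but the underlying arguments coincide: both $x,y$ are placed in the role of the distinguished vertex (or vertices) of $T$, and the remaining $3$-set is located in common neighbourhoods. For the strong tournament, your "pick $w\in W^{--}$, then an edge in $W^{++}\cap N^-(w)$" is exactly the paper's choice of $s_2'$ and an edge in $A=N^+(x)\cap N^+(y)\cap N^-(s_2')$.

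The one stylistic difference: in the $|S|=1$ case the paper applies Proposition~\ref{turanstab} to $G[A]$ uniformly, which covers $T'=T_3$ and $T'=C_3$ in one stroke; you instead count $T_3$'s directly (summing edge counts over $N^+(z)\cap W^{++}$) and reproduce the removal-lemma plus Proposition~\ref{ind} argument from scratch for $T'=C_3$. The latter is literally what Proposition~\ref{turanstab} packages, so you could have shortened that step by invoking it directly (the hypothesis $\delta^0(H)\ge(1/2-O(\eps))|H|$ is exactly what it needs with $r=3$). Your hierarchy bookkeeping $\eps\ll\eta\ll\gamma$ and the $\gamma_0\ll\gamma^2$ choice are handled correctly, so the proof as written goes through.
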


\begin{proof}
By Fact~\ref{T4fact}, $T$ contains a subset $S \subseteq V(T)$ with $|S| \in \{1,2\}$ and such that given any $s \in S$, either
			\begin{itemize}
				\item $ss^\prime \in E(T)$ for all  $s' \in V(T) \setminus S$ or
				\item $s^\prime s \in E(T)$ for all  $s' \in V(T) \setminus S$.
			\end{itemize}
We divide the proof into two cases depending on $|S|$.

\medskip

\noindent \textbf{Case 1:} $|S| = 1$\\
Let $V(T) = \{x_1, x_2, x_3, s\}$ where $S = \{s\}$. Consider the case when  $sx_i \in E(T)$ for $i = 1,2,3$. 
(The other case, when  $x_is \in E(T)$ for $i = 1,2,3$, is analogous.) Set $A := N^+_G(x) \cap N^+_G(y)$ and let $T':=T[x_1,x_2,x_3]$. 
Our aim is to find $\eta n^3$ 3-sets $X\subseteq A$ that span copies of $T'$ in $G[A]$. Then the choice of $s$ and $A$ ensures that each such set $X$ has the property that  $X \cup \{x\}$ and $X \cup \{y\}$ span copies of $T$ in $G$ (where $x$ and $y$ respectively play the role of $s$), as desired.

By (\ref{minT4}) we have that $|A|\geq (1/2-2\eps )n$ and so
$$\delta^0(G[A]) \ge |A|-(1/4+\eps )n \geq |A|- (1/4+\eps)\frac{|A|}{1/2-2\eps} \geq |A|-(1/2+5 \eps )|A|\geq(1/2-\gamma ^2 /4)|A|.$$
Suppose for a contradiction that $G[A]$ contains at most $65 \eta |A|^{3}$ copies of $T'$. Note that
$1/|A| \ll 65 \eta \ll \gamma ^2 /4 \ll 1/3$. Hence, applying Proposition~\ref{turanstab} with
$G[A], T', 3, 65 \eta , \gamma ^2 /4$ playing the roles of $G,T,r,\alpha$ and $\eps$ respectively, we obtain a
$\gamma/2$-independent set $A'$ in $G[A]$ of size at least $|A|/2 \geq (1/4-\eps)n$.  By adding at most
$\eps n$ arbitrary vertices to $A'$, we obtain a $\gamma$-independent set in $G$ of size at least $n/4$, a contradiction.

Thus, there are at least $65 \eta |A|^3 \geq 2 ^{3} \eta n^{3}$ copies of $T'$ in $G[A]$. Any $3$-set in $A$ spans at most
$2^{3}$ different copies of $T'$ in $G[A]$. So there are at least $\eta n^{3}$ $3$-sets $X \subseteq A$ that
span a copy of $T'$ in $G[A]$, as required.

\medskip

\noindent \textbf{Case 2:} $|S| = 2$\\
Let $V(T) = \{x_1, x_2, s_1, s_2\}$ where $S = \{s_1, s_2\}$.
Assume that $x_1, x_2 \in N_T ^+(s_1)$ and $x_1, x_2 \in N_T ^-(s_2)$. (The other cases can be dealt with analogously.) We may further assume that $s_2s_1 \in E(T)$ (otherwise, we can reset $S = \{s_1\}$ and then follow the argument from Case 1). Finally, we may assume that
$x_1x_2 \in E(T)$.

For each of our desired 3-sets $X$, the vertices $x$ and $y$ will play the role of $s_1$ in the copy of $T$ that spans $X \cup \{x\}$ and $X \cup \{y\}$, respectively. Let $s'_2 \in V(G)$ such that $s'_2x,s'_2 y\in E(G)$. By (\ref{minT4}) there are at least $(1/2-2\eps)n$ choices
for $s'_2$. Set $A:=N^+ _G(x) \cap N^+ _G(y) \cap N^- _G(s_2 ')$. Then (\ref{minT4}) implies that
$$|A| \ge \left(\frac{1}{4} - 3\eps \right) n.$$
Suppose that $G[A]$ contains at most $\gamma n^2 /2$ edges. Then by adding at most $3\eps n$ vertices to $A$, we obtain a 
$\gamma$-independent set in $G$ of size at least $n/4$, a contradiction.
So $G[A]$ contains at least $\gamma n^2/2$ edges.

Given any $x'_1x'_2 \in E(G[A])$, set $X:=\{s'_2,x'_1,x'_2\}$. By construction, $X\cup \{x\}$ spans a copy of $T$ in $G$ where
$x, s'_2, x'_1, x'_2$ play the roles of $s_1,s_2,x_1$ and $x_2$ respectively and $X\cup \{y\}$ spans a copy of $T$ in $G$ where
$y, s'_2, x'_1, x'_2$ play the roles of $s_1,s_2,x_1$ and $x_2$ respectively.

Recall that there are at least $(1/2-2\eps)n$ choices
for $s'_2$ and at least $\gamma n^2/2$ choices for $x'_1x'_2$. Overall, this implies that there are at least
$$(1/2-2\eps )n \times \frac{\gamma n^2}{2} \times \frac{1}{3!} \geq \eta n^3$$
choices for $X$, as desired.
\end{proof}

\subsection{The connection lemma for tournaments on at least five vertices}\label{sec5con}

\begin{lemma}\label{5con}
 Let $0<1/n \ll \eps \ll \eta \ll \gamma \ll 1/r$ where $n, r \in \mathbb N$ and $r \geq 5$, and let $T \in \mathcal T_r$.
Suppose that $G$ is a digraph on $n$ vertices such that
\begin{align}\label{absdeg}
\delta ^0 (G) \geq \left (1 -{1}/{r} -\eps \right )n 
\end{align}
and so that $G$ does not contain a $\gamma$-independent set on at least $n/r$ vertices.
Given any $x,y \in V(G)$, there exist at least $\eta n^{r-1}$ $(r-1)$-sets $X \subseteq V(G)$ such
that both $X \cup \{x\}$ and $X\cup \{y\}$ span copies of $T$ in $G$.
\end{lemma}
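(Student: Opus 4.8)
The plan is to reduce the problem, just as in Lemma~\ref{con+-}, to finding many copies of a smaller transitive tournament inside a common neighbourhood, but now exploiting that any $T \in \mathcal T_r$ with $r \geq 5$ contains a copy of $T_3$ as a subtournament. Fix $x,y \in V(G)$. Write $V(T) = \{v_1, v_2, v_3\} \cup W$ where $T[v_1,v_2,v_3]$ is a transitive triangle (say with $v_1 v_2, v_2 v_3, v_1 v_3 \in E(T)$) and $|W| = r-3$. The idea is that $x$ will play the role of one vertex of this transitive triangle in the copy of $T$ spanning $X \cup \{x\}$, and $y$ will play the role of the \emph{same} vertex in the copy spanning $X \cup \{y\}$; the remaining $r-1$ vertices of $X$ must then span a fixed tournament $T^- := T - v_i$ (for the appropriate $i$) in a way that is ``compatible'' with both $x$ and $y$. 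The choice of which $v_i$ to delete should be made so that $v_i$ has the largest possible number of out-neighbours or in-neighbours among $\{v_1,v_2,v_3\}$ that are forced — deleting $v_2$ is natural since in the transitive triangle $v_2$ receives one edge and sends one edge, but I would actually delete $v_1$ or $v_3$ (a source or sink of the triangle) to keep the adjacency constraints one-directional, mirroring the Case~1 argument of Lemma~\ref{T4con}.

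First I would set up the common neighbourhood. Suppose we delete a vertex $v$ of the transitive triangle that is a source within the triangle, so that in $T$ the vertex $v$ sends edges to $v_j, v_k$ and to some subset of $W$ and receives edges from the rest of $W$; let $P \subseteq V(T)\setminus\{v\}$ be those vertices $v$ sends to and $Q$ those it receives from. For $X \cup \{x\}$ to span $T$ with $x$ playing the role of $v$, the vertices of $X$ that play the roles of $P$ must lie in $N^+_G(x)$ and those playing the roles of $Q$ in $N^-_G(x)$; the same with $y$. So I would work inside the digraph induced on $A := \bigl(N^+_G(x)\cap N^+_G(y)\bigr) \cup \bigl(N^-_G(x)\cap N^-_G(y)\bigr)$ — more precisely, I would not take a union but split $X$ into an ``out-part'' lying in $N^+_G(x)\cap N^+_G(y)$ and an ``in-part'' lying in $N^-_G(x)\cap N^-_G(y)$, building the copy of $T - v$ with its vertices distributed accordingly. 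By \eqref{absdeg} each of $N^+_G(x)\cap N^+_G(y)$ and $N^-_G(x)\cap N^-_G(y)$ has size at least $(1 - 2/r - 2\eps)n$, which for $r \geq 5$ is comfortably more than $n/r$, and this is where having a transitive triangle (and hence only needing to reserve ``the rest of $T$'' inside neighbourhoods of pairs rather than triples) buys us the slack that fails for $r = 4$.

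The core counting step is then: within the relevant common-neighbourhood set (call it $B$, with $|B| \geq (1-2/r-2\eps)n$), show there are many copies of the prescribed $(r-1)$-vertex tournament $T-v$ respecting the out/in split. Here I would iterate: greedily or by a degree count extend, using that inside $B$ the induced minimum semidegree is at least $\bigl(1 - \tfrac{1}{r-2} - O(\eps)\bigr)|B|$ (the same arithmetic manipulation as in the proof of Lemma~\ref{con+-}), and apply Proposition~\ref{turanstab} to the relevant induced subdigraph: either $G[B]$ (or the appropriate sub-part) contains $\gg \eta |B|^{r-1}$ copies of $T-v$ — giving, after dividing by the at most $2^{r-1}$ labellings, at least $\eta n^{r-1}$ suitable $(r-1)$-sets $X$, each of which by construction completes to a copy of $T$ with both $x$ and $y$ — or else $G[B]$ contains a $\sqrt{\gamma^2/4}$-independent set of size $\geq |B|/(r-2) \geq n/r - O(\eps)n$, which after adding $O(\eps n)$ arbitrary vertices yields a $\gamma$-independent set of size $\geq n/r$ in $G$, contradicting the hypothesis.

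The main obstacle I expect is bookkeeping the out/in split of $W$ correctly: the vertices of $X$ playing roles in $W$ are constrained both by their adjacencies to $x$ and $y$ (via lying in the correct common neighbourhood) \emph{and} by the internal tournament structure of $T$, and one has to check that extending a partial transitive-type structure inside the split neighbourhoods never runs out of room. The cleanest route is probably to further restrict: first pick the $v_j, v_k$-roles and the roles of the out-part of $W$ inside $N^+_G(x)\cap N^+_G(y)$, and separately the in-part of $W$ inside $N^-_G(x)\cap N^-_G(y)$, then argue via Proposition~\ref{turanstab} applied to a carefully chosen induced subdigraph (intersecting the relevant out/in-neighbourhoods of the already-chosen vertices) that the final ``free'' coordinates — which should be made to span just a transitive tournament — can be chosen in $\gg n^{r-1}$ ways or else an almost-independent set of the forbidden size appears. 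I would also need to double-check, as in Lemma~\ref{T4con} Case~2, whether a single application of Proposition~\ref{turanstab} suffices or whether (when the ``free'' part is small) one instead counts edges directly; for $r \geq 5$ the free part has $\geq 2$ vertices so the Turán-stability application should go through uniformly.
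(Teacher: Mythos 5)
Your high-level starting point (extract a transitive triangle $T_3 \subseteq T$ and try to repeat the Lemma~\ref{con+-} pattern of greedy extension plus Proposition~\ref{turanstab}/no-almost-independent) is the same as the paper's, but the execution diverges in a way that leaves a real gap.

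First, a structural difference: you have $x$ and $y$ play the role of a \emph{triangle} vertex $v\in\{v_1,v_2,v_3\}$, and then try to count copies of $T-v$. The paper instead has $x$ and $y$ play a vertex \emph{outside} the triangle ($x_1\in W$), and reserves the triangle roles for a later flexibility argument. This matters, because choosing $v$ to be a source/sink of the triangle does \emph{not} make $v$ a source/sink of $T$ (a general $T\in\mathcal T_r$ need not have a source or sink), so the adjacency constraints on $X$ remain genuinely mixed between $N^+(x)\cap N^+(y)$ and $N^-(x)\cap N^-(y)$. You flag this bookkeeping issue, but the resolution you sketch does not survive scrutiny: once you greedily place the $r-3$ vertices of $W$ (each step has at worst $r-2$ constraints and so is fine), the candidate sets $C_j,C_k$ for the two remaining triangle roles are each constrained by $x$, $y$, and all $r-3$ greedily chosen vertices — that is $r-1$ constraints each, giving $|C_j|,|C_k|\ge n/r-O(\eps)n$ but nothing more. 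Since $v_j$ and $v_k$ have different adjacency patterns to $W$, these two sets need not coincide; they can be essentially disjoint, in which case the minimum-semidegree bound $(1-1/r-\eps)n$ is not strong enough to force an edge from $C_j$ to $C_k$, and applying Proposition~\ref{turanstab} inside a single one of them (or inside their union) does not produce a copy of the fixed two-vertex pattern with one vertex in each set. This is exactly the wall the paper's argument is designed to get around.

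The paper's proof avoids it by introducing a single flexible vertex $x^*$ chosen to have a \emph{double} edge to $x$, $y$, and to each greedily chosen $x_2',\dots,x_{r-3}'$; this is precisely where $r\ge 5$ is used, since one needs $(1-4/r-O(\eps))n>0$ to find $x^*$ (for $r=4$ such a vertex may not exist, which is why Lemma~\ref{T4con} uses a different structural fact). Because of the double edges, $x^*$ can be slotted into \emph{any} of the three triangle roles $s_1,s_2,s_3$ at the very end. One then defines the three candidate sets $S_1,S_2,S_3$ (where $S_1$ requires an edge \emph{into} $x^*$ and $S_2,S_3$ require an edge \emph{out of} $x^*$), each of size roughly $n/r$, and runs a dichotomy on $|S_1\cup S_2|$: if it exceeds $n/r+\gamma'n$, the minimum semidegree forces many edges from $S_1\cup S_2$ into $S_3$, yielding the $T_3$ with $x^*$ in the middle role $s_2$; if not, $|S_1\cap S_2|\ge n/r-2\gamma'n$, the no-$\gamma$-independent-set hypothesis gives many edges inside $S_1\cap S_2$, and since those vertices have double edges to $x^*$, $x^*$ can be dropped into the sink role $s_3$. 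It is this ``leave-one-vertex-role-undetermined'' flexibility — which your proposal lacks — that converts the $n/r-O(\eps)n$ bound into an actual edge count and closes the argument.
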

\proof
Define $\gamma '$ such that $\eta \ll \gamma ' \ll \gamma$.
Since $|T| \geq 5$, $T$ contains a copy of $T_3$. Let $V(T)=\{x_1, \dots, x_{r-3}, s_1, s_2, s_3 \}$
where $T[s_1,s_2,s_3]=T_3$ so that $s_is_j \in E(T)$ for $i<j$. 

Consider any $x,y \in V(G)$.
We now explain how we construct our desired $(r-1)$-sets $X$. For each such $X$, $x$ will play the role of
$x_1$ in the copy of $T$ spanning $X \cup \{x\}$ and $y$ will play the role of $x_1$ in the copy of
$T$ spanning $X \cup \{y\}$. When constructing each $X$ we  introduce a special vertex $x^*$ that will 
play the role of $s_1,s_2$ or $s_3$ in the copies of $T$ spanned by $X \cup \{x\}$ and $X\cup \{y\}$.

Let $x^* \in V(G)$ be such that $xx^*,x^*x, yx^*,x^*y \in E(G)$. By (\ref{absdeg}) there are at least
$(1-4/r-4\eps)n \geq n/2r$ choices for $x^*$. (Note that we could not guarantee such a vertex $x^*$ exists if $r
=4$. This is the reason why we cannot generalise this proof to work for $r \geq 4$.)

Next we iteratively choose vertices $x'_2, \dots , x'_{r-3} \in V(G)$ such that,
for each $2 \leq i \leq r-3$, the following conditions hold:
\begin{itemize}
\item[($a_i$)] $x^* x'_i, x'_i x^* \in E(G)$;
\item[($b_i$)] $xx'_i , yx'_i \in E(G)$ if $x_1x_i \in E(T)$  and $x'_i x, x'_iy \in E(G)$ if $x_i x_1 \in E(T)$;
\item[($c_i$)]  $x'_{i'}x'_{i} \in E(G)$ if $x_{i'}x_i \in E(T)$ and
$x'_i x'_{i'} \in E(G)$ if $x_i x_{i'} \in E(T)$ (for each $2 \leq i' < i$).
\end{itemize}
Suppose that, for some $2 \leq j \leq r-4$, we have already chosen $x'_2, \dots , x'_{j}$ so
that ($a_i$)--($c_i$) hold for $ 2 \leq i \leq j$. Then (\ref{absdeg}) implies that there are at least
$$\left( 1 - \frac{j+3}{r}-(j+3) \eps \right) n \geq \left( 1- \frac{r-1}{r} -(r-1) \eps \right)n 
\geq \frac{n}{2r}$$
choices for $x'_{j+1}$ so that ($a_{j+1}$)--($c_{j+1}$) are satisfied.

Conditions ($a_i$)--($c_i$) ensure that $x'_i$ can play the role of $x_i$ for each $2\leq i \leq r-3$. Since there are double edges between $x^*$ and 
$x,y, x'_2, \dots , x'_{r-3}$ there is currently freedom as to whether $x^*$ will play the role
of $s_1, s_2$ or $s_3$.

Our next task is to construct sets $S_1, S_2,S_3$ of `candidates' to play the role of $s_1,s_2$ and $s_3$
respectively. More precisely, we say that a vertex $z \in V(G)$ is a \emph{candidate for $s_1$ in $G$} if:
\begin{itemize}
\item $zx^* \in E(G)$;
\item $xz,yz \in E(G)$ if $x_1s_1 \in E(T)$ and $zx,zy \in E(G)$ if $s_1 x_1 \in E(T)$;
\item $x'_i z \in E(G)$ if $x_i s_1 \in E(T)$ and $z x'_i \in E(G)$ if $ s_1 x_i \in E(T)$
(for each $2 \leq i \leq r-3$).
\end{itemize}
We say that a vertex $z \in V(G)$ is a \emph{candidate for $s_2$ in $G$} if:
\begin{itemize}
\item $x^*z \in E(G)$;
\item $xz,yz \in E(G)$ if $x_1s_2 \in E(T)$ and $zx,zy \in E(G)$ if $s_2 x_1 \in E(T)$;
\item $x'_i z \in E(G)$ if $x_i s_2 \in E(T)$ and $z x'_i \in E(G)$ if $ s_2 x_i\in E(T)$
(for each $2 \leq i \leq r-3$).
\end{itemize}
Similarly we say that a vertex $z \in V(G)$ is a \emph{candidate for $s_3$ in $G$} if:
\begin{itemize}
\item $x^*z \in E(G)$;
\item $xz,yz \in E(G)$ if $x_1s_3 \in E(T)$ and $zx,zy \in E(G)$ if $s_3 x_1 \in E(T)$;
\item $x'_i z \in E(G)$ if $x_i s_3 \in E(T)$ and $z x'_i \in E(G)$ if $ s_3x_i \in E(T)$
(for each $2 \leq i \leq r-3$).
\end{itemize}
(Note that it is important that the first condition in the definition of a candidate for $s_1$ differs
from the first condition in the definitions of candidates for $s_2$ and $s_3$.)

Let $S_1 , S_2, S_3$ denote the set of candidates for $s_1,s_2$ and $s_3$ respectively. (\ref{absdeg})
implies that 
\begin{align}\label{SSS}
|S_1|,|S_2|,|S_3| \geq \left( 1 -\frac{r-1}{r} -(r-1) \eps \right ) n \geq \frac{n}{r} -r\eps n.
\end{align}

\medskip

\noindent{\bf Case 1:} $|S_1 \cup S_2|\geq n/r +\gamma ' n$. 
\newline
\noindent In this case (\ref{absdeg}) implies that every $z \in S_3$ receives at least $\gamma 'n-\eps n
\geq \gamma ' n/2$ edges from $S_1 \cup S_2$ in $G$. So (\ref{SSS}) implies that there are at least 
$\gamma ' n^2 /3r$ edges in $G$ with startpoint in $S_1 \cup S_2$ and endpoint in $S_3$.
Without loss of generality assume that
there are at least $\gamma ' n^2 /6r$ edges in $G$ with startpoint in  $S_1 $ and endpoint in $S_3$. Let $s'_1s'_3$ be such an edge. Notice
that, by definition of candidates for $s_1$ and $s_3$, $\{s'_1, x^*, s'_3\}$ spans a copy of $T_3$ in
$G$ with $s'_1, x^*$ and $s'_3$ playing the roles of $s_1$, $s_2$ and $s_3$ respectively. 
Set $X:=\{ x^*, x'_2, \dots , x'_{r-3}, s'_1, s'_3\}$. By construction 
$X\cup \{x\}$ spans a copy of $T$ in $G$ where $x$ plays the role of $x_1$, $x'_i$ plays the
role of $x_i$ (for $2 \leq i \leq r-3$), $s'_1$ plays the role of $s_1$, $x^*$ plays the role of $s_2$
and $s'_3$ plays the role of  $s_3$. Similarly, 
$X\cup \{y\}$ spans a copy of $T$ in $G$ where $y$ plays the role of $x_1$, $x'_i$ plays the
role of $x_i$ (for $2 \leq i \leq r-3$), $s'_1$ plays the role of $s_1$, $x^*$ plays the role of $s_2$
and $s'_3$ plays the role of  $s_3$.

\medskip

\noindent{\bf Case 2:} $|S_1 \cup S_2|< n/r +\gamma ' n$. 
\newline
\noindent
In this case,
$$|S_1 \cap S_2|=|S_1|-|S_1\setminus S_2| \stackrel{(\ref{SSS})}{\geq} \left(\frac{n}{r}
-r\eps n \right) -\left( \gamma ' n +r \eps n \right) \geq \frac{n}{r}- 2 \gamma ' n.$$
 Note that there must be at least $\gamma n^2 /2\geq \gamma 'n^2 /6r$
  edges in $S_1 \cap S_2$ otherwise,
 by adding at most $2 \gamma ' n$ arbitrary vertices to $S_1 \cap S_2$, we obtain a $\gamma$-independent
 set in $G$ of size at least $n/r$, a contradiction. Consider any edge $s'_1s'_2 \in E(G[S_1 \cap S_2])$.

By definition of candidates for $s_1$ and $s_2$, $\{s'_1,  s'_2, x^*\}$ spans a copy of $T_3$ in
$G$ with $s'_1, s'_2$ and $x^*$ playing the roles of $s_1$, $s_2$ and $s_3$ respectively. (Indeed, by definition of $S_1 \cap S_2$ there is a double edge from $x^*$ to both $s_1$ and $s_2$ in $G$. Further,
$s'_1s'_2 \in E(G)$.)
Set $X:=\{ x^*, x'_2, \dots , x'_{r-3}, s'_1, s'_2\}$. By construction 
$X\cup \{x\}$ spans a copy of $T$ in $G$ where $x$ plays the role of $x_1$, $x'_i$ plays the
role of $x_i$ (for $2 \leq i \leq r-3$), $s'_1$ plays the role of $s_1$, $s'_2$ plays the role of $s_2$
and $x^*$ plays the role of  $s_3$. Similarly,
$X\cup \{y\}$ spans a copy of $T$ in $G$.

\medskip

Recall that there are at least $n/2r$ choices for $x^*$, at least $n/2r$ choices for
each $x'_i$ and at least $\gamma' n^2 /6r$ choices for the edges selected in Cases 1 and 2.
Overall, this implies that there are at least
$$\frac{n}{2r} \times \left(\frac{n}{2r} \right) ^{r-4} \times \frac{\gamma ' n^2}{6r} \times
\frac{1}{(r-1)!} \geq \eta n^{r-1}$$
choices for $X$, as desired.
\endproof

\subsection{The connection lemma for cyclic triangles}\label{sec3con}
\begin{lemma}\label{C3con}
Let $0 < 1/n \ll \eps \ll \eta \ll \gamma, \alpha \ll 1$. Suppose that $G$ is a digraph on $n$ vertices such that
\begin{align}\label{C3min}
\delta^0(G) \ge \displaystyle{\left({2}/{3} - \eps\right)} n.
\end{align}			
Further suppose that	
			\begin{itemize}
				\item $G$ does not contain any $\gamma$-independent set of size at least $n/3$, and
				\item $G$ does not $\alpha$-contain $Ex(n)$.
			\end{itemize}
Then, given any $x,y \in V(G)$, at least one of the following conditions holds. 
\begin{itemize}
\item[(i)] There are at least $\eta n^2$ $2$-sets $X\subseteq V(G)$ such that $X \cup \{x\}$ and $X \cup \{y\}$ span copies of $C_3$ in $G$.
\item[(ii)] There are at least $\eta n^5$ $5$-sets $X\subseteq V(G)$ such that $X \cup \{x\}$ and $X \cup \{y\}$ span copies of $2C_3$ in $G$.
\end{itemize}

\end{lemma}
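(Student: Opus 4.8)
The plan is to analyze the neighbourhood structure around $x$ and $y$ and, in the ``good'' case, directly produce many $2$-sets as in (i); when that fails, we exploit the absence of a large independent set and the absence of an approximate copy of $Ex(n)$ to build many $5$-sets spanning $2C_3$ as in (ii). First I would record what it means for $\{u,v\}$ to satisfy (i): since a copy of $C_3$ on $\{x,u,v\}$ requires the three vertices to form a directed triangle, we need (say) $xu, uv, vx \in E(G)$, and likewise $\{x,y\}$-compatibility means $\{u,v\}$ works for both $x$ and $y$. Thus I would set $A^+:=N^+(x)\cap N^+(y)$, $A^-:=N^-(x)\cap N^-(y)$, and $A^\pm$ for the remaining (mixed) vertices; by (\ref{C3min}) each of $A^+, A^-$ has size at least $(1/3-2\eps)n$. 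A $2$-set $\{u,v\}$ with $u\in A^+$, $v\in A^-$ and $uv\in E(G)$ immediately gives copies of $C_3$ through both $x$ and $y$ (with $x$ or $y$ as the source of $u$ and sink of $v$). So if $e_G(A^+,A^-)\geq \eta' n^2$ for a suitable $\eta' \gg \eta$, we are in case (i) and done. The same holds with the roles reversed (using $N^-(x)\cap N^+(y)$ etc.), so I would check all four such ``bipartite'' counts.

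So I may assume all these cross-counts are small, i.e.\ $e_G(A^+,A^-)<\eta' n^2$ and the analogous bounds. The point of this is structural: $A^+$ and $A^-$ together already fill up most of $V(G)$ (their union has size at least $(2/3-4\eps)n$, and the ``mixed'' set plus $\{x,y\}$ accounts for the rest), and almost no edges run between $A^+$ and $A^-$ in either direction. Now I would bring in the two extremal hypotheses. Neither $A^+$ nor $A^-$ can be $\gamma$-independent (of size $\geq n/3$), so $G[A^+]$ contains $\geq \gamma n^2$ edges and likewise $G[A^-]$. The key observation is then that the near-bipartite-free structure between $A^+$ and $A^-$, combined with the minimum semidegree condition, forces $G$ to be close to $Ex(n)$: roughly, $A^+$ and $A^-$ look like two of the classes of $Ex(n)$ (with edges only inside each class and, by the semidegree condition, a ``forward'' direction between them determined by where $x$ and $y$ receive/send their edges), and the leftover $\approx n/3$ vertices form the third class. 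This would contradict the hypothesis that $G$ does not $\alpha$-contain $Ex(n)$ — unless the cross-counts are not all small after all. I expect this dichotomy to be the main obstacle: making precise that ``few edges between $A^+$ and $A^-$ in all four orientation patterns'' $\Rightarrow$ ``$\alpha$-close to $Ex(n)$'' requires carefully assigning the mixed vertices and the exceptional vertices $x,y$ to classes and bounding the number of added edges by $\alpha n^2$, and one must handle the case where the edges between $A^+$ and $A^-$ go in a ``wrong'' direction (which is exactly what produces compatible $2$-sets and hence case (i)).

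Finally, to reach (ii) in the remaining situation, I would use that $G[A^+]$ has $\geq \gamma n^2$ edges together with the structure forced above. Given an edge $uv\in E(G[A^+])$, since $u,v\in N^+(x)\cap N^+(y)$ we already have $xu, xv, yu, yv\in E(G)$; to complete two copies of $C_3$ through $x$ (resp.\ $y$) we need a third vertex $w$ with $ux, vx$ replaced appropriately — here one should instead build $2C_3$: one triangle $\{x,u,w_1\}$ and one triangle containing $v$ and two further vertices, chosen disjointly. Concretely I would look for, for each $z\in\{x,y\}$, two vertex-disjoint directed triangles inside $N^{+}(z)\cup N^{-}(z)$ on a common $5$-set $X$ with $z$ as one vertex of the first triangle; the abundance of edges inside $A^+$ (and symmetrically $A^-$), plus the minimum semidegree condition applied twice to pick the remaining vertices of each triangle, yields $\eta n^5$ such $5$-sets by a standard greedy count (choose the $A^+$-edge, then a common out-neighbour to close the first triangle, then a disjoint second triangle using the $\geq\gamma n^2$ edges and semidegree). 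The bookkeeping — ensuring disjointness of the five chosen vertices from each other and from $x,y$, and that both $x$ and $y$ simultaneously extend — is routine but must be done with care; I would organize it as: (1) pick $w_1\in N^+(x)\cap N^+(y)$ adjacent back to the first triangle's other vertices, (2) pick a directed triangle $w_2w_3w_4$ inside a large common neighbourhood disjoint from what is used so far, giving a product lower bound of order $n\cdot n^2\cdot n^2 = n^5$ up to constants, hence $\geq \eta n^5$ choices after dividing by $5!$. The main difficulty, as noted, is the structural dichotomy in the middle paragraph; everything else is counting.
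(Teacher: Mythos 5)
Your starting decomposition is exactly the one the paper uses: with $A^+ = N^+(x)\cap N^+(y)$ and $A^- = N^-(x)\cap N^-(y)$, an edge $uv$ from $A^+$ to $A^-$ closes a triangle through both $x$ and $y$, so if there are $\Omega(n^2)$ such edges you are in case~(i). But your sketch then has a genuine gap: you implicitly treat $A^+$ and $A^-$ as disjoint and talk about them plus a small ``mixed'' set filling up $V(G)$, ignoring the intersection $D := A^+\cap A^-$ (vertices joined to both $x$ and $y$ by double edges). This set $D$ can have any size up to roughly $n/3$ (and its size cannot exceed that, since if it did, $D$ would inherit almost no internal edges from the ``no $A^+\to A^-$ edges'' assumption, yielding a $\gamma$-independent set of size $\ge n/3$). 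The paper's proof splits into two cases according to whether $|D|$ is tiny (Case~1) or medium (Case~2), and these cases lead to structurally different situations and different $5$-set constructions. Without identifying $D$ and handling its size you cannot make the ``we must be $\alpha$-close to $Ex(n)$'' claim precise, because when $|D|$ is, say, $n/10$, the three sets $A^+\setminus A^-$, $A^-\setminus A^+$ and $V(G)\setminus(A^+\cup A^-)$ are not the classes of $Ex(n)$ and the intended contradiction simply does not arise; instead one has to build the $5$-sets directly (which is what the paper does in its Case~2).

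A secondary issue is that the ``four bipartite counts'' you propose to check are not all useful: for a common $2$-set you need $u\in N^+(x)\cap N^+(y)$ and $v\in N^-(x)\cap N^-(y)$; the other cross-neighbourhood combinations would require double edges between $u$ and $v$ to serve both $x$ and $y$, so the paper tracks only the one count. Also, $|A^+|,|A^-|\ge(1/3-2\eps)n$ is strictly below $n/3$, so the claim that they ``can't be $\gamma$-independent'' needs the small fix of padding with $O(\eps n)$ arbitrary vertices. Finally, the $5$-set construction you outline (two disjoint triangles with $x$, resp.\ $y$, in one of them, sharing the remaining five vertices) is the right shape, but it is not supported by ``abundance of edges inside $A^+$'' alone. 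In the paper's Case~1a and Case~2 the fifth vertex is chosen very carefully — it is picked from $B$ or $D$ so that it simultaneously closes the second triangle for $x$ and for $y$ — and getting the required edges uses the near-complete bipartite structure between the classes (established from $\delta^0\ge(2/3-\eps)n$ combined with the ``few $A^+\to A^-$ edges'' assumption), not just the $\gamma n^2$ internal edges. So the outline is on the right track, but the missing case split on $|D|$ and the undeveloped $5$-set/$Ex(n)$ dichotomy are real obstacles, not just bookkeeping.
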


\begin{proof}
Define additional constants $\eps ', \eps ''$ so that $ \eta  \ll \eps ' \ll \eps '' \ll \gamma, \alpha$. Consider any $x,y \in V(G)$. Set $A':= N_G ^+(x) \cap N_G ^+(y)$ and $B' := N_G ^-(x) \cap N_G ^-(y)$.
Note that (\ref{C3min}) implies  $|A'|,|B'|\geq (1/3-2\eps )n$.
Further, define $A:= A' \setminus B'$, $B:=B'\setminus A'$, $C := V(G) \setminus (A' \cup B')$ and $D:= A' \cap B'$. (So $A,B,C,D$ is a partition of $V(G)$.)

Note that given any $ab \in E(G)$ with $a \in A'$ and $b \in B'$, $\{x,a,b\}$ and $\{y,a,b\}$ span copies of $C_3$ in $G$. Thus, if there are at least
$2 \eta n^2$ such edges $ab \in E(G)$, then we obtain at least $\eta n^2$ $2$-sets $X\subseteq V(G)$ such that $X \cup \{x\}$ and $X \cup \{y\}$ span copies of $C_3$ in $G$, as desired.
Therefore, we may assume that
\begin{itemize}
\item[($\alpha $)] there are at most $2 \eta n^2$ edges $ab \in E(G)$ with $a \in A'$ and $b \in B'$.
\end{itemize}
Suppose that $|B'| >(1/3+8 \eta )n$. Then by (\ref{C3min}), every vertex in $A'$ sends out at least $8 \eta n -\eps n \geq 7 \eta n$ edges to $B'$ in $G$.
Hence, there are at least $$7\eta n |A'|\geq 7 \eta n \times  (1/3-2\eps)n  > 2 \eta n^2$$
edges $ab \in E(G)$ with $a \in A'$ and $b \in B'$, a contradiction to ($\alpha $). Together with an analogous argument, this implies that
\begin{align}\label{A'B'bound}
\left( \frac{1}{3}-2\eps \right )n \leq |A'|,|B'| \leq \left( \frac{1}{3}+8 \eta \right ) n
\end{align}
and so
\begin{align}\label{Cbound}
\left ( \frac{1}{3} -16 \eta \right) n \leq |C|.
\end{align}

Suppose that $|D| > \left(\frac{1}{3} - \eps ''\right)n.$
Then by ($\alpha$),  $e(G[D]) \leq 2 \eta n^2$. By adding at most $\eps ''n$ arbitrary vertices to $D$ we obtain a $\gamma$-independent set in $G$ of size at least $n/3$, a contradiction.
Hence, $$|D|\leq \left(\frac{1}{3} - \eps ''\right)n.$$
We now split the proof into two cases depending on the size of $D$.

\medskip

\noindent\textbf{Case 1:} $|D| < \eps ' n$.
\newline
\noindent In this case,
\begin{align}\label{ABbound}
\left(\frac{1}{3} - 2\eps '\right) n \leq  \left(\frac{1}{3} - 2\eps\right) n - \eps ' n \stackrel{(\ref{A'B'bound})}{\le} |A|, |B| \stackrel{(\ref{A'B'bound})}{\le} \left(\frac{1}{3} + 8\eta \right) n
\end{align}
and so 
\begin{align}\label{Cbound2}
\left ( \frac{1}{3} -16 \eta \right) n \stackrel{(\ref{Cbound})}{\leq} |C| \stackrel{(\ref{ABbound})}{\leq} \left( \frac{1}{3}+ 4 \eps ' \right ) n.
\end{align}
By ($\alpha $), all but at most $2 \sqrt{\eta} n$ vertices $a \in A$ send out at most $\sqrt{\eta}n$ edges to $B$ in $G$. 
So each such vertex $a$ sends out at least $(2/3-\eps )n-\sqrt{\eta}n -\eps ' n -|C| \geq (1/3- 6 \eps ' )n$ edges to $A $ in $G$ and at least 
$(2/3-\eps )n-\sqrt{\eta}n -\eps ' n -|A| \geq (1/3- 2 \eps ' )n$ edges to $C $ in $G$.
Altogether, this implies that
\begin{align}\label{GA}
e({G}[A]) \geq (|A|-2\sqrt{\eta }n)(1/3-6 \eps ')n
 \stackrel{(\ref{ABbound})}{\geq} (|A|-2\sqrt{\eta }n)( |A|-7 \eps 'n) \stackrel{(\ref{ABbound})}{\geq} |A|^2 -3 \eps ' n^2
\end{align}
and 
\begin{align}\label{GAC}
e({G}[A,C]) \geq (|A|-2\sqrt{\eta }n) (1/3-2 \eps ')n
\stackrel{(\ref{Cbound2})}{\geq}(|A|-2\sqrt{\eta }n)(|C|-6\eps 'n) \stackrel{(\ref{ABbound}),(\ref{Cbound2})}{\geq} |A||C|-3 \eps ' n^2.
\end{align}
An analogous argument implies that 
\begin{align}\label{GB}
e({G}[B]){\geq} |B|^2- 3 \eps ' n^2  \ \text{ and } \ e({G}[C,B]) {\geq} |C||B|-3 \eps ' n^2.
\end{align}

\smallskip

Suppose that $d^- _G (x,A)\geq \eps '' n$ and  $d^- _G (y,A)\geq \eps '' n$. Then since $e({G}[A]) \geq |A|^2 -3 \eps ' n^2$, there are at least
$$\frac{\eps '' n(\eps '' n-1)}{2} -3 \eps ' n^2 \geq \eta n^2$$ pairs of distinct vertices $a,a'$ where $a \in (N^- _G (x) \cap A)$, $a' \in (N^- _G (y) \cap A)$ and
$aa',a'a \in E(G)$. For each such pair $a,a'$, $\{x,a,a'\}$ and $\{y,a,a'\}$ both span copies of $C_3$ in $G$ (in fact, they both span copies of $K_3 ^-$).
This implies that (i) is satisfied. 
Similarly, (i) holds  if both  $d^+ _G (x,B)\geq \eps '' n$ and  $d^+ _G (y,B)\geq \eps '' n$.

We may therefore assume that $d^- _G (x,A)< \eps '' n$ or  $d^- _G (y,A)<\eps '' n$. Without loss of generality assume that
\begin{align*}
d^- _G (x,A)< \eps '' n.
\end{align*}
This implies that
\begin{align}\label{b2}
 d^- _G (x,C)\stackrel{(\ref{C3min}),(\ref{ABbound})}{\ge} (2/3-\eps)n-\eps 'n -\eps ''n -(1/3+8 \eta )n \geq (1/3-2 \eps '' )n \stackrel{(\ref{Cbound2})}{\ge}
|C|-3 \eps ''n.
\end{align}
Furthermore, we may assume that  $d^+ _G (x,B)< \eps '' n$ or $d^+ _G (y,B)< \eps '' n$.
We now deal with these two subcases separately. 

\smallskip

\textbf{Case 1a:} $d^+ _G (x,B)< \eps '' n$.

In this case we will show that (ii) is satisfied.
Note that
\begin{align}\label{xC}
d^+ _G (x, C)\stackrel{(\ref{C3min}),(\ref{ABbound})}{\geq} (2/3- \eps)n-\eps ''n -\eps ' n -(1/3+8 \eta)n  \stackrel{(\ref{Cbound2})}{\ge}
|C|-3 \eps ''n.
\end{align}
If $d^+ _G (y, C) > 3 \eps ''n$ then (\ref{xC}) implies that there is a vertex $c \in (N^+ _G (x) \cap N^+ _G (y) \cap C)=A'\cap C$. But by definition $A' \cap C=\emptyset$, a contradiction.
Thus,
\begin{align}\label{yC}
d^+ _G (y, C) \leq 3 \eps ''n.
\end{align}

\begin{claim}\label{cBC}
If $e(G[B,C]) \geq 6 \eps '' n^2$ then (ii) is satisfied.
\end{claim}
\proof
Suppose that $e(G[B,C]) \geq 6 \eps '' n^2$. This implies that there are at least $5 \eps '' n$ vertices $c \in C$ that receive at least $\eps '' n$ edges from $B$ in $G$.
By (\ref{GB}), all but at most $3 \sqrt{\eps '} n$ vertices $c \in C$ send out at least $|B|-\sqrt{\eps '} n$ edges to $B$ in $G$. 
Together with (\ref{xC}) this implies that there are at least $5 \eps '' n -3 \sqrt{\eps '} n -3 \eps '' n -1\geq \eps ''n $ vertices $c \in C \setminus \{y\}$ such that
\begin{itemize}
\item $c \in N^+ _G (x)$;
\item $d^- _G (c,B) \geq \eps '' n$ and $d^+ _G (c,B) \geq |B|-\sqrt{\eps '} n.$
\end{itemize}
Fix such a vertex $c$. By the choice of $c$ and (\ref{GB}) there are at least $\eps '' n -\sqrt{\eps '} n-3\sqrt{\eps '}n \geq \eps '' n/2$ vertices $b_1 \in B$ so that
\begin{itemize}
\item $b_1c,cb_1 \in E(G)$;
\item $d^- _G (b_1, B)\geq |B|-\sqrt{\eps '} n$.
\end{itemize}
Fix such a vertex $b_1$. By definition of $B$, $b_1 \in N^- _G (x)$. Thus, $\{x,c,b_1\}$ spans a copy of $C_3$ in $G$.

(\ref{C3min}), (\ref{ABbound}) and (\ref{yC}) imply that
$$d^+ _G (y,B) \geq (2/3-\eps )n-3\eps '' n-\eps 'n -(1/3+8 \eta )n \geq |B|-4\eps '' n.$$
Together with (\ref{GB}) this implies that there are at least $|B|- 5 \eps '' n$ vertices $b_2 \in B \setminus \{b_1\}$ so that
\begin{itemize}
\item $b_2 \in N^+ _G (y)$;
\item $d^+ _G (b_2, B), d^- _G (b_2, B)\geq |B|-\sqrt{\eps '} n$.
\end{itemize}
Fix such a vertex $b_2$. Next fix a vertex $b_3 \in B \setminus \{b_1\}$ such that
\begin{itemize}
\item $b_3 \in N^+ _G (b_2)$;
\item $d^+ _G (b_3, B)\geq |B|-\sqrt{\eps '} n$.
\end{itemize}
There are at least $|B|-5\sqrt{\eps '} n$ choices for $b_3$. By definition of $B$, $b_3 \in N^- _G (y)$. Thus, $\{y,b_2,b_3\}$ spans a copy of $C_3$ in $G$.

Finally, choose a vertex $b_4 \in B$ such that
\begin{itemize}
\item $b_4 \in N^+ _G (c) \cap N^- _G (b_1) \cap N^- _G (b_2) \cap N^+ _G (b_3)$.
\end{itemize}
The choice of $c,b_1,b_2$ and $b_3$ ensures that there are at least $|B|-4\sqrt{\eps '} n$ choices for $b_4$.
Set $X:= \{c,b_1,b_2,b_3,b_4\}$. By construction both $X \cup \{x\}$ and $X\cup \{y\}$ span copies of $2C_3$ in $G$ (see Figure~2).
\begin{figure}[htb!]
\begin{center}\footnotesize
\includegraphics[width=0.5\columnwidth]{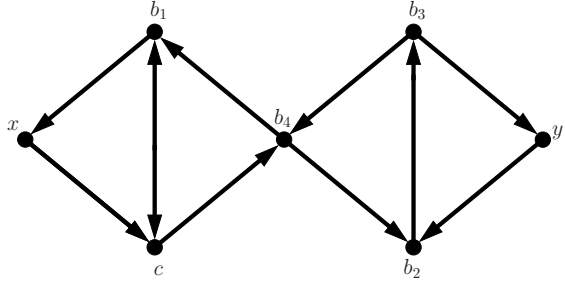}  
\caption{The connecting structure in Case 1a}
\end{center}
\end{figure}

Recall that there are at least $\eps '' n$ choices for $c$, at least $\eps'' n/2$ choices for $b_1$, at least $|B|-5\eps '' n$ choices for $b_2$,
at least $|B|-5\sqrt{\eps '} n$ choices for $b_3$ and at least $|B|-4\sqrt{\eps '} n$ choices for $b_4$.
Overall, this implies that there are at least
$$\eps '' n \times \frac{\eps '' n}{2} \times (|B|-5\eps '' n ) \times (|B|-5\sqrt{\eps '} n) \times (|B|-4\sqrt{\eps '} n) \times \frac{1}{4!} \geq \eta n^5$$
choices for $X$. So indeed (ii) is satisfied. This proves the claim.
\endproof

\medskip

Assume for a contradiction that $e(G[B,C]) < 6 \eps '' n^2$. 
This implies that
\begin{align}\label{GCf}
e(G[C]) & \geq  \delta ^- (G)|C| -e(G[B,C])-e(G[A,C]) -e (G[D,C])  \\ & \stackrel{(\ref{C3min})}{\geq} (2/3-\eps)n|C| -6\eps '' n^2 -|A||C|-\eps 'n |C|\nonumber  \\ &
\stackrel{(\ref{ABbound}),(\ref{Cbound2})}{\geq} |C|(2/3-\eps -24\eps '' -1/3-8\eta -\eps ')n \geq |C|(1/3-25 \eps '' )n   \nonumber \\ & \stackrel{(\ref{Cbound2})}{\geq} |C|^2 -\sqrt{\eps ''} n^2.\nonumber
\end{align}
Similarly,
\begin{align}\label{GBAf}
e(G[B,A]) & \geq  \delta ^+ (G)|B| -e(G[B])-e(G[B,C]) -e (G[B,D])  \\ &  \stackrel{(\ref{C3min})}{\geq} (2/3-\eps)n|B| -|B|^2-6\eps '' n^2 -\eps 'n |B| \nonumber \\ &
\stackrel{(\ref{ABbound})}{\geq} |B|(2/3-\eps -1/3-8\eta -24\eps ''  -\eps ')n \geq |B|(1/3-25 \eps '' )n  \nonumber \\ &  \stackrel{(\ref{ABbound})}{\geq} |B||A| -\sqrt{\eps ''} n^2.\nonumber
\end{align}
Let $A'', B'',C''$ be a partition of $V(G)$ such that 
\begin{itemize}
\item $\lfloor n/3 \rfloor \leq |A''|\leq|B''|\leq|C''| \leq \lceil n/3 \rceil$;
\item $|A''\setminus A|,|B''\setminus B|, |C''\setminus C|\leq 3 \eps 'n$.
\end{itemize}
Such a partition exists by (\ref{ABbound}) and (\ref{Cbound2}).
Further,
\begin{itemize}
\item $e(G[A'',C'']) \stackrel{(\ref{GAC})}{\geq} |A''||C''|-\alpha n^2 /6;$
\item $e(G[C'',B'']) \stackrel{(\ref{GB})}{\geq} |C''||B''|-\alpha n^2 /6;$
\item $e(G[B'',A'']) \stackrel{(\ref{GBAf})}{\geq} |B''||A''|-\alpha n^2 /6;$
\item $e(G[A'']) \stackrel{(\ref{GA})}{\geq} |A''|^2-\alpha n^2 /6;$
\item $e(G[B'']) \stackrel{(\ref{GB})}{\geq} |B''|^2-\alpha n^2 /6;$
\item $e(G[C'']) \stackrel{(\ref{GCf})}{\geq} |C''|^2-\alpha n^2 /6.$
\end{itemize}
This implies that $G$ $\alpha$-contains $Ex(n)$, a contradiction. So $e(G[B,C]) \geq 6 \eps '' n^2$. Claim~\ref{cBC} therefore implies that (ii) holds, as required.

\smallskip

\textbf{Case 1b:} $d^+ _G (y,B)< \eps '' n$.

In this case we will show that (i) is satisfied. Since $d^+ _G (y,B)< \eps '' n$,
\begin{align}\label{yC2}
d^+ _ G (y,C) \stackrel{(\ref{C3min}),(\ref{ABbound})}{\geq} (2/3- \eps)n-\eps ''n -\eps ' n -(1/3+8 \eta)n \geq (1/3-2 \eps '')n \stackrel{(\ref{Cbound2})}{\ge}
|C|-3 \eps ''n.
\end{align}
If $d^- _G (y, C) > 3 \eps ''n$ then (\ref{b2}) implies that there is a vertex $c \in (N^- _G (x) \cap N^- _G (y) \cap C)=B'\cap C$. But by definition $B' \cap C=\emptyset$, a contradiction.
Thus,
\begin{align*}
d^- _G (y, C) \leq 3 \eps ''n.
\end{align*}
This implies that 
\begin{align}\label{yA1}
d^- _ G (y,A) \stackrel{(\ref{C3min}),(\ref{ABbound})}{\geq} (2/3- \eps)n-3\eps ''n -\eps ' n -(1/3+8 \eta)n \geq (1/3-4 \eps '')n \stackrel{(\ref{ABbound})}{\ge}
|A|-5 \eps ''n.
\end{align}

\begin{claim}\label{c1b}
If $e(G[C,A]) \geq 14 \eps '' n^2$ then (i) is satisfied.
\end{claim}
\proof
Suppose that $e(G[C,A]) \geq 14 \eps '' n^2$. This implies that there are at least $8 \eps '' n$ vertices $c \in C$ that send out at least $6\eps ''n$ edges to $A$ in $G$.
By (\ref{GAC}) all but at most $3\sqrt{\eps '} n$ vertices $c \in C$ receive at least $|A|-\sqrt{\eps '}n$ edges from $A$ in $G$. Together with (\ref{b2}) and (\ref{yC2}) this 
implies that there are at least $8 \eps '' n -3 \sqrt{\eps '} n -6 \eps '' n \geq \eps '' n$ vertices $c \in C$ so that
\begin{itemize}
\item $c \in N^- _G (x) \cap N^+ _G (y)$; 
\item $d^+ _G (c,A) \geq 6 \eps '' n$ and $d^- _G (c,A) \geq |A|-\sqrt{\eps '}n$.
\end{itemize}
Fix such a vertex $c$. Let $a \in A$ such that
\begin{itemize}
\item $ a \in N^-_G (y) \cap N^+ _G (c) \cap N^- _G (c)$.
\end{itemize}
The choice of $c$ together with (\ref{yA1}) implies that there are at least
$6 \eps ''n -\sqrt{\eps '}n- 5 \eps '' n \geq \eps '' n/2$ choices for $a$. Since $a \in A$, $a \in N^+ _G (x)$. Set $X:=\{a,c\}$. By construction $X\cup \{x\}$ and $X\cup \{y\}$ both span copies
of $C_3$ in $G$.
In total there are at least 
$$\eps '' n \times \eps '' n/2 \geq \eta n^2$$ choices for $X$. Therefore (i) is satisfied. This proves the claim.
\endproof

\medskip

Assume for a contradiction that $e(G[C,A]) < 14 \eps '' n^2$. 
This implies that
\begin{align}\label{GC1}
e(G[C]) & \geq  \delta ^+ (G)|C| -e(G[C,A])-e(G[C,B]) -e (G[C,D])  \\ & \stackrel{(\ref{C3min})}{\geq} (2/3-\eps)n|C| -14\eps '' n^2 -|C||B|-\eps 'n |C| \nonumber  \\ &
\stackrel{(\ref{ABbound}),(\ref{Cbound2})}{\geq} |C|(2/3-\eps -50\eps '' -1/3-8\eta -\eps ')n \geq |C|(1/3-51 \eps '' )n \nonumber \\ &  \stackrel{(\ref{Cbound2})}{\geq} |C|^2 -\sqrt{\eps ''} n^2.\nonumber
\end{align}
Similarly,
\begin{align}\label{GBA}
e(G[B,A]) & \geq  \delta ^- (G)|A| -e(G[A]) -e (G[C,A]) -e(G[D,A]) \\ & \stackrel{(\ref{C3min})}{\geq} (2/3-\eps)n|A| -|A|^2-14\eps '' n^2 -\eps 'n |A| \nonumber \\ &
\stackrel{(\ref{ABbound})}{\geq} |A|(2/3-\eps -1/3-8\eta -50\eps ''  -\eps ')n \geq |A|(1/3-51 \eps '' )n \nonumber \\ &  \stackrel{(\ref{ABbound})}{\geq} |B||A| -\sqrt{\eps ''} n^2.\nonumber
\end{align}
By arguing precisely as in Case 1a, (\ref{GA})--(\ref{GB}), (\ref{GC1}) and (\ref{GBA}) imply that
 $G$ $\alpha$-contains $Ex(n)$, a contradiction. So $e(G[C,A]) \geq 14 \eps '' n^2$. Claim~\ref{c1b} therefore implies that (i) holds, as required.

\medskip

\noindent\textbf{Case 2:} $\eps ' n \leq |D| \leq (1/3 -\eps '')n $.
\newline
\noindent
In this case we will show that (ii) is satisfied.
Set $d:=|D|/n$. So
\begin{align}\label{d}
\eps ' \leq d \leq 1/3 - \eps ''.
\end{align}
This implies that
\begin{align}\label{AB2}
 \eps '' n/2 \stackrel{(\ref{d})}{\leq} (1/3-2 \eps -d)n \stackrel{(\ref{A'B'bound})}{\leq} |A|,|B| \stackrel{(\ref{A'B'bound})}{\leq}(1/3+8 \eta -d )n .
\end{align}
Thus,
\begin{align}\label{Cb3}
 |C| \stackrel{(\ref{AB2})}{\leq} n-dn-2 (1/3-2 \eps -d)n =(1/3+4 \eps +d)n 
\end{align}
and
\begin{align}\label{Cb4}
 |C| \stackrel{(\ref{AB2})}{\geq} n-dn-2 (1/3+8 \eta -d)n =(1/3-16 \eta  +d)n \stackrel{(\ref{d})}{\geq} (1/3+\eps '/2)n.
\end{align}
Hence,
\begin{align}\label{conxy}
d^+ _G (x,C), d^+ _G (y,C) \stackrel{(\ref{C3min}), (\ref{Cb4})}{\geq} (2/3-\eps )n - (2/3-\eps '/2)n \geq \eps ' n/3.
\end{align}

By ($\alpha $), all but at most $2 \sqrt{\eta} n$ vertices $b \in B$ receive at most $\sqrt{\eta}n$ edges from $A\cup D=A'$ in $G$. 
So each such vertex $b$ receives at least 
$$(2/3-\eps )n-\sqrt{\eta}n  -|B| \stackrel{(\ref{AB2})}{\geq}  (1/3-2\sqrt{\eta} +d)n 
\stackrel{(\ref{Cb3})}{\geq} |C|-3 \sqrt{\eta} n$$ 
edges from $C$ in $G$.
This implies that
\begin{align}\label{GBb}
e({G}[C,B]) \geq (|B|-2\sqrt{\eta }n)(|C|-3 \sqrt{\eta} n)\geq  |C||B| -5 \sqrt{\eta} n^2.
\end{align}
By ($\alpha$), 
\begin{align}\label{GDAD}
e(G[D])+e(G[A,D]) \leq 2\eta n^2 \ \text{ and } \ e(G[D])+e(G[D,B]) \leq 2\eta n^2.
\end{align}
Therefore,
\begin{align}\label{GBD}
e(G[B,D])  & \geq \delta ^- (G)|D|-e(G[A,D])-e(G[D])-e(G[C,D]) 
\\ &
\stackrel{(\ref{C3min}),(\ref{GDAD})}{\geq} (2/3-\eps )n|D|- 2 \eta n^2 -|C||D| 
 \stackrel{(\ref{Cb3})}{\geq}  (1/3-\sqrt{\eta }-d )n|D| 
 \stackrel{(\ref{AB2})}{\geq} |B||D|- \sqrt{\eta } n^2 \nonumber
\end{align}
and 
\begin{align}\label{GCD1}
e(G[D,C])   \stackrel{(\ref{C3min}),(\ref{GDAD})}{\geq} (2/3-\eps )n|D|- 2 \eta n^2 -|D||A| 
 \stackrel{(\ref{AB2})}{\geq}  (1/3-\sqrt{\eta }+d )n|D| 
 \stackrel{(\ref{Cb3})}{\geq} |C||D|- \sqrt{\eta } n^2.
\end{align}

Fix $c_1 \in C\setminus \{y\}$ such that
\begin{itemize}
\item $xc_1 \in E(G)$;
\item $d^+ _G (c_1, B) \geq |B| - \eta ^{1/4} n$;
\item $d^- _G (c_1, D) \geq |D| - \eta ^{1/4} n$.
\end{itemize}
(\ref{conxy}), (\ref{GBb}) and (\ref{GCD1}) imply that there are at least $\eps ' n/3-6 {\eta}^{1/4} n -1 \geq \eps 'n /4$ choices for $c_1$.
Next fix $b_1 \in B$ such that 
\begin{itemize}
\item $c_1b_1 \in E(G)$;
\item $d^+ _G (b_1, D) \geq |D| - \eta ^{1/4} n$.
\end{itemize}
The choice of $c_1$ together with (\ref{GBD}) implies that there are at least $|B|-2\eta ^{1/4} n  \geq \eps ''n /3$ choices for $b_1$. Further,
$b_1x \in E(G)$ by definition of $B$. Thus, $\{x,b_1,c_1\}$ spans a copy of $C_3$ in $G$.

Fix $c_2 \in C\setminus \{c_1, x \}$ such that
\begin{itemize}
\item $yc_2 \in E(G)$;
\item $d^+ _G (c_2, B) \geq |B| - \eta ^{1/4} n$;
\item $d^- _G (c_2, D) \geq |D| - \eta ^{1/4} n$.
\end{itemize}
Again (\ref{conxy}), (\ref{GBb}) and (\ref{GCD1}) imply that there are at least $\eps 'n /4$ choices for $c_2$.
Next fix $b_2 \in B\setminus \{b_1\}$ such that 
\begin{itemize}
\item $c_2b_2 \in E(G)$;
\item $d^+ _G (b_2, D) \geq |D| - \eta ^{1/4} n$.
\end{itemize}
There are at least $|B|-2\eta ^{1/4} n  -1 \geq \eps ''n /3$ choices for $b_2$. Since $b_2y \in E(G)$, $\{y,b_2,c_3\}$ spans a copy of $C_3$ in $G$.

Finally let $d \in (N^+ _G (b_1) \cap N^+ _G (b_2) \cap  N^- _G (c_1)  N^- _G (c_2) \cap D)$. There are at least
$|D|- 4 \eta ^{1/4}n \geq \eps 'n/2$ choices for $d$. 
Set $X:=\{b_1,b_2,c_1,c_2, d\}$. By construction $X \cup \{x\}$ and $X \cup \{y\}$ both span copies of $2 C_3$ in $G$ (see Figure~3).
\begin{figure}[htb!]
\begin{center}\footnotesize
\includegraphics[width=0.5\columnwidth]{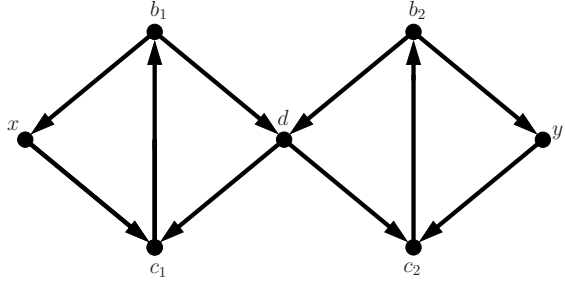}  
\caption{The connecting structure in Case 2}
\end{center}
\end{figure}

In total there are at least
$$ \frac{\eps ' n}{4} \times \frac{\eps ''n}{3} \times \frac{\eps ' n}{4} \times \frac{\eps ''n}{3} \times \frac{\eps ' n}{2} \times \frac{1}{5!} \geq \eta n^5$$
choices for $X$. Therefore (ii) is satisfied, as desired.
\end{proof}

\section{Proof of Theorems~\ref{nonex1} and~\ref{nonex2}}\label{secnon}
In this section we apply our connection lemmas to prove Theorems~\ref{nonex1} and~\ref{nonex2}. 
Following the ideas in \cite{rrs2,rrs}, we first show in Lemma~\ref{lem:abs} that in order to find the absorbing set described in Theorems~\ref{nonex1} and~\ref{nonex2}, it suffices to prove that there are at least $\xi n^{2r^2}$ $T$-absorbing $2r^2$-sets for every fixed $r$-set from $V(G)$.
\begin{lemma}[Absorbing lemma]
\label{lem:abs}
Let $0<\xi \ll 1$ and let $r \geq 2$. Then there exists an $n_0 \in \mathbb N$ such that the following holds. Let $T \in \mathcal T_r$. Consider a digraph $G$ on $n \geq n_0$ vertices. Suppose that any $r$-set of vertices $Q \subseteq V(G)$ can be $T$-absorbed by at least
$\xi n^{2r^2}$ $2r^2$-sets of vertices from $V(G)$. Then $V(G)$ contains a set $M$ so that
\begin{itemize}
\item $|M|\leq \xi n$;
\item $M$ is a $T$-absorbing set for any $W \subseteq V(G) \setminus M$ such that $|W| \in r \mathbb N$ and  $|W|\leq \xi ^2 n$.
\end{itemize}
\end{lemma}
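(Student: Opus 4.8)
The plan is to use the standard absorbing-lemma argument via a probabilistic selection. First I would set up parameters: choose $\xi' \ll \xi$ and select a random subset $\mathcal{F} \subseteq \binom{V(G)}{2r^2}$ by including each $2r^2$-set independently with probability $p := \xi' n / \binom{n}{2r^2}$ (adjusting constants so that $p n^{2r^2}$ is of the right order). Write $f(Q)$ for the family of $2r^2$-sets that $T$-absorb a given $r$-set $Q$; by hypothesis $|f(Q)| \geq \xi n^{2r^2}$ for every $Q$. The goal is to show that with positive probability $\mathcal{F}$ simultaneously satisfies three properties: (a) $|\mathcal{F}|$ is not too large, say $|\mathcal{F}| \leq 2\xi' n$; (b) $\mathcal{F}$ contains at most a few intersecting pairs of sets, say at most $\xi' n / (2 \cdot 2r^2)$ pairs $\{A,B\}$ with $A\cap B \neq \emptyset$; and (c) for every $r$-set $Q \subseteq V(G)$, $|\mathcal{F} \cap f(Q)| \geq \xi' n / 2$ (or at least some number growing with $n$).

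For (a), $\mathbb{E}|\mathcal{F}| = p\binom{n}{2r^2} = \xi' n$, so Markov's inequality (or Proposition~\ref{chernoff}) gives $|\mathcal{F}| \leq 2\xi' n$ with probability at least $3/4$, say. For (b), the expected number of intersecting pairs is at most $\binom{2r^2}{1}\binom{n}{2r^2}\binom{n-1}{2r^2-1} p^2 = O(n^{4r^2-1}) \cdot p^2 = O(n)$ with a small constant, so again Markov gives that with probability at least $3/4$ the number of intersecting pairs is at most $\xi' n/(4r^2)$, provided $\xi'$ was chosen small enough relative to the $O(\cdot)$ constant. For (c), fix $Q$; then $|\mathcal{F}\cap f(Q)|$ is a sum of independent indicators with mean $p|f(Q)| \geq p\xi n^{2r^2} = \xi \xi' n / \binom{n}{2r^2} \cdot n^{2r^2}$; one needs to be slightly careful here — $\binom{n}{2r^2} \sim n^{2r^2}/(2r^2)!$, so the mean is $\Theta(\xi \xi' n)$, which tends to infinity. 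Apply the Chernoff bound of Proposition~\ref{chernoff} with $a = 1/2$: the probability that $|\mathcal{F}\cap f(Q)| < \tfrac12 \mathbb{E}$ is at most $2\exp(-\Theta(\xi\xi' n))$. Taking a union bound over the at most $n^r$ choices of $Q$, this is still $o(1)$, so (c) fails for some $Q$ with probability at most $1/4$.

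Hence with positive probability there is a family $\mathcal{F}$ satisfying (a)–(c). Now delete one set from each intersecting pair in $\mathcal{F}$ to obtain a subfamily $\mathcal{F}'$ consisting of pairwise disjoint $2r^2$-sets; by (a) and (b) we delete at most $\xi' n/(4r^2)$ sets, so $|\mathcal{F}'| \geq |\mathcal{F}| - \xi' n/(4r^2)$ and, crucially, for every $r$-set $Q$ we still have $|\mathcal{F}' \cap f(Q)| \geq \tfrac12 \mathbb{E} - \xi' n/(4r^2) \geq \xi' n/4$ (choosing constants appropriately). Let $M := \bigcup_{A \in \mathcal{F}'} A$. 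Then $|M| = 2r^2 |\mathcal{F}'| \leq 2r^2 \cdot 2\xi' n \leq \xi n$ for $\xi'$ small enough, and since each $A \in \mathcal{F}'$ is (by definition of $f$, being a $T$-absorbing set for some $r$-set, hence in particular $G[A]$ has a perfect $T$-packing — here I should note that $f(Q)$ consists of sets $S$ with $G[S]$ and $G[S\cup Q]$ both having perfect $T$-packings, and the $2r^2$ sets absorbing \emph{every} $Q$ are among these) it follows that $G[M]$ has a perfect $T$-packing, as the disjoint union of the packings on each $A$.

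Finally I would verify the absorbing property: given $W \subseteq V(G)\setminus M$ with $|W| \in r\mathbb{N}$ and $|W| \leq \xi^2 n$, partition $W$ arbitrarily into $r$-sets $Q_1, \dots, Q_t$ where $t = |W|/r \leq \xi^2 n / r$. Process them one at a time: for each $Q_i$, since $|\mathcal{F}'\cap f(Q_i)| \geq \xi' n/4$ and we have so far used at most $t - 1 < \xi^2 n/r$ sets from $\mathcal{F}'$, and $\xi^2 n/r < \xi' n/4$ (as $\xi \ll 1$ and $\xi'$ is a suitable function of $\xi$ — actually I would set this up so $\xi^2 \ll \xi'$), there remains an unused $A_i \in \mathcal{F}' \cap f(Q_i)$; assign it to absorb $Q_i$, i.e.\ replace the perfect $T$-packing of $G[A_i]$ by a perfect $T$-packing of $G[A_i \cup Q_i]$. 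After processing all of $W$, the sets $A_i$ are distinct and disjoint (subfamily of $\mathcal{F}'$) and disjoint from the remaining sets of $\mathcal{F}'$ and from $W$ is covered; combining the perfect $T$-packings of $G[A_i \cup Q_i]$ (for $i \le t$) with those of $G[A]$ for the unused $A \in \mathcal{F}'$ yields a perfect $T$-packing of $G[M \cup W]$. The main obstacle is purely bookkeeping: one must fix the hierarchy $0 < \xi^2 \ll \xi' \ll \xi \ll 1$ and the exact constants in (a)–(c) so that all the inequalities ($2r^2 \cdot 2\xi' \le \xi$, $\xi^2 n/r < \xi' n / 4$, the union bound over $n^r$ sets beating $\exp(-\Theta(\xi\xi' n))$) hold simultaneously; there is no conceptual difficulty beyond the routine probabilistic deletion method of Rödl--Ruciński--Szemerédi.
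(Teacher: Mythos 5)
Your approach is the same probabilistic‐deletion argument (Rödl--Ruciński--Szemerédi) that the paper uses: select $2r^2$-sets independently at random with probability $\sim \xi n^{1-2r^2}$, use Chernoff plus a union bound over the $O(n^r)$ sets $Q$ to get simultaneously a small family, few intersecting pairs, and many absorbers per $Q$, then prune and take the union. The auxiliary parameter $\xi'$ you introduce is not in the paper (there the factor $1/(2r^2)!$ coming from $\binom{n}{2r^2}$ already does the shrinking), but this is a cosmetic difference.

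There is, however, one genuine gap. When you pass from $\mathcal{F}$ to $\mathcal{F}'$ you only delete one set from each intersecting pair, and you then assert that each $A \in \mathcal{F}'$ is ``a $T$-absorbing set for some $r$-set, hence in particular $G[A]$ has a perfect $T$-packing.'' That assertion is not justified: $\mathcal{F}$ was chosen uniformly at random from \emph{all} $2r^2$-subsets of $V(G)$, so most of its members need not be $T$-absorbing sets for any $Q$, and for such $A$ the induced digraph $G[A]$ need not contain a perfect $T$-packing. If even one such $A$ survives into $\mathcal{F}'$, then $G[M]$ fails to have a perfect $T$-packing and $M$ is not a $T$-absorbing set for $\emptyset$, breaking the conclusion. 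The fix is exactly what the paper does: when forming $\mathcal{F}'$, additionally delete every set of $\mathcal{F}$ that is not a $T$-absorbing set. This costs nothing in the key count, since every set counted in $\mathcal{F}\cap f(Q)$ is by definition a $T$-absorbing set and so is never removed in this extra pruning step; the bound $|\mathcal{F}'\cap f(Q)| \geq \xi'n/4$ is therefore unchanged, and the rest of your argument goes through as written.
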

The proof of Lemma~\ref{lem:abs} follows the same ideas as other such absorbing lemmas in the area. In particular, the proof of Lemma~\ref{lem:abs} follows the proof of Lemma~5.2 in~\cite{zhao}
very closely. For completeness, we give the proof in Section~\ref{subabs}.

\begin{lemma}\label{absorb1}
Let $0 < 1/n \ll \eps \ll \xi \ll \gamma , \alpha \ll 1/r$ where $n,r \in \mathbb N$ and $r \geq 3$, and let $T \in \mathcal T_r$. Suppose that $G$ is a digraph on $n$ vertices so that
\begin{align}\label{minabs}
\delta ^0  (G) \geq \left ( 1-{1}/{r} -\eps \right ) n.
\end{align}
Further suppose that	
			\begin{itemize}
				\item $G$ does not contain any $\gamma$-independent set of size at least $n/r$, and
				\item If $T=C_3$ then $G$ does not $\alpha$-contain $Ex(n)$.
			\end{itemize}
			Then there are at least $\xi n^{2r^2}$ $T$-absorbing $2r^2$-sets in $V(G)$ for every $r$-subset of $V(G)$.
\end{lemma}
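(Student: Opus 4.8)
The plan is to fix an arbitrary $r$-set $Q=\{q_1,\dots,q_r\}\subseteq V(G)$ and to build the required $T$-absorbing $2r^2$-sets for $Q$ by gluing together the connecting structures supplied by the connection lemmas of Section~\ref{consec} with a bounded number of additional vertex-disjoint copies of $T$, used only to pad the set up to the exact size $2r^2$. First I would introduce a constant $\eta$ with $\xi\ll\eta\ll\gamma,\alpha$ (this is the constant that will play the role of $\eta$ in the connection lemmas), and record the routine fact that, since $\delta^0(G)\ge(1-1/r-\eps)n$, a greedy embedding produces at least $c_r n^r$ copies of $T$ in $G$ for some $c_r=c_r(r)>0$; consequently, whenever $O(r^2)$ vertices have already been chosen there are still $\Omega(n^r)$ copies of $T$ avoiding them, and similarly $\Omega(n^{r-1})$, $\Omega(n^2)$ or $\Omega(n^5)$ of the connecting sets from the relevant connection lemma avoiding them.

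Suppose first $T\ne C_3$. I would pick any set $B=\{b_1,\dots,b_r\}$, disjoint from $Q$, spanning a copy of $T$ in $G$. Then, for $i=1,\dots,r$ in turn, I would apply the appropriate connection lemma (Lemma~\ref{con+-} if $T=T_3$, Lemma~\ref{T4con} if $T\in\mathcal T_4$, Lemma~\ref{5con} if $r\ge5$) to the pair $(b_i,q_i)$ to obtain an $(r-1)$-set $X_i$, disjoint from $Q$ and from all previously chosen vertices, such that both $X_i\cup\{b_i\}$ and $X_i\cup\{q_i\}$ span copies of $T$. Finally I would pick $r$ further pairwise disjoint copies of $T$, disjoint from all of the above, whose union is a set $P$ with $|P|=r^2$, and set $S:=B\cup X_1\cup\dots\cup X_r\cup P$, so that $|S|=r+r(r-1)+r^2=2r^2$ and $S\cap Q=\emptyset$. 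Then $G[S]$ has the perfect $T$-packing consisting of the copies of $T$ on $X_1\cup\{b_1\},\dots,X_r\cup\{b_r\}$ together with the $r$ copies inside $P$, while $G[S\cup Q]$ has the perfect $T$-packing consisting of $G[B]$, the copies of $T$ on $X_1\cup\{q_1\},\dots,X_r\cup\{q_r\}$, and the copies inside $P$; hence $S$ is $T$-absorbing for $Q$. Since the numbers of choices at the successive steps are $\Omega(n^r)$, then $\Omega(n^{r-1})$ ($r$ times), then $\Omega(n^r)$ ($r$ times), and the exponents sum to $2r^2$, this produces $\Omega(n^{2r^2})$ ordered constructions; dividing by the constant $(2r^2)!$ (and using $\xi\ll\eta$, so that the implied constant, which depends only on $r$ and $\eta$, exceeds $\xi$) gives at least $\xi n^{2r^2}$ distinct $T$-absorbing $2r^2$-sets for $Q$.

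For $T=C_3$ (so $r=3$ and $2r^2=18$) the scheme is the same, but Lemma~\ref{C3con} applied to $(b_i,q_i)$ now returns, for each $i$, either (i) $\Omega(n^2)$ $2$-sets $X_i$ with $X_i\cup\{b_i\}$ and $X_i\cup\{q_i\}$ spanning $C_3$, or (ii) $\Omega(n^5)$ $5$-sets $X_i$ with $X_i\cup\{b_i\}$ and $X_i\cup\{q_i\}$ spanning $2C_3$. Having fixed a triangle $B=\{b_1,b_2,b_3\}$ and chosen $X_1,X_2,X_3$ disjointly, the set $B\cup X_1\cup X_2\cup X_3$ has size $3+\sum_i|X_i|\in\{9,12,15,18\}$, so one can pad it with the appropriate number ($3$ minus the number of indices in case~(ii)) of pairwise disjoint triangles, disjoint from everything chosen so far, to obtain a set $S$ with $|S|=18$ and $S\cap Q=\emptyset$. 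Packing each $X_i\cup\{b_i\}$ (as one or two triangles, according to the case) together with the padding triangles gives a perfect $C_3$-packing of $G[S]$; packing each $X_i\cup\{q_i\}$ together with the padding triangles covers all of $S\cup Q$ except $B$, which spans $C_3$, and so gives a perfect $C_3$-packing of $G[S\cup Q]$. The exponents again sum to $18$ for every case pattern, so as before we obtain $\Omega(n^{18})\ge\xi n^{18}$ distinct $C_3$-absorbing $18$-sets for $Q$.

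The part requiring the most care is the $C_3$ case: the construction must be robust to whichever alternative of Lemma~\ref{C3con} occurs for each of $q_1,q_2,q_3$ — in particular the released triangle $B$ must stay intact in the $S\cup Q$-packing — and one must check that the padding always reaches exactly $2r^2$ and that the counting constant beats $\xi$ uniformly over $Q$ and over all case patterns. Everything else is bookkeeping with the hierarchy $\eps\ll\xi\ll\eta\ll\gamma,\alpha\ll1/r$ and routine estimates of the number of the chosen structures that meet a bounded forbidden set.
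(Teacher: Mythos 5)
Your proof is correct and takes essentially the same approach as the paper: apply the appropriate connection lemma per coordinate, use additional greedily-found copies of $T$ as padding, and count ordered constructions. The only organizational difference is that the paper packages the padding into an intermediate claim (Claim~\ref{claimabs}) asserting that for any $x,y$ there are $\Omega(n^{2r-1})$ $(2r-1)$-sets $X$ with $X\cup\{x\}$ and $X\cup\{y\}$ spanning $2T$ — this uniformises the $T=C_3$ and $T\neq C_3$ cases so that the final assembly needs no case split — whereas you pad at the end and track the $C_3$ case patterns explicitly; both routes yield the same bound.
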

Theorem~\ref{nonex1} follows immediately from Lemmas~\ref{lem:abs} and~\ref{absorb1}. 
Similarly, Theorem~\ref{nonex2} follows immediately from Lemma~\ref{lem:abs} and the following result.
\begin{lemma}\label{absorb2}
Let $0 < 1/n \ll \eps \ll \xi \ll \gamma  \ll 1/r$ where $n,r \in \mathbb N$ and $r \geq 3$. Suppose that $G$ is a digraph on $n$ vertices so that, for any $x \in V(G)$,
\begin{align}\label{minabs+-}
d^+ (x) \geq \left(1-{1}/{r} -\eps \right) n \ \text{ or } \ d^- (x) \geq \left(1-{1}/{r} -\eps \right) n.
\end{align}
Further suppose that	
$G$ does not contain any $\gamma$-independent set of size at least $n/r$.
			Then there are at least $\xi n^{2r^2}$ $T_r$-absorbing $2r^2$-sets in $V(G)$ for every $r$-subset of $V(G)$.
\end{lemma}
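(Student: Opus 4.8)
The plan is to realise each $T_r$-absorbing $2r^2$-set for a fixed $r$-subset $Q=\{q_1,\dots,q_r\}$ of $V(G)$ as a vertex-disjoint union of three kinds of pieces, chosen pairwise disjoint and disjoint from $Q$: (a) a single ``reservoir'' copy $A=\{a_1,\dots,a_r\}$ of $T_r$ in $G$, with $a_i$ playing the role of the $i$th vertex; (b) for each $i\in\{1,\dots,r\}$, an $(r-1)$-set $X_i$ with the property that both $X_i\cup\{q_i\}$ and $X_i\cup\{a_i\}$ span copies of $T_r$ in $G$; and (c) $r$ further vertex-disjoint copies of $T_r$ in $G$, used only as ``padding''. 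Writing $S$ for the union, we get $|S|=r+r(r-1)+r^2=2r^2$, with $2r^2,\,2r^2+r\in r\mathbb N$ as needed. Then $G[S]$ has the perfect $T_r$-packing formed by the sets $X_i\cup\{a_i\}$ ($1\le i\le r$) together with the $r$ padding copies, and $G[S\cup Q]$ has the perfect $T_r$-packing formed by the sets $X_i\cup\{q_i\}$ ($1\le i\le r$), the copy $A$, and the $r$ padding copies; so every such $S$ is a $T_r$-absorbing set for $Q$.

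Two inputs feed this construction. For the reservoir and padding copies I would first check that $G$ contains at least $c_0n^r$ copies of $T_r$ for some $c_0=c_0(r,\gamma)>0$: with an auxiliary constant $\eps'$ satisfying $\eps\ll\eps'\ll\gamma$, the degree condition \eqref{minabs+-} implies the hypothesis of Proposition~\ref{turanstab+-} with input $\eps'$ (since $1-1/r-\eps\ge 1-1/(r-1)-\eps'$), so if $G$ had at most $\alpha n^r$ copies of $T_r$ it would contain a $\sqrt{\eps'}$-independent set of size at least $n/(r-1)\ge n/r$, and hence — restricting to $n/r$ of its vertices and using $\sqrt{\eps'}\le\gamma$ — a $\gamma$-independent set of size $n/r$, contradicting the hypothesis. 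For the sets $X_i$ I would apply the connection lemma: with $\eta$ satisfying $\eps\ll\eta\ll\gamma$, Lemma~\ref{con+-} applies to $G$ verbatim and produces, for each pair $(q_i,a_i)$, at least $\eta n^{r-1}$ admissible $(r-1)$-sets $X_i$. The constants should be arranged so that $\xi$ is small compared with $c_0$, $\eta$ and $r$.

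The counting is then routine and I would carry it out greedily. Fix $Q$: there are at least $\tfrac12 c_0n^r$ copies of $T_r$ avoiding $Q$ to serve as $A$; given $A$, for $i=1,\dots,r$ in turn Lemma~\ref{con+-} offers at least $\eta n^{r-1}$ candidate sets $X_i$, of which all but $O(n^{r-2})$ meet $Q\cup A\cup X_1\cup\dots\cup X_{i-1}$ (each already-used vertex lying in at most $n^{r-2}$ of the $(r-1)$-sets), leaving at least $\tfrac12\eta n^{r-1}$ admissible choices; and given all of these there are $\Omega(n^{r^2})$ ways to choose the $r$ disjoint padding copies avoiding everything chosen so far. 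Multiplying gives at least $c_1n^{r+r(r-1)+r^2}=c_1n^{2r^2}$ tuples $(A,X_1,\dots,X_r,\text{padding})$ for some $c_1=c_1(r,\gamma,\eta)>0$; since each $2r^2$-set $S$ arises from at most a constant $C(r)$ of these tuples and $\xi\ll c_1/C(r)$, this yields at least $\xi n^{2r^2}$ distinct $T_r$-absorbing $2r^2$-sets for $Q$. The genuinely substantial work — Lemma~\ref{con+-} and Proposition~\ref{turanstab+-} — is already in place; the main point that needs care here is designing the gadget so that both $G[S]$ and $G[S\cup Q]$ carry perfect $T_r$-packings of the prescribed size $2r^2$, after which only the disjointness and overcounting bookkeeping remains.
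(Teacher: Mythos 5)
Your proposal is correct and takes essentially the same route as the paper: Lemma~\ref{con+-} supplies the connecting $(r-1)$-sets, abundance of $T_r$-copies supplies the reservoir and padding, and the $2r^2$-set is assembled greedily with the usual disjointness and overcounting bookkeeping. The only (cosmetic) difference is that the paper's proof, which reuses the machinery of Lemma~\ref{absorb1}, folds the padding into each connecting piece via Claim~\ref{claimabs} (each $X_i$ is a $(2r-1)$-set with $X_i\cup\{q_i\}$ and $X_i\cup\{a_i\}$ spanning $2T_r$), whereas you keep the $r$ padding copies as a separate block of the gadget.
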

The rest of the section is devoted to the proofs of Lemmas~\ref{lem:abs}--\ref{absorb2}.

\subsection{Proof of Lemma~\ref{lem:abs}.}\label{subabs}
Given an $r$-set $Q \subseteq V(G)$, let $L_Q$ denote the family of all $T$-absorbing $2r^2$-sets for $Q$ in $\binom{V(G)}{2r^2}$. By assumption, $|L_Q| \ge \xi n^{2r^2}$.
Let $F$ be the family of $2r^2$-sets obtained by selecting each of the $\binom{n}{2r^2}$ elements of $\binom{V(G)}{2r^2}$
independently with probability $p:= \xi / n^{2r^2-1}$.
Then
$$\mathbb{E}(|F|) = p \binom{n}{2r^2}  < \frac{\xi}{(2r^2)!} n \ \text{ and } \
\mathbb{E}(|L_Q\cap F|)\ge p \,\xi n^{2r^2}= \xi^2 n$$ for every set $Q\in  \binom{V(G)}{r}$.

Since $n$ is sufficiently large, Proposition~\ref{chernoff} implies that with high probability we have
\begin{equation}\label{eq:F}
    |F|\le 2 \mathbb{E}(|F|)< \frac{2\xi}{(2r^2)!} n,
\end{equation}
\begin{equation}\label{eq:LF}
   |L_Q\cap F| \ge \frac12 \, \mathbb{E}(|L_Q\cap F|) \ge \frac{\xi^2}{2} n \quad \text{for all  } Q\in \binom{V(G)}{r}.
\end{equation}
Let $Y$ be the number of intersecting pairs of members of $F$. Then
\[
\mathbb{E}(Y)\le p^2 \binom{n}{2r^2} 2r^2 \binom{n}{2r^2-1}\le \frac{\xi^2 n}{(2r^2-1)!(2r^2-1)!}.
\]
By Markov's bound, the probability that $Y\le \frac{2 \xi^2}{(2r^2-1)!(2r^2-1)!} n$ is at least $\frac{1}{2}$. Therefore we can find a family $F$ of $2r^2$-sets satisfying \eqref{eq:F} and \eqref{eq:LF} and having at most $\frac{2 \xi^2}{(2r^2-1)!(2r^2-1)!} n$
intersecting pairs. Removing all non-absorbing $2r^2$-sets and one set from each of the intersecting pairs in $F$, we obtain a family $F'$ of disjoint $T$-absorbing $2r^2$-sets such that $|F'|\le |F|\le \frac{2\xi}{(2r^2)!} n \leq \xi n/2r^2$ and for all $Q\in \binom{V(G)}{r}$,
\begin{equation}\label{eq:LF'}
    |L_Q\cap F'| \ge \frac{\xi^2}{2} n - \frac{2 \xi^2}{(2r^2-1)!(2r^2-1)!} n > \frac{\xi^2}{r} n.
\end{equation}
Let $M$ denote the disjoint union of the sets in $F'$. Then $|M|=|F'|2r^2\leq \xi n$.
Since $F'$ consists of disjoint $T$-absorbing sets and each $T$-absorbing set is covered by a perfect $T$-packing, $G[M]$ contains a perfect $T$-packing.
Now let $W\subseteq V(G)\backslash M$ be a set of at most $\xi^2 n$ vertices such  that $|W|= r\ell$ for some $\ell \in \mathbb N$. We arbitrarily partition $W$ into $r$-sets $Q_1, \dots, Q_{\ell}$. Because of \eqref{eq:LF'}, we are able to $T$-absorb each $Q_i$ with a different $2r^2$-set from $L_{Q_i}\cap F'$. Therefore $G[M\cup W]$ contains a perfect $T$-packing, as desired.
\endproof

\subsection{Proof of Lemma~\ref{absorb1}.}\label{subabs2}
Define $\eta$ such that $\xi \ll \eta \ll \gamma$. Note that (\ref{minabs}) implies that there are at least
\begin{align}\label{noT}
n \times \left (1- \frac{1}{r}-\eps \right ) n \times \left (1- \frac{2}{r}-2\eps \right ) n \times \dots \times  \left (1- \frac{r-1}{r}-(r-1)\eps \right ) n \times \frac{1}{r!} \geq 2 \eta ^2 n^r
\end{align}
$r$-sets in $ V(G)$ that span copies of $T$ in $G$.
\begin{claim}\label{claimabs}
For any $x, y \in V(G)$ there are at least $\eta ^4 n^{2r-1}$ $(2r-1)$-sets $X\subseteq V(G)$ such that both $X \cup \{x\}$ and $X \cup \{y\}$ span copies of $2T$ in $G$.
\end{claim}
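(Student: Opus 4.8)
The plan is to deduce the claim from the relevant connection lemma together with the fact, recorded in~(\ref{noT}), that $G$ contains at least $2\eta^2 n^r$ copies of $T$. First I would treat the generic case $T\neq C_3$ and $r\geq 4$: here the appropriate connection lemma (Lemma~\ref{5con} when $r\geq 5$, Lemma~\ref{T4con} when $r=4$, and Lemma~\ref{con+-} when $T=T_3$, noting $r\geq 3$) supplies, for any fixed $x,y\in V(G)$, at least $\eta n^{r-1}$ $(r-1)$-sets $X_1\subseteq V(G)$ such that both $X_1\cup\{x\}$ and $X_1\cup\{y\}$ span copies of $T$ in $G$. Fix one such $X_1$. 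Now I want to choose an $r$-set $X_2$, disjoint from $X_1\cup\{x,y\}$, spanning a copy of $T$ in $G$, so that $X:=X_1\cup X_2$ is a $(2r-1)$-set with both $X\cup\{x\}=(X_1\cup\{x\})\cup X_2$ and $X\cup\{y\}=(X_1\cup\{y\})\cup X_2$ spanning copies of $2T$. Since there are at least $2\eta^2 n^r$ copies of $T$ in $G$ and at most $(2r-1)\cdot n^{r-1}$ of them meet the fixed set $X_1\cup\{x,y\}$ (which has size $r+1$), and since $\eta\ll 1/r$, at least $\eta^2 n^r$ of these $r$-sets $X_2$ avoid $X_1\cup\{x,y\}$ entirely.

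Combining the two selections, I obtain a supply of pairs $(X_1,X_2)$ of size at least $\eta n^{r-1}\cdot \eta^2 n^r = \eta^3 n^{2r-1}$; each pair yields a $(2r-1)$-set $X=X_1\cup X_2$ with the desired property. Finally, since a given $(2r-1)$-set $X$ arises from at most $\binom{2r-1}{r-1}\leq 2^{2r}$ ordered pairs $(X_1,X_2)$ (the partition of $X$ into its $(r-1)$-part and $r$-part, of which there are at most $\binom{2r-1}{r-1}$), the number of \emph{distinct} $(2r-1)$-sets $X$ with the stated property is at least $\eta^3 n^{2r-1}/2^{2r}\geq \eta^4 n^{2r-1}$, using $\xi\ll\eta\ll\gamma\ll 1/r$ so that $\eta\cdot 2^{-2r}\geq\eta^2$ comfortably. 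This proves the claim whenever $T\neq C_3$.

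For the case $T=C_3$ I would instead invoke Lemma~\ref{C3con}: for the fixed pair $x,y$, either (i) there are at least $\eta n^2$ $2$-sets $X_0$ with $X_0\cup\{x\}$ and $X_0\cup\{y\}$ spanning copies of $C_3$, or (ii) there are at least $\eta n^5$ $5$-sets $X'$ with $X'\cup\{x\}$ and $X'\cup\{y\}$ spanning copies of $2C_3$. Here $r=3$, so $2r-1=5$. In subcase (ii) we already have the required $\eta n^5\geq\eta^4 n^5$ $5$-sets directly. In subcase (i), I would augment each $2$-set $X_0$ by a $3$-set $X_2$ disjoint from $X_0\cup\{x,y\}$ that spans a copy of $C_3$: as above, (\ref{noT}) with $r=3$ gives at least $2\eta^2 n^3$ copies of $C_3$, at least $\eta^2 n^3$ of which avoid the $5$-element set $X_0\cup\{x,y\}$, so the number of pairs $(X_0,X_2)$ is at least $\eta n^2\cdot\eta^2 n^3=\eta^3 n^5$, and after dividing by the at most $\binom{5}{2}=10$ ways to split a $5$-set we still get at least $\eta^4 n^5$ distinct $5$-sets $X=X_0\cup X_2$. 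In either subcase the claim holds.

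The main obstacle is really just bookkeeping: verifying that in every case (the several connection lemmas, plus the two subcases of the $C_3$ lemma) the disjointness can be arranged while losing only lower-order terms, and checking that the final counting constants survive the hierarchy $\xi\ll\eta\ll\gamma,\alpha\ll 1/r$. No genuinely new idea is needed beyond the connection lemmas and the elementary count~(\ref{noT}); the only subtlety is to make sure that when we enlarge an $(r-1)$-set (or a $2$-set) by a copy of $T$ we always work with copies of $T$ that are vertex-disjoint from everything already fixed, which costs at most $O(n^{r-1})$ copies out of the $\Omega(n^r)$ available.
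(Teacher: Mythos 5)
Your proposal is correct and takes essentially the same route as the paper: obtain $\eta n^{r-1}$ connecting $(r-1)$-sets from the appropriate connection lemma, augment each with a vertex-disjoint copy of $T$ supplied by~(\ref{noT}), and divide by the overcount factor $\binom{2r-1}{r-1}$. The only cosmetic difference is that the paper phrases the $T=C_3$ dichotomy as a proof by contradiction (if the claim failed for some pair $x,y$, Lemma~\ref{C3con}(ii) could not hold for that pair, forcing case~(i)), whereas you split directly into the two subcases of Lemma~\ref{C3con}.
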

\proof
Suppose for a contradiction that Claim~\ref{claimabs} is false. 
Then if $T=C_3$, certainly Lemma~\ref{C3con}(ii) does not hold.
In particular, Lemmas~\ref{con+-}, \ref{T4con}--\ref{C3con} imply that, for any 
$x, y \in V(G)$ there are at least $\eta  n^{r-1}$ $(r-1)$-sets $X'\subseteq V(G)$ such that both $X' \cup \{x\}$ and $X' \cup \{y\}$ span copies of $T$ in $G$.
Fix such a set $X'$.  (\ref{noT}) implies that there are least
$$2 \eta^2 n^r - (r+1)\binom{n}{r-1} \geq \eta ^2 n^r$$
$r$-sets $X'' \subseteq V(G)$ that span copies of $T$ in $G$ and that are disjoint from $X' \cup \{x ,y\}$. Fix such a set $X''$ and define $X:=X' \cup X''$.
By construction both $X \cup \{x\}$ and $X \cup \{y\}$ span copies of $2T$ in $G$.
Further, since $\eta \ll  1/r$, there are at least
$$ \eta n^{r-1} \times \eta ^2 n^r \times \frac{1}{\binom{2r-1}{r-1}} \geq \eta ^4 n^{2r-1}$$
choices for $X$, a contradiction. This proves the claim.
\endproof

\medskip

Consider any $r$-subset $Q:=\{x_1, x_2, \dots , x_r \}$ of $V(G)$. Fix some $r$-subset $Y :=\{y_1, y_2 , \dots, y_r\}$ of $V(G)$ that spans a copy of $T$ in $G$
and that is disjoint from $Q$.
(\ref{noT}) implies that there are least
$$2 \eta ^2 n^r - r\binom{n}{r-1} \geq \eta ^2 n^r$$
choices for $Y$.
Next fix a $(2r-1)$-set $X_1\subseteq V(G)$ such that both $X _1 \cup \{x_1\}$ and $X_1 \cup \{y_1\}$ span copies of $2T$ in $G$ and so that
$X_1$ is disjoint from $Q\cup Y$. Claim~\ref{claimabs} implies that there are at least 
$$ \eta ^4 n^{2r-1} - 2r \binom{n}{2r-2} \geq \eta ^4 n^{2r-1} /2$$
choices for $X_1$. Similarly, Claim~\ref{claimabs} implies that we can iteratively choose $(2r-1)$-sets $X_2, \dots , X_r \subseteq V(G)$ such that, for each $2 \leq i \leq r$:
\begin{itemize}
\item Both $X _i \cup \{x_i\}$ and $X_i \cup \{y_i\}$ span copies of $2T$ in $G$;
\item $X_i$ is disjoint from $Q \cup Y$;
\item $X_i$ is disjoint from $X_j$ for all $1 \leq j <i$;
\item There are at least $\eta ^4 n^{2r-1} /2$ choices for $X_i$.
\end{itemize}
Set $S:= Y \cup \bigcup _{1\leq i \leq r} X_i$. Then $S$ is a $T$-absorbing $2r^2$-set for $Q$. Indeed, $G[X_i \cup \{y_i\}]$ contains a perfect $T$-packing
for all $1\leq i \leq r$ so $G[S]$ contains a perfect $T$-packing. Furthermore, $G[X_i \cup \{x_i\}]$ contains a perfect $T$-packing
for all $1\leq i \leq r$ and $Y$ spans a copy of $T$ in $G$ so $G[S \cup Q]$ contains a perfect $T$-packing.

In summary, there are at least $\eta ^2 n^r$ choices for $Y$ and at least $\eta ^4 n^{2r-1} /2$ choices for each of the $X_i$. Since each $T$-absorbing $2r^2$-set may be counted
$\binom{2r^2}{r}\binom{r(2r-1)}{2r-1}\binom{(r-1)(2r-1)}{2r-1} \dots \binom{2(2r-1)}{2r-1}$ times
there are at least
$$\eta ^2 n^r \times \left( \frac{\eta ^4 n^{2r-1}}{2} \right ) ^{r} \times \frac{1}{\binom{2r^2}{r}\binom{r(2r-1)}{2r-1}\binom{(r-1)(2r-1)}{2r-1} \dots \binom{2(2r-1)}{2r-1} }\geq \xi n^ {2r^2}$$
$T$-absorbing $2r^2$-sets for $Q$, as desired. 
\endproof

\subsection{Proof of Lemma~\ref{absorb2}.}
Define $\eta$ such that $\xi \ll \eta \ll \gamma$. 
By Lemma~\ref{con+-}, for every vertex $x \in V(G)$, there are at least $\eta n^{r-1}$ $(r-1)$-sets $X \subseteq V(G)$ such that $X \cup \{x\}$ spans a copy of $T_r$ in $G$. Thus, there are at least
\begin{align}\label{noT+-}
n \times \eta n^{r-1} \times \frac{1}{r} \geq 2 \eta ^2 n^r
\end{align}
$r$-sets in $V(G)$ that span copies of $T_r$ in $G$.

By now following the proof of Lemma~\ref{absorb1} identically (applying (\ref{noT+-}) and Lemma~\ref{con+-}) we conclude that there are at least $\xi n^{2r^2}$ $T_r$-absorbing $2r^2$-sets in $V(G)$ for every $r$-subset of $V(G)$, as required.
\endproof


\section{Tools for the proof of Lemma~\ref{ex1}}\label{secex}
In Sections~\ref{secex1} and~\ref{secexC3} we deal with the extremal cases of Theorems~\ref{mainthm}. The proof of Lemma~\ref{ex1} builds on the ideas from the extremal case in~\cite{kss}.
(Note though that~\cite{kss} concerns embedding \emph{powers of Hamilton cycles} in \emph{graphs}.)
 In this section we give a number of results that will be applied in the proof of Lemma~\ref{ex1}.

\subsection{Perfect $T$-packings in the non-extremal case}
In the proof of Lemma~\ref{ex1} we will apply the following result which is a direct consequence of Theorems~\ref{nonex1} and~\ref{almostthm}
(its proof is implicit in the proof of Theorem~\ref{mainthm} given in Section~\ref{pfsec}).
\begin{thm}\label{1nonex}
Let $0 < 1/n \ll \eps  \ll \gamma  \ll 1/r$ where $n,r \in \mathbb N$ and $r \geq 3$ so that $r$ divides $n$, and let $T \in \mathcal T_r\setminus \{C_3\}$. Suppose that $G$ is a digraph on $n$ vertices so that
\begin{align*}
\delta ^0  (G) \geq \left ( 1-{1}/{r} -\eps \right ) n.
\end{align*}
Further suppose that	 $G$ does not contain any $\gamma$-independent set of size at least $n/r$.
Then $G$ contains a perfect $T$-packing.
\end{thm}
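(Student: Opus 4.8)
The plan is to mimic, essentially verbatim, the derivation of Theorem~\ref{mainthm} from Theorems~\ref{nonex1} and~\ref{almostthm} carried out in Section~\ref{pfsec}, simply omitting the two extremal subcases. Indeed, the hypothesis here \emph{hands us} the non-extremal conclusion ($G$ has no $\gamma$-independent set of size at least $n/r$), and since $T \neq C_3$ the extra hypothesis of Theorem~\ref{nonex1} that $G$ does not $\alpha$-contain $Ex(n)$ is vacuously true.

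First I would introduce auxiliary constants $\xi$ and $\ell$ with
$$0 < 1/n \ll 1/\ell \ll \eps \ll \xi \ll \gamma \ll 1/r,$$
which is consistent with the hypothesised chain $1/n \ll \eps \ll \gamma \ll 1/r$, and I would set $\alpha := \gamma$. Applying Theorem~\ref{nonex1} to $G$ — legitimate since $\delta^0(G) \geq (1-1/r-\eps)n$, $G$ has no $\gamma$-independent set of size at least $n/r$, and (as $T \neq C_3$) $G$ trivially does not $\alpha$-contain $Ex(n)$ — produces a set $M \subseteq V(G)$ with $|M| \leq \xi n$ such that $M$ is a $T$-absorbing set for every $W \subseteq V(G) \setminus M$ with $|W| \in r\mathbb N$ and $|W| \leq \xi^2 n$. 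In particular (taking $W = \emptyset$) $G[M]$ has a perfect $T$-packing, so $r$ divides $|M|$.

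Next I would set $G' := G \setminus M$ and $n' := |G'| \geq (1-\xi)n$, noting that $r \mid n'$ (as $r \mid n$ and $r \mid |M|$) and that
$$\delta^0(G') \geq \delta^0(G) - |M| \geq (1-1/r-\eps)n - \xi n \geq (1-1/r-\xi)n'.$$
I would then observe that $G'$ contains no $(\gamma/2)$-independent set of size at least $n'/r$: otherwise, extending such a set by at most $\xi n$ arbitrary vertices of $G$ would (since $\xi \ll \gamma$) yield a $\gamma$-independent set of size $n/r$ in $G$, contradicting the hypothesis. Therefore Theorem~\ref{almostthm}, applied with $G'$, $n'$, $\xi$, $\gamma/2$ in the roles of $G$, $n$, $\eps$, $\gamma$ — the required hierarchy $1/n' \ll 1/\ell \ll \xi \ll \gamma/2 \ll 1/r$ following from the chain above together with $n' \geq n/2$ — rules out its alternative (ii) and hence gives a $T$-packing $\mathcal M_1$ in $G'$ covering all but a set $W$ of at most $\ell \leq \xi^2 n$ vertices. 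Since $r \mid n'$ and $\mathcal M_1$ covers a multiple of $r$ vertices, $|W| \in r\mathbb N$, so the absorbing property of $M$ furnishes a perfect $T$-packing $\mathcal M_2$ of $G[M \cup W]$, and $\mathcal M_1 \cup \mathcal M_2$ is the desired perfect $T$-packing of $G$.

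There is no genuine obstacle: the statement is a bookkeeping composition of two theorems already proved in the paper. The only points needing (routine) care are checking that the constant hierarchies required by Theorems~\ref{nonex1} and~\ref{almostthm} are respected by the parameters substituted into them, and transferring the ``no large almost-independent set'' property from $G$ to the slightly smaller digraph $G'$ — both immediate consequences of $|M| \leq \xi n$ and $\xi \ll \gamma$.
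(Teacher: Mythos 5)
Your proof is correct and follows exactly the route the paper indicates: the paper explicitly notes Theorem~\ref{1nonex} is ``a direct consequence of Theorems~\ref{nonex1} and~\ref{almostthm}'' whose ``proof is implicit in the proof of Theorem~\ref{mainthm},'' and your argument is precisely that derivation with the extremal subcases dropped. The bookkeeping you flag (hierarchy checks, propagating the no-almost-independent-set property to $G'$, and the divisibility of $|W|$) is handled in the same way as in Section~\ref{pfsec}.
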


\subsection{Perfect $K_r$-packings in $r$-partite digraphs}
We will also apply the following immediate consequence of Theorem~\ref{rpart}.
\begin{thm}\label{rpart2}
Given $r \in \mathbb N$ there exists an $n_0 \in \mathbb N$ such that the following holds. Suppose $G$ is an $r$-partite digraph
with vertex classes $V_1, \dots, V_r$ where $|V_i|=n\geq n_0$ for all $1\le i \leq r$.
If $$\bar{\delta}^+(G) , \bar{\delta}^-(G)\geq (1-1/2r)n+1$$ then $G$ contains a perfect $K_r$-packing.
\end{thm}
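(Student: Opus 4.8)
The plan is to reduce Theorem~\ref{rpart2} to the undirected $r$-partite Hajnal--Szemer\'edi theorem (Theorem~\ref{rpart}) by passing to the graph of ``double edges'' of $G$. Let $n_0$ be the constant returned by Theorem~\ref{rpart} for the given $r$, and suppose $G$ is an $r$-partite digraph with vertex classes $V_1,\dots,V_r$, each of size $n\geq n_0$, with $\bar\delta^+(G),\bar\delta^-(G)\geq(1-1/(2r))n+1$. Let $G'$ be the $r$-partite (undirected) graph on the same vertex set with the same vertex classes, in which $xy$ is an edge exactly when both $xy\in E(G)$ and $yx\in E(G)$. Every copy of $K_r$ in $G'$ uses one vertex from each $V_i$ and spans a complete digraph on those $r$ vertices in $G$; hence a perfect $K_r$-packing in $G'$ is precisely a perfect $K_r$-packing in $G$, and so it suffices to produce the former.

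To that end I would verify the minimum degree hypothesis of Theorem~\ref{rpart} for $G'$. Fix $v\in V_i$ and a class $V_j$ with $j\neq i$. Since $d^+_G(v,V_j)\geq(1-1/(2r))n+1$, at most $n/(2r)-1$ vertices of $V_j$ fail to be out-neighbours of $v$ in $G$; likewise at most $n/(2r)-1$ vertices of $V_j$ fail to be in-neighbours of $v$. Hence at least $n-2(n/(2r)-1)=(1-1/r)n+2$ vertices of $V_j$ are joined to $v$ by edges of $G$ in both directions, i.e.\ are neighbours of $v$ in $G'$. As $v$ and $j$ were arbitrary, $\bar\delta(G')\geq(1-1/r)n+1$, so Theorem~\ref{rpart} applies to $G'$ (recall $n\geq n_0$) and produces a perfect $K_r$-packing in $G'$, hence in $G$, as required.

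I do not expect a genuine obstacle: the argument is the elementary observation that the double-edge graph inherits a minimum co-degree condition of exactly the strength needed, the coefficient $1/(2r)$ having been chosen so that deleting two ``missing neighbourhood'' sets of $V_j$, each of size at most $n/(2r)$, still leaves more than the Keevash--Mycroft threshold $(1-1/r)n$. The only point needing slight care is integrality of degrees, but since the bound above carries a slack of $2$ rather than $1$, rounding causes no loss.
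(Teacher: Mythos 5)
Your proof is correct and is exactly the reduction the paper has in mind: the paper simply calls Theorem~\ref{rpart2} an ``immediate consequence'' of Theorem~\ref{rpart} without writing it out, and your passage to the graph of double edges with the union-bound degree check is the intended argument.
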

Here $\bar{\delta} ^+(G)$ ($\bar{\delta} ^-(G)$) denotes the minimum outdegree (indegree) of a vertex from one vertex class $V_i$ to another vertex class $V_j$.
\subsection{Matchings in digraphs}
A \emph{matching}  in a (di)graph $G$ is a collection of vertex-disjoint edges $M \subseteq E(G)$. We write $V(M)$ for the set of vertices covered by the edges from $M$.
We say that $M$ is a \emph{$d$-matching} if $|M|=d$. We say that $M$ is a \emph{perfect matching} if $V(M)=V(G)$.

\begin{prop}\label{simplematch}
Let $d, n \in \mathbb N$. Suppose that $G$ is a graph on $n \geq 2d$ vertices such that $\delta (G) \geq d$. Let $X \subseteq V(G)$ such that $|X|=d$. Then $G$ contains a $d$-matching  that covers all
the vertices in $X$.
\end{prop}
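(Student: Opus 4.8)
The plan is to build the desired $d$-matching greedily, one edge at a time, always covering a fresh vertex of $X$ while maintaining enough room to continue. Concretely, I would enumerate the vertices of $X$ as $x_1, \dots, x_d$ and construct edges $e_1, \dots, e_d$ with $x_i \in e_i$ and all $e_i$ pairwise disjoint. Suppose that for some $0 \le i < d$ we have already chosen disjoint edges $e_1, \dots, e_i$ covering $x_1, \dots, x_i$ (together with at most $i$ further vertices). We want to pick an edge $e_{i+1}$ incident to $x_{i+1}$ and disjoint from $V(\{e_1,\dots,e_i\})$. If $x_{i+1}$ already lies in $V(\{e_1,\dots,e_i\})$ we are free to re-index so that this does not happen; more cleanly, process the vertices of $X$ but skip any that have already been covered, and note that we only ever need $d$ edges in total since $|X| = d$.

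The key point is the degree count: when choosing $e_{i+1}$, the vertex $x_{i+1}$ has at least $\delta(G) \ge d$ neighbours in $G$, while the set of already-used vertices $V(\{e_1,\dots,e_i\})$ has size at most $2i \le 2(d-1) < 2d \le n$, and in fact at most $2i$ of the neighbours of $x_{i+1}$ can be blocked. Since $x_{i+1}$ has at least $d$ neighbours and at most $i < d$ of its neighbours lie in the at-most-$2i$ used vertices other than $x_{i+1}$ itself — here one should be slightly careful and observe that $x_{i+1}$ itself is not yet used, so all $2i$ used vertices are potential obstructions — we need $d > 2i$, i.e. $i < d/2$. This is not quite enough on its own, so the cleaner bookkeeping is: after picking $i$ edges we have used $2i$ vertices, and we need $x_{i+1}$ to have a neighbour outside these $2i$ vertices; this is guaranteed as long as $d = \delta(G) > 2i$, which fails once $i \ge d/2$.

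To get past $i \ge d/2$ I would instead pick the edges in a smarter order, or use the hypothesis $n \ge 2d$ directly: the real obstruction to be handled is precisely that a greedy step could fail when many vertices are already used. The fix is to note that we may always choose the partner of $x_{i+1}$ to lie \emph{outside $X$ whenever possible}, so that each edge consumes at most one vertex of $X$ and the vertices of $X$ themselves do not pile up as obstructions; then the used set relevant to step $i+1$ has at most $i$ vertices of $X$ (namely $x_1,\dots,x_i$) plus at most $i$ partners, and one checks that $n \ge 2d$ ensures a valid partner exists at every step — if all of $x_{i+1}$'s $\ge d$ neighbours were used, then since at most $i \le d-1$ partners and the remaining $x_j$'s account for fewer than $d + d = 2d \le n$ vertices, a contradiction with $\delta(G) \ge d$ forces a free partner. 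I expect the main (very minor) obstacle to be exactly this bookkeeping: making the greedy argument robust to the case where the partner of some $x_i$ is forced to be another vertex of $X$, which is handled by processing $X$ in an order that defers such vertices, or simply by the crude bound $|V(\{e_1,\dots,e_i\})| \le 2d - 2 < n$ together with $\delta(G) \ge d$. Once the greedy step is justified, iterating it $d$ times (skipping already-covered members of $X$) produces the required $d$-matching covering $X$, and the proof is complete. $\square$
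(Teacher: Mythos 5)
Your greedy approach has a genuine gap that is not just bookkeeping. The core claim --- that at each step $x_{i+1}$ has a free partner --- is not justified: once $2i \ge d$, all $\ge d$ neighbours of $x_{i+1}$ could indeed lie in the $2i$ used vertices, and the ``contradiction with $\delta(G)\ge d$'' you invoke simply isn't there (the inequality $2i < n$ is irrelevant; what you would need is $2i < d$). Your ``prefer a partner outside $X$'' heuristic does not rescue this. For instance, take $d=4$, $n=8$, $X=\{x_1,x_2,x_3,x_4\}$, $Y=\{y_1,\dots,y_4\}$, with adjacencies $N(x_1)=\{x_3,y_1,y_2,y_3\}$, $N(x_2)=\{x_3,y_1,y_2,y_4\}$, $N(x_3)=\{x_1,x_2,y_1,y_2\}$, $N(x_4)=\{y_1,y_2,y_3,y_4\}$, $N(y_1)=\{x_1,x_2,x_3,x_4,y_3\}$, $N(y_2)=\{x_1,x_2,x_3,x_4,y_4\}$, $N(y_3)=\{x_1,x_4,y_1,y_4\}$, $N(y_4)=\{x_2,x_4,y_2,y_3\}$. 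Then $\delta(G)=4=d$ and a perfect matching covering $X$ exists (e.g.\ $x_1y_3,\,x_2y_4,\,x_3y_1,\,x_4y_2$), yet your heuristic greedy can choose $e_1=x_1y_1$, $e_2=x_2y_2$, after which \emph{every} neighbour of $x_3$ is used and the procedure is stuck. The ``defer problematic vertices'' idea is also not made precise and would need a real argument. Finally, you never address the second issue that arises if some edge covers two vertices of $X$: you then have fewer than $d$ edges and must extend to a $d$-matching disjoint from what you have, which can independently fail under naive greedy choices.

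The paper's proof avoids all of this with a standard extremality-plus-swap argument rather than a greedy one: it first takes any $d$-matching $M$ (which exists since $n\ge 2d$ and $\delta\ge d$), then among all $d$-matchings takes one covering the maximum number of vertices of $X$. If some $x\in X$ is uncovered, it classifies the edges of $M$ by how many endpoints they have in $X$ (counts $a,b,c$ with $a+b+c=d$ and $b<c$), shows via local swaps that $x$ cannot be adjacent to any unmatched vertex, nor to the $X$-endpoint of any mixed edge, nor to either endpoint of any edge lying wholly outside $X$, and concludes $\deg(x)\le a+2b < a+b+c = d$, a contradiction. That global optimisation step is precisely what you are missing: it guarantees there is always an available swap, whereas your local greedy can paint itself into a corner.
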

\proof
It is easy to see that $G$ contains a $d$-matching. Let $M$ be a $d$-matching in $G$ that covers the maximum number of vertices from $X$. Suppose for a contradiction that there is a vertex $x \in X$ uncovered by $M$.
In particular, $M$ covers more vertices in $V(G) \setminus X$ than in $X$.
There exist non-negative integers $a,b,c$ such that $a+b+c=d$ and:
\begin{itemize}
\item[(i)] $M$ contains precisely $a$ edges $wz $ where $w \in X$ and $z \in V(G)\setminus X$;
\item[(ii)] $M$ contains precisely $b$ edges with both endpoints in $X$;
\item[(iii)] $M$ contains precisely $c$ edges with both endpoints in $V(G)\setminus X$.
\end{itemize}
Since $M$ covers more vertices in $V(G)\setminus X$ than in $X$, $b<c$ (and so $c\geq 1$). Suppose $x$ has a neighbour  $y \in V(G) \setminus V(M)$. Then add $xy$ to $M$ and delete an edge  $wz $ from $M$
such that $w,z \in V(G)\setminus X$. Then $M$ is a $d$-matching covering more vertices in $X$ than before, a contradiction. So $x$ only has neighbours in $V(M)$. 

Suppose $wz$ is an edge in $M$ such that $w \in X$ and $z \in V(G)\setminus X$. If $xw \in E(G)$ then delete $wz$ from $M$ and add $xw$ to $M$. So again $M$ is a $d$-matching covering more vertices in $X$ than before, a contradiction. Thus, $x$ is not adjacent to $w$. A similar argument shows that, if $wz \in M$ with $w,z \in V(G)\setminus X$, then $xw,xz \not \in E(G)$.
Together with (i)--(iii) this shows that $x$ has at most $a+2b <a+b+c=d$ neighbours in $G$, a contradiction, as desired.
\endproof
The following immediate consequence of Proposition~\ref{simplematch} will be applied in the proof of  Lemma~\ref{ex1}.
\begin{prop}\label{simplematch2}
Let $d, n \in \mathbb N$. Suppose that $G$ is a digraph on $n \geq 2d$ vertices such that, for any $x \in V(G)$, 
$d^+(x) \geq d$ or $d^- (x) \geq d$. Let $X \subseteq V(G)$ such that $|X|=d$. Then $G$ contains a $d$-matching  that covers all
the vertices in $X$.
\end{prop}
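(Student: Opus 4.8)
The plan is to deduce this immediately from Proposition~\ref{simplematch} by passing to the underlying undirected graph. Let $G'$ denote the graph on $V(G)$ in which $xy \in E(G')$ precisely when $xy \in E(G)$ or $yx \in E(G)$. First I would check that $\delta(G') \geq d$: for any vertex $x \in V(G)$, its neighbourhood in $G'$ is exactly $N^+_G(x) \cup N^-_G(x)$, and by hypothesis at least one of $N^+_G(x)$, $N^-_G(x)$ has size at least $d$, so $d_{G'}(x) \geq d$.

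Since $G'$ has $n \geq 2d$ vertices and $\delta(G') \geq d$, Proposition~\ref{simplematch} applied to $G'$ with the same set $X$ yields a $d$-matching $M'$ in $G'$ covering every vertex of $X$. Each edge $xy \in M'$ arises, by definition of $G'$, from an edge of $G$ oriented one way or the other; selecting one such directed edge of $G$ for each $xy \in M'$ produces a set $M \subseteq E(G)$ of $d$ pairwise vertex-disjoint edges of $G$ with $V(M) = V(M')$. Hence $M$ is a $d$-matching of $G$ covering $X$, as required.

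There is no substantive obstacle here; the only points that need a moment's care are the inequality $d_{G'}(x) \geq \max\{d^+_G(x), d^-_G(x)\} \geq d$ and the observation that the passage from $M'$ to $M$ preserves both the size and the covered vertex set of the matching.
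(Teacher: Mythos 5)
Your argument is correct and is exactly the deduction the paper has in mind: the paper presents Proposition~\ref{simplematch2} as an ``immediate consequence'' of Proposition~\ref{simplematch} with no written proof, and passing to the underlying undirected graph $G'$ (where $\delta(G')\geq d$ since $d_{G'}(x)=|N^+_G(x)\cup N^-_G(x)|\geq\max\{d^+_G(x),d^-_G(x)\}\geq d$), applying Proposition~\ref{simplematch}, and reorienting the matched edges is precisely that intended deduction.
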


Let $\eps >0$. Suppose that $G$ is a (di)graph $G$ on $n$ vertices. Then we say that $G$ is \emph{$\eps$-close  to $2K_{n/2}$} if there exists a partition $A,B$ of $V(G)$ such that $|A|=\lfloor n/2 \rfloor$,
$|B|= \lceil n/2 \rceil$ and $e_G (A,B) \leq \eps n^2$. 

\begin{prop}\label{matchstab}
Let $\gamma >0$ and $n \in \mathbb N$ be even such that $1/n \ll \gamma$. Suppose that $G$ is a graph on $n$ vertices so that
\begin{align}\label{matmin}
\delta (G) \geq (1/2-\gamma) n.
\end{align}
Then at least one of the following conditions holds:
\begin{itemize}
\item $G$ contains a $3 \gamma$-independent set of size at least $n/2$;
\item $G$ is $3\gamma$-close to $2K_{n/2}$;
\item $G$ contains a perfect matching.
\end{itemize}
\end{prop}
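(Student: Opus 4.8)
The plan is to run a standard "augmenting path / connected components" analysis. First I would take a maximum matching $M$ in $G$ and suppose it is not perfect, so there is a non-empty set $U$ of uncovered vertices; note $U$ is independent by maximality. If $|U|$ is "large" — say at least $2\gamma n$ — then I claim we are close to the extremal configurations. Indeed, each $u \in U$ has $\delta(G) \geq (1/2-\gamma)n$ neighbours, all inside $V(M)$, and moreover for each edge $wz \in M$, $u$ cannot be adjacent to both $w$ and $z$ (else we could augment). So each matching edge "receives" at most one edge from each uncovered vertex, which forces the bipartite count $e_G(U, V(M)\setminus N\text{-stuff})$ to be constrained; pushing this a little, either $U$ together with a few more vertices forms a $3\gamma$-independent set of size $\geq n/2$, or the neighbourhoods concentrate and we can partition $V(G)$ into two halves with few crossing edges, i.e. $G$ is $3\gamma$-close to $2K_{n/2}$.

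If instead $|U|$ is small ($< 2\gamma n$) — in particular if $|U| \in \{1,2\}$ after the usual parity remarks — then I would use an augmenting-path argument to show a perfect matching exists unless $G$ is essentially disconnected. Fix $u \in U$ and let $R$ be the set of vertices reachable from $u$ by $M$-alternating paths. Standard matching theory gives that $R$ is "$M$-saturated except for $u$" and that every vertex of $R \setminus \{u\}$ has all its neighbours in $R$ (otherwise we reach another uncovered vertex and augment). If $R$ has a matched partner structure with $|R|$ roughly at least $(1/2-\gamma)n$, then since every neighbour of a vertex in $R$ lies in $R$, the set $V(G)\setminus R$ is almost non-adjacent to $R$; combined with $\delta(G)\geq(1/2-\gamma)n$ this forces $|R|,|V(G)\setminus R|$ both close to $n/2$ and $e_G(R, V(G)\setminus R)$ small, giving $3\gamma$-closeness to $2K_{n/2}$. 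The remaining possibility is $|R|$ small, but $\delta(G) \geq (1/2-\gamma)n$ forces $u$ to have $\geq (1/2-\gamma)n$ neighbours in $R$, hence $|R| > (1/2-\gamma)n > 2\gamma n$, a contradiction for $n$ large. So in all cases we land in one of the three conclusions.

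The main obstacle I anticipate is the bookkeeping in the "medium/large $|U|$" case: extracting a clean partition witnessing $3\gamma$-closeness to $2K_{n/2}$ (with exactly $\lfloor n/2\rfloor$ and $\lceil n/2\rceil$ parts) from the rough degree information, while simultaneously handling the alternative that $U$-plus-a-few-vertices is the independent set. Concretely, one must argue that if $G$ is \emph{not} $3\gamma$-close to $2K_{n/2}$ and does \emph{not} contain a large $3\gamma$-independent set, then the alternating-reachability set $R$ of any uncovered vertex must actually grow to cover a new uncovered vertex, yielding an augmenting path and contradicting maximality of $M$. Making the constants fit (the factor $3$ in $3\gamma$, turning the $(1/2-\gamma)$ degree slack and the $O(\gamma n)$ uncovered vertices into the $\leq 3\gamma n^2$ edge bounds and the $\leq 3\gamma n^2$ independent-set bound) is routine but needs care; everything else is classical matching theory and a few applications of $\delta(G)\geq(1/2-\gamma)n$.
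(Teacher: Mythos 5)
Your overall strategy (take a maximum matching, exploit the structure around uncovered vertices) is in the right spirit, but two of your key structural claims are false as stated, and the paper's argument is quite different in its details.

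First, the claim ``for each edge $wz\in M$, $u$ cannot be adjacent to both $w$ and $z$ (else we could augment)'' is wrong. If $u$ is an uncovered vertex adjacent to \emph{both} endpoints $w,z$ of a matching edge $wz$, there is no augmenting path: the alternating path $u\text{--}w\text{--}z$ ends at the matched vertex $z$, and $u\text{--}z\text{--}w$ ends at $w$; you can swap $wz$ for $uw$ but that merely moves the unsaturation to $z$. An augmenting path through $wz$ needs a \emph{second} uncovered vertex $u'$ with $u\sim w$ and $u'\sim z$ (or vice versa). This is exactly why the paper works with two uncovered vertices $x,y$ from the start. Second, the claim that ``every vertex of $R\setminus\{u\}$ has all its neighbours in $R$'' is not a correct statement of alternating-reachability structure in a general (non-bipartite) graph: only the ``outer'' (even-distance) vertices of the alternating forest have constrained neighbourhoods, ``inner'' (odd-distance) vertices can freely have neighbours outside $R$ without creating an augmenting path, and even for the outer vertices you must contend with blossoms. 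As written, the case analysis built on these two claims does not go through, and no amount of constant bookkeeping fixes it.

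The paper sidesteps both issues. Since $n$ is even, a maximal non-perfect matching leaves at least \emph{two} uncovered vertices $x\ne y$, and all of $N(x),N(y)$ lie in $V(M)$. Define $SN(x)$ to be the set of $M$-partners of $N(x)$, and $SN(y)$ analogously; then $|SN(x)|,|SN(y)|\ge(1/2-\gamma)n$. The single structural lemma is that there is \emph{no edge} $zz'$ of $G$ with $z\in SN(x)$ and $z'\in SN(y)$: if $zz'\in M$ this gives the augmenting path $x\text{--}z'\text{--}z\text{--}y$, and if $zz'\notin M$ then replacing the two matching edges at $z$ and $z'$ by $xw$, $yw'$ and $zz'$ (where $w,w'$ are the $M$-partners of $z,z'$) yields a larger matching. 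From this one lemma the conclusion is immediate by a size count: if $SN(x)\cap SN(y)\ne\emptyset$, a vertex there has no neighbour in $SN(x)\cup SN(y)$, forcing $|SN(x)\cup SN(y)|\le(1/2+\gamma)n$, hence $|SN(x)\cap SN(y)|\ge(1/2-3\gamma)n$, and since it is independent you get the first alternative; if $SN(x)\cap SN(y)=\emptyset$, then $SN(x)$ and $SN(y)$ are disjoint sets of size roughly $n/2$ with no crossing edge, giving the second. If you want to salvage your alternating-reachability route, you would essentially need a Gallai--Edmonds type decomposition, which is a substantially heavier tool than the two-vertex $SN$ argument actually used.
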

\proof
Suppose that $G$ does not contain a perfect matching. Let $M$ be a maximal matching in $G$. So there exists distinct $x,y \in V(G) \setminus V(M)$. The maximality of $M$ implies that
$N(x),N(y) \subseteq V(M)$. Define $SN(x):=\{ z \in V(M) : wz \in M \text{ for some } w\in N(x)\}$. Define $SN(y)$ analogously. (\ref{matmin}) implies that
\begin{align}\label{SNbound}
|SN(x)|,|SN(y)| \geq (1/2 -\gamma )n.
\end{align}

Suppose for a contradiction that there is an edge $zz' \in E(G)$ such that $z \in SN(x)$ and $z' \in SN(y)$. If $zz' \in M$ then by definition of $SN(x)$ and $SN(y)$, $xz',yz \in E(G)$.
Define $M':=(M\setminus \{zz'\}) \cup \{xz',yz\} \subseteq E(G)$. Thus, $M'$ is a larger matching than $M$, a contradiction. 
So $zz' \not \in M$. Let $w,w' \in V(M) $ such that $wz,w'z' \in M$. Then by definition of $SN(x)$ and $SN(y)$, $xw,yw' \in E(G)$.
Set $M':=(M\setminus \{wz,w'z'\}) \cup \{xw,yw',zz'\} \subseteq E(G)$. Then $M'$ is a larger matching than $M$, a contradiction. 
This proves that there is no such edge $zz'$.

Define  $SN(x,y):= SN(x) \cap SN(y)$.
Suppose that $SN(x,y) \not = \emptyset$. Consider any $z \in SN(x,y)$. Then in $G$, $z$ does not have any neighbours in $SN(x) \cup SN(y)$. So (\ref{matmin}) implies that 
$|SN(x) \cup SN(y)| \leq (1/2+\gamma )n$. So together with (\ref{SNbound}) this implies that $|SN(x,y)|\geq (1/2- 3 \gamma )n$. Further, $SN (x,y)$ is an independent set in $G$.
By adding at most $3 \gamma n$ arbitrary vertices to $SN(x,y)$ we obtain a $3 \gamma$-independent set of size at least $n/2$ in $G$.

Finally, suppose that $SN(x,y) =\emptyset$. So $SN(x)$ and $SN(y)$ are disjoint and $e_G (SN(x),SN(y))=0$. Together with (\ref{SNbound}) this implies that $G$ is $3\gamma$-close to $2K_{n/2}$, as desired.
\endproof

We will also apply the following consequence of Proposition~\ref{matchstab}.
\begin{prop}\label{matchstab2}
Let $\gamma >0$ and $n \in \mathbb N$ be even such that $1/n \ll \gamma$. Suppose that $G$ is a digraph on $n$ vertices so that, for every $x \in V(G)$,
\begin{align*}
d^+ (x) \geq \left(1/2 -\gamma \right) n \ \text{ or } \ d^- (x) \geq \left(1/2 -\gamma \right) n.
\end{align*}
Then at least one of the following conditions holds:
\begin{itemize}
\item $G$ contains a $6 \gamma$-independent set of size at least $n/2$;
\item $G$ is $6\gamma$-close to $2K_{n/2}$;
\item $G$ contains a perfect matching.
\end{itemize}
\end{prop}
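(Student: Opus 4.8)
The plan is to deduce the statement directly from the undirected Proposition~\ref{matchstab} by passing to the underlying graph, exactly as Proposition~\ref{simplematch2} is deduced from Proposition~\ref{simplematch} and Proposition~\ref{turan} is deduced from Tur\'an's theorem. First I would let $G'$ be the graph on $V(G)$ in which $xy \in E(G')$ whenever $xy \in E(G)$ or $yx \in E(G)$. The hypothesis immediately gives $\delta(G') \geq (1/2-\gamma)n$: if $d^+_G(x) \geq (1/2-\gamma)n$ then $N^+_G(x) \subseteq N_{G'}(x)$, and if instead $d^-_G(x) \geq (1/2-\gamma)n$ then $N^-_G(x) \subseteq N_{G'}(x)$, so in either case $d_{G'}(x) \geq (1/2-\gamma)n$.

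Next I would apply Proposition~\ref{matchstab} to $G'$ with the same $\gamma$ (this is where $1/n \ll \gamma$ is used), obtaining one of its three conclusions, and then translate each back to $G$ using the single elementary observation that every edge of $G'$ comes from at most two edges of $G$, so $e_{G'}(X) \le e_G(X) \le 2 e_{G'}(X)$ for any $X \subseteq V(G)$ and similarly $e_G(X,Y) \le 2 e_{G'}(X,Y)$ for disjoint $X,Y$. If $G'$ contains a $3\gamma$-independent set $S$ with $|S| \ge n/2$, then $e_G(S) \le 2 e_{G'}(S) \le 6\gamma n^2$, so $S$ is $6\gamma$-independent in $G$. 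If $G'$ is $3\gamma$-close to $2K_{n/2}$ via a partition $A,B$ (and since $n$ is even, $|A|=|B|=n/2$), then $e_G(A,B) \le 2 e_{G'}(A,B) \le 6\gamma n^2$, so $G$ is $6\gamma$-close to $2K_{n/2}$. Finally, if $G'$ contains a perfect matching $M$, then for each $xy \in M$ at least one of $xy$, $yx$ lies in $E(G)$; orienting each edge of $M$ accordingly yields a set of vertex-disjoint directed edges of $G$ covering $V(G)$, i.e.\ a perfect matching of $G$.

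There is essentially no obstacle here: the proof is a routine reduction, and the only point requiring (minimal) care is the factor of two in the edge counts, which is precisely why the constant in the conclusion degrades from $3\gamma$ to $6\gamma$.
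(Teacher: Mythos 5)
Your proposal is correct and is exactly the intended argument: the paper presents Proposition~\ref{matchstab2} without proof as an immediate consequence of Proposition~\ref{matchstab}, and passing to the underlying graph $G'$ (so that $\delta(G') \geq (1/2-\gamma)n$) and then pulling each of the three conclusions back through the factor-of-two edge-count comparison is precisely the deduction the author has in mind. Your handling of the three cases, including the conversion of an undirected perfect matching into a directed one by picking one orientation of each matching edge, is complete and correct.
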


\section{Proof of Lemma~\ref{ex1}}\label{secex1}
Define constants $\gamma , \gamma _1, \gamma _2, \dots , \gamma _{r}$ and $n_0 \in \mathbb N$ such that
$$0<1/n_0 \ll \gamma \ll \gamma _1  \ll \gamma _2 \ll \dots \ll \gamma _{r} \ll 1/r.$$
Let $T \in \mathcal T_r$ and $G$ be a digraph on $n\geq n_0$ vertices as in the statement of the lemma.
By assumption $G$ contains a $\gamma$-independent set $A_1$ of size $n/r$. (So $A_1$ is also a $\gamma _1$-independent set in $G$.) Consider $G_1:=G\setminus A_1$. If $G_1$ contains a $\gamma_2$-independent set
$A_2$ of size $n/r$ set $G_2:=G_1\setminus A_2$. (Note that $A_2$ is also a $\gamma_2$-independent set in $G$.) Otherwise let $B:=V(G_1)$. 
Repeating this process, for some $1 \leq s \leq r$, we obtain a partition $A_1, \dots , A_s, B$ of $V(G)$ such that:
\begin{itemize}
\item $A_i$ is a $\gamma _i$-independent set of size $n/r$ in $G$ (for all $1 \leq i \leq s$);
\item $|B|=(r-s)n/r$ and $G[B]$ does not contain a $\gamma _{s+1}$-independent set of size $n/r$.
\end{itemize}
(The latter condition is vacuous if $B=\emptyset$.)
If $B = \emptyset$ define additional constants $\alpha , \beta ',  \beta $ so that $$\gamma _r \ll \alpha \ll \beta ' \ll \beta \ll 1/r.$$ If $B\not = \emptyset$ then define $\alpha , \beta ', \beta , \eta$ so that $$\gamma _s \ll \alpha \ll \beta ' \ll \beta  \ll \eta \ll \gamma _{s+1}.$$

Let $\delta >0$ and $1\leq i \leq s$. We now introduce a number of definitions.
\begin{itemize}
\item We say that a vertex $ x\in A_i$ is \emph{$(\delta,i)$-bad} if $d^+ _G (x,A_i) \geq \delta n$ or $d^- _G (x,A_i) \geq \delta n $.
Otherwise we say that $x$ is \emph{$(\delta,i)$-good}.
\item We say that a vertex $x \in V(G) \setminus A_i$ is \emph{$(\delta ,i)$-exceptional} if $d^+ _G (x,A_i) , d^- _G (x,A_i)\leq \delta n$. Otherwise we say that $x$ is \emph{$(\delta,i)$-acceptable}.
\item We say that a vertex $x \in V(G) \setminus A_i$ is \emph{$(\delta ,i)$-excellent} if $d^+ _G (x,A_i) , d^- _G (x,A_i)\geq |A_i|-\delta n$. 
\item Similarly, we say that a vertex $x \in V(G) \setminus B$ is \emph{$(\delta ,B)$-excellent} if $d^+ _G (x,B) , d^- _G (x,B)\geq |B|-\delta n$. 
\end{itemize}
Later on we will modify the vertex classes $A_1,  \dots , A_s, B$. When referring to, for example, $(\delta,i)$-bad vertices, we mean with respect to the current class $A_i$ and not the original class.

For each $1 \leq i \leq s$, since $A_i$ is a $\gamma _i$-independent set in $G$ and $\gamma _i \ll \alpha  \ll \beta $, there are at most $\alpha  n$ vertices in $A_i$ that are $(\beta , i)$-bad.
Furthermore, (\ref{minex1}) implies that there are at least
$$2\delta ^0 (G)|A_i|-2\gamma _i n^2 \geq 2 |A_i||V(G)\setminus A_i|-2 \gamma _i n^2$$
edges in $G$ with one endpoint in $A_i$ and the other in $V(G) \setminus A_i$. So as $\gamma _i \ll \alpha  \ll \beta ' $, there are at most $\alpha  n$ vertices $x \in V(G) \setminus A_i$ that
are not $(\beta ' ,i)$-excellent. (This implies that there are at most $\alpha n$ $(\beta ,i)$-exceptional vertices.)

\medskip

\noindent
{\bf Modifying the partition $A_1, \dots ,A_s ,B$.}
Let $t$ be the largest integer such that there exists both $t$ $(\beta,1)$-bad vertices $x_1, \dots , x_t \in A_1$ \emph{and} $t$ $(\beta ,1)$-exceptional vertices $y_1, \dots , y_t \in V(G) \setminus A_1$.
Note that $t \leq \alpha n$. Move $y_1, \dots , y_t$ into $A_1$ and remove $x_1, \dots, x_t$ from $A_1$ so that each $x_i$ replaces $y_i$ in their respective classes. (So if originally $y_i \in A_j$ then 
we move $x_i$ into $A_j$. If originally $y_i \in B$ then we move $x_i$ into $B$.) We call this `phase' \emph{Step~1}.

If a vertex $x \in A_1$ was initially $(\beta, 1)$-good then after Step~1, $x$ is still $(\beta + \alpha, 1)$-good. Further, each $y_i$ is now $(\beta+\alpha, 1)$-good. So if $A_1$ initially contained \emph{precisely}
$t$ $(\beta,1)$-bad vertices, then $A_1$ no longer contains any $(\beta +\alpha, 1)$-bad vertices.

If a vertex $y \in V(G) \setminus A_1$ was  initially $(\beta,1)$-acceptable, then after Stage~1, $y$ is still  $(\beta- \alpha,1)$-acceptable. Further, each vertex $x_i$ is 
$(\beta- \alpha,1)$-acceptable. So if initially there were \emph{precisely}
$t$ $(\beta,1)$-exceptional vertices, then after Stage~1 there are no  $(\beta -\alpha, 1)$-exceptional vertices.

Thus, after Stage~1 we have that:
\begin{itemize}
\item $A_i$ is an $\alpha$-independent set of size $n/r$ in $G$ (for all $1\leq i \leq s$);
\item If $B\not = \emptyset$ then $G[B]$ does not contain any $(\gamma _{s+1} -2\alpha r)$-independent set of size $n/r$;
\item There are no $(\beta +\alpha, 1)$-bad vertices in $A_1$ or there are no $(\beta -\alpha, 1)$-exceptional vertices in $V(G)\setminus A_1$;
\item $A_i$ contains at most $2 \alpha n$ $(\beta +\alpha, i)$-bad vertices (for all $1\leq i \leq s$);
\item There are at most $2 \alpha n$ vertices in $V(G) \setminus A_i$ that are not $(\beta ' +\alpha, i)$-excellent (for each $1 \leq i \leq s$).
\end{itemize}
Suppose that $s \geq 2$.
 We now explain \emph{Stage~2} of the switching procedure.
Let $t'$ be the largest integer such that there exists both $t'$ $(\beta+\alpha,2)$-bad vertices $x_1, \dots , x_{t'} \in A_2$ {and} $t'$ $(\beta ,2)$-exceptional vertices $y_1, \dots , y_{t'} \in V(G) \setminus A_2$
at the end of Stage 1.
Note that $t' \leq 2\alpha n$. Move $y_1, \dots , y_{t'}$ into $A_2$ and remove $x_1, \dots, x_{t'}$ from $A_2$ so that each $x_i$ replaces $y_i$ in their respective classes. 

If a vertex $x \in A_2$ was  $(\beta+\alpha, 2)$-good after Stage~1 then $x$ is still $(\beta + 3\alpha, 2)$-good. Further, each $y_i$ is now $(\beta+3\alpha, 2)$-good. So if at the end of Stage 1, $A_2$ contained \emph{precisely}
$t'$ $(\beta+\alpha, 2)$-bad vertices, then $A_2$ no longer contains any $(\beta +3\alpha, 2)$-bad vertices.

If a vertex $y \in V(G) \setminus A_2$ was  $(\beta,2)$-acceptable at the end of Stage~1, then $y$ is still $(\beta- 2\alpha,2)$-acceptable. Further, each vertex $x_i$ is 
$(\beta- 2\alpha,2)$-acceptable. So if at the end of Stage~1, there were \emph{precisely}
$t'$ $(\beta,2)$-exceptional vertices, then there are now no  $(\beta -2\alpha, 2)$-exceptional vertices.

Recall that after Stage~1 there are no $(\beta +\alpha, 1)$-bad vertices in $A_1$ or there are no $(\beta -\alpha, 1)$-exceptional vertices in $V(G)\setminus A_1$.
Suppose that the former holds. Then (\ref{minex1}) implies that after Stage~1 every vertex in $A_1$ is $(\beta +\alpha,2)$-excellent. In particular, $A_1$ is not modified in Stage~2.
Next suppose that after Stage~1 there were no $(\beta -\alpha, 1)$-exceptional vertices in $V(G)\setminus A_1$. Suppose that $x$ is a vertex that lies in $V(G) \setminus A_1$ both after Stage~1 and after Stage~2.
Then after Stage~2 $x$ is  a $(\beta -3\alpha, 1)$-acceptable vertex. Suppose that $x$ is a vertex in $A_1$ after Stage~1 and a vertex in $V(G)\setminus A_1$ after Stage~2. Then $x\in A_2$ after
Stage~2 and so was a $(\beta ,2)$-exceptional vertex after Stage~1. Together with (\ref{minex1}), this implies that, after Stage~2, $x$ is   a $(\beta +2\alpha, 1)$-excellent vertex
(in particular, $x$ is not $(\beta -3\alpha, 1)$-exceptional). 
Overall this implies that, after Stage~2, there are no $(\beta +3\alpha, 1)$-bad vertices in $A_1$ or there are no $(\beta -3\alpha, 1)$-exceptional vertices in $V(G)\setminus A_1$.

Therefore, after Stage~2 we have that:
\begin{itemize}
\item $A_i$ is a $3\alpha$-independent set of size $n/r$ in $G$ (for all $1\leq i \leq s$);
\item If $B\not = \emptyset$ then $G[B]$ does not contain any $(\gamma _{s+1} -6\alpha r)$-independent set of size $n/r$;
\item There are no $(\beta +3\alpha, i)$-bad vertices in $A_i$ or there are no $(\beta -3\alpha , i)$-exceptional vertices in $V(G)\setminus A_i$ for $i=1,2$;
\item $A_i$ contains at most $4 \alpha n$ $(\beta +3\alpha, i)$-bad vertices (for all $1\leq i \leq s$);
\item There are at most $4 \alpha n$ vertices in $V(G) \setminus A_i$ that are not $(\beta ' +3\alpha, i)$-excellent (for each $1 \leq i \leq s$).
\end{itemize}

By applying an analogous switching procedure iteratively for $A_3, \dots , A_s$ we modify the partition $A_1, \dots , A_s , B$ of $V(G)$ such that the following conditions hold:
\begin{itemize}
\item[($\alpha _1$)] $A_1, \dots , A_s , B$ remains a partition of $V(G)$ so that $A_i$ is a $\sqrt{\alpha}$-independent set of size $n/r$ in $G$ (for all $1\leq i \leq s$);
\item[($\alpha _2$)] If $B\not = \emptyset$ then $G[B]$ does not contain any $(\gamma _{s+1}/2)$-independent set of size $n/r$;
\item[($\alpha _3$)] There are no $(2\beta , i)$-bad vertices  in $A_i$ or there are no $(\beta /2, i)$-exceptional vertices in $V(G) \setminus A_i$ (for all $1\leq i \leq s$);
\item[($\alpha _4$)] $A_i$ contains at most $\sqrt{ \alpha} n$ $(2\beta , i)$-bad vertices (for all $1\leq i \leq s$);
\item[($\alpha _5$)] There are at most $\sqrt{ \alpha} n$ vertices in $V(G) \setminus A_i$ that are not $(2 \beta ', i)$-excellent (for each $1 \leq i \leq s$).
\end{itemize}
Note that if $B\not = \emptyset$ then (\ref{minex1}) implies that 
\begin{align}\label{minGB}
\delta ^0 (G[B]) \geq \left( 1- \frac{1}{r-s} \right ) |B|.
\end{align}

\subsection{The case when $|B|\not = 2n/r$ or $G[B]$ is not close to $2K_{n/r}$}\label{firstcase}
In this subsection we will assume that either (i) $r-s \not = 2$ (and so $|B|\not = 2n/r$) or (ii) $r-s=2$ and $G[B]$ is not $\eta$-close to $2K_{n/r}$.
The case when $r-s=2$ and $G[B]$ is $\eta$-close to $2K_{n/r}$ is considered in Section~\ref{secondcase}.

\medskip

\noindent
{\bf Covering the exceptional vertices with matchings.}
Given any $1 \leq i \leq s$, let $V_{ex,i}$ denote the set of $(\beta /2, i)$-exceptional vertices in $V(G) \setminus A_i$. 
($\alpha _5$) implies that $c_i:=|V_{ex,i}|\leq \sqrt{\alpha }n$ for all $1 \leq i \leq s$. (\ref{minex1}) implies that
a vertex cannot be both $(\beta /2, i)$-exceptional
and $(\beta /2, j)$-exceptional for $i \not = j$. So $V_{ex,i}$ and $V_{ex,j}$ are disjoint for all $i \not = j$.

Given $1 \leq i \leq s$, if $V_{ex,i}=\emptyset$, set $G_i$ to be the empty digraph. 
If $V_{ex,i}\not =\emptyset$, set $G_i:= G[A_i \cup V_{ex,i}]$. 
If $V_{ex,i} \not = \emptyset$ then by $(\alpha _3)$, there are no $(2 \beta ,i)$-bad vertices in $A_i$. Thus, by (\ref{minex1}) every vertex in $A_i$ is
$(2 \beta ,j)$-excellent for all $j \not =i$. In particular, if $x \in A_i$ then $ x \not \in V_{ex,j}$. Therefore the digraphs $G_1, \dots , G_s$ are vertex-disjoint.

If $V_{ex,i}\not =\emptyset$ then, since $|G_i|=n/r+c_i$, (\ref{minex1}) implies that $\delta ^0 (G_i) \geq c_i$ (for $1\leq i \leq s$). 
Further, $|G_i|\geq 2c_i$.
So Proposition~\ref{simplematch2} implies that there are disjoint matchings $M_1, \dots , M_s$ in $G$ such that:
\begin{itemize}
\item[($\beta _1$)] $M_i$ is a $c_i$-matching in $G_i$ that covers all the vertices in $V_{ex,i}$ (for all $1 \leq i \leq s$).
\end{itemize}
Note that if $V_{ex,i} =\emptyset$ then $M_i$ is empty.

\medskip

\noindent
{\bf Extending the matchings $M_i$ into $T$-packings.} Our next task is to find vertex-disjoint $T$-packings $\mathcal M_1, \dots , \mathcal M_s$ in $G$ so that, for each $1 \leq i \leq s$:
\begin{itemize}
\item[($\gamma _1$)] $\mathcal M_i$ contains precisely $c_i$ disjoint copies of $T$;
\item[($\gamma _2$)] $\mathcal M_i$ covers $M_i$. In particular, each copy of $T$ in $\mathcal M_i$ contains a unique edge from $M_i$;
\item[($\gamma _3$)] $\mathcal M_i$ covers precisely $c_i$ vertices from $A_j$ (for each $1\leq j \leq  s$) and precisely $(r-s)c_i$ vertices from $B$.
\end{itemize}
Suppose that for some $1 \leq i \leq s$ we have found our desired $T$-packings $\mathcal M_1 , \dots , \mathcal M_{i-1}$. We now construct $\mathcal M_i$.
If $M_i$ is empty then we set $\mathcal M_i =\emptyset$ and then ($\gamma _1$)--($\gamma _3$) are vacuously true for $\mathcal M_i$.
So suppose that $M_i$ is non-empty. Since $|V_{ex,i}|=c_i$, ($\beta _1$) implies that there exist non-negative integers $a_i, b_i$ so that $c_i=a_i+2b_i$ and 
\begin{itemize}
\item[(i)] $M_i$ contains precisely $a_i$ edges with one endpoint in $V_{ex,i}$ and the other in $A_i$;
\item[(ii)] $M_i$ contains precisely $b_i$ edges with both endpoints in $V_{ex,i}$;
\item[(iii)] $M_i$ contains precisely $b_i$ edges with both endpoints in $A_i$.
\end{itemize}

Consider any edge $e$ in $M_i$ with one endpoint $x \in A_i$ and one endpoint $y \in V_{ex,i}$.
Since $V_{ex,i} \not = \emptyset$, ($\alpha _3$) implies that $x$ is $(2 \beta, i)$-good. In particular, together with (\ref{minex1}) this implies that 
$d^+ (x,A_j),d^-(x,A_j) \geq |A_j|-2 \beta n$ for all $j \not = i$ and $d^+ (x,B),d^-(x,B) \geq |B|-2 \beta n$. Since $y$ is $(\beta/2,i)$-exceptional, (\ref{minex1}) implies that
$d^+ (y,A_j),d^-(y,A_j) \geq |A_j|-\beta n /2$ for all $j \not = i$ and $d^+ (y,B),d^-(y,B) \geq |B|- \beta n/2$.
It is easy to see that, together with (\ref{minGB}), ($\alpha_4$) and ($\alpha _5$), this implies that we can greedily construct a copy $T'$ of $T$ in $G$ such that:
\begin{itemize}
\item $T'$ is vertex-disjoint from $\mathcal M_1 , \dots , \mathcal M_{i-1}$ and $M_{i}\setminus \{e\},M_{i+1}, \dots , M_s$;
\item $T'$ contains $e$ and contains precisely one vertex from each of $A_1. \dots, A_s$ and $r-s$ vertices from $B$.
\end{itemize}
Further, we can repeat this process for all $a_i$ such edges $e$ so that the $a_i$ copies of $T$ thus obtained are vertex-disjoint.
Let $\mathcal M'_i$ denote the set of these copies of $T$. So $\mathcal M'_i$ covers $a_i$ vertices from each $A_j$ and $a_i(r-s)$ vertices from $B$.

Next pair off each of the $b_i$ edges from (ii) with a unique edge from (iii). Consider one such pair $(e,e')$ of edges. So the endpoints $x,y$ of $e$ lie in $V_{ex,i}$ and the endpoints
$x',y'$ of $e'$ lie in $A_i$. 
Suppose that $x,y \in A_{i'}$ for some $i' \not = i$. (The other cases are similar.)
Since $x,y$ are $(\beta/2,i)$-exceptional, (\ref{minex1})  implies that 
$d^+ (x,A_j),d^-(x,A_j), d^+ (y,A_j),d^-(y,A_j) \geq |A_j|- \beta n/2$ for all $j \not = i$ and $d^+ (x,B),d^-(x,B) , d^+ (y,B),$ $d^-(y,B)\geq |B|- \beta n/2$. 
It is easy to see that, together with (\ref{minGB}), ($\alpha_4$) and ($\alpha _5$), this implies that we can greedily construct a copy $T'$ of $T$ in $G$ such that:
\begin{itemize}
\item $T'$ is vertex-disjoint from $\mathcal M_1 , \dots , \mathcal M_{i-1}, \mathcal M'_i$ and $M_{i}\setminus \{e\},M_{i+1}, \dots ,M_s$;
\item $T'$ contains $e$ and contains two vertices from $A_{i'}$ (namely $x$ and $y$),  no vertices from $A_i$, one vertex from $A_j$ (for $j \not = i,i'$) and $r-s$ vertices from $B$.
\end{itemize}
Similarly 
we can greedily construct a copy $T''$ of $T$ in $G$ such that:
\begin{itemize}
\item $T''$ is vertex-disjoint from $\mathcal M_1 , \dots , \mathcal M_{i-1}, \mathcal M'_i$, $T'$ and $M_{i}\setminus \{e'\},M_{i+1}, \dots  ,M_s$;
\item $T''$ contains $e'$ and contains two vertices from $A_{i}$ (namely $x'$ and $y'$),  no vertices from $A_{i'}$, one vertex from $A_j$ (for $j \not = i,i'$) and $r-s$ vertices from $B$.
\end{itemize}
So together $T'$ and $T''$ cover precisely two vertices from each $A_j$ and $2(r-s)$ vertices from $B$.
Further, we can repeat this process for all  such pairs of edges $(e,e')$ so that the $2b_i$ copies of $T$ thus obtained are vertex-disjoint.
Let $\mathcal M''_i$ denote the set of these copies of $T$.
Then by construction $\mathcal M''_i$ covers precisely $2b_i$ vertices from each $A_j$ and $2b_i(r-s)$ vertices from $B$.
Set $\mathcal M_i:= \mathcal M'_i \cup \mathcal M''_i$. 
So $\mathcal M_i$ is a $T$-packing in $G$.
By construction $\mathcal M_i$ is vertex-disjoint from $\mathcal M_1, \dots , \mathcal M_{i-1}$ and satisfies ($\gamma _1$)--($\gamma _3$), as desired.

\medskip

\noindent
{\bf Covering the remaining vertices.} Remove all those vertices covered by $\mathcal M_1 , \dots , \mathcal M_s$ from $G$ (and from the classes $A_1, \dots, A_s, B$). Call the resulting digraph $G'$.
So $n':=|G'|\geq (1- r^2 \sqrt{\alpha} )n$ by ($\gamma _1$), $|A_i|=n'/r$ for all $1\leq i \leq s$ and $|B|=(r-s)n'/r$ by ($\gamma _3$).
Further, (\ref{minex1}) and ($\alpha _1$)--($\alpha _5$) imply that the following conditions hold:
\begin{itemize}
\item[($\delta _1$)] $\delta ^0 (G') \geq (1-1/r-r^2 \sqrt{\alpha} )n \geq (1-1/r- r^2 \sqrt{\alpha}) n'$; 
\item[($\delta _2$)] $A_1, \dots , A_s , B$ is a partition of $V(G')$ so that $A_i$ is a $2\sqrt{\alpha}$-independent set of size $n'/r$ in $G'$ (for all $1\leq i \leq s$);
\item[($\delta _3$)] If $B\not = \emptyset$ then $G'[B]$ does not contain any $(\gamma _{s+1}/3)$-independent set of size $n'/r$;
\item[($\delta _4$)] Every vertex in $V(G') \setminus A_i$ is $(\beta /3,i)$-acceptable (for each $1 \leq i \leq s$);
\item[($\delta _5$)] There are at most $\sqrt{ \alpha} n$ vertices in $V(G') \setminus A_i$ that are not $(2 \beta ', i)$-excellent (for each $1 \leq i \leq s$).
\end{itemize}
In particular, note that ($\delta _4$) follows since $\mathcal M_i$ contains the vertices in $V_{ex,i}$. If $B\not = \emptyset$ then
($\delta _1$) implies that 
\begin{align}\label{Bboundy}
d^+ _{G'} (y,B) , d^- _{G'} (y,B) \geq \left (1-\frac{1}{r} -r^2 \sqrt{\alpha} \right )n'-\frac{sn'}{r} \geq \left (1- \frac{1}{r-s} -\alpha ^{1/3} \right) |B|
\end{align}
for all $y \in V(G')$.

We now split the proof into cases depending on the size of $B$.

\smallskip

\noindent
{\bf Case 1: $B=\emptyset$.}  In this case $s=r$ and $A_1, \dots , A_r$ is a partition of $V(G')$.
Then $G'$ contains a $T$-packing $\mathcal M'$ such that:
\begin{itemize}
\item[($\eps _1$)] $\mathcal M'$ contains at most $r \sqrt{\alpha} n$ copies of $T$;
\item[($\eps _2$)] If $x \in V(G') \setminus A_i$ is not $(2\beta ' ,i)$-excellent  then $x$ is contained in a copy of $T$ in $\mathcal M'$ (for any $1 \leq  i \leq r$);
\item[($\eps _3$)] Each copy of $T$ in $\mathcal M'$ covers exactly one vertex from $A_i$ (for each $1 \leq i \leq r$).
\end{itemize}
To see that such a $T$-packing $\mathcal M'$ exists, suppose that we have found a $T$-packing $\mathcal M^*$ in $G'$ that satisfies ($\eps _1$) and ($\eps _3$).
Suppose that for some $1\leq i \leq r,$ $x \in V(G) \setminus A_i$ so that $x$ is not $(2\beta ' ,i)$-excellent and $x$ is not covered by $\mathcal M^*$. (By ($\delta _5$) there are at most $r \sqrt{\alpha} n$ such vertices $x$.) Without loss of generality assume that $x \in A_1$. Then by ($\delta _1$) and ($\delta _4$)  there exist $2 \leq i'\not = i'' \leq r$ such that:
\begin{itemize}
\item $d^+ _{G'} (x,A_{i'} ) \geq \beta n/3$;
\item $d^- _{G'} (x,A_{i''} ) \geq \beta n/3$;
\item $d^+_{G'} (x,A_j), d^- _{G'} (x,A_j) \geq \beta n/3$ for all $2 \leq j \leq r$ such that $j \not = i',i''$.
\end{itemize}
Without loss of generality assume that $i'=2$ and $i''=3$.
Write $V(T)=\{x_1, \dots , x_r\}$ where $x_1x_2 ,x_3 x_1 \in E(T)$. 
Since $d^+ _{G'} (x,A_{2} ) \geq \beta n/3$, ($\delta _5$) implies that there is a vertex $y_2 \in A_2 \setminus V(\mathcal M^*)$ such that $xy_2 \in E(G')$ and 
$y_2$ is $(2 \beta ', i)$-excellent for all $3 \leq i \leq r$.
Further, since $d^- _{G'} (x,A_{3} ) \geq \beta n/3$, the choice of $y_2$ together with ($\delta _5$) ensures that there is a vertex 
$y_3 \in A_3 \setminus V(\mathcal M^*)$ such that $y_3x, y_2y_3,y_3y_2 \in E(G')$ and 
$y_3$ is $(2 \beta ', i)$-excellent for all $4 \leq i \leq r$. 
In particular, $\{x,y_2,y_3\}$ spans a copy of $T[x_1,x_2,x_3]$ in $G'$ that is vertex-disjoint from $\mathcal M^*$.
Continuing in this way we obtain vertices $y_2, \dots , y_r$ such that $y_i \in A_i \setminus V(\mathcal M^*)$ and $\{x,y_2, \dots , y_r\}$ spans a copy of $T$ in $G'$ where 
$x,y_2, \dots , y_r$ play the roles of $x_1, \dots ,x_r$ respectively. 
This argument shows that we can indeed find a $T$-packing $\mathcal M'$ that satisfies ($\eps _1$)--($\eps _3$).

Remove all those vertices covered by $\mathcal M'$ from $G'$ (and from the classes $A_1, \dots, A_r$). Call the resulting digraph $G''$.
So $n'':=|G''|\geq (1- 2r^2 \sqrt{\alpha} )n$ by ($\eps _1$) and  $|A_i|=n''/r$ for all $1\leq i \leq r$ by ($\eps _3$).
Further, ($\eps _2$) implies that, given any $x \in V(G'') \setminus A_i$, $x$ is $(2 \beta ',i)$-excellent (for all $ 1 \leq i \leq r$).
So Theorem~\ref{rpart2} implies that $G''$ contains a perfect $K_r$-packing and thus a perfect $T$-packing $\mathcal M''$.
Set $\mathcal M:=\mathcal M' \cup \mathcal M'' \cup \mathcal M_1 \cup \dots \cup \mathcal M_r$. Then $\mathcal M$ is a perfect $T$-packing in $G$, as required.

\medskip

\noindent
{\bf Case 2: $B\not =\emptyset$.}
In this case $s\leq r-1$. Given any $1 \leq i \leq s$, ($\delta _1$) and ($\delta _2$) imply that there are at least
$$2|B||A_i|-r^2\sqrt{\alpha} n^2 -4\sqrt{\alpha} n^2 \geq 2|B||A_i|-\alpha ^{1/3} n^2$$
edges in $G'$ with one endpoint in $A_i$ and the other endpoint in $B$. 
Since $\alpha \ll \beta \ll 1/r$, this implies that there are at most $\alpha ^{1/4} n /r$ vertices in $A_i$ that are not $(\beta ,B)$-excellent.
Let $V_{ex,B}$ denote the set of all those vertices in $V(G')\setminus B$ that are not $(\beta,B)$-excellent. So $|V_{ex,B}|\leq \alpha ^{1/4} n$.

Then $G'$ contains a $T$-packing $\mathcal M'$ such that:
\begin{itemize}
\item[($\eps ' _1$)] $\mathcal M'$ contains   $m' \leq 2 r \sqrt{\alpha} n + 2 \alpha ^{1/4} n \leq 3 \alpha ^{1/4} n$ copies of $T$;
\item[($\eps ' _2$)] If $x \in V(G') \setminus A_i$ is not $(\beta  ,i)$-excellent  then $x$ is contained in a copy of $T$ in $\mathcal M'$ (for any $1 \leq  i \leq s$).
Similarly, if $x \in V(G') \setminus B$ is not $(\beta  ,B)$-excellent  then $x$ is contained in a copy of $T$ in $\mathcal M'$;
\item[($\eps ' _3$)] $\mathcal M'$ covers exactly $m'$ vertices from $A_i$ (for each $1 \leq i \leq s$) and $m'(r-s)$ vertices from $B$.
\end{itemize}
To prove that such a $T$-packing $\mathcal M'$ exists, we will use the follow three claims.

\begin{claim}\label{claima}
Let $x \in V(G') \setminus B$ be such that $x$ is not $(\beta  ,B)$-excellent and let $W \subseteq V(G') \setminus \{x\}$ where $|W| \leq \alpha ^{1/5} n$.
Then there are two vertex-disjoint copies $T',T''$ of $T$ in $G'$ so that:
\begin{itemize}
\item[(i)] $V(T') \cup V(T'')$ contains two vertices from $A_i$ (for each $1 \leq i \leq s$) and $2(r-s)$ vertices from $B$;
\item[(ii)] $x \in V(T') \cup V(T'')$ and $V(T') \cup V(T'')$  is disjoint from $W$.
\end{itemize}
\end{claim}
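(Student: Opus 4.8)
The plan is to build the two copies greedily, $T'$ first (it has to contain $x$) and then $T''$ inside what remains; relabel the classes so that $x\in A_1$. Before embedding anything I would fix the \emph{shapes} of $T'$ and of $T''$, namely the non-negative integer vectors recording how many vertices each copy puts in each of $A_1,\dots,A_s$ and in $B$. These two shapes must add up coordinatewise to $(2,\dots,2;2(r-s))$, and each must be the shape of an actual copy of $T$. A short degree count, using $\delta^0(G')\ge(1-1/r-r^2\sqrt\alpha)n'$ together with $|A_i|=n'/r$ and the near-independence of $A_1$, shows that $x$ has at least $\beta n/3$ in- and out-neighbours in $A_i$ for all but at most two indices $i\in\{2,\dots,s\}$, and also has at least $\beta n/3$ in- and out-neighbours in $B$ unless $r-s=1$ and $x$ is essentially non-adjacent to $B$; in that last situation the same count forces $x$ to be $(2\beta,1)$-bad. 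So in the generic case I would take both shapes balanced (one vertex in each $A_i$ and $r-s$ vertices in $B$) and assign $x$ to a vertex of $T$ of positive out- and in-degree (which exists since $r\ge3$), so that the at most two directional constraints on $x$ can be met, the $r-s$ ``free'' slots of $T$ inside $B$ absorbing the remainder. In the single degenerate case ($r-s=1$, $x$ non-adjacent to $B$, hence $(2\beta,1)$-bad) I would instead let $T'$ place two vertices in $A_1$ --- $x$ and a neighbour of $x$ inside $A_1$, oriented to match $T$, and chosen $(2\beta',i)$-excellent for all $i\neq1$ --- one vertex in each other $A_i$, and nothing in $B$, and let $T''$ place nothing in $A_1$, one vertex in each other $A_i$, and two vertices in $B$.

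Having fixed a shape, I would realise $T'$ one vertex at a time: place $x$ in its slot, then repeatedly pick the next vertex in its designated class so that it is a common neighbour --- with the directions prescribed by $T$ --- of all previously chosen vertices of $T'$, lies outside $W$ and outside the vertices already used, and is $(\beta,B)$-excellent whenever it is placed in some $A_i$. At each step the number of admissible choices is at least the size of the relevant class minus $O(\beta n)-|W|$, which is positive since $|W|\le\alpha^{1/5}n\ll\beta n$: inside an $A_i$ this uses ($\delta_5$) (all but $\sqrt\alpha n$ vertices outside $A_i$ are adjacent both ways to almost all of $A_i$) together with ($\delta_4$) and the flexibility of $x$ just established; inside $B$ it uses (\ref{Bboundy}), so that $\delta^0(G'[B])\ge(1-\tfrac1{r-s}-\alpha^{1/3})|B|$, together with the fact that the $A_i$-vertices already chosen are $(\beta,B)$-excellent. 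For the two $B$-vertices of $T''$ in the degenerate case there is no useful degree bound inside $B$, so there I would instead invoke ($\delta_3$): $G'[B]$ is not $(\gamma_{s+1}/3)$-independent, hence contains a directed edge in the prescribed orientation even after deleting $W\cup V(T')$ and the few vertices of $B$ not correctly joined to the already chosen $A_i$-vertices, and this realises the $2$-vertex subtournament of $T$ sitting in $B$. Building $T''$ after $T'$ is strictly easier, since no constraint now comes from $x$ and one only needs $T''$ vertex-disjoint from $V(T')\cup W$.

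The main obstacle is the first step: one must check that a pair of shapes exists which is simultaneously complementary (adds to $(2,\dots,2;2(r-s))$) and realisable (every edge of $T'$ and of $T''$ is supported by the degree data available at $x$ and at the vertices chosen so far). This is exactly where the degree count bounding how badly $x$ can connect to any single class is needed, together with a little tournament combinatorics to place the roles of $T$; once the shape is in hand the remainder is the routine greedy bookkeeping already used several times in Section~\ref{secex1}.
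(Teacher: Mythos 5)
Your proposal takes a genuinely different route and it has a gap in the \emph{generic} case. The paper never uses the balanced shape $(1,\dots,1;\,r-s)$ for $T'$: in every case it doubles up inside the class containing $x$ (placing $x$ together with a nearby $(\beta,B)$-excellent and $(2\beta',j)$-excellent vertex $y_2\in A_1$) so that $T'$ carries only $r-s-1$ vertices of $B$, and it then gives $T''$ the shape $(0,1,\dots,1;\,r-s+1)$. The reason the balanced $T'$ fails is not confined to $r-s=1$. For $r-s\ge 2$, ($\delta_1$) only forces $d^+_{G'}(x,B)\ge|B|-n'/r-O(\sqrt\alpha\,n)$ (and likewise for $d^-$), so the fraction of $B$ missed by $x$ in one direction can be as large as $1/(r-s)+O(\sqrt\alpha)$. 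That is exactly the semidegree deficiency inside $B$ coming from (\ref{Bboundy}). When you try to place the last of the $r-s$ $B$-vertices of a balanced $T'$ greedily, the $r-s-1$ earlier $B$-vertices already eat a $(r-s-1)/(r-s)+O(\alpha^{1/3})$ fraction of $B$, and the constraint coming from $x$ eats another $1/(r-s)+O(\sqrt\alpha)$; the union can cover all of $B$, so the greedy count goes negative. Your ``$r-s$ free slots absorbing the remainder'' is precisely the step that does not survive the bookkeeping. By contrast, with only $r-s-1$ $B$-slots in $T'$ the available fraction after all constraints is $1/(r-s)-O(\alpha^{1/3})>0$, so the greedy goes through; this is why the paper uses the asymmetric shape uniformly rather than only in a degenerate subcase.

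A second, smaller discrepancy: with the asymmetric split, $T''$ needs $r-s+1$ vertices in $B$, and for $r-s\ge 2$ a straight greedy inside $B$ also fails (the $(r-s+1)$th vertex would need to avoid a fraction $(r-s)/(r-s)$ of $B$). The paper instead restricts to $B'$, the set of $B$-vertices that are $(2\beta',j)$-excellent for every $j$, and applies Proposition~\ref{turanstab} together with ($\delta_3$) to produce a copy of a size-$(r-s+1)$ subtournament $T^*$ of $T$ inside $G'[B']$, afterwards extending to $A_2,\dots,A_s$ by excellence. Your plan only invokes ($\delta_3$) to find a single directed edge (which covers $r-s=1$), but for $r-s\ge2$ you would need the full Tur\'an-type stability statement, not just non-independence of $G'[B]$. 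Adopting the paper's asymmetric shapes for all cases, and replacing the greedy step inside $B$ for $T''$ by an application of Proposition~\ref{turanstab}, would repair the argument.
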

\proof
To prove the claim consider a vertex  $x \in V(G') \setminus B$ that is not $(\beta  ,B)$-excellent.
If $s=1$ then $x \in A_1$. Further, since $x$ is not $(\beta  ,B)$-excellent, 
($\delta _1$) implies that 
\begin{itemize}
\item $d^+ _{G'} (x,A_{1} ) \geq \beta n-r^2 \sqrt{\alpha} n' \geq \beta n/2 $ or $d^- _{G'} (x,A_{1} ) \geq \beta n/2$.
\end{itemize}
Without loss of generality assume that $d^+ _{G'} (x,A_{1} ) \geq \beta n/2$.

Fix a vertex $y$ in $A_1$ such that 
\begin{itemize}
\item $xy \in E(G')$;
\item $y$ is $(\beta,B)$-excellent;
\item $y \not \in W$.
\end{itemize}
Note that there are at least $\beta n/2 -\alpha ^{1/4} n -\alpha ^{1/5} n \geq \beta n/4$ choices for $y$ since $|V_{ex,B}|\leq \alpha ^{1/4} n$ and $|W|\leq \alpha ^{1/5} n$. 
Then by repeatedly applying (\ref{Bboundy}) we can greedily extend $xy$ to a copy $T'$ of $T$ in $G'$ containing two vertices from $A_1$ (namely $x $ and $y$) and $r-2$ vertices from $B$ so that
$T'$ is disjoint from $W$.

Next suppose that $s\geq 2$. Since $x$ is not $(\beta  ,B)$-excellent,
($\delta _1$) and ($\delta _4$) imply that there exist $1 \leq i'\not = i'' \leq s$ such that:
\begin{itemize}
\item $d^+ _{G'} (x,A_{i'} ) \geq \beta n/3$;
\item $d^- _{G'} (x,A_{i''} ) \geq \beta n/3$;
\item $d^+_{G'} (x,A_j), d^- _{G'} (x,A_j) \geq \beta n/3$ for all $1 \leq j \leq r$ such that $j \not = i',i''$.
\end{itemize}
Without loss of generality assume that $x \in A_1$,  $i'=1$ and $i''=2$ (the other cases are similar).

Write $V(T)=\{x_1, \dots , x_r\}$ where $x_1x_2 ,x_3 x_1 \in E(T)$. 
Since $d^+ _{G'} (x,A_{1} ) \geq \beta n/3$, $|V_{ex,B}|\leq \alpha ^{1/4} n$ and $|W|\leq \alpha ^{1/5} n$, ($\delta _5$) implies that there is a vertex $y_2 \in A_1 $ such that:
\begin{itemize}
\item $xy_2 \in E(G')$;
\item $y_2$ is $(2 \beta ', i)$-excellent for all $2 \leq i \leq s$; 
\item $y_2$ is $(\beta ,B)$-excellent;
\item $y_2 \not \in W$.
\end{itemize}
Since $d^- _{G'} (x,A_{2} ) \geq \beta n/3$, the choice of $y_2$ together with ($\delta _5$) ensures that there is a vertex 
$y_3 \in A_2 $ such that:
\begin{itemize}
\item  $y_3x, y_2y_3,y_3y_2 \in E(G')$;
\item $y_3$ is $(2 \beta ', i)$-excellent for all $3\leq i \leq s$;
\item $y_3$ is $(\beta ,B)$-excellent;
\item $y_3 \not \in W$.
\end{itemize}
In particular, $\{x,y_2,y_3\}$ spans a copy of $T[x_1,x_2,x_3]$ in $G'$.
Continuing in this fashion and then repeatedly applying (\ref{Bboundy}) we can greedily find a copy $T'$ of $T$ in $G'$ that covers 
two vertices in $A_1$ (namely $x$ and $y_2$), one vertex from $A_j$ (for $2 \leq j \leq s$) and $r-s-1$ vertices from $B$ so that $T'$ is disjoint from $W$.
So in both cases we have found a copy $T'$ of $T$ in $G'$ that covers two vertices in $A_1$ (including $x$), one vertex from $A_j$ (for $2 \leq j \leq s$) and $r-s-1$ vertices from $B$.

Let $T^*$ be a subtournament of $T$ of size $r-s+1$.
Let $B'$ denote the set of vertices $x \in B \setminus (V(T')\cup W)$ that are $(2 \beta ',j)$-excellent for all $1 \leq j \leq s$. Then $|B'|\geq |B|-r\sqrt{\alpha } n -r-\alpha^{1/5}n $ by ($\delta _5$).
Together with~(\ref{Bboundy}) this implies that
$$\delta ^0 (G'[B'])\geq \left ( 1- \frac{1}{r-s} -\alpha ^{1/6}\right ) |B'|.$$
Moreover, ($\delta _3$) implies that $G'[B']$ does not contain any $(\gamma _{s+1}/4)$-independent set of size $|B'|/(r-s)$. 
Proposition~\ref{turanstab} (with $G'[B']$, $|B'|$, $\alpha ^{1/6}$, $T^*$, $r-s+1$ playing the roles of $G$, $n$, $\eps$, $T$, $r$ respectively) implies that  
$G'[B']$ contains a copy $T_1 ^*$ of $T^*$. The choice of $B'$ ensures that we can greedily extend $T^* _1$ to a copy $T''$ of $T$ in $G'$ that is disjoint from $V(T')\cup W$ and that covers
no vertices from $A_1$, one vertex from $A_j$ (for $2 \leq j \leq s$) and $r-s+1$ vertices from $B$.
So together $T'$ and $T''$ satisfy (i) and (ii). This completes the proof of Claim~\ref{claima}.
\endproof

\begin{claim}\label{claimb}
Let $x \in V(G') \setminus (A_i \cup B)$ be such that $x$ is not $(\beta , i)$-excellent (for some $1 \leq i \leq s$) and let $W \subseteq V(G')\setminus \{x\}$ where $|W| \leq \alpha ^{1/5} n$.
Then there are two vertex-disjoint copies $T',T''$ of $T$ in $G'$ so that:
\begin{itemize}
\item[(i)] $V(T') \cup V(T'')$ contains two vertices from $A_j$ (for each $1 \leq j \leq s$) and $2(r-s)$ vertices from $B$;
\item[(ii)] $x \in V(T') \cup V(T'')$ and $V(T') \cup V(T'')$  is disjoint from $W$.
\end{itemize}
\end{claim}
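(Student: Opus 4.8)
My plan is to follow the proof of Claim~\ref{claima} as closely as possible; the only essential change is that the vertex $x$ is now deficient with respect to the class $A_i$ rather than with respect to $B$. First, if $s=1$ then $V(G')\setminus(A_i\cup B)=\emptyset$, so the claim is vacuous and we may assume $s\ge2$; write $x\in A_k$ for some $k\in\{1,\dots,s\}\setminus\{i\}$. By $(\delta_4)$ the vertex $x$ is $(\beta/3,i)$-acceptable, so without loss of generality $d^+_{G'}(x,A_i)\ge\beta n/3$ (the case $d^-_{G'}(x,A_i)\ge\beta n/3$ is symmetric). The bookkeeping step is this: by $(\delta_1)$, $x$ misses at most $(1/r+r^2\sqrt{\alpha})n'$ vertices of $G'$ in the out-direction, and since $|A_j|=n'/r$ for each $j\le s$, if there were two classes $A_{j_1},A_{j_2}$ with $j_1,j_2\in\{1,\dots,s\}\setminus\{k\}$ and $d^+_{G'}(x,A_{j_a})<\beta n/3$ then $x$ would miss more than $2(n'/r-\beta n/3)>(1/r+r^2\sqrt{\alpha})n'$ vertices, a contradiction; hence there is at most one class $A_j$ ($j\le s$, $j\ne k$) on which $x$ has fewer than $\beta n/3$ out-neighbours and, symmetrically, at most one on which it has fewer than $\beta n/3$ in-neighbours. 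Finally, exactly as in the derivation of (\ref{Bboundy}), $d^+_{G'}(x,B),d^-_{G'}(x,B)\ge(1-1/(r-s)-\alpha^{1/3})|B|$.

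Next I would build the first copy $T'$. Since $r\ge3$, fix a vertex $v$ of $T$ that is neither its source nor its sink, so $v$ has at least one in-neighbour and at least one out-neighbour in $T$; let $x$ play the role of $v$. By the bookkeeping step, among the classes $A_j$ with $j\le s$, $j\ne k$, at most one must be placed at an out-neighbour of $v$ (one on which $x$ has few in-neighbours) and at most one at an in-neighbour of $v$ (one on which $x$ has few out-neighbours), and since $d^+_{G'}(x,A_i)\ge\beta n/3$ the class $A_i$ is never of the latter type. As $(s-1)+(r-s)=r-1$, there is an embedding of the $r-1$ vertices of $T\setminus\{v\}$ assigning one vertex to each $A_j$ ($j\le s$, $j\ne k$) at a $v$-neighbour of compatible orientation and the remaining $r-s$ vertices to $B$. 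I then realise this embedding greedily: first the $A_j$-vertices ($j\le s$) one at a time, each chosen correctly adjacent to $x$ and to the previously placed vertices, $(2\beta',j')$-excellent for all $j'\le s$, $(\beta,B)$-excellent, and avoiding $W$ — by $(\delta_5)$ and the degree bounds of the first paragraph there are at least $\beta n/4$ choices at each step; then the $r-s$ vertices of $B$, repeatedly applying (the analogue of) (\ref{Bboundy}) and avoiding $W$. The resulting copy $T'$ of $T$ contains $x$, covers exactly one vertex of each $A_j$ ($j\le s$) — with $x$ the one in $A_k$ — and $r-s$ vertices of $B$, and is disjoint from $W$.

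For the second copy $T''$ I would argue verbatim as in Claim~\ref{claima}. Let $B'$ be the set of vertices in $B\setminus(V(T')\cup W)$ that are $(2\beta',j)$-excellent for every $j\le s$; then $|B'|\ge|B|-r\sqrt{\alpha}n-r-\alpha^{1/5}n$, and as for (\ref{Bboundy}) one gets $\delta^0(G'[B'])\ge(1-1/(r-s)-\alpha^{1/6})|B'|$, which is more than enough to apply Proposition~\ref{turanstab} to a subtournament $T^*$ of $T$ on $r-s$ vertices, while $(\delta_3)$ ensures $G'[B']$ has no large independent set. Hence Proposition~\ref{turanstab} yields a copy $T^*_1$ of $T^*$ in $G'[B']$, and extending it greedily (using the excellence of the vertices of $B'$ and (\ref{Bboundy})) gives a copy $T''$ of $T$ disjoint from $V(T')\cup W$ covering $r-s$ vertices of $B$ and exactly one vertex from each $A_j$ ($j\le s$). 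Then $T'\cup T''$ covers two vertices from each $A_j$ ($j\le s$) and $2(r-s)$ vertices of $B$, contains $x$, and avoids $W$, so (i) and (ii) hold.

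The one genuinely new point, and the main thing to get right, is the bookkeeping of the first paragraph: in Claim~\ref{claima} the deficiency of $x$ lies with $B$, which forces $x$ to have many edges inside its own class and lets $T'$ take two vertices there; here $x$ may have no edges inside $A_k$, so instead I make both $T'$ and $T''$ use exactly one vertex per $A_j$ and $r-s$ vertices of $B$, and the point is that $x$'s single guaranteed direction towards $A_i$, together with the ``at most one deficient class per direction'' bound, still leaves enough room to embed $T$ around $x$ — which works as soon as one notes that a non-source-non-sink vertex of $T$ has both an in- and an out-neighbour. The remaining estimates (the count of choices at each greedy step, the size of $B'$, and the semidegree of $G'[B']$) are identical to those in Claim~\ref{claima}.
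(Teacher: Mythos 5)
Your route departs from the paper's (the paper's omitted proof is, as stated, ``essentially identical'' to that of Claim~\ref{claima}), and the reason you give for departing --- that ``$x$ may have no edges inside $A_k$'' --- is false, which in turn leaves a genuine gap when $r-s=1$. Since $x$ is not $(\beta,i)$-excellent, suppose (say) $d^+_{G'}(x,A_i)<n'/r-\beta n$. Then $(\delta_1)$ together with $|A_j|=n'/r$ and $|B|=(r-s)n'/r$ gives
$$d^+_{G'}(x,A_k)\ \ge\ d^+_{G'}(x)-d^+_{G'}(x,A_i)-\sum_{j\le s,\ j\ne i,k}|A_j|-|B|\ >\ \beta n-r^2\sqrt{\alpha}\,n'\ \ge\ \beta n/2,$$
and symmetrically if the deficiency is in the in-direction. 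So $x$ does have $\Omega(\beta n)$ out-neighbours (or in-neighbours) in its own class $A_k$, just as in Claim~\ref{claima}; the intended proof then has $T'$ take two vertices of $A_k$ (one being $x$) and $r-s-1$ from $B$, and $T''$ take none from $A_k$ and $r-s+1\ge2$ from $B$ via Proposition~\ref{turanstab} with a subtournament on $r-s+1$ vertices.

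Because you discard this observation and make both $T'$ and $T''$ use exactly one vertex per $A_j$ and $r-s$ vertices from $B$, your $T'$ needs $x$ to have useful degree to $B$, for which you invoke (\ref{Bboundy}). But (\ref{Bboundy}) is vacuous when $r-s=1$: it reads $d^{\pm}_{G'}(x,B)\ge-\alpha^{1/3}|B|$, and $(\delta_1)$ alone does not rule out $x$ having no out- or in-edges to $B$ at all. The case $r-s=1$ (i.e.\ $|B|=n/r$) is live in Section~\ref{secex1}, so the single $B$-vertex of your $T'$ cannot be found from what you have written; closing this requires exactly the non-excellence information you chose not to use (it forces $x$ to have $\Omega(\beta n)$ edges to $B$ in the relevant direction, or allows $A_i$ to take the position that $B$ cannot). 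A smaller point: Proposition~\ref{turanstab} requires its parameter ``$r$'' to be at least $2$, so it cannot be applied to a subtournament on $r-s=1$ vertex --- trivially patched, but an oversight. For $r-s\ge2$ your balanced decomposition does go through with the bookkeeping you describe, so the idea is salvageable, but as written it does not prove the claim.
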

The proof of Claim~\ref{claimb} is essentially identical to the proof of Claim~\ref{claima}, so we omit it.

\begin{claim}\label{claimc}
Let $x \in B$ be such that $x$ is not $(\beta , i)$-excellent (for some $1 \leq i \leq s$) and let $W \subseteq V(G')\setminus \{x\}$ where $|W| \leq \alpha ^{1/5} n$.
Then there is a copy $T'$ of $T$ in $G'$ so that:
\begin{itemize}
\item[(i)] $V(T')$ contains one vertex from $A_j$ (for each $1 \leq j \leq s$) and $r-s$ vertices from $B$;
\item[(ii)] $x \in V(T') $ and $V(T')$  is disjoint from $W$.
\end{itemize}
\end{claim}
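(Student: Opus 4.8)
The plan is to follow the proofs of Claims~\ref{claima} and~\ref{claimb}, the key simplification being that, since $x$ already lies in $B$ and a single copy of $T$ uses exactly $r-s$ vertices of $B$ together with one vertex from each $A_j$, a \emph{single} copy $T'$ through $x$ already has the correct intersection pattern with the classes (this is why one copy suffices here, rather than the two needed in Claims~\ref{claima} and~\ref{claimb}). So it is enough to greedily embed $T$ with $x$ playing one vertex.

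The first (and only non-routine) step is to choose the vertex $v^*$ of $T$ that $x$ will play, together with an assignment of the remaining vertices of $T$ to the classes. By $(\delta_4)$, $x$ is $(\beta/3,j)$-acceptable for every $1\le j\le s$, so in each $A_j$ the vertex $x$ has at least $\beta n/3$ out-neighbours or at least $\beta n/3$ in-neighbours. Moreover, since $d^+_{G'}(x,B),d^-_{G'}(x,B)\le|B|$, the bound $(\delta_1)$ gives $\sum_{j=1}^s d^+_{G'}(x,A_j)\ge((s-1)/r-o(1))n$ and similarly for in-degrees; as each $|A_j|\le n/r$, it follows that the set $P:=\{j:d^+_{G'}(x,A_j)\ge\beta n/3\}$ and the set $N:=\{j:d^-_{G'}(x,A_j)\ge\beta n/3\}$ each have size at least $s-1$, so $|P\cap N|\ge s-2$; that is, at most two of the classes are ``one-sided'' for $x$. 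A short case analysis now shows that we may pick $v^*\in V(T)$ and distinct vertices $w_1,\dots,w_s\in V(T)\setminus\{v^*\}$ (with $w_j$ assigned to $A_j$, and the remaining $r-1-s$ vertices together with $v^*$ forming the $B$-part $R$) so that, for each $j$, the edge of $T$ between $v^*$ and $w_j$ points in a direction in which $x$ has at least $\beta n/3$ neighbours in $A_j$: one only needs a $v^*$ with $d^+_T(v^*)\ge|P\setminus N|$ and $d^-_T(v^*)\ge|N\setminus P|$, and since $|P\setminus N|+|N\setminus P|\le2$, such a $v^*$ always exists (any tournament on $r\ge3$ vertices has a vertex that is neither a source nor a sink, and for $r=3$, where $|P|,|N|\ge s-1\ge1$, the degree bound rules out $|P\setminus N|=2$ or $|N\setminus P|=2$, so no vertex of out- or in-degree $2$ is ever required).

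With the roles fixed, I would build $T'$ exactly as in Claim~\ref{claima}. First embed the $A$-part: pick $y_1\in A_1,\dots,y_s\in A_s$ in turn so that each $y_j$ is correctly oriented with respect to $x$ (at least $\beta n/3$ choices by the choice of assignment), correctly oriented with respect to the previously chosen $y_{j'}$ (each such $y_{j'}$ having been chosen $(2\beta',j)$-excellent), disjoint from $W$, and both $(2\beta',j')$-excellent for all $j'>j$ and $(\beta,B)$-excellent; by $(\delta_5)$ and $|V_{ex,B}|\le\alpha^{1/4}n$ there remain at least $\beta n/4$ choices at each step. Then $\{x,y_1,\dots,y_s\}$ spans a copy of $T[\{v^*,w_1,\dots,w_s\}]$, and it remains to fill the $r-s-1$ vertices of $R\setminus\{v^*\}$ inside $B$. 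If $r-s-1=0$ we are done; otherwise let $B'$ consist of the vertices of $B\setminus(\{x\}\cup W\cup\{y_1,\dots,y_s\})$ lying in the appropriate one-directional neighbourhood of $x$ (large by~(\ref{Bboundy})) and $(2\beta',j)$-excellent for all $j$. Then $|B'|\ge|B|-o(n)$, $\delta^0(G'[B'])\ge(1-1/(r-s)-o(1))|B'|$ by~(\ref{Bboundy}), and by $(\delta_3)$ the digraph $G'[B']$ has no $(\gamma_{s+1}/4)$-independent set of size $|B'|/(r-s)$; hence Proposition~\ref{turanstab}, applied to $G'[B']$ with $r-s$ playing the role of $r$, produces a copy of $T[R\setminus\{v^*\}]$ inside $G'[B']$ (when $r-s-1=1$ one just picks a single suitable vertex). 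By the choice of $B'$ and the $(\beta,B)$-excellence of the $y_j$, this copy extends $\{x,y_1,\dots,y_s\}$ to the required copy $T'$ of $T$; counting shows $V(T')$ meets each $A_j$ in one vertex and $B$ in $r-s$ vertices, and is disjoint from $W$. The hard part is the bookkeeping of the middle paragraph --- aligning $v^*$ and the class assignment with $x$'s possibly very lopsided neighbourhoods in the $A_j$; everything after that is the routine greedy embedding already carried out for Claim~\ref{claima}.
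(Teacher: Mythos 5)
Your central combinatorial observation --- that $P:=\{j:d^+_{G'}(x,A_j)\ge\beta n/3\}$ and $N:=\{j:d^-_{G'}(x,A_j)\ge\beta n/3\}$ both have size at least $s-1$, whence $|P\setminus N|+|N\setminus P|\le 2$ and a suitable vertex $v^*\in V(T)$ with $d^+_T(v^*)\ge|P\setminus N|$ and $d^-_T(v^*)\ge|N\setminus P|$ exists --- is correct and is a careful way of making precise the greedy argument that the paper dismisses in one sentence (``It is easy to see that $(\delta_1)$, $(\delta_4)$ and (\ref{Bboundy}) imply that we can greedily construct a copy $T'$ of $T$...''). The selection of $y_1,\dots,y_s$ is also fine.

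The last step, however, has a genuine gap. You define $B'$ to be the set of $(2\beta',j)$-excellent vertices of $B$ that lie in ``the appropriate one-directional neighbourhood of $x$'', and then assert $|B'|\ge|B|-o(n)$ together with $\delta^0(G'[B'])\ge(1-1/(r-s)-o(1))|B'|$. These two statements are incompatible: restricting to (say) $N^+_{G'}(x)\cap B$ costs roughly $|B|/(r-s)$ vertices by (\ref{Bboundy}), so $|B'|\approx(1-1/(r-s))|B|$, and then the relative minimum semidegree of $G'[B']$ drops to about $(1-2/(r-s))|B|/|B'|\approx 1-1/(r-s-1)$, which is \emph{below} the threshold needed for Proposition~\ref{turanstab} to produce a copy of $T[R\setminus\{v^*\}]$ on $r-s-1$ vertices. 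Worse, the phrase ``the appropriate one-directional neighbourhood'' presupposes that $R\setminus\{v^*\}$ lies entirely in $N^+_T(v^*)$ or entirely in $N^-_T(v^*)$, but this cannot always be arranged: for instance if $T$ is a regular tournament on $5$ vertices, $s=1$ and $|P\setminus N|=|N\setminus P|=0$, then every $v^*$ has $d^\pm_T(v^*)=2$ while $|R\setminus\{v^*\}|=3$, so $R\setminus\{v^*\}$ must straddle both sides of $v^*$. If instead you drop the directional restriction (so $|B'|\ge|B|-o(n)$ holds), then the copy of $T[R\setminus\{v^*\}]$ furnished by Proposition~\ref{turanstab} need not be correctly adjacent to $x$, and $T'$ is not a valid copy of $T$. (This is precisely the difference from Claim~\ref{claima}, where $T''$ is an entirely new copy of $T$ with no adjacency constraints to the vertex $x$, so applying Proposition~\ref{turanstab} to excellent vertices of $B$ suffices there.)

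The fix is to abandon Proposition~\ref{turanstab} for this step and instead embed the $r-s-1$ vertices of $R\setminus\{v^*\}$ into $B$ greedily, one at a time. At step $k$ one needs a vertex of $B$ correctly oriented with respect to $x$, the $k-1$ previously placed $B$-vertices, and $y_1,\dots,y_s$, while avoiding $W$. By (\ref{Bboundy}) each of the $k$ constraints involving $x$ or a placed $B$-vertex removes at most $|B|/(r-s)+\alpha^{1/3}|B|$ options, and the constraints from the $y_j$ (which are $(\beta,B)$-excellent) remove $o(|B|)$ more. Since $k\le r-s-1$, at least $|B|/(r-s)-o(|B|)>0$ choices remain, so the greedy embedding succeeds. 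This is what the paper's terse remark is referring to, and it avoids both problems with your $B'$ construction.
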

It is easy to see that ($\delta _1$), ($\delta _4$) and (\ref{Bboundy}) imply that we can greedily construct a copy
$T'$ of $T$ as in Claim~\ref{claimc}.

\medskip

Recall that $|V_{ex,B}|\leq \alpha ^{1/4} n$. 
Together with ($\delta _5$)  this implies that we can repeatedly apply Claims~\ref{claima}--\ref{claimc} to obtain  a $T$-packing $\mathcal M'$ in $G'$ satisfying ($\eps '_1$)--($\eps '_3$).

Remove all those vertices covered by $\mathcal M'$ from $G'$ (and from the classes $A_1, \dots, A_s, B$). Call the resulting digraph $G''$.
So $n'':=|G''|\geq (1- {\alpha} ^{1/5})n$ by ($\eps' _1$) and  $|A_i|=n''/r$ for all $1\leq i \leq s$ and $|B|=(r-s)n''/r$ by ($\eps '_3$).
Further, ($\eps '_2$) implies that, given any $x \in V(G'') \setminus A_i$, $x$ is $( \beta ,i)$-excellent (for all $ 1 \leq i \leq s$)
and every vertex $y \in V(G'') \setminus B$ is $(\beta, B)$-excellent. 

Suppose that $|B|=n''/r$. Then as in Case~1,
 Theorem~\ref{rpart2} implies that $G''$ contains a perfect $K_r$-packing and thus a perfect $T$-packing $\mathcal M''$.
Set $\mathcal M:=\mathcal M' \cup \mathcal M'' \cup \mathcal M_1 \cup \dots \cup \mathcal M_s$. Then $\mathcal M$ is a perfect $T$-packing in $G$, as required.

Next suppose that $|B|\geq 2n''/r$. Let $T^*$ be a subtournament of $T$ on $r-s$ vertices such that $T^* \not = C_3$. (Note that if $r-s=3$ then $r \geq 4$. Every tournament on at least four vertices
contains $T_3$, so we indeed may choose $T^* \not = C_3$.) By (\ref{Bboundy}) and (${\eps '_1}$) we have that 
$$\delta ^0 (G''[B]) \geq (1-1/(r-s) -\alpha ^{1/5} ) |B|.$$
Moreover, ($\delta _3$) implies that $G''[B]$ does not contain any $(\gamma _{s+1}/4)$-independent set of size $n''/r=|B|/(r-s)$.
Further, if $|B|=2n''/r$ (and so $r-s=2$) then by assumption $G''[B]$ is not $\eta /2$-close to $2K_{n''/r}$.
Thus, by Theorem~\ref{1nonex} and Proposition~\ref{matchstab2}, $G''[B]$ contains a perfect $T^*$-packing $\mathcal M^*$.

\smallskip

Define an auxiliary digraph $G^*$ from $G''$ as follows. $G^*$ has vertex set $A_1 \cup \dots \cup A_s \cup B^*$ where $|B^*|=n''/r$ and each vertex $x \in B^*$ corresponds to a unique copy $T^* _x$ of $T^*$
from $\mathcal M^*$. The edge set of $G^*$ consists of every edge $wz \in E(G'')$ such that $w \in A_i$ and $z \in A_j$ for some $i \not =j$ together with the following edges:
Suppose that $ x \in B^*$ and $y \in V(G^*) \setminus B^*$. Then
\begin{itemize}
\item $yx \in E(G^*)$ precisely if $y$ sends an edge to every vertex in $T^* _x$ in $G''$;
\item $xy \in E(G^*)$ precisely if $y$ receives an edge from every vertex in $T^* _x$ in $G''$.
\end{itemize}
Note that $G^*$ is an $(s+1)$-partite digraph with vertex classes of size $n''/r$. Further,
$$\bar{\delta} ^+ (G^*), \bar{\delta }^- (G^*) \geq n''/r - \beta r n.$$
So by Theorem~\ref{rpart2}, $G^*$ contains a perfect $K_{s+1}$-packing. By construction of $G^*$ this implies that $G''$ contains a perfect $T$-packing $\mathcal M''$.
Set $\mathcal M:=\mathcal M' \cup \mathcal M'' \cup \mathcal M_1 \cup \dots \cup \mathcal M_s$. Then $\mathcal M$ is a perfect $T$-packing in $G$, as required.

\subsection{The case when $|B| = 2n/r$ and $G[B]$ is close to $2K_{n/r}$}\label{secondcase} In this subsection we consider the case when $|B|=2n/r$ and
 $G[B]$ is $\eta$-close to $2K_{n/r}$. Thus, there exists a partition $B_1, B_2$ of $B$ such that $|B_1|=|B_2|=n/r$ and $e_G (B_1,B_2) \leq \eta |B|^2$.
(\ref{minGB}) implies that $\delta ^0 (G[B])\geq |B|/2=n/r$.

For $i=1,2$ and $\delta >0$ we say that a vertex $x \in B_i$ is \emph{$(\delta, B_i)$-excellent} if $d^+ _G (x,B_i) , d^- _G (x,B_i) \geq |B_i| - \delta n$.
(Later on we will modify the classes $B_1, B_2$. When referring to, for example,  $(\delta,B_1)$-excellent vertices, we mean with respect to the current class $B_1$ and not the original class.)
Note that there are at most $\eta ^{1/2} |B|$ vertices $x \in B_i$ that are not $(\eta ^{1/2} ,B_i)$-excellent for $i=1,2$.

Let $V_{ex} ^1$ denote the set of vertices $x \in B_1$ that are not $(\eta ^{1/2}, B_1)$-excellent. Define $V^2 _{ex}$ analogously. Given a vertex $x \in V^1 _{ex}$, if $d^+ _G (x,B_2)
\geq n/2r$ then  move $x$ into $B_2$. Similarly, if $x \in V^2 _{ex}$ and $d^+ _G (x,B_1)
\geq n/2r$ then  move $x$ into $B_1$. Thus, the following conditions hold:
\begin{itemize}
\item[($\zeta _1$)] $n/r - \eta ^{1/2} n \leq |B_1|,|B_2| \leq n/r +\eta ^{1/2}n$;
 \item[($\zeta _2$)] $e_G (B_1,B_2) \leq  5 \eta ^{1/2} |B|^2$;
\item[($\zeta _3$)] There are at most $2\eta ^{1/2} |B|$ vertices $x \in B_i$ that are not $(2\eta ^{1/2} ,B_i)$-excellent (for $i=1,2$);
\item[($\zeta _4$)] Given any $x \in B_i$, $d^+ _G (x, B_i) \geq n/3r$ (for $i=1,2$).
\end{itemize}
Actually, there is slack in conditions ($\zeta _1$) and ($\zeta _2$). Indeed, if we move a single vertex from $B_2$ to $B_1$ (or vice versa) then ($\zeta _1$) and ($\zeta _2$) still hold.

Since $|B|=2n/r$ is even, either $|B_1|$ and $|B_2|$ are even or $|B_1|$ and $|B_2|$ are odd. Suppose that $|B_1|$ and $|B_2|$ are odd. Without loss of generality assume that $|B_2|\geq n/r$.
Fix a vertex $b_1 \in B_1$ such that
\begin{itemize}
\item[(i)] $b_1$ is $(2 \beta ' ,i)$-excellent for all $1 \leq i \leq s$.
\end{itemize}
(Such a vertex $b_1$ exists by ($\alpha _5$).) 
Then by (\ref{minex1}) there is a vertex $b_2 \in B_2$ such that $b_1b_2 \in E(G)$. Further, (\ref{minex1}) implies that 
\begin{itemize}
\item[(ii)] $d^+ _G (b_2,A_i), d^- _G (b_2,A_i) \geq \eta n$ for all $1 \leq i \leq s$ or;
\item[(iii)]  $d^+ _G (b_2,B_1) \geq n/3r$ or $d^- _G (b_2,B_1) \geq n/3r$.
\end{itemize}
If (iii) holds then move $b_2$ into $B_1$. Otherwise, we leave the partition $B_1,B_2$ of $B$ unchanged. 
Thus, the following conditions hold:
\begin{itemize}
\item[($\eta _1$)] $n/r - \eta ^{1/2} n \leq |B_1|,|B_2| \leq n/r +\eta ^{1/2}n$;
 \item[($\eta _2$)] $e_G (B_1,B_2) \leq  5 \eta ^{1/2} |B|^2$;
\item[($\eta _3$)] There are at most $3\eta ^{1/2} |B|$ vertices $x \in B_i$ that are not $(3\eta ^{1/2} ,B_i)$-excellent (for $i=1,2$);
\item[($\eta _4$)] Given any $x \in B_i$, $d^+ _G (x, B_i) \geq n/4r$ or $d^- _G (x, B_i) \geq n/4r$ (for $i=1,2$).
\end{itemize}
Additionally, one of the following conditions holds:
\begin{itemize}
\item[($\eta _5$)] $|B_1|$ and $|B_2|$ are even or;
 \item[($\eta _6$)] $|B_1|$ and $|B_2|$ are odd. Further, there exist $b_1 \in B_1$ and $b_2 \in B_2$ such that
\begin{itemize}
\item $b_1b_2 \in E(G)$;
\item $b_1$ is $(2 \beta ' ,i)$-excellent for all $1 \leq i \leq s$;
\item $d^+ _G (b_2,A_i), d^- _G (b_2,A_i) \geq \eta n$ for all $1 \leq i \leq s$.
\end{itemize}
\end{itemize}
If ($\eta _6$) holds then we will extend the edge $b_1b_2$ into a copy of $T$ in $G$. First though we will cover the `exceptional vertices' in $G$ with $T$-packings.

\medskip

\noindent
{\bf Covering the exceptional vertices with $T$-packings.}
Given any $1 \leq i \leq s$, let $V_{ex,i}$ denote the set of $(\beta /2, i)$-exceptional vertices in $V(G) \setminus A_i$. (Note that if ($\eta _6$) holds then $b_1, b_2 \not \in V_{ex,i}$.)
($\alpha _5$) implies that $c_i:=|V_{ex,i}|\leq \sqrt{\alpha }n$ for all $1 \leq i \leq s$.
Then there exist vertex-disjoint $T$-packings $\mathcal M_1, \dots , \mathcal M_s$ in $G$ so that, for each $1 \leq i \leq s$:
\begin{itemize}
\item[($\theta _1$)] $\mathcal M_i$ contains precisely $c_i$ disjoint copies of $T$;
\item[($\theta _2$)] Each vertex from $V_{ex,i}$ lies in a copy of $T$ in $\mathcal M_i$;
\item[($\theta _3$)] $\mathcal M_i$ covers precisely $c_i$ vertices from $A_j$ (for each $1\leq j \leq  s$) and precisely $2c_i$ vertices from $B$;
\item[($\theta _4$)]  $\mathcal M_i$ covers an even number of vertices from $B_1$ and an even number of vertices from $B_2$;
\item[($\theta _5$)] If ($\eta _6$) holds then $\mathcal M_i$ does not cover $b_1$ or $b_2$.
\end{itemize}
Note that the same argument used to construct $\mathcal M_1, \dots , \mathcal M_s$
 in Section~\ref{firstcase} shows that we can construct $\mathcal M_1, \dots , \mathcal M_s$ here so that ($\theta _1$)--($\theta _3$) hold. It is not difficult to see that we can additionally ensure that
($\theta _4$) and ($\theta _5$) hold.
\COMMENT{To see why ($\theta _4$) holds: If one of the $a_i$ edges with one endpoint in $V_{ex,i}$ and one in $A_i$. Then $T'$ chosen so that two vertices from $B_1$ or from $B_2$, one from the $A_j$.
For the pairs $(e.e')$:
\begin{itemize}
\item If $x,y \in A_{i'}$: $T'$ contains 2 vertices from $B_1$, 2 from $A_{i'}$, 0 from $A_i$ and 1 from rest.
$T''$ contains 2 from $B_2$, $2$ from $A_i$, 0 from $A_{i'}$, 1 from rest.
\item If $x \in A_{i'}$, $y \in A_{j'}$: as above.
\item If $x \in A_{i'}$, $y \in B$: as above.
\item If $x , y \in B_1$: $T'$ contains 3 vertices from $B_1$, 1 from $A_{j}$ $j \not = i$, 0 from $A_i$ .
$T''$ contains 1 from $B_1$, $2$ from $A_i$,  1 from rest of $A_j$.
\item If $x \in B_1$, $y \in B_2$: $T'$ contains 2 vertices from $B_1$ 1 from $B_2$, 1 from $A_{j}$ $j \not = i$, 0 from $A_i$ .
$T''$ contains 1 from $B_2$, $2$ from $A_i$,  1 from rest of $A_j$.
\end{itemize}}

\medskip

\noindent
{\bf Extending the edge $b_1b_2$ to a copy of $T$.}
If $|B_1|$ and $|B_2|$ are even set $\mathcal T := \emptyset$. Otherwise, ($\eta _6$) holds. In this case, we can greedily construct a copy $T'$ of $T$ in $G$ such that:
\begin{itemize}
\item $T'$ is vertex-disjoint from $\mathcal M_1, \dots , \mathcal M_s$;
\item $T'$ contains $b_1b_2$ (and so one vertex from each of $B_1$ and $B_2$) and precisely one vertex from each of $A_1, \dots , A_s$.
\end{itemize}
Set $\mathcal T :=\{T'\}$. 

\medskip

\noindent
{\bf Covering the remaining vertices.} Remove all those vertices covered by $\mathcal M_1 , \dots , \mathcal M_s, \mathcal T$ from $G$ (and from the classes $A_1, \dots, A_s, B$ and from $B_1, B_2$). Call the resulting digraph $G'$.
So $n':=|G'|\geq (1- 2r^2 \sqrt{\alpha} )n$ by ($\theta _1$), $|A_i|=n'/r$ for all $1\leq i \leq s$ and $|B|=2n'/r$ by ($\theta _3$) and the choice of $\mathcal T$.

Further, (\ref{minex1}) and ($\alpha _1$)--($\alpha _5$)  imply that the following conditions hold:
\begin{itemize}
\item[($\iota _1$)] $\delta ^0 (G') \geq (1-1/r-2r^2 \sqrt{\alpha} )n \geq (1-1/r- 2r^2 \sqrt{\alpha}) n'$; 
\item[($\iota  _2$)] $A_1, \dots , A_s , B$ is a partition of $V(G')$ so that $A_i$ is a $2\sqrt{\alpha}$-independent set of size $n'/r$ in $G'$ (for all $1\leq i \leq s$);
\item[($\iota  _3$)] Every vertex in $V(G') \setminus A_i$ is $(\beta /3,i)$-acceptable (for each $1 \leq i \leq s$);
\item[($\iota  _4$)] There are at most $\sqrt{ \alpha} n$ vertices in $V(G') \setminus A_i$ that are not $(2 \beta ', i)$-excellent (for each $1 \leq i \leq s$).
\end{itemize}
In particular, note that ($\iota _3$) follows from ($\theta _2$). 
Further, ($\eta _1$)--($\eta _6$) and ($\theta _1$)--($\theta _5$) together with the choice of $\mathcal T$ implies that:
\begin{itemize}
\item[($\kappa _1$)] $n/r - 2\eta ^{1/2} n \leq |B_1|,|B_2| \leq n/r +\eta ^{1/2}n$ and $|B_1|$ and $|B_2|$ are even;
\item[($\kappa _2$)] There are at most $4\eta ^{1/2} |B|$ vertices $x \in B_i$ that are not $(3\eta ^{1/2} ,B_i)$-excellent (for $i=1,2$);
\item[($\kappa _3$)] Given any $x \in B_i$, $d^+ _{G'} (x, B_i) \geq n/5r$ or $d^- _{G'} (x, B_i) \geq n/5r$ (for $i=1,2$).
\end{itemize}
Note that ($\iota _1$) implies that
$$
d^+ _{G'} (y,B) , d^- _{G'} (y,B) \geq \left (\frac{1}{2} -\alpha ^{1/3} \right) |B|
$$
for all $y \in V(G')$.

Let $V_{ex,B}$ denote the set of all those vertices in $V(G')\setminus B$ that are not $(\beta,B)$-excellent.
By arguing as in Case~2 from Section~\ref{firstcase} we see that $|V_{ex,B}|\leq \alpha ^{1/4} n$.

$G'$ contains a $T$-packing $\mathcal M'$ such that:
\begin{itemize}
\item[($\lambda _1$)] $\mathcal M'$ contains   $m' \leq 2 r \sqrt{\alpha} n + 2 \alpha ^{1/4} n \leq 3 \alpha ^{1/4} n$ copies of $T$;
\item[($\lambda _2$)] If $x \in V(G') \setminus A_i$ is not $(\beta  ,i)$-excellent  then $x$ is contained in a copy of $T$ in $\mathcal M'$ (for any $1 \leq  i \leq s$).
Similarly, if $x \in V(G') \setminus B$ is not $(\beta  ,B)$-excellent  then $x$ is contained in a copy of $T$ in $\mathcal M'$;
\item[($\lambda_3$)] $\mathcal M'$ covers exactly $m'$ vertices from $A_i$ (for each $1 \leq i \leq s$) and $2m'$ vertices from $B$.
Further, $\mathcal M'$ covers an even number of vertices from $B_1$ and an even number of vertices from $B_2$.
\end{itemize}
To prove that such a $T$-packing $\mathcal M'$ exists, we will use the follow three claims.

\begin{claim}\label{claima1}
Let $x \in V(G') \setminus B$ be such that $x$ is not $(\beta  ,B)$-excellent and let $W \subseteq V(G')\setminus \{x\}$ where $|W| \leq \alpha ^{1/5} n$.
Then there are two vertex-disjoint copies $T',T''$ of $T$ in $G'$ so that:
\begin{itemize}
\item[(i)] $V(T') \cup V(T'')$ contains two vertices from $A_i$ (for each $1 \leq i \leq s$) and four vertices from $B$;
\item[(ii)] $x \in V(T') \cup V(T'')$ and $V(T') \cup V(T'')$  is disjoint from $W$.
\item[(iii)] $V(T') \cup V(T'')$ contains an even number of vertices from $B_1$ and an even number of vertices from $B_2$.
\end{itemize}
\end{claim}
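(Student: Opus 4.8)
The plan is to run the proof of Claim~\ref{claima} essentially verbatim, with one twist dictated by the parity requirement~(iii): we arrange that \emph{all} of the $B$-vertices used in $T'$ and in $T''$ lie in a single part $B_i$. Since $r-s=2$ throughout this subsection, $T'$ uses exactly $r-s-1=1$ vertex of $B$ and $T''$ uses exactly $r-s+1=3$ vertices of $B$ (the latter spanning a fixed subtournament $T^*$ of $T$ on three vertices, chosen as in Claim~\ref{claima}; when $s=1$ we have $r=3$ and $T''=T^*$ is a copy of $T$ on $3$ vertices of $B$). If all four of these vertices lie in one $B_i$, then $V(T')\cup V(T'')$ meets $B_i$ in $4$ vertices and $B_{3-i}$ in $0$ vertices, so it contains an even number of vertices from $B_1$ and an even number from $B_2$, giving~(iii); (i) and (ii) then follow exactly as in Claim~\ref{claima}.

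First I would construct the ``$A$-parts'' of $T'$ and of $T''$ precisely as in the proof of Claim~\ref{claima}. That construction uses only $(\delta _1)$, $(\delta _4)$, $(\delta _5)$ and the bound $d^+_{G'}(y,B),d^-_{G'}(y,B)\ge(1/2-\alpha^{1/3})|B|$ for all $y$, whose analogues $(\iota _1)$, $(\iota _3)$, $(\iota _4)$ and the displayed semidegree bound of this subsection hold here; crucially it does \emph{not} use the hypothesis that $G'[B]$ has no large almost-independent set. After relabelling the classes so that $x\in A_1$, this produces an embedding of the subtournament of $T$ on $s+1$ of its vertices consisting of $x$, a second vertex of $A_1$, and one vertex of each $A_j$ for $2\le j\le s$ — all but $x$ being $(\beta,B)$-excellent (and $(2\beta',j)$-excellent where required) and avoiding $W$ — still to be completed to $T'$ by a single vertex of $B$; and, disjointly, an embedding of a subtournament of $T$ on $s-1$ vertices, one in each $A_j$ ($2\le j\le s$) with the same excellence properties, avoiding $W$ and the $A$-part of $T'$, still to be completed to $T''$ by a copy of $T^*$ inside $B$.

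Next I would fix the part $B_i$. Let $d\in\{+,-\}$ be the direction, in $T$, of the edge between the role of $x$ and the role of the single $B$-vertex of $T'$. The bound $d^{d}_{G'}(x,B)\ge(1/2-\alpha^{1/3})|B|$ (which holds for \emph{every} vertex of $G'$, in particular for the non-excellent vertex $x$) gives $d^{d}_{G'}(x,B_1)+d^{d}_{G'}(x,B_2)\ge(1/2-\alpha^{1/3})|B|$, so I may choose $i\in\{1,2\}$ with $d^{d}_{G'}(x,B_i)\ge|B|/5$. Since each of the at most $s$ other $A$-part vertices of $T'$ is $(\beta,B)$-excellent, it misses at most $\beta n$ vertices of $B_i$ in the direction prescribed by $T$; deleting also $W$ and the $A$-part of $T''$ (a set of size $O(\alpha^{1/5}n)$) there remain at least $|B|/6$ choices in $B_i$ for the $B$-vertex $z$ of $T'$, and I fix one, completing $T'$.

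Finally I would build $T^*$ inside $B_i$. By $(\kappa _1)$ and $(\kappa _2)$, the set $B_i'$ of vertices of $B_i$ that are $(3\eta^{1/2},B_i)$-excellent and $(2\beta',j)$-excellent for all $1\le j\le s$ has size at least $|B_i|-\eta^{1/4}n$, and each of its vertices sends and receives edges to all but at most $3\eta^{1/2}n$ vertices of $B_i$; hence the subdigraph of $G'$ on $B_i'$ minus the negligible set $\{z\}\cup W\cup(\text{$A$-part of }T'')$ is nearly complete (minimum semidegree $\ge|B_i|-\eta^{1/5}|B_i|$, say), and so contains a copy of $T^*$ — indeed a copy of any tournament of bounded order, which covers the case $s=1$ as well. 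Because the $A$-part vertices of $T''$ are $(\beta,B)$-excellent, this copy of $T^*$ may moreover be taken adjacent to them in the directions prescribed by $T$, completing $T''$. Then $V(T')\cup V(T'')$ meets each $A_j$ ($1\le j\le s$) in exactly two vertices, meets $B_i$ in $1+3=4$ vertices and $B_{3-i}$ in none, contains $x\in V(T')$, and avoids $W$, which is (i)--(iii). The main obstacle is exactly the point where Claim~\ref{claima} invoked Proposition~\ref{turanstab} inside $B$: that is unavailable here because $G'[B]$ is close to $2K_{n/r}$ and hence has huge almost-independent sets, so $T^*$ must be located inside a single part $B_i$; the parity condition~(iii) is precisely what forces $z$ and $T^*$ into the \emph{same} $B_i$, and the slightly delicate bookkeeping is to push $z$ into whichever $B_i$ is convenient using only the weak degree bound available for the non-excellent vertex $x$.
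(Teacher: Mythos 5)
Your proof is correct and follows essentially the same approach as the paper: the key idea in both is to place all four $B$-vertices of $V(T')\cup V(T'')$ into a single part $B_i$, so that (iii) holds with a $4{+}0$ split. The only difference is that you fix $B_i$ in advance via a degree-splitting argument on $x$, whereas the paper builds $T'$ freely as in Claim~\ref{claima}, observes (WLOG) that its unique $B$-vertex landed in $B_1$, and only then constrains $T''$ to take three vertices from that same $B_1$ --- a slightly shorter presentation that avoids the explicit calculation of $d^{d}_{G'}(x,B_i)$.
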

\proof
To prove the claim consider a vertex  $x \in V(G') \setminus B$ that is not $(\beta  ,B)$-excellent. Without loss of generality suppose that $x \in A_1$.
By arguing precisely as in Claim~\ref{claima} one can find a copy $T'$ of $T$ in $G'$ 
that covers 
two vertices in $A_1$ (including $x$), one vertex from $A_i$ (for $2 \leq i \leq s$) and one vertex from $B$ so that $T'$ is disjoint from $W$.

Without loss of generality suppose that $T'$ covers a vertex from  $B_1$. Then by applying ($\kappa _2$) and ($\iota _4$)  it is easy to see that we can greedily
construct a copy $T''$ of $T$ so that $T''$ covers three vertices from  $B_1$, no vertices from $A_1$ and one vertex from $A_j$ (for all $2 \leq j \leq s$) and so that 
$T''$ is disjoint from $V(T') \cup W$. Together $T'$ and $T''$ satisfy (i)--(iii). This completes the proof of Claim~\ref{claima1}.
\endproof

\begin{claim}\label{claimb1}
Let $x \in V(G') \setminus (A_i \cup B)$ be such that $x$ is not $(\beta , i)$-excellent (for some $1 \leq i \leq s$) and let $W \subseteq V(G')\setminus \{x\}$ where $|W| \leq \alpha ^{1/5} n$.
Then there are two vertex-disjoint copies $T',T''$ of $T$ in $G'$ so that:
\begin{itemize}
\item[(i)] $V(T') \cup V(T'')$ contains two vertices from $A_i$ (for each $1 \leq i \leq s$) and four vertices from $B$;
\item[(ii)] $x \in V(T') \cup V(T'')$ and $V(T') \cup V(T'')$  is disjoint from $W$.
\item[(iii)] $V(T') \cup V(T'')$ contains an even number of vertices from $B_1$ and an even number of vertices from $B_2$.
\end{itemize}
\end{claim}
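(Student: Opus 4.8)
The plan is to imitate the proof of Claim~\ref{claima1} essentially word for word, with the class $A_i$ now playing the role that the class $B$ played there; in other words, Claim~\ref{claimb1} is to Claim~\ref{claima1} exactly as Claim~\ref{claimb} is to Claim~\ref{claima}, and in the write-up I would just record this analogy and omit the repeated computation.

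Concretely, first note that since $x\in V(G')\setminus(A_i\cup B)$ there is a unique $i'\ne i$ with $x\in A_{i'}$. Because $x$ is not $(\beta,i)$-excellent but $\delta^0(G')$ is large (by ($\iota_1$)), one of the out- or in-neighbourhoods of $x$ misses at least $\beta n$ vertices of $A_i$; feeding this deficit against ($\iota_1$) shows that in the ``surviving'' direction $x$ is adjacent to all but $O(\beta n)$ vertices of each remaining class $A_j$ and of $B$, and hence to at least $\beta n/3$ vertices of $B_1$ and of $B_2$ --- this is the exact analogue of the use of ($\delta_1$),($\delta_4$) in the proof of Claim~\ref{claima}. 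Choosing which vertex of $T$ the vertex $x$ realises according to which direction fails, I would then, exactly as in the first half of the proof of Claim~\ref{claima1}, greedily embed a copy $T'$ of $T$ that contains $x$, omits $A_i$, covers one further vertex of each class $A_j$ with $j\ne i$, and covers $r-s=2$ vertices of $B$, all while avoiding $W$; at every one of the $r$ embedding steps the hypotheses ($\iota_1$)--($\iota_4$), ($\kappa_1$)--($\kappa_3$) and the inequality $d^+_{G'}(y,B),d^-_{G'}(y,B)\ge(1/2-\alpha^{1/3})|B|$ leave $\Omega(n)$ candidates, the only forbidden sets being $W$, $V_{ex,B}$ and the $O(\beta' n)$ non-$(2\beta',\cdot)$-excellent vertices.

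Next, as in the second half of Claim~\ref{claima1}, I would look at how the two $B$-vertices of $T'$ split between $B_1$ and $B_2$, and then greedily build a second copy $T''$ of $T$, disjoint from $V(T')\cup W$, covering two vertices of $A_i$, no vertex of $A_{i'}$, one further vertex of each remaining class $A_j$, and two vertices of $B$ routed so that $V(T')\cup V(T'')$ meets each of $B_1,B_2$ in an even number of vertices; this routing is possible because the $A$-vertices of $T''$ may be taken $(\beta,B)$-excellent, hence adjacent in both directions to all but $\beta n$ vertices of $B_1$ and of $B_2$. A final bookkeeping check shows that $T'\cup T''$ covers exactly two vertices of each $A_j$, four vertices of $B$, and an even number of vertices of each of $B_1,B_2$, which is precisely (i)--(iii).

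The step I expect to be the main obstacle is the one that is systematically suppressed in Claims~\ref{claima},~\ref{claimb} and~\ref{claima1}: checking that the greedy embeddings of $T'$ and $T''$ never get stuck. This is a pure counting argument --- at each stage one intersects the current adjacency constraints, the required class-membership, the excellence properties needed for later steps, and disjointness from $W$ and the already-built partial copies, and verifies the intersection is nonempty --- and it goes through thanks to the hierarchy $\alpha\ll\beta'\ll\beta\ll\eta\ll1/r$ together with ($\iota_1$)--($\iota_4$), ($\kappa_1$)--($\kappa_3$). Since this is verbatim the argument already given for Claim~\ref{claima1}, the sensible course, and the one the paper takes for Claim~\ref{claimb}, is to note the analogy and not repeat it.
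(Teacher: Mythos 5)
Your proposal coincides with the paper's own treatment: the paper dispatches Claim~\ref{claimb1} with the single remark that its proof is essentially identical to that of Claim~\ref{claima1} and omits it, which is exactly the analogy you identify and invoke. The additional detail you sketch is in the right spirit, though the intermediate assertion that $x$ is adjacent in the surviving direction to all but $O(\beta n)$ vertices of each remaining class overstates what the degree hypotheses give; the actual argument, as in Claim~\ref{claima}, only needs (and only has) the weaker bounds of order $\beta n/3$ into the relevant classes, and those are what you ultimately use.
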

The proof of Claim~\ref{claimb1} is essentially identical to the proof of Claim~\ref{claima1}, so we omit it.

\begin{claim}\label{claimc1}
Let $x \in B$ be such that $x$ is not $(\beta , i)$-excellent (for some $1 \leq i \leq s$) and let $W \subseteq V(G')\setminus \{x\}$ where $|W| \leq \alpha ^{1/5} n$.
Then there is a copy $T'$ of $T$ in $G'$ so that:
\begin{itemize}
\item[(i)] $V(T')$ contains one vertex from $A_j$ (for each $1 \leq j \leq s$) and two vertices from either $B_1$ or $B_2$;
\item[(ii)] $x \in V(T') $ and $V(T')$  is disjoint from $W$.
\end{itemize}
\end{claim}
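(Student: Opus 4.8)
The plan is to build the copy $T'$ greedily: put $x$ and one further vertex $x''$ into whichever part $B_\ell$ contains $x$, and one vertex $y_j$ into each $A_j$. Everything is routine except the choice of \emph{which} vertex of $T$ each of $x,x'',y_1,\dots,y_s$ plays; this must be set up so that $x$'s (possibly lopsided) adjacency to the classes is respected. Note that here $s=r-2$, so $\{x,x'',y_1,\dots,y_s\}$ has exactly $r$ vertices.

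First I would extract the local degree data for $x$. Since $x\in B\subseteq V(G')\setminus A_j$ for every $j$, condition $(\iota_3)$ gives that $x$ is $(\beta/3,j)$-acceptable, so for each $j$ at least one of $d^+_{G'}(x,A_j)\ge \beta n/3$ or $d^-_{G'}(x,A_j)\ge \beta n/3$ holds. By $(\iota_1)$, $x$ has at most $n/r+2r^2\sqrt{\alpha}\,n$ out-non-neighbours in $G'$ (and likewise in-non-neighbours); since each class has size at least $n'/r$ and $\alpha,\beta\ll 1/r$, a two-line count shows there is at most one index $j$ with $d^+_{G'}(x,A_j)<\beta n/3$ (call its class \emph{out-deficient}), at most one with $d^-_{G'}(x,A_j)<\beta n/3$ (\emph{in-deficient}), and these are distinct. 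The same budget yields the extra fact that if $d^+_{G'}(x,B_\ell)<n/5r$ then there is no in-deficient $A_j$, and symmetrically (using $(\kappa_1)$ to bound $|B_\ell|$). We may assume $x\in B_1$, and by $(\kappa_3)$ at least one of $d^+_{G'}(x,B_1)\ge n/5r$, $d^-_{G'}(x,B_1)\ge n/5r$ holds.

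Next comes the role assignment, which is the crux. Using the elementary tournament fact that for $r\ge 3$ every $T\in\mathcal T_r$ has a vertex with $d^+_T\ge 1$ and $d^-_T\ge 1$, I would pick such a vertex $\rho$, declare that $x$ plays $\rho$, that $x''$ plays an out-neighbour of $\rho$ when we use $d^+_{G'}(x,B_1)\ge n/5r$ (an in-neighbour when we use $d^-_{G'}(x,B_1)\ge n/5r$), and bijectively assign the remaining $r-2=s$ vertices of $T$ to $A_1,\dots,A_s$, routing the out-deficient $A_j$ (if any) to an in-neighbour of $\rho$ and the in-deficient $A_j$ (if any) to an out-neighbour of $\rho$. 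This is always possible: in a one-sided situation, say only $d^+_{G'}(x,B_1)\ge n/5r$, there is no in-deficient class, so $\rho$ needs only one out-neighbour (for $x''$) and at most one in-neighbour, which $\rho$ has; in the two-sided situation we have the freedom to route $x''$ through either side of $\rho$, and since $d^+_T(\rho)+d^-_T(\rho)=r-1$ with both positive, $\max(d^+_T(\rho),d^-_T(\rho))\ge 2$ once $r\ge 4$, while for $r=3$ one has $s=1$ so at most one class is deficient — in every case the out-slots and in-slots left over for the remaining (``flexible'') $A_j$, which have both $d^\pm_{G'}(x,A_j)\ge\beta n/3$, exactly match their number.

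Finally the embedding itself. With the roles fixed, first choose $x''\in B_1\setminus(W\cup\{x\})$ with the correct edge to $x$ and with $x''$ being $(2\beta',j)$-excellent for all $1\le j\le s$; this is possible since $d^+_{G'}(x,B_1)\ge n/5r$ (resp. $d^-$) and, by $(\iota_4)$ together with $|W|\le\alpha^{1/5}n$, only $o(n)$ vertices are excluded. Then for $j=1,\dots,s$ in turn pick $y_j\in A_j\setminus(W\cup\{x,x'',y_1,\dots,y_{j-1}\})$ with the orientation to $x$ prescribed by $\rho$ (available because $y_j$ need only lie in $N^+_{G'}(x)\cap A_j$ or $N^-_{G'}(x)\cap A_j$, of size $\ge\beta n/3$ by the role choice), the correct orientations to $x''$ and to $y_1,\dots,y_{j-1}$ (each $(2\beta',j)$-excellent, hence costing only $2\beta'n$ choices apiece), and with $y_j$ itself $(2\beta',j')$-excellent for all $j'>j$; since $\beta'\ll\beta$ this leaves at least $\beta n/3-3s\beta'n-2\sqrt{\alpha}\,n>0$ choices. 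Then $\{x,x'',y_1,\dots,y_s\}$ spans a copy of $T$ meeting each $A_j$ in one vertex, meeting $B_1$ in exactly $\{x,x''\}$, containing $x$, and disjoint from $W$. The main obstacle, as indicated, is the role-assignment step — ensuring that $x$ can be given a vertex of $T$ whose in- and out-neighbourhoods can simultaneously absorb the partner $x''$ and the (at most two) deficient classes — and this is precisely where the degree-budget bounds (at most one deficient class on each side, and none on a side where $x$'s $B_1$-degree is small) and condition $(\kappa_3)$ are used.
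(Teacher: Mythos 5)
Your proof is correct and matches the paper's intended argument: the paper dismisses this claim as ``easy to see'' greedily from $(\kappa_3)$, $(\iota_3)$ and $(\iota_4)$, and you supply exactly that greedy construction, with the genuine content being the role-assignment step controlled by the count of deficient classes (which, as you note, also needs the semidegree bound $(\iota_1)$). One small wording slip: ``if $d^+_{G'}(x,B_\ell)<n/5r$ then there is no in-deficient $A_j$'' should read ``no \emph{out}-deficient $A_j$'' (dually $d^-_{G'}(x,B_\ell)<n/5r$ rules out in-deficient classes), but the version you actually invoke in the one-sided case is the correct one, so the argument is unaffected.
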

It is easy to see that   ($\kappa _3$), ($\iota _3$) and ($\iota _4$)    imply that we can greedily construct a copy
$T'$ of $T$ as in Claim~\ref{claimc1}.

\medskip

Recall that $|V_{ex,B}|\leq \alpha ^{1/4} n$. 
Together with ($\iota _4$)  this implies that we can repeatedly apply Claims~\ref{claima1}--\ref{claimc1} to obtain  a $T$-packing $\mathcal M'$ in $G'$ satisfying ($\lambda _1$)--($\lambda _3$).

Remove all those vertices covered by $\mathcal M'$ from $G'$ (and from the classes $A_1, \dots, A_s, B$ and from $B_1$ and $B_2$). Call the resulting digraph $G''$.
So $n'':=|G''|\geq (1- {\alpha} ^{1/5})n$ by ($\lambda_1$) and  $|A_i|=n''/r$ for all $1\leq i \leq s$ and $|B|=2n''/r$ by ($\lambda _3$).
($\lambda _2$) implies that, given any $x \in V(G'') \setminus A_i$, $x$ is $( \beta ,i)$-excellent (for all $ 1 \leq i \leq s$)
and every vertex $y \in V(G'') \setminus B$ is $(\beta, B)$-excellent.

Moreover, ($\kappa_1$) and ($\lambda _3$) imply that $|B_1|$ and $|B_2|$ are even. 
($\kappa _1$)--($\kappa  _3$) and ($\lambda _1$) imply that
\begin{itemize}
\item $ |B_1|,|B_2| \geq n/r - 3\eta ^{1/2} n $;
\item There are at most $5\eta ^{1/2} |B|$ vertices $x \in B_i$ that are not $(3\eta ^{1/2} ,B_i)$-excellent (for $i=1,2$);
\item Given any $x \in B_i$, $d^+ _{G''} (x, B_i) \geq n/6r$ or $d^- _{G''} (x, B_i) \geq n/6r$ (for $i=1,2$).
\end{itemize}
It is easy to see that this implies that $G''[B]$ contains a perfect matching $\mathcal P$. 

\smallskip

Define an auxiliary digraph $G^*$ from $G''$ as follows. $G^*$ has vertex set $A_1 \cup \dots \cup A_s \cup B^*$ where $|B^*|=n''/r$ and each vertex $x \in B^*$ corresponds to a unique edge $e_x$
from $\mathcal P$. The edge set of $G^*$ consists of every edge $wz \in E(G'')$ such that $w \in A_i$ and $z \in A_j$ for some $i \not =j$ together with the following edges:
Suppose that $ x \in B^*$ and $y \in V(G^*) \setminus B^*$. Then
\begin{itemize}
\item $yx \in E(G^*)$ precisely if $y$ sends an edge to both vertices on $e_x$ in $G''$;
\item $xy \in E(G^*)$ precisely if $y$ receives an edge from both vertices on  $e_x$ in $G''$.
\end{itemize}
Note that $G^*$ is an $(r-1)$-partite digraph with vertex classes of size $n''/r$. Further,
$$\bar{\delta} ^+ (G^*), \bar{\delta }^- (G^*) \geq n''/r - 2\beta  n.$$
So by Theorem~\ref{rpart2}, $G^*$ contains a perfect $K_{r-1}$-packing. By construction of $G^*$ this implies that $G''$ contains a perfect $T$-packing $\mathcal M''$.
Set $\mathcal M:=\mathcal M' \cup \mathcal M'' \cup \mathcal M_1 \cup \dots \cup \mathcal M_s \cup \mathcal T$. Then $\mathcal M$ is a perfect $T$-packing in $G$, as required.
\endproof

\section{Proof of Lemma~\ref{exC3}} \label{secexC3}
 Let $0 < 1/n_0 \ll \alpha \ll \beta  \ll \gamma   \ll 1$. Suppose that $G$ is as in the statement of the lemma
and let $A_1,A_2,A_3$ denote the partition of $V(G)$ corresponding to the vertex classes of $Ex(n)$.

Given any $1 \leq i \leq 3$, any $ x \in A_i$ and any $\delta >0$, we say that $x$ is \emph{externally $\delta$-excellent} if $x$ sends out at least
$(1-\delta)|A_{i+1}|$ edges to $A_{i+1}$ in $G$ and receives at least $(1-\delta )|A_{i-1}|$ edges from $A_{i-1}$ in $G$.
(Here indices are taken mod $3$.)
Otherwise we say that
$x$ is \emph{externally $\delta$-bad}.
Since $G$ $\alpha$-contains $Ex(n)$ and $\alpha \ll \beta  \ll \gamma$, there are at most $\beta n$ vertices
in $G$ that are  externally $\gamma$-bad.

We also require analogous definitions corresponding to edges inside our vertex classes. Indeed, 
given any $1 \leq i \leq 3$, any $ x \in A_i$ and any $\delta >0$,   
we say that $x$ is \emph{internally $\delta$-excellent} if $x$ sends out at least
$(1-\delta)|A_i|$ edges in $G[A_{i}]$ and receives at least $(1-\delta)|A_i|$ edges in $G[A_{i}]$.
Otherwise we say that
$x$ is \emph{internally $\delta$-bad}.
Since $G$ $\alpha$-contains $Ex(n)$ and $\alpha \ll \beta \ll \gamma $, there are at most $\beta n$ vertices
in $G$ that are internally $\gamma $-bad. Throughout the proof we will modify the classes $A_1$, $A_2$ and $A_3$. When
referring to, for example, internally excellent vertices, we mean with respect to the current classes  $A_1$, $A_2$ and $A_3$
rather than the original partition of $V(G)$.

Since $\delta ^0 (G) \geq 2n/3-1$, given any vertex $x \in V(G)$ there is an $1\leq i_x \leq 3$ such that $x$ sends out 
at least $n/10$ edges to $A_{i_x}$ in $G$ and receives at least $n/10$ edges from $A_{i_x}$ in $G$. For each vertex $x \in V(G)$
that is internally $\gamma $-bad we move $x$ into the class $A_{i_x}$. 
Thus, we now have that:
\begin{itemize}
\item[(a)] $|A_i|= n/3 \pm 2\beta n$ for each $1 \leq i \leq 3$;
\item[(b)] $\delta ^0 (G[A_i]) \geq n/20$ for each $1 \leq i \leq 3$;
\item[(c)] All but at most $\beta n $ vertices in $G$ are internally $2\gamma $-excellent;
\item[(d)] All but at most $2\beta n$ vertices in $G$ are externally $2 \gamma $-excellent. \COMMENT{Now $2\beta n$ since moved
vertices may no longer be externally excellent}
\end{itemize}
Actually there is some slack in these conditions. Indeed, (a)--(d) hold even if we remove three vertices from $G$ (and thus
from $A_1$, $A_2$ and $A_3$).

\medskip

\noindent
{\bf Changing the parity of the class sizes.}
Our next task is to remove (the vertices of) at most one copy of $C_3$ from $G$ to ensure that $|A_1| \equiv |A_2| \equiv |A_3|$ (mod $3$).
If $|A_1 |\equiv |A_2 |\equiv |A_3|$ (mod $3$) already then we do not remove a copy of $C_3$. 
Recall that $n \equiv 0$ (mod $3$).
Therefore, without loss of generality we may
assume that $|A_1| \equiv 0$, $|A_2| \equiv 1$ and $|A_3| \equiv 2$ (mod $3$). (The other cases follow analogously.)

In this case there exists a $1\leq j \leq 3$ such that  $|A_{j}|\geq n/3+1$. 
Suppose that $|A_2|\geq n/3+1$. 
Fix some $a \in A_1$ that is externally $2\gamma$-excellent. 
(Such a vertex exists by (a) and (d).)
Since  $d^- (a)\geq 2n/3-1$ there exists
a vertex $b \in A_2$ such that $ba \in E(G)$. Further, since $a$ is externally $2\gamma$-excellent and 
$d^- _{G[A_2]}(b) \geq n/20$ by (b), there is a vertex $c \in A_2$ such that $ac, cb \in E(G)$. Thus, $a$, $b$ and $c$ together span
a copy $C'_3$ of $C_3$ in $G$. Remove $V(C'_3)$ from $G$ (and thus from $A_1$ and $A_2$). So now $|A_1|\equiv |A_2|\equiv |A_3|\equiv 2$ (mod $3$). In all other cases, we can similarly remove three vertices from $G$ that span a copy $C'_3$ of $C_3$ so that
$|A_1| \equiv |A_2| \equiv |A_3|$ (mod $3$). As outlined earlier, (a)--(d) still hold.

\medskip

\noindent{\bf Covering the externally bad vertices and balancing the class sizes.}
(b)--(d) ensure that we can greedily construct a collection $\mathcal C_1$ of at most $2 \beta n$ vertex-disjoint 
copies of $C_3$ in $G$ that together cover all those vertices in $G$ that are externally $2\gamma$-bad.
In particular, (b) and (c) ensure that we can choose each such copy of $C_3$ to lie in one of $G[A_1]$, $G[A_2]$ and $G[A_3]$.
So after removing the vertices in $\mathcal C_1$ from $G$ we still have that 
$|A_1| \equiv |A_2| \equiv |A_3|$ (mod $3$). Further, $n/3-8\beta n \leq |A_i|\leq n/3 +2 \beta n$ for each $1 \leq i \leq 3$.

These last two properties together with (b) and (c) ensure that
 we can greedily construct a collection $\mathcal C_2$ of at most $7 \beta n$ vertex-disjoint copies of $C_3$ in $G$ such that:
\begin{itemize}
\item The copies of $C_3$ in $\mathcal C_2 $ are vertex-disjoint from the copies of $C_3$ in $\mathcal C_1$;
\item Each copy of $C_3$ in $\mathcal C_2$ lies in  one of $G[A_1]$, $G[A_2]$ and $G[A_3]$.
\item By removing the vertices in $\mathcal C_1 \cup \mathcal C_2$ from $G$ we  have that 
$|A_1| = |A_2| = |A_3| \geq n/3 -  8\beta  n$.
\end{itemize}

\medskip

\noindent{\bf Covering the remaining vertices.}
Remove the vertices in $\mathcal C_1 \cup \mathcal C_2$ from $G$ (and thus from $A_1$, $A_2$ and $A_3$). The choice of $\mathcal C_1$ ensures that
every vertex now in $G$ is externally $3 \gamma$-excellent and the choice of $\mathcal C_2$ ensures that now
$|A_1| = |A_2| = |A_3| \geq n/3 -  8\beta  n$.
Let $G':=G[A_1,A_2] \cup G[A_2,A_3]\cup G[A_3,A_1]$. 
By ignoring the orientations of the edges in $G'$, we can (for example) apply Theorem~\ref{rpart} to find
a perfect $C_3$-packing  $\mathcal C_3$ in $G'$. 
(Indeed, the underlying graph of $G'$ satisfies the minimum degree condition in Theorem~\ref{rpart} since every vertex in $V(G')$
is externally $3 \gamma$-excellent.)
The union of $\mathcal C_1$, $\mathcal C_2$, $\mathcal C_3$ and $C'_3$
(if it exists) is a perfect $C_3$-packing in $G$, as desired.
\endproof

\section{Concluding remarks}
In this section we raise a number of open questions concerning perfect packings in digraphs.

\subsection{Minimum degree conditions forcing perfect tournament packings}
In~\cite{cdkm}, Czygrinow, DeBiasio,  Kierstead and  Molla proved the following minimum degree result for perfect transitive tournament packings.
\begin{thm}[Czygrinow, DeBiasio,  Kierstead and  Molla~\cite{cdkm}]\label{cdthm2}
Let $n,r \in \mathbb N$ such that $r$ divides $n$. Then every digraph $G$ on $n$
vertices with $$\delta  (G) \geq 2(1-1/r)n-1$$  contains a perfect $T_r$-packing.
\end{thm}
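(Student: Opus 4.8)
The plan is to observe first that Theorem~\ref{cdthm2} is really Conjecture~\ref{conj2} in disguise, and then to prove Conjecture~\ref{conj2} for large $n$ using the tools developed in this paper, handling the bounded range of $n$ by a separate elementary argument.

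\emph{Reduction to a semidegree-type hypothesis.} Fix $x\in V(G)$. Every edge incident to $x$ is counted exactly once in $d^+(x)+d^-(x)$, so $d^+(x)+d^-(x)=\delta_{\{x\}}\ge 2(1-1/r)n-1$, where we abbreviate $\deg_G(x)$. Since $r\mid n$, the number $(1-1/r)n$ is an integer; hence if both $d^+(x)\le (1-1/r)n-1$ and $d^-(x)\le (1-1/r)n-1$ held, we would get $\deg_G(x)\le 2(1-1/r)n-2$, a contradiction. Therefore $d^+(x)\ge (1-1/r)n$ or $d^-(x)\ge (1-1/r)n$ for every $x$, i.e. $G$ satisfies (\ref{conmin1}). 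Thus Theorem~\ref{cdthm2} is equivalent to Conjecture~\ref{conj2}. (The bound is best possible: a digraph on $n$ vertices with an independent set of size $n/r+1$ has a vertex of degree at most $2(1-1/r)n-2$, and since a copy of $T_r$ is a tournament it meets any independent set of $G$ in at most one vertex, so $n/r$ disjoint copies of $T_r$ cannot cover all $n/r+1$ of those vertices.)

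\emph{The large-$n$ case.} For $r=3$ Conjecture~\ref{conj2} is proved in Section~\ref{easy}, so assume $r\ge 4$ and that $n$ exceeds the threshold supplied by Theorem~\ref{2nonex1}. By the remark preceding Theorem~\ref{2nonex1} we may delete edges so that $xy\notin E(G)$ whenever $d^+(x)<(1-1/r)n$ and $d^-(y)<(1-1/r)n$. Applying Theorem~\ref{2nonex1} with a small constant $\gamma$, either $G$ contains a perfect $T_r$-packing and we are done, or $G$ contains a $\gamma$-independent set of size at least $n/r$; so it remains to treat this extremal configuration. Here I would adapt the proof of Lemma~\ref{ex1}: iteratively peel off $\gamma_i$-independent classes $A_1,\dots,A_s$ of size $n/r$ leaving a core $B$ with no large independent set, clean the partition so that each $A_i$ is almost independent and crossing edges are almost complete, absorb the bounded set of exceptional vertices into copies of $T_r$ by covering them with short matchings via Proposition~\ref{simplematch2} and extending each matching edge to a $T_r$, keeping the class sizes balanced, and finally tile the remainder by building an auxiliary $(s+1)$-partite digraph on the cleaned classes and invoking Theorem~\ref{rpart2}, using Theorem~\ref{1nonex} (for $T=T_{r-s}$) or Proposition~\ref{matchstab2} to tile the core $B$. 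The one genuine change from Lemma~\ref{ex1} is bookkeeping: each vertex is guaranteed only a large out-degree \emph{or} a large in-degree, so the notions of ``excellent'' and ``exceptional'' and the orientations chosen when extending matching edges must be phrased to respect each vertex's type.

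\emph{The bounded-$n$ case and the main obstacle.} For $n$ below the asymptotic threshold the regularity/absorption machinery does not apply, and a hands-on argument is required; here I would induct on $n/r$, at each step removing one carefully chosen copy of $T_r$. The point to watch is that deleting $r$ vertices weakens (\ref{conmin1}), so one cannot peel indefinitely; one instead shows that after a bounded number of ``bad'' removals the remaining digraph either has minimum degree comfortably above the threshold or falls into the extremal regime already treated. I expect the extremal analysis (Step~3, and its small-$n$ analogue) to be the main difficulty: the class of digraphs satisfying (\ref{conmin1}) and containing a linear-sized independent set is wide, and threading every exceptional vertex into a copy of $T_r$ while keeping all class sizes exactly balanced for the final $r$-partite tiling is more delicate than in Lemma~\ref{ex1} precisely because only one of $d^+$, $d^-$ is controlled at each vertex.
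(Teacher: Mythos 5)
Your reduction from $\delta(G)\geq 2(1-1/r)n-1$ to (\ref{conmin1}) is correct (using $r\mid n$ to make $(1-1/r)n$ an integer), but you should only claim that the degree hypothesis \emph{implies} (\ref{conmin1}), not that the two statements are equivalent: Conjecture~\ref{conj2} is strictly stronger than Theorem~\ref{cdthm2}, since a digraph satisfying (\ref{conmin1}) can have vertices of total degree well below $2(1-1/r)n-1$. Also note that the paper does not prove Theorem~\ref{cdthm2}; it is cited from Czygrinow, DeBiasio, Kierstead and Molla~\cite{cdkm}, and in Section~11 it serves only to motivate Conjecture~\ref{conj2} and Question~\ref{ques1}. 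So there is no in-paper proof for your argument to parallel.

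The substantive gap in your proposal lands exactly where the paper itself stops. After deleting edges and applying Theorem~\ref{2nonex1}, you are left with the extremal case in which $G$ satisfies (\ref{conmin1}) but contains a $\gamma$-independent set of size $n/r$, and the paper explicitly declares this open (``to prove Conjecture~\ref{conj2} for large digraphs it suffices to prove the extremal case''). Your plan to ``adapt the proof of Lemma~\ref{ex1}'' and your description of the change as ``bookkeeping'' understate the problem: Lemma~\ref{ex1} uses $\delta^0(G)\geq(1-1/r)n$ throughout, for example when a vertex is called excellent because \emph{both} $d^+_G(x,A_i)$ and $d^-_G(x,A_i)$ are large, when (\ref{Bboundy}) provides two-sided degree control into $B$, and when the auxiliary $(s+1)$-partite digraph $G^*$ is fed into Theorem~\ref{rpart2} with lower bounds on \emph{both} $\bar\delta^+(G^*)$ and $\bar\delta^-(G^*)$. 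Under (\ref{conmin1}) alone a vertex may have indegree as small as $(1-2/r)n-1$, so these steps fail without a new argument about threading $T_r$-copies through skewed vertices. Having reduced to (\ref{conmin1}) you have also discarded the extra constraint $d^+(x)+d^-(x)\geq 2(1-1/r)n-1$, which is precisely the information that might make the extremal case of Theorem~\ref{cdthm2} tractable even while that of Conjecture~\ref{conj2} is open. Finally, your bounded-$n$ paragraph is only a sketch and defers back to the same unresolved extremal regime; the regularity and absorption machinery in this paper is asymptotic, so the small-$n$ case genuinely requires a separate argument which you have not supplied.
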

Note that Conjecture~\ref{conj2}  would imply Theorem~\ref{cdthm2}. At first sight one may ask whether $T_r$ can be replaced by 
 \emph{any} $T \in \mathcal T_r$ in Theorem~\ref{cdthm2}.
However, the following result of
Wang~\cite{wang} shows that one requires a significantly larger minimum degree condition in the case when $T=C_3$.
\begin{thm}[Wang~\cite{wang}]\label{wang1}
Let $n  \in \mathbb N$ such that $3$ divides $n$. If $G$ is a digraph on $n$ vertices and
$$\delta (G) \geq \frac{3n-3}{2}$$ then $G$ contains a perfect $C_3$-packing. Moreover, if $n/3$ is odd, then there is a digraph $G'$ on $n$ vertices
with $\delta (G') = \frac{3n-5}{2}$  which does not contain a perfect $C_3$-packing.
\end{thm}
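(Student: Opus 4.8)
\textbf{The plan} is to treat the two halves of Theorem~\ref{wang1} separately, with the sufficiency half carrying essentially all the work. For the tightness construction, write $n=3m$ with $m$ odd, so $n$ is odd, and let $G'$ have vertex set $A\cup B$ with $|A|=(n-1)/2$ and $|B|=(n+1)/2$, where $G'[A]$ and $G'[B]$ are complete digraphs, every edge directed from $A$ to $B$ is present, and no edge is directed from $B$ to $A$. A directed triangle on a triple meeting both $A$ and $B$ would have to use an edge directed from $B$ to $A$ to close up, so every copy of $C_3$ in $G'$ lies inside $A$ or inside $B$; a perfect $C_3$-packing would therefore force $3\mid|A|$, which fails because $|A|=(3m-1)/2$ and $3m-1\equiv-1\pmod 3$. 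Finally $d_{G'}(a)=2(|A|-1)+|B|=\frac{3n-5}{2}$ for $a\in A$ and $d_{G'}(b)=2(|B|-1)+|A|=\frac{3n-3}{2}$ for $b\in B$, so $\delta(G')=\frac{3n-5}{2}$.

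For the sufficiency, let $G$ satisfy $\delta(G)\ge\frac{3n-3}{2}$ and let $G_u$ be its underlying undirected graph. Since $d^+_G(x)+d^-_G(x)\le 2d_{G_u}(x)$, we get $\delta(G_u)\ge\frac12\delta(G)\ge\frac{3n-3}{4}\ge\frac{2n}{3}$ once $n\ge 9$ (the cases $n=3,6$ are checked by hand), so Theorem~\ref{hs} gives a perfect $K_3$-packing of $G_u$. On each of its triples $G$ spans a copy of $C_3$ or a copy of $T_3$ (a triple inducing $K_3$ or $K_3^-$ in fact spans $C_3$), so $G$ has a perfect $\{C_3,T_3\}$-packing. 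Fix one, $\mathcal M$, with the maximum number of copies of $C_3$; it suffices to show every member of $\mathcal M$ spans $C_3$. Suppose instead that some member spans $T_3$ but not $C_3$, say $T'=\{x,y,z\}$ with $x\to y\to z$ and $x\to z$ in $G$; then $z\not\to x$, since otherwise $x\to y\to z\to x$ is a $C_3$ and we could raise the $C_3$-count of $\mathcal M$.

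The main step is an exchange argument sharpened by a stability analysis. At most $10$ directed edges lie inside $T'$ (the pair $\{x,z\}$ carries only the edge $x\to z$), so the degree hypothesis forces at least $\frac{9n-9}{2}-10$ directed edges between $V(T')$ and $V(G)\setminus V(T')$; averaging over the $n/3-1$ other members of $\mathcal M$ yields some $T''\in\mathcal M\setminus\{T'\}$ with at least $14$ of the $18$ possible directed edges between $V(T')$ and $V(T'')$ (at least $13$ when $n\in\{6,9\}$, which still suffices). A case analysis on whether $T''$ is transitive or cyclic, on the orientation of $T'$, and on which at most four cross-edges are absent then shows that, unless these missing edges form a specific ``extremal'' pattern, $V(T')\cup V(T'')$ can be repartitioned into two triangles strictly more of which are cyclic, contradicting the choice of $\mathcal M$. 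The surviving patterns are exactly those in which some vertex of $T'\cup T''$ behaves, towards $V(G)$, like a vertex of the class $B$ in the construction $G'$; collecting these over all admissible $T''$ forces $G$ to differ from a copy of $G'$ (for a suitable bipartition $A\cup B$ of $V(G)$) in only $O(1)$ edges, after which the single unit of slack in $\delta(G)=\frac{3n-3}{2}$ over $\delta(G')=\frac{3n-5}{2}$ — which must appear either as an edge directed from $B$ to $A$ or as class sizes $|A|,|B|$ both divisible by $3$ — lets one build a perfect $C_3$-packing directly.

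The hard part will be this final step: making the exchange case analysis exhaustive, and, more seriously, running the near-extremal analysis with the exact constant, since the example $G'$ shows there is no slack to waste. Identifying precisely which local obstructions to the exchange propagate to global proximity to $G'$, and then turning one extra unit of minimum degree into the global feature (a backward edge, or $3\mid|A|$) that $G'$ lacks, is where the real content of the theorem lies.
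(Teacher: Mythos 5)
Theorem~\ref{wang1} is quoted in the paper as a black-box result from Wang~\cite{wang}; the paper itself contains no proof of it, so there is no in-paper argument to compare against. Your tightness construction is correct: for $n=3m$ with $m$ odd, taking $|A|=(n-1)/2$ and $|B|=(n+1)/2$ gives $|A|\equiv 1\pmod 3$, the absence of $B\to A$ edges confines every $C_3$ to a single class, and $\delta(G')=(3n-5)/2$ as claimed. The opening reduction in the sufficiency direction is also sound: pass to the underlying graph, invoke Theorem~\ref{hs} to get a perfect $K_3$-packing (needing $n\ge 9$), maximize the number of $C_3$'s over perfect $\{C_3,T_3\}$-packings, and observe that a transitive-but-not-cyclic member $T'=\{x,y,z\}$ satisfies $z\not\to x$, hence carries at most five internal directed edges, hence contributes at least $(9n-9)/2-10$ cross-edges, which by averaging over the $n/3-1$ remaining members gives some $T''$ with at least $14$ cross-edges once $n\ge 12$ (only $13$ at $n=9$). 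All of that checks.

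What follows, however, is a plan and not a proof, and you flag this yourself. Two genuine gaps remain. First, the claimed case analysis — that any such $T''$ either admits a repartition of $V(T')\cup V(T'')$ into two triangles with strictly more $C_3$'s, or else the missing cross-edges form a specific ``$G'$-like'' local pattern — is asserted but not carried out; with up to four missing cross-edges and unspecified internal edges in both $T'$ and $T''$, this is a nontrivial finite check whose outcome is not self-evident (and at $n=9$, with only $13$ cross-edges guaranteed, it must be rerun under weaker hypotheses). Second, the closing stability argument — that these local obstructions, collected over all admissible $T''$, force $G$ to within $O(1)$ edges of a copy of $G'$, and that the single unit of minimum degree above $\delta(G')=(3n-5)/2$ then produces a perfect $C_3$-packing — is an \emph{exact} near-extremal analysis with no slack constant to absorb errors, qualitatively harder than the soft ``$\alpha$-contains $Ex(n)$'' style arguments used elsewhere in the paper. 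Until both steps are done, the proposal identifies a plausible route (mirroring the exchange argument of Section~4) but does not establish Wang's theorem.
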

Together with Zhang~\cite{zhang}, Wang also characterised the minimum degree threshold that ensures a digraph contains a perfect $C_4$-packing. (Here $C_4$ denotes the directed cycle on four vertices.)
Czygrinow,  Kierstead and  Molla~\cite{ckm} showed that the degree condition in Theorem~\ref{wang1} can be relaxed
to $\delta (G) \geq (4n-3)/3$ if we instead seek a perfect packing consisting of a fixed number of  cyclic triangles and at least one transitive triangle.

In light of Theorems~\ref{cdthm2} and~\ref{wang1} we ask the following question.
\begin{ques}\label{ques1} Let $n , r \in \mathbb N$ such that $r$ divides $n$. Let $T\in \mathcal T_r \setminus \{C_3\}$.
Does every digraph $G$ on $n$ vertices with $\delta (G) \geq 2(1-1/r)n-1$ contain a perfect $T$-packing?
\end{ques}
 Czygrinow, DeBiasio,  Kierstead and  Molla~\cite{cdkm} have answered Question~\ref{ques1} in the affirmative under the additional assumptions that $r$ is sufficiently large and $\delta (G) \geq 2(1-1/r+o(1))n$.



\subsection{Packing other directed graphs}
It is also natural to seek minimum degree conditions which ensure a digraph contains a perfect $H$-packing where  $H$ is some digraph other than a tournament. 
Let $K_r$ denote the complete digraph on $r$ vertices, and write $K_r ^-$ to denote $K_r$ minus an edge.  
(In the undirected setting we also use $K_r$ to denote the complete graph on $r$ vertices.)
The following result is a simple consequence of the
Hajnal--Szemer\'edi theorem.
\begin{prop}\label{simple} Let $ n,r \in \mathbb N$ such that $r$ divides $n$. Suppose that $G$ is a digraph on $n$ vertices
such that $$\delta  (G) \geq (2-1/r)n-1.$$ Then $G$ contains a perfect $K_r$-packing.
\end{prop}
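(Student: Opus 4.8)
The plan is to reduce Proposition~\ref{simple} directly to the Hajnal--Szemer\'edi theorem (Theorem~\ref{hs}) by passing to the graph of ``double edges''. Define $G'$ to be the (undirected) graph on $V(G)$ in which $xy \in E(G')$ precisely when both $xy \in E(G)$ and $yx \in E(G)$. The point is that a copy of $K_r$ in $G'$ is exactly a set of $r$ vertices spanning a complete digraph $K_r$ in $G$, so a perfect $K_r$-packing in $G'$ is a perfect $K_r$-packing in $G$. Thus it suffices to show $\delta(G') \geq (1-1/r)n$ and then invoke Theorem~\ref{hs}, using that $r$ divides $n$.

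The one computation needed is the lower bound on $\delta(G')$. Fix $x \in V(G)$. By definition of the minimum degree of a digraph, $d^+_G(x) + d^-_G(x) = d_G(x) \geq \delta(G) \geq (2-1/r)n - 1$. The neighbours of $x$ in $G'$ are exactly the vertices of $N^+_G(x) \cap N^-_G(x)$, and by inclusion--exclusion
\begin{align*}
|N^+_G(x) \cap N^-_G(x)| &= |N^+_G(x)| + |N^-_G(x)| - |N^+_G(x) \cup N^-_G(x)| \\
&\geq d^+_G(x) + d^-_G(x) - (n-1) \geq \left(2 - \tfrac1r\right)n - 1 - (n-1) = \left(1 - \tfrac1r\right)n.
\end{align*}
Hence $d_{G'}(x) \geq (1-1/r)n$ for every $x \in V(G)$, i.e.\ $\delta(G') \geq (1-1/r)n$.

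With this in hand the proof concludes immediately: $G'$ is a graph on $n$ vertices with $r \mid n$ and $\delta(G') \geq (1-1/r)n$, so Theorem~\ref{hs} yields a perfect $K_r$-packing in $G'$, which is the desired perfect $K_r$-packing in $G$. There is no real obstacle here --- the only thing to be slightly careful about is the convention that a pair of opposite edges $xy, yx$ contributes $2$ to $d_G(x)$ (as stated in the paper's definition of $\delta$), since that is exactly what makes the inclusion--exclusion bound come out to $(1-1/r)n$ rather than something weaker.
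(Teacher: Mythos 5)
Your proof is correct and follows exactly the same approach as the paper: pass to the graph $G'$ of double edges, show $\delta(G') \geq (1-1/r)n$, and apply the Hajnal--Szemer\'edi theorem. The only difference is that you spell out the inclusion--exclusion computation for $\delta(G')$, which the paper leaves implicit.
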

\proof
Let $G'$ denote the \emph{graph} on $V(G)$ whose edge set consists of all those pairs $xy$ such that both $xy, yx \in E(G)$.
Since $\delta  (G) \geq (2-1/r)n-1,$ we have that $\delta (G') \geq (1-1/r)n$. Thus, Theorem~\ref{hs} implies that
$G'$ contains a perfect $K_r$-packing. By definition of $G'$ this implies that $G$ contains a perfect $K_r$-packing.
\endproof
Note that the minimum degree condition in Proposition~\ref{simple} is best-possible: Let $n \in \mathbb N$ such that $r$ divides $n$.
Suppose $A$ and $B$ are disjoint vertex sets where $|A|=n/r+1$ and $|B|=(1-1/r)n-1$. Let $G_1$ be the digraph with vertex set $A \cup B$ such that
$G_1$ contains all possible edges between $A$ and $B$, all edges in $B$ and so that $A$ induces a tournament.
 Then $\delta (G_1)= (2-1/r)n-2$ and $G$ does not contain a perfect $K_r$-packing since every
copy of $K_r$ in $G_1$ contains at most one vertex from $A$.

Proposition~\ref{simple} implies that a digraph $G$ whose order $n$ is divisible by $r$ and with
$\delta ^0 (G)\geq (1-1/2r)n-1/2$ contains a perfect $K_r$-packing. Further, in the digraph $G_1$ above, set
$n/r$ to be even and 
$G_1[A]$ to be a regular tournament.  Then $G_1$ does not contain a perfect
$K_r$-packing but $\delta ^0 (G_1)=(1-1/2r)n-1$.
Thus, together with Theorem~\ref{mainthm}, this shows that the minimum semidegree threshold that forces a perfect $K_r$-packing
is much higher than the threshold that forces a perfect $T$-packing for any tournament $T$ on $r$ vertices.
It would be interesting to establish the minimum semidegree threshold that forces a perfect $K^- _r$-packing in a digraph.
In particular, is this threshold significantly  lower than the corresponding threshold for perfect $K _r$-packings?

Let $m \in \mathbb N$ be divisible by $6$ and set $n:=2m+3$. Suppose that $G$ is a digraph on $n$ vertices with the following properties: $V(G)=V_1 \cup V_2$ where $|V_1|=m+1$ and $|V_2|=m+2$;
$G[V_1]$ and $G[V_2]$ are complete digraphs; the edges between $V_1$ and $V_2$ in $G$ form a bipartite tournament that is as regular as possible. Note that, since $|V_1|$ and $|V_2|$ are not divisible by $3$,
$G$ does not contain a perfect $K_3 ^-$-packing. Further, $\delta ^0 (G) =m/2+1+m= (3n-5)/4$.
\begin{ques}\label{quesz} Let $n  \in \mathbb N$ be divisible by $3$. 
Does every digraph $G$ on $n$ vertices with $\delta ^0 (G) \geq 3n/4$ contain a perfect $K_3 ^-$-packing?
\end{ques}

\section*{Acknowledgements}
The author is very grateful to Reshma Ramadurai for many helpful discussions, to Richard Mycroft for a discussion on~\cite{my1} and to Daniela K\"uhn, Deryk Osthus and Katherine Staden for their comments on the manuscript. The author is also grateful to the two referees for  quick and helpful reviews. In particular, the comments of one reviewer helped to significantly improve the presentation of the paper.

{\footnotesize \obeylines \parindent=0pt

Andrew Treglown
School of Mathematics
University of Birmingham
Edgbaston
Birmingham
B15 2TT
UK
\tt{a.c.treglown@bham.ac.uk}}






\end{document}